\documentclass[11pt,a4paper]{article}

\usepackage[T1]{fontenc}
\usepackage[utf8]{inputenc}
\usepackage[margin=2.7cm]{geometry}
\usepackage{amsmath,amssymb,amsthm}
\usepackage{enumitem}
\usepackage{booktabs}
\usepackage{longtable}

\usepackage{array}
\usepackage{xcolor}
\usepackage{url}
\usepackage[colorlinks=true,linkcolor=blue!45!black,citecolor=blue!45!black, bookmarks=true,bookmarksnumbered=true,bookmarksopen=true]{hyperref}

\allowdisplaybreaks

\theoremstyle{definition}
\newtheorem{definition}{Definition}

\newtheorem{assumption}[definition]{Assumption}
\newtheorem{property}[definition]{Property}
\newtheorem{remark}[definition]{Remark}
\newtheorem{example}[definition]{Example}

\theoremstyle{plain}
\newtheorem{introthm}{Theorem}

\newtheorem{introcor}[introthm]{Corollary}
\newtheorem{lemma}[definition]{Lemma}
\newtheorem{proposition}[definition]{Proposition}
\newtheorem{theorem}[definition]{Theorem}
\newtheorem{corollary}[definition]{Corollary}

\newcommand{\XX}{\mathcal{X}}                      
\newcommand{\XXA}{\mathcal{B}_{\XX}}               
\newcommand{\Xp}{\XX_{+}}                          
\newcommand{\YY}{C}                                
\newcommand{\R}{\mathbb{R}}
\newcommand{\N}{\mathbb{N}}
\newcommand{\Nz}{\mathbb{N}_{0}}
\newcommand{\Prob}{\mathcal{M}_{1}}                
\newcommand{\Probac}{\mathcal{M}_{1}^{\ll\pi}}      
\newcommand{\Sign}{\mathcal{M}_{\pm}}              
\newcommand{\Fplus}{\mathcal{F}^{+}}               
\newcommand{\Dens}{\mathcal{D}}                    
\newcommand{\dd}{\mathop{}\!\mathrm{d}}
\newcommand{\tv}[1]{\lVert #1 \rVert}
\newcommand{\Lone}[1]{\lVert #1 \rVert_{L^{1}(\pi)}}
\newcommand{\ind}{\mathbf{1}}
\newcommand{\Ex}{\mathbb{E}}                        
\newcommand{\Prb}{\mathbb{P}}                       
\DeclareRobustCommand{\hyptgt}[2]{\hypertarget{hyp:#1}{\textup{(#2)}}}
\DeclareRobustCommand{\hyp}[2]{\hyperlink{hyp:#1}{\textup{(#2)}}}
\newcommand{\sing}{\operatorname{sing}}             
\newcommand{\aeevery}{\pi\text{-almost every}}
\newcommand{\aewhere}{\pi\text{-almost everywhere}}
\newcommand{\contig}{\mathbin{\vartriangleleft}}   
\newcommand{\acp}[1]{\ll #1}                
\newcommand{\sgp}[1]{\setminus #1}          

\title{\bfseries A Direct Route to Markov Chain Convergence\\[2mm]
\Large via Asymptotic Equivalence with the Target}
\author{Patrick Forr\'e\\[+10pt]
    \small{AI4Science Lab}\\[0pt]
 \small{Korteweg-de Vries Institute for Mathematics}\\[0pt]
    \small{University of Amsterdam}\\[0pt] \small{\texttt{p.d.forre@uva.nl}}}
\date{}

\begin{document}
\maketitle

\begin{abstract}
\noindent
For a Markov kernel $T$ with an invariant probability measure $\pi$, we give a
self-contained proof of the \emph{Markov chain convergence} theorem via a criterion
called \emph{asymptotic equivalence with the target}. It assumes two parts about the
Lebesgue decompositions of $T^{n}_{x}$ and $\pi$ for every starting point $x$:
\emph{asymptotic absolute continuity}: the singular mass
$\sing(T^{n}_{x}\mid\pi)$ tends to $0$; \emph{asymptotic domination of the target}:
the singular mass $\sing(\pi\mid T^{n}_{x})$ tends to $0$, as $n \to \infty$.
Assuming a jointly measurable density for the absolutely continuous part of each
iterate, this criterion is sufficient and necessary for convergence. A density version
of it is verified on general measurable spaces in three cases:
i.) $T$ has a positive transition density w.r.t.\ $\pi$;
ii.) $T$ consists of an absolutely continuous part with positive transition density
together with an atom at the starting point, which covers the Metropolis--Hastings
algorithm;
iii.) the transition density is positive only after a finite number of steps that may
depend on the starting point $x$.
To demonstrate our general criterion, we investigate the Gibbs sampler with random scan
and the parallel tempering algorithm. Furthermore, we show that in all mentioned
settings Birkhoff's ergodic theorem applies, so as to obtain the \emph{strong law of
large numbers}. Throughout this paper, neither irreducibility, nor aperiodicity, nor
recurrence, nor couplings, nor splitting constructions, nor small sets are used. In all
results, the state space is a general measurable space with no structure beyond a
$\sigma$-algebra. That joint measurability is assumed of the kernel, not of the space;
countable generation supplies it. None of the theorems proved here is new; what is
offered is a short route to a single, widely applicable Markov chain convergence
criterion.
\end{abstract}

{
\small
\tableofcontents
}
\newpage

\section{Introduction}\label{sec:intro}

\subsection{Motivation}

Markov chain Monte Carlo is used, in statistics, in machine learning and in
computational physics, overwhelmingly on continuous state spaces. The target is a
probability measure $\pi$ on a space $\XX = \R^{d}$, or on a manifold, specified by a
density $p$ with respect to Lebesgue measure $\lambda$,
\[
  \pi(A) = \int_{A} p \dd\lambda ,
\]
and known only up to its normalising constant: what one can evaluate is a measurable
map $\tilde p : \XX \to (0,\infty)$ with $p = \tilde p/Z$, the number
$Z = \int_{\XX}\tilde p \dd\lambda \in (0,\infty)$ being unavailable. One then
constructs a Markov kernel $T$ leaving $\pi$ invariant and hopes that the law
$T^{n}\circ\mu$ of the chain after $n$ steps, started in an initial distribution
$\mu$, approaches $\pi$. The convergence theory ordinarily invoked to justify this is
the general theory of $\psi$-irreducible aperiodic Harris chains
\cite{Nummelin84,MeynTweedie09,Douc18}, whose development --- irreducibility, cycles
and aperiodicity, small sets, recurrence, the splitting construction --- is long, and
is designed to cover state spaces far more general than the ones on which the method
is actually run.

What has to be true for the chain to converge is best seen through the Lebesgue
decomposition of its law. At each time $n$ and from each starting point $x$ the law
$T^{n}_{x}$ splits, uniquely, into a part possessing a density with respect to the
target and a part carried by a set the target ignores,
\[
  T^{n}_{x} \;=\; T^{n}_{x,\acp{\pi}} \;+\; T^{n}_{x,\sgp{\pi}},
  \qquad T^{n}_{x,\acp{\pi}} \ll \pi,
  \qquad T^{n}_{x,\sgp{\pi}}(\XX\setminus S) = 0 \ \text{ for some } S \text{ with } \pi(S)=0 .
\]
Two things must happen along this splitting, and between them they are enough. The
singular part must become negligible, from every starting point: whatever mass sits on
sets of $\pi$-measure zero must, in the limit, be given up. And the absolutely
continuous part must become rich enough to see the whole target: from $\pi$-almost
every starting point, the part of $\pi$ on which its density still vanishes must, in
the limit, carry no mass, so that no region of positive $\pi$-measure stays invisible.
The two conditions are the two halves of one relation, mutual absolute continuity,
each asked in the limit rather than at any finite time. These two conditions --- \emph{asymptotic absolute
continuity} and \emph{asymptotic domination of the target} --- are the criterion this
note proves, and they are Theorem \ref{thm:crit} below. Neither refers to a second
starting point, to an initial distribution, or to a density that anyone has to
exhibit.

The condition under which both are easiest to verify, and the one from which the
subject historically starts, is that the kernel $T$ itself possess a density
$\tau(y \mid x)$ with respect to $\lambda$, so that
$T(A \mid x) = \int_{A}\tau(y \mid x)\,\lambda(\dd y)$, and that this density be
strictly positive. On a continuous state space that is, in practice, a mild
assumption. Three observations make the point.

\begin{itemize}[leftmargin=1.4em,topsep=3pt]
  \item A Metropolis--Hastings chain whose proposal density $q(y \mid x)$ is
        strictly positive satisfies it, and only ratios $\tilde p(y)/\tilde p(x)$ of
        the unnormalised target enter the algorithm, so that the unknown constant $Z$
        cancels. The Gaussian random walk proposal
        $q(y \mid x) = \varphi_{\sigma}(y-x)$, with $\varphi_{\sigma}$ the centred
        Gaussian density of variance $\sigma^{2}$, is of this kind, as are
        independence samplers with a Gaussian or heavy-tailed proposal.
  \item If the proposal is local or degenerate, positivity can be enforced at
        negligible cost. Convolving with a Gaussian of arbitrarily small variance
        $\sigma^{2}$ turns any proposal kernel $Q_{x}$ into one with density
        $q_{\sigma}(y \mid x) = \int_{\R^{d}}\varphi_{\sigma}(y-z)\,Q(\dd z \mid x)$,
        strictly positive on all of $\R^{d}$ whatever $Q$ was. Mixing in a global
        proposal with small probability $\epsilon \in (0,1)$, that is replacing $q$ by
        $q' := (1-\epsilon)\,q + \epsilon\,\varphi_{\sigma}$, has the same effect.
  \item What positivity excludes --- periodic structure, and regions the chain cannot
        reach in one step --- is a genuine phenomenon on discrete or lattice-like
        state spaces, and is what forces the general theory to introduce cycles and
        small sets. With a diffuse proposal on a continuous space it does not occur.
\end{itemize}

Strict positivity is, however, more than the criterion asks, and in two separate
respects. It makes the singular part vanish \emph{outright} after a single step,
where asymptotic absolute continuity requires only that its mass tends to zero; and it
makes the density of the absolutely continuous part positive \emph{immediately},
where asymptotic domination asks only that the invisible part of the target shrink to
nothing, at a rate that may depend on the starting point and without ever reaching
zero. Both slacks are used. The Metropolis--Hastings kernel carries an atom at
the point it starts from, of mass the rejection probability there; when $\pi$ is
atomless and that probability is positive, no iterate is absolutely continuous, and
$T^{n}_{x,\sgp{\pi}} \ne 0$ for every $n$. What is true is that the mass of the atom decays
geometrically in $n$, which is asymptotic absolute continuity and nothing stronger. The Gibbs sampler with random
scan, when the coordinate measures are atomless, is singular with respect to $\pi$ at
every step, and no iterate of it is absolutely continuous either; there an absolutely
continuous minorant appears only after a full sweep of the coordinates has become
possible, that is after $d$ steps, and only then is the target dominated. Neither algorithm has a
strictly positive transition density, and the criterion covers both.

Two caveats should be stated at once. The criterion yields \emph{qualitative}
convergence in total variation and no rate, and no rate valid uniformly in the
starting point is available even under strict positivity: Example \ref{ex:norate}
exhibits a chain with an everywhere strictly positive transition density for which
$\sup_{x \in \XX} \tv{T^{n}_{x} - \pi} = 1$ for \emph{every} $n \in \N$,
although $\tv{T^{n}_{x} - \pi} \to 0$ for each fixed $x$.
Moreover the convolution device above, while it restores positivity, does not restore
a usable rate: on $\R^{d}$ the uniform lower bound $\inf_{x,y} q_{\sigma}(y \mid x)$
it supplies is $0$, and over a region of diameter $R$ it is only of the order of
$\varphi_{\sigma}(R)$. Rates belong to the theory of geometric ergodicity,
for which we refer to \cite{Gallegos24} and \cite{MeynTweedie09}.

The purpose of this note is to prove the two-part criterion directly, and to verify it
in the settings in which Markov chain Monte Carlo is actually run --- strictly
positive transition densities among them, but also the two algorithms just mentioned,
for which no such density exists.
The argument rests on a handful of short lemmas
and uses no irreducibility, no aperiodicity, no recurrence, no small sets, no couplings
and no splitting construction. It is carried out on an arbitrary measurable space; no
topological hypothesis of any kind is needed (Remark \ref{rem:nostructure}).

\subsection{The criterion, and the results in their applied form}\label{ssec:applied}

The results of these notes are all instances of a single criterion, and the criterion
is short enough to be stated first. It is a statement about the Lebesgue decomposition
of the law of the chain: the singular half must eventually become small, and the
absolutely continuous half must eventually become large enough to see all of $\pi$.

\begin{introthm}[Convergence via asymptotic equivalence with the target]\label{thm:crit}
Let $(\XX,\XXA)$ be a measurable space and  $T$ be a Markov kernel with invariant probability measure $\pi$ on
$(\XX,\XXA)$: $T \circ \pi = \pi$.
For $x \in \XX$ and $n \in \N$ let
\[
  T^{n}_{x}
  = T^{n}_{x,\acp{\pi}} + T^{n}_{x,\sgp{\pi}}
\]
be the Lebesgue decomposition of $T^{n}_{x}$ with respect to $\pi$, and assume that
for each $n$ the absolutely continuous part admits a density
$t_{n}(y \mid x)$ with respect to $\pi$ that is jointly measurable in $(x,y)$ --- as
it does, for instance, whenever $\XXA$ is countably generated.
Assume further:
\begin{enumerate}[label=\textup{(\roman*)},nosep,topsep=3pt,leftmargin=2.4em,itemsep=3pt]
  \item \emph{(asymptotic absolute continuity)} for every $x \in \XX$ the mass of the
        part of $T^{n}_{x}$ that $\pi$ does not see vanishes:
        \[
          T^{n}_{x,\sgp{\pi}}(\XX) \;=\; \sing\bigl( T^{n}_{x} \bigm| \pi \bigr)
          \;\longrightarrow\; 0
          \qquad\text{as } n \to \infty ;
        \]
  \item \emph{(asymptotic domination of the target)} for $\pi$-almost every
        $x \in \XX$ the mass of the part of $\pi$ that $T^{n}_{x}$ does not see
        vanishes:
        \[
          \sing\bigl( \pi \bigm| T^{n}_{x} \bigr) \;\longrightarrow\; 0
          \qquad\text{as } n \to \infty ,
        \]
        where $\sing(\pi\mid\alpha)$ denotes the mass of the part of $\pi$ that is
        singular with respect to $\alpha$.
\end{enumerate}
Then $\pi$ is the unique invariant probability measure of $T$, and
\[
  \sup_{A \in \XXA}\bigl| (T^{n}\circ\mu)(A) - \pi(A) \bigr| \;\longrightarrow\; 0
  \qquad\text{as } n \to \infty,
\]
for every probability measure $\mu$ on $(\XX,\XXA)$.

Conversely, each of the two hypotheses is implied by that conclusion, so the two
together characterise it.
\end{introthm}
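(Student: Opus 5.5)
The plan is to prove the forward direction in three stages: convergence from absolutely continuous initial laws, then from every starting point, then for general $\mu$. The converse is then checked directly.

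\emph{Stage 1: absolutely continuous initial laws.} If $\nu\ll\pi$ then $T\circ\nu\ll\pi$. Indeed, $\pi(A)=0$ together with $T\circ\pi=\pi$ forces $T(A\mid x)=0$ for $\pi$-a.e.\ $x$. So $T$ induces a Markov operator $P$ on $L^{1}(\pi)$ with $P1=1$ and $\Lone{Pf}\le\Lone{f}$, and the claim is $\Lone{P^{n}g-1}\to 0$ for every probability density $g$. By contractivity and density of bounded functions it suffices to take $g\le M$; then $0\le P^{k}g\le M$ for all $k$ since $P1=1$. Write $P^{k}g-1=u_{k}^{+}-u_{k}^{-}$ with disjointly supported parts of equal mass $\ell_{k}/2$. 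The sequence $\ell_{k}$ is nonincreasing, with limit $\ell$; suppose $\ell>0$.

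I then want a \emph{uniform} decrement. Fix a small $\varepsilon>0$. By (ii), joint measurability of $t_{m}$ and Egorov's theorem, I choose $m$, $\eta>0$ and a set $B$ with $\pi(B^{c})<\ell/(16M)$ such that for $x\in B$ the section $G_{x}=\{y: t_{m}(y\mid x)\ge\eta\}$ satisfies $\pi(G_{x})\ge 1-\varepsilon$. Put $F^{\pm}(y)=\int_{B}u_{k}^{\pm}(x)\ind_{G}(x,y)\,\pi(\dd x)$, where $G=\{(x,y):t_{m}(y\mid x)\ge\eta\}$. Then $P^{m}u_{k}^{\pm}\ge\eta F^{\pm}$, we have $0\le F^{\pm}\le M$, and $\int_{B}u_{k}^{\pm}\ge\ell/4$. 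Hence $\min(F^{+},F^{-})\ge F^{+}F^{-}/(2M)$, and Fubini gives
\[
\int F^{+}F^{-}\,\dd\pi=\iint_{B\times B}u_{k}^{+}(x)\,u_{k}^{-}(x')\,\pi(G_{x}\cap G_{x'})\,\pi(\dd x)\,\pi(\dd x')\ge(\ell/4)^{2}(1-2\varepsilon).
\]
Cancellation between $P^{m}u_{k}^{+}$ and $P^{m}u_{k}^{-}$ then yields
\[
\ell_{k+m}\le\ell_{k}-2\eta\,(\ell/4)^{2}(1-2\varepsilon)/(2M).
\]
This bound is independent of $k$, which contradicts the convergence of $\ell_{k}$. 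So $\ell=0$.

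\emph{Stage 2: every starting point.} Fix $x$ and split $T^{n}_{x}=T^{n}_{x,\acp{\pi}}+T^{n}_{x,\sgp{\pi}}$. For each fixed $n$ we have $T^{n+k}_{x}=T^{k}\circ T^{n}_{x,\acp{\pi}}+T^{k}\circ T^{n}_{x,\sgp{\pi}}$. By Stage 1 the first term converges in total variation, as $k\to\infty$, to $T^{n}_{x,\acp{\pi}}(\XX)\,\pi$. The second term has mass $\sing(T^{n}_{x}\mid\pi)$. Hence
\[
\limsup_{k}\tv{T^{k}_{x}-\pi}\le 2\,\sing(T^{n}_{x}\mid\pi),
\]
and this tends to $0$ by (i). For a general $\mu$, the quantity $\tv{T^{n}\circ\mu-\pi}\le\int\tv{T^{n}_{x}-\pi}\,\mu(\dd x)\to0$ by dominated convergence; here the integrand is measurable, using joint measurability. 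Uniqueness of $\pi$ follows by taking $\mu$ invariant.

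\emph{Converse.} Assume the conclusion. If $S$ is a $\pi$-null set carrying $T^{n}_{x,\sgp{\pi}}$, then $\sing(T^{n}_{x}\mid\pi)=T^{n}_{x}(S)-\pi(S)\le\sup_{A}|T^{n}_{x}(A)-\pi(A)|$, which gives (i). Symmetrically, if $S'$ is a $T^{n}_{x}$-null set carrying the part of $\pi$ singular to $T^{n}_{x}$, then $\sing(\pi\mid T^{n}_{x})=\pi(S')-T^{n}_{x}(S')$, which gives (ii).

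\emph{Main obstacle.} The hard step is Stage 1: extracting a decrement that is uniform in $k$ from the merely qualitative hypothesis (ii). The device above handles this by combining the reduction to bounded $g$ (so that $u_{k}^{\pm}\le M$ and cannot concentrate off $B$) with the product inequality $\min(a,b)\ge ab/(2M)$, which turns the overlap into a Fubini-computable quantity.
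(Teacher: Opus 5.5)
Your proof is correct, and in Stage~1 it takes a genuinely different route from the paper.

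\emph{The paper's route.} The paper first converts (ii) into the minorant hypothesis \hyp{P}{P} (Proposition~\ref{prop:LimpliesP}). This is a Tonelli--Markov swap that produces endpoints $y$ seen from most starting points. It then builds an explicit common minorant $\zeta=\frac{\eta\delta}{4}\,\pi\restriction G$ of the $N$-step images of any two $M\pi$-dominated laws (Lemma~\ref{lem:positivity}, Proposition~\ref{prop:minorisation}). Finally it renormalises the discrepancy by its own size, which gives the multiplicative bound $\Delta_{n+N}\le(1-\gamma)\Delta_n$ with the constant $M/d$.

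\emph{Your route.} You use (ii) in its native orientation, namely sections in $y$ for a fixed starting point, and you never produce a common minorant. The overlap comes instead from $\min(a,b)\ge ab/C$ together with Fubini over \emph{pairs} of starting points, via $\pi(G_x\cap G_{x'})\ge 1-2\varepsilon$. Rather than renormalising, you keep $\int_B u_k^{\pm}\ge\ell/4$ by using $\ell_k\ge\ell$ and $u_k^{\pm}\le M$ directly, which yields an additive decrement. The assumed positive limit plays the same role in both arguments. Your Stage~2 is essentially Lemma~\ref{lem:SiffR} combined with Theorem~\ref{thm:abstract}, with the truncation absorbed into Stage~1.

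\emph{What each buys.} Your route is shorter for this statement and skips \hyp{P}{P}, \eqref{prop:U} and \eqref{prop:R} altogether. The paper's route confines joint measurability to the single passage \hyp{L}{L}$\Rightarrow$\hyp{P}{P}. As a result its core Theorem~\ref{thm:criterion} needs no \hyp{J}{J}, and it applies verbatim to the Gibbs, tempering and \hyp{M}{M} settings, where only a minorant is at hand. You use \hyp{J}{J} in three places:
\begin{itemize}
\item to make $x\mapsto\pi(\{t_m(\cdot\mid x)=0\})$ measurable, so that Egorov applies;
\item for the bound $P^{m}f\ge\int f(x)\,t_m(\cdot\mid x)\,\pi(\dd x)$;
\item in the final integration step.
\end{itemize}

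\emph{Two steps you should justify.} First, turning (ii) into $\pi(\{t_m(\cdot\mid x)=0\})\to0$ uses $\sing(\pi\mid\alpha)=\pi(\{g_\alpha=0\})$, which is \eqref{eq:acpositive}. Second, measurability of $x\mapsto\tv{T^{n}_{x}-\pi}$ fails in general on a bare measurable space (Remark~\ref{rem:tvnotmeasurable}). It holds here only because
$\tv{T^{n}_{x}-\pi}=1-\int t_n(y\mid x)\,\pi(\dd y)+\int(t_n(y\mid x)-1)^{+}\,\pi(\dd y)$,
and both terms are Tonelli-measurable. The paper avoids the issue entirely with Lemma~\ref{lem:pointwise}, which fixes one Hahn set per time step and applies Fatou.
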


\noindent
The two hypotheses have names, and the names are what the rest of these notes are
about. Hypothesis (i) is \emph{asymptotic absolute continuity}: the part of the law
that is singular with respect to $\pi$ carries, in the limit, no mass. It does not say
that $T^{n}_{x}$ becomes absolutely continuous --- $T^{n}_{x,\sgp{\pi}}$ may well be
nonzero for every $n$ --- only that what remains of it is asymptotically negligible.
Hypothesis (ii) is \emph{asymptotic domination of the target}: from $\pi$-almost every
starting point, the part of $\pi$ that the chain has not yet learned to see becomes
negligible. Together they say that $T^{n}_{x}$ and $\pi$ become mutually absolutely
continuous as $n$ grows --- \emph{asymptotic equivalence with the target}, the two
halves of the relation $T^{n}_{x} \sim \pi$ of \ref{conv:order}, each asked only in
the limit. Neither is required to hold exactly at any finite time; in particular
neither $T^{n}_{x} \ll \pi$ nor $\pi \ll T^{n}_{x}$ need ever hold. Neither hypothesis mentions a second starting point, an initial
distribution, a coupling, a small set, an irreducibility measure, a period, or a
topology on $\XX$; both are conditions on the chain started at one point, and both are
conditions on one and the same decomposition. Neither may be dropped
(Examples \ref{ex:RnotU} and \ref{ex:UnotR}).

\medskip

\noindent
\emph{What the jointly measurable density is doing, and why it is not a hypothesis on
the space.} It is used at exactly one point, and for one purpose. On a bare measurable
space a family of Radon--Nikodym derivatives indexed by the starting point need not
admit a version measurable in $(x,y)$ jointly, and asymptotic domination --- which
asserts the existence of no density at all --- gives no way to select one; the selection
is what is being assumed. In these notes that assumption is isolated as the hypothesis
\hyp{J}{J} of Section \ref{sec:core}, and it is a hypothesis on the \emph{kernel}, of
exactly the kind already made when a transition density is assumed measurable in both
variables: \hyp{J}{J} stands to the Lebesgue density of a general iterate as \hyp{D}{D}
does to the transition density itself. Countable generation of $\XXA$ is one sufficient
condition for it, and the one that is checked in practice
(Proposition \ref{prop:jointdensity}); it holds on $\R^{d}$, on a manifold, on any
standard Borel space and on countable products of these. It is not, however, the
hypothesis: \emph{no convergence theorem in these notes assumes anything of the state
space} (Remark \ref{rem:nostructure}).

Everything after the introduction in fact takes a second road, on which even \hyp{J}{J}
is not needed: it \emph{assumes the conclusion} of the martingale construction one step
further on, in the form of the hypothesis \hyp{P}{P} of Section \ref{sec:core}, which
asks for a jointly measurable minorant density directly.
Theorem \ref{thm:criterion} is the criterion in that form. Its hypotheses are implied
by those of Theorem \ref{thm:crit} under \hyp{J}{J}
(Proposition \ref{prop:LimpliesP}) and always imply them
(Lemma \ref{lem:PimpliesL}), so the two are equivalent as soon as \hyp{J}{J} holds; and
it is the version from which
Theorems \ref{thm:posdens}--\ref{thm:gibbsintro} below are deduced.

\medskip

\noindent
The four statements that follow are the criterion in the settings where it is applied.
They are self-contained, and are the form in which the results are likely to be used;
they are proved below as Corollaries \ref{cor:reference}, \ref{cor:mh},
\ref{cor:referencegen} and \ref{cor:gibbs} respectively; Theorem \ref{thm:crit} itself
is proved as Theorem \ref{thm:asympequiv}.

\begin{introthm}[Markov chains with a strictly positive transition density]\label{thm:posdens}
Let $\XX$ be a set, $\XXA$ a $\sigma$-algebra on $\XX$ and $\lambda$ a
$\sigma$-finite measure on $(\XX,\XXA)$. Let $p : \XX \to (0,\infty)$ be measurable
with $\int_{\XX} p \dd\lambda = 1$ and let $\pi(A) := \int_{A} p \dd\lambda$ be the
target. Let
\[
  \tau : \XX\times\XX \to [0,\infty), \qquad (x,y) \longmapsto \tau(y \mid x),
\]
be $\XXA\otimes\XXA$-measurable with $\int_{\XX}\tau(y \mid x)\,\lambda(\dd y) = 1$
for every $x \in \XX$, and let $T$ be the Markov kernel
$T(A \mid x) := \int_{A}\tau(y \mid x)\,\lambda(\dd y)$. Assume
\begin{enumerate}[label=(\roman*),nosep,topsep=3pt]
  \item \emph{(invariance)} $\displaystyle\int_{\XX}\tau(y \mid x)\,p(x)\,\lambda(\dd x) = p(y)$ for $\lambda$-almost every $y \in \XX$;
  \item \emph{(positivity)} $\tau(y \mid x) > 0$ for $(\lambda\otimes\lambda)$-almost every $(x,y) \in \XX\times\XX$.
\end{enumerate}
Then $\pi$ is the unique invariant probability measure of $T$, and for every initial
distribution $\mu$ on $(\XX,\XXA)$,
\[
  \lim_{n\to\infty}\ \sup_{A \in \XXA}\bigl| (T^{n}\circ\mu)(A) - \pi(A) \bigr| = 0 .
\]
\end{introthm}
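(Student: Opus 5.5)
The plan is to deduce Theorem \ref{thm:posdens} from the criterion, Theorem \ref{thm:crit}. We check its two hypotheses together with the joint measurability requirement, and we do this already at time $n=1$, since positivity makes both conditions hold exactly there. We rewrite the kernel relative to $\pi$. Because $p>0$, the measures $\lambda$ and $\pi$ are mutually absolutely continuous, and
\[
  T(A\mid x)=\int_A t(y\mid x)\,\pi(\dd y),\qquad t(y\mid x):=\frac{\tau(y\mid x)}{p(y)} .
\]
The function $t$ is $\XXA\otimes\XXA$-measurable. Invariance $T\circ\pi=\pi$ follows from (i) by Tonelli: $\int T(A\mid x)\,\pi(\dd x)=\int_A\int\tau(y\mid x)p(x)\,\lambda(\dd x)\,\lambda(\dd y)=\pi(A)$.

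Next we handle the iterates. By induction, $T^{n}_x\ll\lambda\sim\pi$ for every $x$ and every $n\ge1$, with density
\[
  t_n(y\mid x)=\int t_{n-1}(z\mid x)\,t(y\mid z)\,\pi(\dd z),
\]
which is jointly measurable by Tonelli. Hence the singular part vanishes identically for $n\ge1$, and hypothesis (i) of Theorem \ref{thm:crit} holds trivially. The joint measurability demanded there also holds, without any countable generation of $\XXA$.

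For hypothesis (ii) we must show $\pi\ll T^n_x$ for $\pi$-a.e.\ $x$ and all $n\ge1$, i.e.\ $t_n(\cdot\mid x)>0$ $\pi$-a.e. At $n=1$, Fubini applied to positivity (ii), using $\lambda\otimes\lambda\sim\pi\otimes\pi$, gives a $\pi$-null set $N$ such that $t(\cdot\mid x)>0$ $\pi$-a.e.\ for all $x\notin N$. Fix such $x$ and suppose $B$ satisfies $T^n_x(B)=0$. Writing $T^n_x(B)=\int T(B\mid z)\,T^{n-1}_x(\dd z)$, and using that $T^{n-1}_x$ is nonzero, we get $T(B\mid z)=0$ for some $z$ in a set of positive $T^{n-1}_x$-measure. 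That set also has positive $\pi$-measure, because $T^{n-1}_x\ll\pi$ for $n\ge2$; so it contains some $z\notin N$. For such $z$, $T(B\mid z)=0$ forces $\pi(B)=0$. Thus $\sing(\pi\mid T^n_x)=0$ for all $n\ge1$ and $x\notin N$.

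Theorem \ref{thm:crit} then gives uniqueness of $\pi$ and total-variation convergence for every $\mu$. The only real care is in the Fubini step that converts the $\lambda\otimes\lambda$-a.e.\ positivity into a statement holding for $\pi$-a.e.\ fixed $x$, and in the transfer through $T^{n-1}_x\ll\pi$. I expect neither to be a serious obstacle. All the real work sits in Theorem \ref{thm:crit}, which we take as given.
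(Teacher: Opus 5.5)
Your proof is correct, but it takes a different route from the paper. The paper obtains Theorem \ref{thm:posdens} as the case $p>0$ of Corollary \ref{cor:reference}. There it forms $t=\tau/p$ and checks that this is the almost-everywhere form of \hyp{D}{D}. It then applies Theorem \ref{thm:main}, which verifies \hyp{P}{P} directly with $N=1$, $\YY=\XX$, $s=t$, taking $\YY'$ from Fubini in the \emph{other} variable: for $\pi$-a.e.\ endpoint $y$, the starting points with $t(y\mid x)=0$ form a null set. It gets \hyp{S}{S} from Lemma \ref{lem:onestep} and concludes with Theorem \ref{thm:criterion}.

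You instead go through the density-free criterion, Theorem \ref{thm:crit} (proved as Theorem \ref{thm:asympequiv}), checking \hyp{S}{S}, \hyp{L}{L} and \hyp{J}{J}. This is precisely the alternative that Remark \ref{rem:Lverification} says is available ``at the price of \hyp{J}{J}'', and the price shows in your argument.

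\begin{itemize}
\item \hyp{J}{J} asks for jointly measurable densities of \emph{every} iterate, so you need the Chapman--Kolmogorov construction of all $t_n$.
\item Strictly, you should replace $t_n$ by $t_n\,\ind_{\{t_n<\infty\}}$, since the Tonelli integral may equal $+\infty$ and \hyp{J}{J} asks for finite values. This is harmless, because $t_n(\,\cdot\mid x)$ is finite $\pi$-a.e.\ for each $x$.
\item Yet inside Theorem \ref{thm:asympequiv}, Proposition \ref{prop:LimpliesP} may take $N=1$ with your data. What it rebuilds is then essentially the paper's \hyp{P}{P} with $s=t$.
\end{itemize}

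So the paper's route is shorter and never touches the iterates. Yours has the merit of exhibiting Theorem \ref{thm:posdens} as the case in which both singular masses vanish exactly from the first step.

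Your propagation step is fine; it could also be replaced by Lemma \ref{lem:monotone}(ii). It in fact gives slightly more than you claim. For $n\ge 2$ it uses only $T^{n-1}_x\ll\pi$ and $\pi(N)=0$, never $x\notin N$, so $\pi\ll T^{n}_x$ holds for \emph{every} $x$ once $n\ge 2$.
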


\begin{introthm}[The Metropolis--Hastings algorithm]\label{thm:mhintro}
Let $\XX,\XXA,\lambda,p,\pi$ be as in Theorem \ref{thm:posdens}. Let the proposal be the
Markov kernel $Q(A \mid x) = \int_{A} q(y \mid x)\,\lambda(\dd y)$, given by an
$\XXA\otimes\XXA$-measurable map $q : \XX\times\XX \to [0,\infty)$ with
$\int_{\XX} q(y \mid x)\,\lambda(\dd y) = 1$ for every $x \in \XX$, and let $T$ be the
Metropolis--Hastings kernel
\[
  T(A \mid x) := \int_{A} a(y \mid x)\,q(y \mid x)\,\lambda(\dd y) + r(x)\,\ind_{A}(x),
  \qquad
  a(y \mid x) := \min\Bigl\{ 1,\ \frac{p(y)\,q(x \mid y)}{p(x)\,q(y \mid x)} \Bigr\},
\]
with $r(x)$ the resulting total rejection probability. If
\[
  q(y \mid x) > 0 \qquad\text{for every } (x,y) \in \XX\times\XX ,
\]
then $\pi$ is the unique invariant probability measure of $T$ and
$\sup_{A \in \XXA}| (T^{n}\circ\mu)(A) - \pi(A) | \to 0$ for every initial
distribution $\mu$ on $(\XX,\XXA)$. No invariance hypothesis is needed: for the
Metropolis--Hastings kernel it is automatic.
\end{introthm}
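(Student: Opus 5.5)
We reduce Theorem \ref{thm:mhintro} to the criterion, Theorem \ref{thm:crit}. Three things are needed: invariance of $\pi$, asymptotic absolute continuity, and asymptotic domination of the target. We also need the jointly measurable density hypothesis. To avoid any structure on the space, we will in fact produce directly a jointly measurable minorant density of the absolutely continuous part of each iterate, in the spirit of hypothesis \hyp{P}{P}.

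\emph{Invariance.} First we check detailed balance. The function $p(x)\,a(y\mid x)\,q(y\mid x)=\min\{p(x)q(y\mid x),\,p(y)q(x\mid y)\}$ is symmetric in $(x,y)$, and the rejection part is diagonal. Hence $\int T(A\mid x)\,\pi(\dd x)=\int_A\int a(x\mid y)q(x\mid y)\lambda(\dd x)\,\pi(\dd y)+\int_A r\dd\pi=\pi(A)$, because $\int a(x\mid y)q(x\mid y)\lambda(\dd x)=1-r(y)$. Measurability of $a$ and $r$ in $(x,y)$ is routine.

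\emph{Decomposition of the iterates.} Write $T=K+R$, where $K(\dd y\mid x)=k(y\mid x)\lambda(\dd y)$ with $k=aq$, and $R_x=r(x)\delta_x$. Since $p,q>0$ everywhere, we have $k(y\mid x)>0$ for every $(x,y)$. Expand $T^n$ over words in $K$ and $R$. Every word containing at least one $K$ has a jointly measurable density with respect to $\lambda$, hence with respect to $\pi$ because $p>0$. This density is a finite sum of iterated integrals of $k$, weighted by powers of $r$ evaluated along the path. Only the word $R^n$ can be singular, and it gives $T^n_x\ge$ (a.c.\ part) with singular remainder $r(x)^n\delta_x$. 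Thus $\sing(T^n_x\mid\pi)\le r(x)^n$.

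\emph{Asymptotic absolute continuity.} The key point is $r(x)<1$ for every $x$. This holds because $1-r(x)=\int a(y\mid x)q(y\mid x)\lambda(\dd y)$ and the integrand is strictly positive everywhere. The last claim needs $\lambda(\XX)>0$, which follows from $\int p\dd\lambda=1$. Hence $r(x)^n\to0$ for each fixed $x$, giving (i) without any uniformity in $x$.

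\emph{Asymptotic domination of the target.} The word $K R^{n-1}$ (a single $K$-step, then rejections) contributes the density $\ge k(y\mid x)\,r(y)^{n-1}$. This density vanishes only where $r(y)=0$, which need not be $\pi$-null. So the word $R^{n-1}K$ is better: its contribution is $r(x)^{n-1}k(y\mid x)$, positive in $y$ whenever $r(x)>0$. Simpler still, consider all words in which exactly one $K$ occurs, at position $j$. Alternatively, use the following cleaner argument. Let $m_n(\cdot\mid x)$ be the a.c.\ part, and let $N_n(x)=\{y: \text{density}=0\}$. Then $T^{n+1}_x\ge \int T^n_x(\dd z)\,K_z$ restricted to the a.c.\ parts. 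Since $k(\cdot\mid z)>0$ everywhere, the a.c.\ part at step $n+1$ has density at least $\int T^n_x(\dd z)k(y\mid z)$, which is strictly positive for every $y$ because $T^n_x(\XX)=1$. So already for $n\ge1$ the density of the a.c.\ part of $T^{n+1}_x$ is positive everywhere, giving $\pi\ll T^{n+1}_x$ and $\sing(\pi\mid T^{n}_x)=0$ for $n\ge2$. Indeed for $n=1$ this also holds, since $k(\cdot\mid x)>0$. This density is jointly measurable by Tonelli, so it serves as the minorant required in \hyp{P}{P}.

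The main obstacle is bookkeeping rather than analysis. One must confirm that the sum of the non-$R^n$ words really is the full a.c.\ part, i.e.\ that $\delta_x$ is singular. If $\pi$ has an atom at $x$, this fails. That case is harmless, since then the singular mass is even smaller, but it has to be handled by an inequality $\sing(T^n_x\mid\pi)\le r(x)^n$ rather than an equality. With (i), (ii) and the measurable minorant in hand, Theorem \ref{thm:crit} (in its \hyp{P}{P} form, Theorem \ref{thm:criterion}) yields uniqueness of $\pi$ and total-variation convergence for every initial $\mu$.
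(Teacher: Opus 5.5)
Your proof is correct and follows essentially the paper's route (Corollary \ref{cor:mh} $\to$ Corollary \ref{cor:mhreference} $\to$ Theorem \ref{thm:mh}). Invariance comes from the symmetry of $\min\{p(x)q(y\mid x),\,p(y)q(x\mid y)\}$, \hyp{P}{P} is verified with $N=1$, $\YY=\YY'=\XX$ and the everywhere positive minorant $s=aq/p$, and \hyp{S}{S} follows from $\sing(T^{n}_{x}\mid\pi)\le r(x)^{n}$ with $r<1$ at every point; your expansion over words is the paper's splitting $T^{n}\circ\mu=\nu_{n}+\sigma_{n}$ at $\mu=\delta_{x}$. The detours through $KR^{n-1}$ and $R^{n-1}K$ and the separate check of \hyp{L}{L} are unnecessary; and since a minorant is not the exact density that \hyp{J}{J} demands, the theorem you may invoke is Theorem \ref{thm:criterion}, as in your last line, rather than Theorem \ref{thm:crit}.
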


\noindent
Strict positivity of $q$ everywhere is the simplest hypothesis that can be checked in
practice, and it is what the devices described above deliver; Corollary
\ref{cor:mh} proves Theorem \ref{thm:mhintro} under a weaker pair of conditions, and
Remark \ref{rem:mhsharp} shows that the weaker pair cannot be weakened further.
Note that the kernel $T$ of Theorem \ref{thm:mhintro} is in general \emph{not} absolutely
continuous with respect to $\pi$: it retains an atom at the starting point whenever
$r(x) > 0$, which is the generic case (Remark \ref{rem:comparison}(iii)). So
Theorem \ref{thm:mhintro} is not a special case of Theorem \ref{thm:posdens};
Section \ref{sec:mh} is devoted to this.

The third result weakens the positivity requirement in a different direction. Write
$\tau_{1} := \tau$ and, recursively,
$\tau_{n+1}(y \mid x) := \int_{\XX}\tau_{n}(y \mid z)\,\tau(z \mid x)\,\lambda(\dd z)$
for the $n$-step transition densities.

\begin{introthm}[Markov chains with a strictly positive transition density after finitely many steps]\label{thm:eventual}
Let $\XX,\XXA,\lambda$, $p,\pi,\tau,T$ be as in Theorem \ref{thm:posdens} and assume
invariance, that is, hypothesis (i) of Theorem \ref{thm:posdens}. Assume further that
\begin{enumerate}[label=(\roman*),nosep,topsep=3pt,start=2]
  \item for every $x \in \XX$ there is a number $n=n(x) \in \N$ such that
        $\tau_{n}(y \mid x) > 0$ for $\lambda$-almost every $y \in \XX$.
\end{enumerate}
Then $\pi$ is the unique invariant probability measure of $T$, and
$\sup_{A \in \XXA}|(T^{n}\circ\mu)(A) - \pi(A)| \to 0$ for every initial distribution
$\mu$ on $(\XX,\XXA)$.
\end{introthm}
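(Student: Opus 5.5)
The plan is to deduce Theorem \ref{thm:eventual} from the criterion, Theorem \ref{thm:crit}, by checking its two hypotheses and the joint measurability of densities. Since $T(\cdot\mid x)=\tau(\cdot\mid x)\lambda$ and $p>0$, every iterate is absolutely continuous with respect to $\pi$. Indeed, by Fubini and induction $T^{n}_{x}(A)=\int_{A}\tau_{n}(y\mid x)\,\lambda(\dd y)$, with $\tau_{n}$ jointly $\XXA\otimes\XXA$-measurable, because it is defined by integrating a jointly measurable function against the $\sigma$-finite $\lambda$. So the density with respect to $\pi$ is
\[
  t_{n}(y\mid x)=\tau_{n}(y\mid x)/p(y),
\]
which is jointly measurable. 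This supplies the density hypothesis of Theorem \ref{thm:crit} with no condition on $\XXA$. It also gives hypothesis (i) trivially: $\sing(T^{n}_{x}\mid\pi)=0$ for every $n$ and every $x$. Invariance $T\circ\pi=\pi$ follows from hypothesis (i) of Theorem \ref{thm:posdens} by Fubini.

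The work is hypothesis (ii), asymptotic domination. I will show that for every $x$, $\pi\ll T^{m}_{x}$ for all $m\ge n(x)$, so that $\sing(\pi\mid T^{m}_{x})=0$ eventually. At $m=n(x)$ this is immediate: $\tau_{n}(\cdot\mid x)>0$ $\lambda$-a.e., so $T^{n}_{x}$ and $\lambda$, and hence $\pi$, have the same null sets. Propagation to $m>n$ is the key step. I write
\[
  T^{m}_{x}=\int T^{m-n}(\cdot\mid z)\,T^{n}_{x}(\dd z),
\]
so it suffices that $\pi\ll\int T^{k}(\cdot\mid z)\,\pi(\dd z)$, because $T^{n}_{x}\sim\pi$. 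By invariance the right-hand side is $T^{k}\circ\pi=\pi$. More carefully: if $T^{m}_{x}(A)=0$, then $T^{k}(A\mid z)=0$ for $T^{n}_{x}$-a.e.\ $z$, hence for $\pi$-a.e.\ $z$ by equivalence. Integrating against $\pi$ then gives $\pi(A)=(T^{k}\circ\pi)(A)=0$. This argument uses the order of composition $T^{m}=T^{n}T^{m-n}$, and either order is fine because the kernels commute as powers. Writing it this way, with the first $n$ steps taken from $x$, is what is needed.

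Hence both hypotheses of Theorem \ref{thm:crit} hold, in fact with (ii) for every $x$ and not just $\pi$-a.e. The conclusion, uniqueness of $\pi$ and total-variation convergence from every $\mu$, follows directly. The only delicate point I anticipate is bookkeeping. The $\lambda$-a.e.\ invariance identity must be turned into $T\circ\pi=\pi$. Joint measurability of $\tau_{n}$ needs Tonelli with $\lambda$ $\sigma$-finite. The reduction from $\lambda$-null to $\pi$-null sets uses $p>0$ everywhere. None of these is a real obstacle; the propagation step above is the heart of the argument.
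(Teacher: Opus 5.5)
Your proof is correct, but it reaches the statement by a different route than the paper.

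\emph{The paper's route.} The paper proves the statement as Corollary~\ref{cor:referencegen}. It forms the sets $\XX_{n}=\{x : T^{n}_{x}\sim\pi\}$ and shows by Tonelli that they are measurable. By Corollary~\ref{cor:selfimprovement} they increase to $\XX$, so hypothesis \hyp{E}{E} holds with $t_{n}=\tau_{n}/p$. Theorem~\ref{thm:general} then verifies \hyp{P}{P} by hand: pick $N$ with $\pi(\XX\setminus\XX_{N})\le\varepsilon$, take $\YY=\XX_{N}$ and $s=t_{N}$, and obtain $\YY'$ from Fubini. It checks \hyp{S}{S} as well and concludes with Theorem~\ref{thm:criterion}.

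\emph{Your route.} You instead check the density-free criterion of Theorem~\ref{thm:crit} (proved as Theorem~\ref{thm:asympequiv}) through \hyp{J}{J}, \hyp{S}{S} and \hyp{L}{L}. The paper itself points to this alternative in Remarks~\ref{rem:measurableXn} and~\ref{rem:Lverification}(iii).

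\emph{What your route buys.} Every verification is made at one starting point at a time. You need no measurability of a set of good starting points and no Fubini step for $\YY'$, and you obtain \hyp{L}{L} at every $x$.

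\emph{What it costs.} That work has been relocated, not removed. Proposition~\ref{prop:LimpliesP}, on which the sufficiency half of Theorem~\ref{thm:asympequiv} rests, builds measurable sets $\XX_{n}$ by Tonelli from the jointly measurable $t_{n}$. It then produces $\YY'$ by Markov's inequality, in essentially the same way as the paper's direct argument. It also needs \hyp{J}{J}, which is free here only because the explicit density $\tau_{n}/p$ is at hand.

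\emph{What the paper's route buys.} It stays with \hyp{P}{P}, which asks for no measurable selection. Its version also covers targets $p$ that vanish on part of $\XX$. There \hyp{S}{S} is no longer automatic, and the bound $\sing(T^{n}\circ\mu\mid\pi)\le 1-\mu(\XX_{n})$ has content. With $p>0$ everywhere, as here, both routes make \hyp{S}{S} trivial.

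\emph{On your propagation step.} It is Lemma~\ref{lem:monotone}(ii) iterated, the same fact the paper uses to get $\XX_{n}\subseteq\XX_{n+1}$. It uses only $\pi\ll T^{n}_{x}$, not the full equivalence $T^{n}_{x}\sim\pi$.
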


\noindent
The number $n$ may depend on the starting point $x$, and no aperiodicity hypothesis
is imposed: it turns out to be a consequence
(Corollary \ref{cor:selfimprovement}). On a \emph{finite} state space, with $\lambda$ the
counting measure, hypothesis (ii) of Theorem \ref{thm:eventual} is \emph{equivalent} to
irreducibility together with aperiodicity, so that Theorem \ref{thm:eventual} then reduces
to the fundamental theorem of Markov chains convergence (Remark \ref{rem:finite}).
Theorems \ref{thm:mhintro} and \ref{thm:eventual} each weaken the hypotheses of
Theorem \ref{thm:posdens}, in two different directions, but neither implies the other:
the kernel of Theorem \ref{thm:mhintro} retains an
atom at the starting point, so that for atomless $\pi$ no iterate of it is absolutely
continuous and Theorem \ref{thm:eventual} does not apply to it; conversely a kernel whose
first step has no density at all, but whose second step has a strictly positive one,
is covered by Theorem \ref{thm:eventual} and not by Theorem \ref{thm:mhintro}.
Remark \ref{rem:comparison} states this precisely.

The fourth result is of a different character. It is not a further weakening of the
hypotheses on the density, but an instance of the general convergence criterion itself
(Theorem \ref{thm:criterion}) applied to a kernel for which no iterate is absolutely
continuous with respect to $\pi$, so that none of Theorems \ref{thm:posdens},
\ref{thm:mhintro} and \ref{thm:eventual} applies. The Gibbs sampler is the example carried out in
full; parallel tempering is carried out in Corollary \ref{cor:tempering}.

\begin{introthm}[The Gibbs sampler with random scan]\label{thm:gibbsintro}
Let $d \in \N$ with $d \ge 2$, let $\mathcal{Y}_{1},\dots,\mathcal{Y}_{d}$ be
measurable spaces carrying $\sigma$-finite measures
$\lambda_{1},\dots,\lambda_{d}$, and let $\XX$, $\XXA$ and $\lambda$ be their
products. Let $p : \XX \to (0,\infty)$ be measurable with
$\int_{\XX} p \dd\lambda = 1$ and let $\pi(A) := \int_{A} p \dd\lambda$ be the
target. Assume that all full conditional distributions exist, that is, that
\[
  \int_{\mathcal{Y}_{i}} p(x^{i:w})\,\lambda_{i}(\dd w) < \infty
  \qquad\text{for every } i \in \{1,\dots,d\} \text{ and every } x \in \XX ,
\]
where $x^{i:w}$ denotes $x$ with its $i$-th coordinate replaced by $w$. Let $P_{i}$
be the Markov kernel that replaces the $i$-th coordinate by a draw from its full
conditional and leaves the others unchanged, and let
$T := \frac{1}{d}\sum_{i=1}^{d}P_{i}$ be the random scan Gibbs kernel. Then $\pi$ is
the unique invariant probability measure of $T$, and
$\sup_{A \in \XXA}|(T^{n}\circ\mu)(A) - \pi(A)| \to 0$ for every initial distribution
$\mu$ on $(\XX,\XXA)$.
\end{introthm}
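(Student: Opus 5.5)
The plan is to deduce the theorem from the convergence criterion (Theorem \ref{thm:crit}, or its minorant form). Everything reduces to two checks: that $\pi$ is invariant, and that from every starting point $x$ some iterate $T^{d}_{x}$ has an absolutely continuous part whose density with respect to $\pi$ is positive $\pi$-almost everywhere. I write $P_{i}(\dd y\mid x)=\delta_{x_{-i}}(\dd y_{-i})\,\kappa_i(y_i\mid x_{-i})\,\lambda_i(\dd y_i)$, where
\[
  \kappa_i(w\mid x_{-i}) = \frac{p(x^{i:w})}{\int p(x^{i:v})\,\lambda_i(\dd v)} .
\]
The denominator is finite by hypothesis and positive because $p>0$, so $\kappa_i>0$ everywhere.

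\textbf{Step 1: invariance.} Each $P_i$ is $\pi$-reversible. The joint measure $\pi(\dd x)P_i(\dd y\mid x)$ equals
\[
  \frac{p(x)\,p(y)}{\int p(x^{i:v})\,\lambda_i(\dd v)}
\]
against $\lambda_{-i}(\dd x_{-i})\,\lambda_i(\dd x_i)\,\lambda_i(\dd y_i)$ on $\{x_{-i}=y_{-i}\}$. This expression is symmetric in $(x,y)$, so each $P_i$ leaves $\pi$ invariant, and hence so does $T$. The argument uses Tonelli and $\sigma$-finiteness of each $\lambda_i$.

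\textbf{Step 2: a full sweep gives a positive density.} Expanding
\[
  T^{n}=d^{-n}\sum_{(i_1,\dots,i_n)}P_{i_n}\cdots P_{i_1},
\]
the term with $(i_1,\dots,i_d)=(1,\dots,d)$ is the deterministic-scan sweep $S=P_d\cdots P_1$. Its kernel is
\[
  S(\dd y\mid x)=\prod_{i=1}^{d}\kappa_i\bigl(y_i\mid y_1,\dots,y_{i-1},x_{i+1},\dots,x_d\bigr)\,\lambda(\dd y),
\]
which has a strictly positive, jointly measurable density with respect to $\lambda$, and hence with respect to $\pi$, since $p>0$. So $T^{d}_{x}\ge d^{-d}S_x$.

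For $n\ge d$, the composition rules let me propagate this. For general $n$, the event that the index sequence contains the block $(1,\dots,d)$ at some position is handled by the coarser observation below. Any term $P_{i_n}\cdots P_{i_1}$ in which every coordinate index appears contains, after its last missing-free stretch, a product in which each coordinate is eventually refreshed. I expect such a product to also admit a strictly positive density: integrating positive conditional densities in the order in which coordinates are last updated yields a positive function of $y$. The $d^{-n}$-weighted mass of index sequences in which some coordinate never appears tends to $0$, like $d(1-1/d)^n$. Hence the absolutely continuous part of $T^{n}_{x}$ has mass tending to $1$ for every $x$, which gives asymptotic absolute continuity (i). Its density is positive everywhere, so $\sing(\pi\mid T^{n}_{x})=0$ for $n\ge d$, which gives (ii).

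\textbf{Main obstacle and conclusion.} The main obstacle is the claim in Step 2 that an arbitrary word covering all indices yields a strictly positive, jointly measurable density. I would prove it by induction on word length: the law after the word is a mixture over the intermediate states, where the coordinates already "finally updated" have a positive density times a kernel in the remaining coordinates. I would track the last occurrence of each index and integrate out the earlier ones using Tonelli. Joint measurability comes for free, since everything is a product and integral of measurable functions. This supplies the minorant hypothesis \hyp{P}{P}, after which Theorem \ref{thm:criterion} yields uniqueness of $\pi$ and total-variation convergence for every initial distribution $\mu$.
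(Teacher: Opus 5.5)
Your proposal is correct and follows the paper's proof (Corollary \ref{cor:gibbs}) essentially step for step. Invariance comes from reversibility of each $P_{i}$. The single sweep $(1,\dots,d)$ gives $T^{d}_{x}\ge d^{-d}T_{\mathrm{sc},x}$ with an everywhere positive, jointly measurable density, which is \hyp{P}{P}. For \hyp{S}{S}, every word covering all indices is absolutely continuous (Lemma \ref{lem:gibbsrefresh}), combined with the coupon-collector bound $d(1-1/d)^{n}$. The differences are minor: the strict positivity of the density of an arbitrary covering word, which you plan to prove by induction, is true but unnecessary (absolute continuity suffices for \hyp{S}{S}, and \hyp{P}{P} already comes from the one sweep), and you are right to close through Theorem \ref{thm:criterion} rather than Theorem \ref{thm:crit}, since the latter would also require the measurable-selection hypothesis \hyp{J}{J}.
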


\noindent
No invariance hypothesis is needed here either: each $P_{i}$ is reversible with
respect to $\pi$ by construction. The words ``for every $x$'' in the displayed
hypothesis are load-bearing and cannot be relaxed to ``for $\lambda$-almost every
$x$'': see Remark \ref{rem:gibbssharp}. If in addition every $\lambda_{i}$ is atomless, the
kernel $T$ is \emph{singular} with respect to $\pi$, and no iterate of it is
absolutely continuous, so that Theorem \ref{thm:gibbsintro} is a special case of none of
Theorems \ref{thm:posdens}, \ref{thm:mhintro} and \ref{thm:eventual}
(Lemma \ref{lem:gibbssingular}). By contrast the systematic scan sampler
$P_{d}\cdots P_{1}$ \emph{does} have a strictly positive transition density and is
covered by Theorem \ref{thm:posdens} (Remark \ref{rem:gibbssystematic}); the two scans, so
close in practice, sit on opposite sides of the absolute continuity divide.

Finally, all five of Theorems \ref{thm:crit}--\ref{thm:gibbsintro} have a common consequence,
which is what is actually used when a chain is run.

\begin{introcor}[Convergence of ergodic averages]\label{cor:llnintro}
In each of Theorems \ref{thm:crit}, \ref{thm:posdens}, \ref{thm:mhintro}, \ref{thm:eventual} and \ref{thm:gibbsintro}, let
$(X_{k})_{k \in \Nz}$ be the Markov chain with kernel $T$ started in an arbitrary
initial distribution $\mu$, and let $\Prb_{\mu}$ denote its law on the space of
trajectories. Then for every measurable map $f : \XX \to \R$ with
$\int_{\XX} |f| \dd\pi < \infty$,
\[
  \frac{1}{n}\sum_{k=0}^{n-1} f(X_{k}) \longrightarrow \int_{\XX} f \dd\pi
  \qquad (n \to \infty), \quad \Prb_{\mu}\text{-almost surely.}
\]
If $f$ is bounded, the convergence holds in addition in $L^{r}(\Prb_{\mu})$ for every
number $r \in [1,\infty)$. If $\mu \le M\pi$ for some number $M \in [1,\infty)$ ---
for instance if $\mu = \pi$ --- then, for every number $r \in [1,\infty)$, the
convergence holds in $L^{r}(\Prb_{\mu})$ for every $f$ with
$\int_{\XX} |f|^{r} \dd\pi < \infty$.

In Theorems \ref{thm:posdens}--\ref{thm:gibbsintro}, where $\pi$ is presented by a
density $p$ with respect to a reference measure $\lambda$, the integrals against $\pi$
read $\int_{\XX} f(x)\,p(x)\dd\lambda(x)$ and the condition $\mu \le M\pi$ says that
$\mu$ has a $\lambda$-density bounded by $Mp$. Theorem \ref{thm:crit} names no
reference measure, and the statement is the one displayed above.
\end{introcor}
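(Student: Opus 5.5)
The plan is to derive everything from one fact shared by all five theorems: for every starting point $x$ and every initial law $\mu$, $\tv{T^{n}\circ\mu-\pi}\to 0$. Once Theorems \ref{thm:posdens}--\ref{thm:gibbsintro} are deduced from Theorem \ref{thm:crit}, via Theorem \ref{thm:criterion}, they all deliver this conclusion. So it suffices to prove the corollary under the conclusion of Theorem \ref{thm:crit} alone. The argument has three steps:
\begin{enumerate}[label=(\alph*),nosep]
  \item show that $\Prb_{\pi}$ is ergodic for the shift $\theta$ on trajectory space;
  \item apply Birkhoff's theorem under $\Prb_{\pi}$;
  \item transfer the almost sure statement from $\Prb_{\pi}$ to an arbitrary $\Prb_{\mu}$.
\end{enumerate}

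\emph{Step (a), ergodicity.} Let $C$ be a shift-invariant event, $\theta^{-1}C=C$, and set $h(x):=\Prb_{x}(C)$. This function is measurable and bounded, and by the Markov property and invariance of $C$ it is harmonic: $Th=h$. Iterating gives $h(x)=(T^{n}h)(x)=\int h\dd T^{n}_{x}$. Total variation convergence of $T^{n}_{x}$ to $\pi$ then gives $h(x)=\int h\dd\pi=:c$ for \emph{every} $x$, so $h$ is constant. By Lévy's martingale convergence theorem, $\ind_{C}=\lim_{n}\Prb_{\mu}(C\mid X_{0},\dots,X_{n})=\lim_{n}h(X_{n})=c$ holds $\Prb_{\mu}$-a.s. for any $\mu$, using the Markov property and shift invariance in the middle equality. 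Hence $c\in\{0,1\}$, and in particular $\Prb_{\pi}(C)\in\{0,1\}$. Since $\Prb_{\pi}$ is $\theta$-invariant by invariance of $\pi$, Birkhoff's ergodic theorem applied to $F(\omega)=f(\omega_{0})$ gives almost sure and $L^{1}(\Prb_{\pi})$ convergence of the averages to $\int f\dd\pi$. More generally, the $L^{r}$ mean ergodic theorem gives $L^{r}(\Prb_{\pi})$ convergence whenever $\int|f|^{r}\dd\pi<\infty$.

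\emph{Step (c), transfer to arbitrary $\mu$.} Let $C$ be the set of trajectories on which $\frac1n\sum_{k<n}f(\omega_{k})\to\int f\dd\pi$. Because $f$ is real-valued, shifting a trajectory changes each average only by terms $O(1/n)\cdot|f(\omega_{0})|$ together with a reindexing, so $\theta^{-1}C=C$. By step (a) the function $x\mapsto\Prb_{x}(C)$ is the constant $c$, and $c=\Prb_{\pi}(C)=1$ by step (b). Hence $\Prb_{\mu}(C)=\int\Prb_{x}(C)\,\mu(\dd x)=1$ for every $\mu$. This constancy is where pointwise total variation convergence for every starting point, not merely $\pi$-a.e., is essential.

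\emph{The $L^{r}$ statements.} For bounded $f$, the averages are bounded by $\sup|f|$, and dominated convergence upgrades almost sure convergence to $L^{r}(\Prb_{\mu})$ convergence. If $\mu\le M\pi$, then $\Prb_{\mu}\le M\,\Prb_{\pi}$ as measures on trajectory space, so
\[
\Ex_{\mu}\bigl|\tfrac1n\textstyle\sum_{k<n} f(X_{k})-\int f\dd\pi\bigr|^{r}\le M\,\Ex_{\pi}\bigl|\tfrac1n\sum_{k<n} f(X_{k})-\int f\dd\pi\bigr|^{r}\to0 .
\]

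I expect the main obstacle to be step (a), in two respects. First, one must check the measurability of $x\mapsto\Prb_{x}(C)$ and the identity $\Prb_{\mu}(C\mid\mathcal F_{n})=h(X_{n})$ on a bare measurable space. Both should follow from the Ionescu-Tulcea construction and a monotone class argument, with no structure on $\XX$ needed. Second, one must verify carefully that $C$ is exactly shift invariant rather than only invariant modulo null sets, which requires finiteness of $f$ everywhere.
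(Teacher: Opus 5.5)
Your proof is correct, but it reaches ergodicity and the passage to an arbitrary $\mu$ by a different mechanism from the paper's.

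\emph{The paper's route.} Proposition~\ref{prop:ergodic} proves ergodicity through a mixing estimate, $\Prb_{\pi}(A\cap\vartheta^{-n}B)\to\Prb_{\pi}(A)\Prb_{\pi}(B)$ for $A$ in the cylinder algebra. It then approximates an invariant $B$ by such an $A$. After Birkhoff, the paper only knows $\Prb_{x}(\Gamma)=1$ for $\pi$-almost every $x$. Corollary~\ref{cor:lln} reaches an arbitrary $\mu$ through Remark~\ref{rem:aposteriori}: it builds $\sigma\le T^{n_{1}}\circ\mu$ with $\sigma\ll\pi$ and $\sigma(\XX)\ge1-\varepsilon$. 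It then uses the Markov property at time $n_{1}$ together with the strict invariance of $\Gamma$.

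\emph{Your route.} You observe that $x\mapsto\Prb_{x}(C)$ is bounded harmonic for every invariant $C$. Convergence $\tv{T^{n}_{x}-\pi}\to0$ at every $x$ then forces $\Prb_{x}(C)=\Prb_{\pi}(C)$ at every point. Lévy's upward theorem gives the zero--one law. The transfer becomes immediate: $\Prb_{\mu}(\Gamma)=\int\Prb_{x}(\Gamma)\,\mu(\dd x)=1$. One computation serves both for ergodicity and for the transfer, which makes your argument shorter.

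\emph{Trade-offs.} You pay with martingale convergence, which the paper invokes only in Proposition~\ref{prop:jointdensity}, in place of the elementary approximation of an invariant set by cylinder sets. The paper's ergodicity step needs convergence only at $\pi$-almost every point, since it integrates against $\pi$. Nevertheless, as Remark~\ref{rem:ergodichyp} notes, both arguments ultimately consume nothing beyond pointwise total variation convergence.

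\emph{Cesàro invariance.} Your reindexing identity $\frac1n\sum_{k<n}f(\omega_{k+1})=\frac{n+1}{n}\cdot\frac1{n+1}\sum_{k\le n}f(\omega_{k})-\frac{f(\omega_{0})}{n}$ is correct. It yields strict shift invariance of the convergence set in both directions at once. Lemma~\ref{lem:cesaroinvariant} instead has to deduce $f(X_{n})/n\to0$ separately.
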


\noindent
This is the statement that justifies estimating $\int f \dd\pi$ by an average
along a single trajectory of the Markov chain, and it is what a run of the algorithm actually produces.
It is not a consequence of the convergence of the laws $T^{n}\circ\mu$ alone. Its proof
combines it with Birkhoff's pointwise ergodic
theorem, and is given in Section \ref{sec:ergodic}, where Corollary \ref{cor:llnintro}
appears as Corollary \ref{cor:lln} and Birkhoff's theorem is quoted as
Theorem \ref{thm:birkhoff}. Apart from the standard measure theory listed in
Remark \ref{rem:nostructure}, that section is the only part of the note which relies
on results proved elsewhere.

\subsection{The idea of the proof and the plan}

\paragraph{The four-line proof, and why it is not available.}
It helps to begin with the case in which everything is easy. Suppose the transition density were
bounded below by a constant, $t(y \mid x) \ge \varepsilon_{0} > 0$ for all $x$ and
$y$. Then, whatever the initial law $\mu$, the law after one step satisfies
$T\circ\mu \ge \varepsilon_{0}\pi$: a fixed proportion $\varepsilon_{0}$ of the mass
has been redistributed according to $\pi$ and has forgotten where it came from. Two
chains started differently share that proportion, so their laws can disagree by at
most $1-\varepsilon_{0}$; iterating this argument leads to $\tv{T^{n}\circ\mu - \pi} \le (1-\varepsilon_{0})^{n}$. This is Doeblin's argument
(Remark \ref{rem:uniform}). It is four lines long and it even produces a rate.

On a \emph{finite} state space this is always the situation: a strictly positive
matrix has finitely many entries and therefore a smallest one, so the fundamental
theorem of Markov chains never leaves the easy case. On a general space the
hypothesis ``$t(y\mid x) > 0$ at every point'' gives no such $\varepsilon_{0}$ at
all --- the infimum of a strictly positive function on an infinite set is usually
zero --- and no substitute for it can be manufactured: Example \ref{ex:norate} is a
chain with an everywhere strictly positive density for which
$\sup_{x}\tv{T^{n}_{x} - \pi} = 1$ for every $n$. Everything below is a way
of recovering the \emph{conclusion} of Doeblin's argument, though necessarily not its
rate, from a hypothesis that supplies no $\varepsilon_{0}$.

\paragraph{Three facts.}
Fix two initial laws $\mu$ and $\nu$ and write
$\Delta_{n} := \tv{T^{n}\circ\mu - T^{n}\circ\nu}$ for the discrepancy after $n$
steps. The proof rests on three statements, of which only the second uses eventual
domination of the target.

\begin{enumerate}[label=\textup{(\arabic*)},leftmargin=2.6em,itemsep=4pt,topsep=4pt]

\item \emph{Nothing ever gets worse.} A Markov kernel averages, and averaging cannot
drive two laws apart: $\Delta_{n}$ is non-increasing
(Lemma \ref{lem:contraction}). So the limit $d := \lim_{n}\Delta_{n}$ exists, and the
entire problem is to rule out $d > 0$ --- to exclude that the two laws stop
approaching each other while still a positive distance apart.

\item \emph{On laws that are not too concentrated, one step gains a definite amount.}
Call a probability measure \emph{$M$-flat}\footnote{``$M$-flat'' will in the formal statements be called ``dominated by $M\pi$''.} if $\mu \le M\pi$. An $M$-flat law cannot
hide in a small set: it must put at least half of its mass on a set of $\pi$-measure
at least $1/(2M)$. Domination of the target, in turn, says that from a set of positive
$\pi$-measure a definite amount of mass reaches a definite set of endpoints
(Lemma \ref{lem:positivity}). Combining the two, the images of any two $M$-flat laws
after $N$ steps have a \emph{common minorant} of mass $\gamma > 0$ --- a piece of
mass that both of them carry --- and two measures with a common minorant of mass
$\gamma$ are at distance at most $1-\gamma$ (Lemma \ref{lem:minorant}). This is
Proposition \ref{prop:minorisation}, and it is the exact analogue of Doeblin's
$\varepsilon_{0}$: a lower bound valid not for all laws, but for all laws in a fixed
class. The price is that $\gamma$ shrinks as $M$ grows --- the larger $M$ is, the more
concentrated the laws in the class are allowed to be, and the less can be guaranteed
about them --- so the bound is worthless unless $M$ can be held fixed as the chain
runs.

\item \emph{The class is never left, so that $M$ can indeed be held fixed.}
$\mu \le M\pi$ implies $T\circ\mu \le M\pi$, because $\pi$ is invariant
(Lemma \ref{lem:domination}). The constant $M$, and with it $\gamma$, therefore does
not deteriorate with the number of steps. This is the only place where invariance of
$\pi$ is used in the core argument, and it is what allows the argument to close.

\end{enumerate}

\paragraph{Why the three facts do not simply multiply.}
There is one genuine obstacle left, and it is worth naming, because it is the only
step of the proof that is not a routine transcription of the finite case. Fact (2)
is an \emph{absolute} bound: it says that the distance is at most $1-\gamma$, not
that the distance gets multiplied by $1-\gamma$. Applied directly to the pair
$(T^{n}\circ\mu, T^{n}\circ\nu)$ it yields $\Delta_{n} \le 1-\gamma$ for large $n$
and then says nothing further; once the two laws are closer than $1-\gamma$, the
statement is vacuous.

The device that converts an absolute bound into a proportional one is a
renormalisation. The discrepancy after $n$ steps is a signed measure $h_{n}$ of total
size $\Delta_{n}$; divide it by its own size. Its positive and negative parts then
become \emph{probability} measures, to which fact (2) applies afresh, and scaling
back turns the absolute bound $1-\gamma$ into the proportional bound
$\Delta_{n+N} \le (1-\gamma)\Delta_{n}$.

Dividing by $\Delta_{n}$, however, multiplies the flatness constant by
$1/\Delta_{n}$: the undivided parts are $M$-flat, the divided ones only
$(M/\Delta_{n})$-flat. If $\Delta_{n}$ were allowed to tend to $0$, the constant would
blow up, $\gamma$ would degenerate, and the argument would collapse. This is exactly
where the assumed $d > 0$ earns its keep: it converts the useless bound
$\Delta_{n} > 0$ into the uniform bound $\Delta_{n} \ge d$, so that one single
constant $M/d$, and hence one single $\gamma$, serves all $n$ at once. The distance
then falls by the factor $1-\gamma$ over every block of $N$ steps and therefore tends
to $0$ --- contradicting $d > 0$. Hence $d = 0$. This is
Proposition \ref{thm:dominated}, and it explains why the method yields no rate: the
constant $\gamma$ that drives the decay is chosen only after the unknown limit $d$
has been named.

\paragraph{From flat laws to arbitrary ones.}
What has been proved so far concerns \emph{flat} initial laws, and the law one
actually starts from need not be flat --- $\delta_{x}$ is not, when $\pi$ is
atomless. Here asymptotic absolute continuity enters, and in a completely different
way from the domination hypothesis: it says that after enough steps all
but an arbitrarily small proportion of the law has a density with respect to $\pi$
(under strict positivity this already happens after one step,
Lemma \ref{lem:onestep}), and truncating that density at a high
level turns it into a flat one, at the cost of an arbitrarily small error in total
variation (Lemma \ref{lem:truncation}). Since errors do not grow along the dynamics,
by fact (1), that small error remains small forever, and the convergence for flat
laws transfers to all of them.

These two roles --- supplying the common minorant, and supplying the passage from an
arbitrary initial law to a flat one --- are the only things the argument ever asks
for, and they are exactly the two halves of the criterion. The second role is played
by \hyp{S}{S}, asymptotic absolute continuity: the part of $T^{n}_{x}$ that is
singular with respect to $\pi$ has vanishing mass, for every starting point $x$. The
first is played by \hyp{P}{P}, a strictly positive minorant density after finitely
many steps, which is asymptotic domination of the target together with the requirement
that a witnessing density be exhibited, jointly measurably in $(x,y)$; under \hyp{J}{J}
that requirement is free and the two are the same hypothesis
(Proposition \ref{prop:LimpliesP}). It is \hyp{P}{P} that is assumed below, because it
asks for no measurable selection at all --- in three of the five settings the minorant
it is given is one that no selection would have produced
(Remark \ref{rem:PvsJ}).

\paragraph{In one sentence.}

\begin{quote}
Doeblin's argument for the convergence of the Markov chain needs a lower bound on the
transition density that is valid uniformly on the whole space; asymptotic domination
of the target supplies instead a bound that is uniform only over a restricted class of
initial laws, that class is preserved by the dynamics because $\pi$ is invariant, and
asymptotic absolute continuity is what carries an arbitrary initial law into it.
\end{quote}

The argument is organised around a single convergence theorem, proved once in
Section \ref{sec:core} from \hyp{P}{P} and \hyp{S}{S}
(Theorem \ref{thm:criterion}). That is Theorem \ref{thm:crit} with its second
hypothesis, asymptotic domination of the target, replaced by the jointly measurable minorant density
that a measurable Lebesgue decomposition would otherwise have had to produce; the two
are equivalent under \hyp{J}{J} (Proposition \ref{prop:LimpliesP}), and the form
assumed here is the one that needs no measurable selection at all.
Everything in that section uses only that
$\pi$ is an invariant probability measure of a Markov kernel $T$. Sections
\ref{sec:first}--\ref{sec:general} then verify the two hypotheses under three
successively more general conditions on the transition density, and
Section \ref{sec:applications} verifies them for two algorithms to which none of
those three conditions applies. Nothing after Section \ref{sec:core} does anything
else.

Section \ref{sec:core} is arranged so that the convergence theorem comes as early as
possible. It is first proved, in Subsection \ref{ssec:criterion}, from two properties
of the action of $T$ on measures: \emph{uniform overlap} \eqref{prop:U}, which is
what fact (2) provides, and \emph{regularisation} \eqref{prop:R}, which is what the
last paragraph provides. Only two lemmas are needed for that, and no density occurs
in it. The remaining subsections then trade those two properties for \hyp{P}{P} and
\hyp{S}{S}: the first implies \eqref{prop:U} (Lemma \ref{lem:PimpliesU}) and the
second is \emph{equivalent} to \eqref{prop:R} (Lemma \ref{lem:SiffR}), while both are
far easier to check, being statements about the chain started at a point rather than
about all initial laws. Assumption \hyp{S}{S} is asymptotic absolute continuity, in
the form in which the criterion of Theorem \ref{thm:crit} states it.

Subsection \ref{ssec:singmass} states the Lebesgue decomposition
(Theorem \ref{thm:lebesgue}), defines the singular mass, and proves the handful of
elementary facts about both that are used later; it is what \hyp{S}{S} is stated in terms of, and
where the vocabulary of Theorem \ref{thm:crit} is fixed. Subsection \ref{ssec:L}
replaces \hyp{P}{P} by the hypothesis \hyp{L}{L} that
$\sing(\pi\mid T^{n}_{x}) \to 0$ for $\aeevery$ starting point $x$ --- a statement with
no density in it at all --- and shows the two to be equivalent as soon as the
absolutely continuous parts of the iterates have jointly measurable densities, which is
the hypothesis \hyp{J}{J} stated there (Lemma \ref{lem:PimpliesL} and
Proposition \ref{prop:LimpliesP}); countable generation of $\XXA$, the hypothesis
\hyp{C}{C}, is a sufficient condition for \hyp{J}{J} and enters only through it
(Proposition \ref{prop:jointdensity}). Subsection \ref{ssec:main}, the last of the
section, assembles the pieces into Theorem \ref{thm:asympequiv}, which is
Theorem \ref{thm:crit} above. Only its sufficiency half restates what
Theorem \ref{thm:criterion} already gives; the converse is new, and is one line from
Corollary \ref{cor:singtv}.

\medskip
\noindent
{\small
\begin{tabular}{@{}>{\raggedright\arraybackslash}p{0.44\textwidth}>{\raggedright\arraybackslash}p{0.50\textwidth}@{}}
\toprule
Statement & Depends on \\
\midrule
Lemma \ref{lem:contraction} (the dynamics is a contraction) & $T$ is a Markov kernel \\
Lemma \ref{lem:domination} (domination by $M\pi$ is preserved) & invariance of $\pi$ \\
Proposition \ref{thm:dominated} (convergence for dominated pairs) & Lemmas \ref{lem:contraction}, \ref{lem:domination}, \eqref{prop:U} \\
Lemma \ref{lem:pointwise} (points suffice for laws) & Definition \ref{def:tv}, Fatou \\
Theorem \ref{thm:abstract} (convergence in general) & Lemma \ref{lem:contraction}, Proposition \ref{thm:dominated}, \eqref{prop:R}, Lemma \ref{lem:pointwise} \\
\midrule
Lemma \ref{lem:minorant} (a common minorant bounds the distance) & nothing \\
Lemma \ref{lem:positivity} (quantitative positivity) & nothing (pure measure theory) \\
Proposition \ref{prop:minorisation} (uniform overlap after $N$ steps) & Lemmas \ref{lem:minorant}, \ref{lem:positivity} \\
\midrule
Theorem \ref{thm:lebesgue} (Lebesgue decomposition) & Radon--Nikodym \\
Proposition \ref{prop:singforms} (six descriptions of the singular mass) & Theorem \ref{thm:lebesgue}, Definition \ref{def:tv} \\
Corollary \ref{cor:singtv} (\textbf{necessity}: both singular masses $\le$ the distance) & Definition \ref{def:tv}, \eqref{eq:singsg} \\
Lemma \ref{lem:monotone} (propagation and monotonicity of the singular masses) & Theorem \ref{thm:lebesgue}, Proposition \ref{prop:singforms}, invariance of $\pi$ \\
Definition \ref{def:lebesgue} (the density $g_{\alpha}$, and \eqref{eq:acpositive}) & Radon--Nikodym, Proposition \ref{prop:singforms} \\
\midrule
Lemma \ref{lem:PimpliesU} (\hyp{P}{P} $\Rightarrow$ \eqref{prop:U}) & Proposition \ref{prop:minorisation} \\
Lemma \ref{lem:truncation} (truncation of a density) & Lemma \ref{lem:tvformulas} \\
Lemma \ref{lem:SiffR} (\hyp{S}{S} $\Leftrightarrow$ \eqref{prop:R}) & Lemmas \ref{lem:monotone}(iv), \ref{lem:truncation} \\
\textbf{Theorem \ref{thm:criterion}} (\textbf{the convergence theorem}) & Lemmas \ref{lem:PimpliesU}, \ref{lem:SiffR}, Theorem \ref{thm:abstract} \\
Remark \ref{rem:aposteriori} (\eqref{prop:R} and \hyp{S}{S} for every law) & Theorem \ref{thm:abstract}, Lemma \ref{lem:SiffR} \\
Lemma \ref{lem:uniformminorant} (a uniform minorant gives \hyp{S}{S}) & Lemmas \ref{lem:monotone}(i), \ref{lem:monotone}(iv) \\
\midrule
Lemma \ref{lem:PimpliesL} (\hyp{P}{P} $\Rightarrow$ \hyp{L}{L}) & Tonelli, Borel--Cantelli \\
Proposition \ref{prop:jointdensity} (\hyp{C}{C} $\Rightarrow$ \hyp{J}{J}) & \hyp{C}{C}, L\'evy's upward theorem \\
Proposition \ref{prop:LimpliesP} (\hyp{L}{L} $\Rightarrow$ \hyp{P}{P} under \hyp{J}{J}) & \hyp{J}{J}, Tonelli, Theorem \ref{thm:lebesgue}, \eqref{eq:acpositive}, Lemma \ref{lem:monotone}(v) \\
\textbf{Theorem \ref{thm:asympequiv}} (\textbf{the characterisation}) & \hyp{J}{J}, Proposition \ref{prop:LimpliesP}, Theorem \ref{thm:criterion}, Corollary \ref{cor:singtv} \\
\midrule
Theorem \ref{thm:main} (strictly positive density) & verify \hyp{P}{P}, \hyp{S}{S} with $N=1$ \\
Theorem \ref{thm:mh} (strictly positive density plus an atom) & verify \hyp{P}{P}, \hyp{S}{S} with $N=1$ \\
Theorem \ref{thm:general} (strictly positive density after $n(x)$ steps) & verify \hyp{P}{P}, \hyp{S}{S} with $N=N(\varepsilon)$ \\
Corollary \ref{cor:gibbs} (the Gibbs sampler) & Lemma \ref{lem:uniformminorant}, Theorem \ref{thm:criterion} \\
Corollary \ref{cor:tempering} (parallel tempering) & Lemma \ref{lem:uniformminorant}, Theorem \ref{thm:criterion} \\
\midrule
Corollary \ref{cor:lln} (ergodic averages) & Theorem \ref{thm:criterion}, Remark \ref{rem:aposteriori}, Theorem \ref{thm:birkhoff} \\
\bottomrule
\end{tabular}
}
\medskip

Each of Sections \ref{sec:first}--\ref{sec:general} adds one or two short lemmas of
its own, stated where they are first needed, so that a reader interested only in the
first theorem does not meet them. No topology on the state space appears anywhere after
Section \ref{sec:setting}, and no coupling construction is needed.

\paragraph{Sources, and what is and is not claimed.}
\emph{None of the theorems of Subsection \ref{ssec:applied} is new, and none is
claimed to be.} What is offered is a route to them. This paragraph says which
classical statement each of the four applied theorems is, and states explicitly what the route, rather than the destination, is
supposed to supply. Pointers to the specific places where a result of these notes, or
the machinery it replaces, can be found are given again as they arise.

\smallskip
\noindent\emph{(i) Which theorem is which.}
Theorem \ref{thm:main} is the discrete-time form of what is usually called
\emph{Doob's theorem}, after Doob \cite{Doob48}; see Da Prato and Zabczyk
\cite[Theorem 4.2.1]{DaPratoZabczyk96}, where it is used in exactly this way to
deduce convergence of transition probabilities to the unique invariant measure, and
Kulik and Scheutzow \cite{KulikScheutzow15} for a coupling proof.
Theorem \ref{thm:general} is contained in \cite[Theorem 1]{KulikScheutzow15}: its
hypothesis \hyp{E}{E} gives, through the self-improvement
Corollary \ref{cor:selfimprovement}, a single number $n$ for any prescribed pair
$x,y$ with $T^{n}_{x} \sim \pi \sim T^{n}_{y}$, which is
exactly the hypothesis of that theorem. It is in turn contained in the criterion of
Scheutzow and Schindler \cite{ScheutzowSchindler21}, which is \emph{necessary} as
well as sufficient; Remark \ref{rem:necessary} places \hyp{E}{E} between the two
precisely. Theorem \ref{thm:mh} and Corollary \ref{cor:mh} are the standard
convergence statements for the Metropolis--Hastings algorithm
\cite{Metropolis53,Hastings70}, due in this measure-theoretic generality to Tierney
\cite{Tierney94}; and Corollary \ref{cor:gibbs} is the convergence theorem for the
Gibbs sampler of Roberts and Smith \cite{RobertsSmith94}, Tierney \cite{Tierney94}
and Chan \cite{Chan93}. See the survey of Roberts and Rosenthal
\cite{RobertsRosenthal04}. Theorem \ref{thm:crit}, the criterion of which these four
are instances, is not claimed to be new either. It is stated
in terms of the Lebesgue decomposition of $T^{n}_{x}$ with respect to $\pi$;
Remark \ref{rem:necessary} places the closely related hypothesis \hyp{E}{E} exactly
within the classification of Scheutzow and Schindler \cite{ScheutzowSchindler21},
whose criteria are necessary as well as sufficient but are stated in terms of pairs of
starting points instead. All four are special cases of the general convergence
theory for $\psi$-irreducible aperiodic Harris chains developed by Orey
\cite{Orey71}, Nummelin \cite{Nummelin84}, Meyn and Tweedie \cite{MeynTweedie09} and
Douc, Moulines, Priouret and Soulier \cite{Douc18}, for which see also Kulik
\cite{Kulik18} and Hern\'andez-Lerma and Lasserre \cite{HernandezLermaLasserre03}.

\smallskip
\noindent\emph{(ii) Two neighbouring literatures.}
The $L^{1}$ sector of Theorem \ref{thm:main} is a theorem of the Lasota school: an
integral Markov operator whose kernel is almost everywhere strictly positive and
which possesses a stationary density is \emph{asymptotically stable}. See Lasota and
Mackey \cite{LasotaMackey94}, in the chapters on the asymptotic stability of Markov
operators, and Rudnicki \cite{Rudnicki95}. Restricted to initial laws that
\emph{have} a density, Theorem \ref{thm:main} is that statement; the extension to an
arbitrary initial law is Lemma \ref{lem:onestep}, one line. The relative of
Theorem \ref{thm:mh} is the \emph{partially integral} circle of results: an operator
which merely dominates a nontrivial integral part, and has a nontrivial fixed point,
is asymptotically stable --- proved for continuous-time Markov semigroups on $L^{1}$
by Pich\'or and Rudnicki \cite[Theorem 1]{PichorRudnicki00}. Assumption \hyp{M}{M} is
a discrete-time hypothesis of exactly that shape, the integral part being $k$ and the
remainder the atom. Finally, the renormalisation device of
Proposition \ref{thm:dominated} --- dividing a signed measure by its own total
variation, so that an absolute bound becomes a proportional one --- is the engine of
the \emph{zero-two law} for Markov operators of Derriennic \cite{Derriennic76} and of
the operator-theoretic tradition surveyed by Foguel \cite{Foguel69}, in which the
behaviour of the singular part under a Markov operator (here
Lemma \ref{lem:monotone}(iv)) is a standard tool.

\smallskip
\noindent\emph{(iii) What the route is claimed to offer.} Three things, none of them
a new theorem.
\begin{enumerate}[label=\textup{(\arabic*)},nosep,topsep=3pt,leftmargin=2.4em]
  \item \emph{One criterion, and no structure at all.} Everything is deduced from the
        single pair \hyp{P}{P}, \hyp{S}{S} (Theorem \ref{thm:criterion}) on a bare
        measurable space: no topology, no irreducibility, no aperiodicity, no
        recurrence, no small sets, no splitting and no coupling. This is a genuine and
        not merely stylistic difference from the references above. The sharp criteria
        of \cite{KulikScheutzow15} and \cite{ScheutzowSchindler21} are proved under
        the standing hypothesis that $\XXA$ be countably generated with measurable
        diagonal; the Doob-type proofs in the strong Feller setting, such as the short
        one in Hairer \cite{Hairer08}, use a topology. Moreover those criteria deliver
        convergence from every \emph{point}, and the passage from that to convergence
        from every initial \emph{distribution} runs through the measurability of
        $x \mapsto \tv{T^{n}_{x} - \pi}$, which on a bare measurable space
        can fail; Remark \ref{rem:tvnotmeasurable} is about exactly this point, and
        the argument here never needs that map. What the density-free form of the
        criterion assumes is \hyp{J}{J}, a jointly measurable density for the
        absolutely continuous part of each iterate, which is again a hypothesis on the
        kernel; countable generation enters only as a sufficient condition for it
        (Proposition \ref{prop:jointdensity}), and no convergence theorem here assumes
        it.
  \item \emph{Kernels singular at every step, treated uniformly.} The random scan
        Gibbs sampler and parallel tempering have $\sing(T^{n}_{x}\mid\pi) > 0$ for
        every $n$ and every $x$ (Lemmas \ref{lem:gibbssingular} and
        \ref{lem:temperingsingular}), so the density-based classical statements do not
        reach them directly. They are nevertheless two applications of one lemma
        (Lemma \ref{lem:uniformminorant}) here, and they come with explicit
        quantitative bounds on the singular mass.
  \item \emph{Explicit constants and explicit failure modes.} Every constant is traced
        ($\gamma = \eta\delta/8$ in Proposition \ref{prop:minorisation},
        $c=(1-\omega)(\theta^{-})^{K}$ in Corollary \ref{cor:tempering}), and each
        hypothesis is accompanied by an object showing it cannot be dropped
        (Examples \ref{ex:norate}, \ref{ex:RnotU}, \ref{ex:UnotR} and
        Remarks \ref{rem:mhsharp}, \ref{rem:gibbssharp}, \ref{rem:pcn}).
\end{enumerate}
One further source is close to these notes in spirit: Asmussen and Glynn
\cite{AsmussenGlynn11} give a short proof that an irreducible chain possessing a
transition density and a stationary distribution is automatically positive Harris
recurrent, which is why the density hypothesis used here can replace recurrence
theory. For coupling characterisations of total variation convergence, which is the
other half of the necessary-and-sufficient picture, see Thorisson
\cite{Thorisson00}.

\paragraph{Relation to the finite case.}
These notes may be read as a generalization, to arbitrary measurable spaces, of
the \emph{fundamental theorem of Markov chains} for finite state spaces --- every
irreducible aperiodic chain has a unique stationary distribution, to which it
converges from every initial distribution --- whose various classical proofs are
surveyed by Biswas \cite{Biswas22}. The precise relationship is this. Restricted to a
finite state space, Theorem \ref{thm:general} \emph{is} that theorem, its hypothesis
\hyp{E}{E} being equivalent to irreducibility together with aperiodicity
(Remark \ref{rem:finite}); and the reduction, carried out in \cite{Biswas22} and
elsewhere, of the general finite case to the case of a strictly positive transition
matrix is the finite instance of the self-improvement Corollary
\ref{cor:selfimprovement}. The argument given below is the general-state-space form
of the contraction proof, the one which tracks how far apart two copies of the chain
can be after $n$ steps.

One caveat is worth stating at the outset, because it is exactly the point at which
the general case stops being a routine transcription of the finite one. On a finite
state space a strictly positive transition matrix is automatically \emph{uniformly}
positive, since there are only finitely many entries to take a minimum over; the
finite fundamental theorem therefore always lands in the Doeblin situation of
Remark \ref{rem:uniform}, where the proof is four lines and produces a geometric
rate. On a general space, strict positivity of the density does not give any uniform
lower bound, no rate is available (Remark \ref{rem:notclaimed}), and supplying a
substitute is the actual work done in Sections \ref{sec:core}--\ref{sec:general}.

\section{The general setting}\label{sec:setting}

\subsection{Notations}\label{sec:notation}

Every object below is introduced together with its type: a number with the set of
numbers it belongs to, a map with its domain and codomain. The remaining conventions
are the following.

\begin{enumerate}[label=\textup{(N\arabic*)},ref=\textup{N\arabic*},leftmargin=3.2em,itemsep=2pt]

\item\label{conv:numbers}
$\N := \{1,2,3,\dots\}$, $\Nz := \N \cup \{0\}$, $\R$ is the set of real numbers, and
$[0,\infty] := [0,\infty) \cup \{+\infty\}$ carries its usual order and arithmetic.

\item\label{conv:maps}
A map is introduced as $F : A \to B$ with $A$ its domain and $B$ its codomain. If
$a \in A$ then $F(a) \in B$; a map and its values are never identified, so that in
particular a map into a set of numbers is not itself a number.

\item\label{conv:spaces}
Spaces are written with calligraphic capitals $\XX,\mathcal{Y},\dots$, and the
$\sigma$-algebra of a space is the letter $\mathcal{B}$ with that space as subscript:
that of $\XX$ is $\XXA$, that of a further space $\mathcal{Y}$ would be
$\mathcal{B}_{\mathcal{Y}}$. Only one space occurs below. Its points are denoted
$x,y,z$ and its measurable subsets by plain capitals $A, \YY, E, G, L$, apart from the
two distinguished families $\Xp$ (the support of the target density) and $\XX_{n}$
(the sets of \hyp{E}{E}). The only other measurable space to occur is the path
space $\XX^{\Nz}$ of Section \ref{sec:ergodic}, whose measurable subsets are written
$B$ and $\Gamma$, so that no letter denotes a subset of two different spaces. In
Subsection \ref{ssec:gibbs} the space $\XX$ is a finite product
$\mathcal{Y}_{1}\times\cdots\times\mathcal{Y}_{d}$ of measurable spaces; the factors
are the only spaces other than $\XX$ and $\XX^{\Nz}$ to occur.

\item\label{conv:measures}
$\Prob$ and $\Sign$ denote the sets of probability measures, respectively of finite
signed measures, on $(\XX,\XXA)$; each of their elements is itself a map
$\XXA \to [0,1]$, respectively $\XXA \to \R$.

\item\label{conv:order}
For measures $\alpha,\beta$ we write $\alpha \le \beta$ if $\alpha(A) \le \beta(A)$
for every $A \in \XXA$, and $M\pi$ for the measure $A \mapsto M\pi(A)$, so that
``$\mu \le M\pi$'' is a statement about maps and not about numbers. Recall that
$\alpha \le \beta$ implies $\int g \dd\alpha \le \int g \dd\beta$ for every
measurable $g : \XX \to [0,\infty]$. We write $\alpha \ll \beta$ for absolute
continuity, $\beta \gg \alpha$ for the same relation read the other way round --- so
that $\beta \gg \pi$ says that $\beta$ gives positive mass to every set that $\pi$
does --- and $\alpha \sim \beta$ for mutual absolute continuity; for
$\nu \in \Prob$ the relation $\nu \sim \pi$ says exactly that $\nu$ has a density
with respect to $\pi$ which is strictly positive $\aewhere$.

\item\label{conv:functions}
$\Fplus$ is the set of measurable maps $\XX \to [0,\infty]$ and, once $\pi$ is fixed,
$\Dens := \{ f \in \Fplus : \int_{\XX} f \dd\pi = 1 \}$ is the set of probability
densities with respect to $\pi$. The norm of $L^{1}(\pi)$ is written $\Lone{\cdot}$.

\item\label{conv:kernelnotation}
The conditioning variable stands to the right of a bar: $T(A \mid x)$ is the
probability of $A$ when the chain starts at $x$, and $t(y \mid x)$ the corresponding
density evaluated at $y$. Consistently, the action of a kernel on a measure is a
composition from the left, $\mu \mapsto T\circ\mu$, so that $T^{n}\circ\mu$ is the
law after $n$ steps and $T \circ (T^{n}\circ\mu) = T^{n+1}\circ\mu$.

\item\label{conv:Tx}
For $x \in \XX$ and $n \in \Nz$ we write
\[
  T^{n}_{x} \;:=\; T^{n}(\,\cdot\mid x) \;=\; T^{n}\circ\delta_{x} \;\in\; \Prob
\]
for the law of the chain at time $n$ started at the point $x$, and correspondingly
$T_{x} := T^{1}_{x}$. For the two parts of a Lebesgue decomposition the reference
measure is displayed in the subscript: for finite measures $\alpha$ and $\beta$ we
write
\[
  \alpha \;=\; \alpha_{\acp{\beta}} \;+\; \alpha_{\sgp{\beta}},
  \qquad
  \alpha_{\acp{\beta}} \ll \beta,
  \qquad
  \alpha_{\sgp{\beta}} \perp \beta ,
\]
so that $\alpha_{\acp{\beta}}$ is the part of $\alpha$ that $\beta$ sees and
$\alpha_{\sgp{\beta}}$ the part it does not (Definition \ref{def:lebesgue}); in
particular $T^{n}_{x,\acp{\pi}}$ and $T^{n}_{x,\sgp{\pi}}$ are the two parts of
$T^{n}_{x}$ with respect to $\pi$, while $\pi_{\sgp{T^{n}_{x}}}$ is the part of the
target that the chain does not see. Both orders occur below, which is why the
reference is displayed rather than left implicit; the mass of the second part is
written $\sing(\alpha\mid\beta) = \alpha_{\sgp{\beta}}(\XX)$
(Definition \ref{def:singmass}), and it is that mass, rather than the measure, that
appears in the hypotheses. The
bar notation of \ref{conv:kernelnotation} is kept for the kernel itself whenever a set
is named, as in $T^{n}(A \mid x)$; the parts of the decomposition and the auxiliary
kernels introduced later are written as measures, so that a set is supplied on the
right, as in $T^{n}_{x,\sgp{\pi}}(\XX)$. Densities are not abbreviated in this way: a
density such as $t(y \mid x)$ or $s(y \mid x)$ is a map of its \emph{first} argument,
and the form $t(\,\cdot\mid x)$ names that map.

\item\label{conv:representative}
Densities are genuine maps defined at every point, not equivalence classes; when a
density is produced by an integral formula, that formula is the representative used.
A statement ``$f \le M$'' is meant $\aewhere$ unless the word ``every'' appears,
whereas ``$f(y) \ge c$ for every $y \in G$'' is meant pointwise on $G$.
\end{enumerate}

{\small
\begin{longtable}{@{}ll>{\raggedright\arraybackslash}p{0.40\textwidth}@{}}
\caption{The recurring symbols of these notes, together with their types. Every
other object is introduced in the text together with its domain and codomain, or with
the set it belongs to.}\label{tab:symbols}\\
\toprule
Symbol & Type & Role \\
\midrule
\endfirsthead
\toprule
Symbol & Type & Role \\
\midrule
\endhead
$\XX$                  & a set                                        & state space \\
$\XXA$                 & a $\sigma$-algebra on $\XX$                  & measurable sets \\
$A, \YY, E, G, L$      & elements of $\XXA$, i.e.\ sets               & measurable subsets of $\XX$; $E$ is a Hahn set (Definition \ref{def:tv}) \\
$P$                    & an element of $\XXA$, i.e.\ a set            & a cell of a finite partition (Proposition \ref{prop:jointdensity}) \\
$\Xp, \XX_{n}$         & elements of $\XXA$, i.e.\ sets               & support of $\pi$; sets of \hyp{E}{E} \\
$x, y, z$              & elements of $\XX$, i.e.\ points              & states \\
$\pi$                  & a map $\XXA \to [0,1]$                       & invariant probability measure \\
$\mu, \nu, \alpha, \beta, \rho$ & maps $\XXA \to [0,1]$               & probability measures \\
$h, h_{n}$             & maps $\XXA \to \R$                           & finite signed measures \\
$\zeta, \sigma, \sigma_{n}$ & maps $\XXA \to [0,\infty)$              & finite nonnegative measures \\
$\lambda$              & a map $\XXA \to [0,\infty]$                  & $\sigma$-finite reference measure; occurs only in the statements phrased with one \\
$T$                    & a map $\XXA \times \XX \to [0,1]$            & Markov kernel \\
$T(A \mid x)$          & a number in $[0,1]$                          & transition probability \\
$T^{n}(A \mid x)$      & a number in $[0,1]$                          & $n$-step transition probability \\
$\Probac$              & a subset of $\Prob$                          & the laws with a density with respect to $\pi$ (Remark \ref{rem:singdist}) \\
$T_{x}, T^{n}_{x}$     & elements of $\Prob$                          & law of the chain started at $x$, after one step and after $n$ (\ref{conv:Tx}) \\
$\alpha_{\acp{\beta}}, \alpha_{\sgp{\beta}}$ & maps $\XXA \to [0,\infty)$        & the parts of $\alpha$ that $\beta$ does and does not see (Definition \ref{def:lebesgue}) \\
$T^{n}_{x,\acp{\pi}}, T^{n}_{x,\sgp{\pi}}$ & maps $\XXA \to [0,1]$              & the same for $\alpha := T^{n}_{x}$ and $\beta := \pi$ \\
$W_{x}$                & a map $\XXA \to [0,1]$                       & residual kernel after subtracting a minorant (Lemma \ref{lem:uniformminorant}) \\
$T \circ \mu$          & a map $\XXA \to [0,1]$                       & law after one step \\
$t$                    & a map $\XX \times \XX \to (0,\infty)$        & transition density \\
$t(y \mid x)$          & a number in $(0,\infty)$                     & value of the density \\
$t_{n}$                & a map $\XX \times \XX \to [0,\infty)$        & $n$-step density \\
$s$                    & a map $\XX \times \XX \to [0,\infty)$        & generic minorant density in \hyp{P}{P} and Lemma \ref{lem:positivity} \\
$u$                    & a map $\XX \times \XX \to [0,\infty)$        & uniform minorant density in Lemma \ref{lem:uniformminorant} \\
$k$                    & a map $\XX \times \XX \to [0,\infty)$        & density of the moving part (Section \ref{sec:mh}) \\
$a$                    & a map $\XX \times \XX \to [0,1]$             & acceptance probability (Section \ref{sec:mh}) \\
$\theta, r$            & maps $\XX \to [0,1]$                         & moving and holding probability (Section \ref{sec:mh}) \\
$\ind_{A}$             & a map $\XX \to \{0,1\}$                      & indicator map of the set $A \in \XXA$ \\
$f, g, \tilde f$       & maps $\XX \to [0,\infty]$                    & densities with respect to $\pi$ \\
$f$                    & a map $\XX \to \R$                           & the observable of Section \ref{sec:ergodic} (the only place where $f$ is not a density) \\
$g_{\alpha}$           & a map $\XX \to [0,\infty)$                   & density of the absolutely continuous part of $\alpha$ (Subsection \ref{ssec:lebesgue}) \\
$t_{n}$                & a map $\XX\times\XX \to [0,\infty)$          & jointly measurable version of $g_{T^{n}_{x}}$ (Proposition \ref{prop:jointdensity}) \\
$u, v$                 & maps $\XX \to [0,\infty)$                    & densities with respect to $\lambda = \pi + \alpha$ (Proposition \ref{prop:jointdensity}) \\
$\delta_{x}$           & a map $\XXA \to \{0,1\}$                     & the Dirac measure at the point $x$ \\
$\tv{\mu - \nu}$       & a number in $[0,1]$                          & total variation distance \\
$M, M'$                & numbers in $[1,\infty)$                      & domination constants \\
$M_{0}$                & a number in $[1,\infty)$                     & truncation level (Lemma \ref{lem:truncation}) \\
$\delta$               & a number in $(0,1)$                          & lower bound on a measure \\
$\eta$                 & a number in $(0,1]$                          & lower bound on a density \\
$\gamma$               & a number in $(0,1]$                          & overlap constant \\
$N, K, k, k_{0}$       & numbers in $\N$                              & numbers of steps, indices \\
$n, n_{1}, m, j$       & numbers in $\Nz$                             & indices \\
$\Delta$               & a map $\Nz \to [0,1]$, $n \mapsto \Delta_{n}$& distance along the chain \\
$d$                    & a number in $[0,1]$                          & limit of $\Delta$ \\
$c, c_{M}$             & numbers in $(0,1]$                           & masses \\
$u_{m}, v_{j}$         & numbers in $[0,1]$                           & error terms in Proposition \ref{prop:weaklaw} \\
$\varepsilon$          & a number in $(0,1)$                          & accuracy \\
$\Gamma$               & an element of $\XXA^{\otimes\Nz}$            & an event on path space (Section \ref{sec:ergodic}) \\
$\vartheta$            & a map $\XX^{\Nz} \to \XX^{\Nz}$              & the shift (Section \ref{sec:ergodic}) \\
$r$                    & a number in $[1,\infty)$                     & exponent of a space $L^{r}$; occurs only in Section \ref{sec:ergodic}, where the map $r$ of Section \ref{sec:mh} does not appear \\
\bottomrule
\end{longtable}
}

\subsection{Markov kernels, invariance and total variation}\label{ssec:kernels}

\begin{remark}[No structure on the state space is assumed]\label{rem:nostructure}
Throughout, $(\XX,\XXA)$ is nothing but a set together with a $\sigma$-algebra on it.
No topology, no metric, no countability and no standard Borel hypothesis are used
anywhere in the proofs below; the tools employed are Tonelli's and Fubini's theorems,
the Hahn--Jordan decomposition of a finite signed measure, and the Radon--Nikodym
theorem, all of which are available on an arbitrary measurable space; the Lebesgue
decomposition is not quoted but proved, in Theorem \ref{thm:lebesgue}.
More than that: \emph{no hypothesis of any convergence theorem proved here is a
hypothesis on the space.} Every further hypothesis below --- \hyp{D}{D}, \hyp{M}{M},
\hyp{E}{E}, \hyp{P}{P}, \hyp{J}{J}, \hyp{L}{L}, \hyp{S}{S} --- is a condition on the
kernel $T$ and its relation to $\pi$. Five of them (all but \hyp{L}{L} and \hyp{S}{S},
which mention no density at all) ask for a density measurable in the starting point and
the endpoint jointly, which on a bare measurable space is a real request and not a
formality; but it is a request made of $T$, not of $\XXA$.

\emph{Countable generation of the $\sigma$-algebra} $\XXA$ is accordingly assumed by no
theorem. It is assumed in one place only, and there as a \emph{sufficient condition}
rather than as a hypothesis of the theory: Proposition \ref{prop:jointdensity} shows
that it implies \hyp{J}{J}, by a martingale construction, and that is the whole of its
role. (It is discussed in several other places, and in
Remark \ref{rem:tvnotmeasurable} it appears in a second capacity, as one of two
conditions under which $x \mapsto \tv{T^{n}_{x}-\pi}$ would be measurable --- neither
of which is used either.) How the \emph{jointly measurable transition densities} that we
elsewhere \emph{assume} can be constructed, when one starts instead from absolute
continuity alone, is explained in Remarks \ref{rem:jointmeasurability} and
\ref{rem:measurableXn} and carried out in Proposition \ref{prop:jointdensity};
Remark \ref{rem:PvsJ} compares that assumption with \hyp{P}{P}. A standard Borel
space --- a measurable space whose $\sigma$-algebra is the Borel $\sigma$-algebra of
some Polish topology --- is countably generated, as are $\R^{d}$ with its Borel sets
and countable products of such spaces, so \hyp{C}{C}, and with it \hyp{J}{J}, holds
wherever the algorithms discussed here are run.
\end{remark}

\begin{definition}[Markov kernel]\label{def:kernel}
A \emph{Markov kernel} on $(\XX,\XXA)$ is a map
\[
  T : \XXA \times \XX \longrightarrow [0,1], \qquad (A,x) \longmapsto T(A \mid x),
\]
such that
\begin{enumerate}[label=(\roman*),nosep,topsep=3pt]
  \item for every $x \in \XX$, the map $T_{x} : \XXA \to [0,1]$ is a probability measure;
  \item for every $A \in \XXA$, the map $T(A \mid \cdot\,) : \XX \to [0,1]$ is measurable.
\end{enumerate}
The \emph{$n$-step kernels} are defined recursively by
$T^{0}(A \mid x) := \ind_{A}(x)$ and
\[
  T^{n+1}(A \mid x) := \int_{\XX} T^{n}(A \mid z)\, T(\dd z \mid x)
  \qquad (n \in \Nz,\ A \in \XXA,\ x \in \XX),
\]
each of which is again a Markov kernel.
\end{definition}

\begin{definition}[Action on measures]\label{def:action}
For a finite signed measure $h \in \Sign$ and a number $n \in \Nz$ we define
the finite signed measure $T^{n} \circ h \in \Sign$ by
\[
  (T^{n} \circ h)(A) := \int_{\XX} T^{n}(A \mid x)\, h(\dd x)
  \qquad (A \in \XXA),
\]
the integral of a bounded measurable map against $h$ being defined, as usual, as its
integral against $h^{+}$ minus its integral against $h^{-}$.
The map $h \mapsto T^{n}\circ h$ is a linear map $\Sign \to \Sign$ which
maps $\Prob$ into $\Prob$, and it satisfies
\[
  T \circ (T^{n} \circ h) = T^{n+1} \circ h \qquad (n \in \Nz),
\]
because for every bounded measurable map $g : \XX \to \R$ one has
$\int g \dd(T^{n}\circ h) = \int \bigl( \int g(z)\,T^{n}(\dd z \mid x) \bigr)
h(\dd x)$. That identity holds for indicator maps by the displayed formula, hence for
nonnegative simple maps by linearity, hence for every measurable
$g : \XX \to [0,\infty]$ by monotone convergence, applied to $h^{+}$ and to $h^{-}$
separately; a bounded measurable $g : \XX \to \R$ is then treated by writing
$g = g^{+}-g^{-}$.
\end{definition}

\begin{definition}[Invariant probability measure]\label{def:invariant}
A probability measure $\pi \in \Prob$ is called \emph{invariant} for $T$ if
$T \circ \pi = \pi$, that is, if $(T\circ\pi)(A) = \pi(A)$ for every $A \in \XXA$.
\end{definition}

\begin{definition}[Total variation distance]\label{def:tv}
For $h \in \Sign$ put
\[
  \tv{h} := \sup_{A \in \XXA} \bigl| h(A) \bigr| \in [0,\infty),
\]
so that $\tv{\cdot} : \Sign \to [0,\infty)$ is a map; it satisfies the triangle
inequality $\tv{h_{1}+h_{2}} \le \tv{h_{1}} + \tv{h_{2}}$, since
$|h_{1}(A)+h_{2}(A)| \le |h_{1}(A)| + |h_{2}(A)|$ for every $A$. For
$\mu,\nu \in \Prob$ the number $\tv{\mu - \nu} \in [0,1]$ is called the \emph{total
variation distance} of $\mu$ and $\nu$.

Recall the Hahn--Jordan decomposition: for $h \in \Sign$ there is a set
$E \in \XXA$, called a \emph{Hahn set for $h$}, such that $h(A) \ge 0$ for every
measurable $A \subseteq E$ and $h(A) \le 0$ for every measurable
$A \subseteq \XX \setminus E$; the two nonnegative measures
$h^{+}(A) := h(A \cap E)$ and $h^{-}(A) := -h(A \setminus E)$ then satisfy
$h = h^{+}-h^{-}$ and do not depend on the choice of $E$. If moreover $h(\XX) = 0$,
then
\[
  h^{+}(\XX) = h^{-}(\XX) = \tv{h} ,
\]
since $\sup_{A \in \XXA}h(A) = h^{+}(\XX)$ and
$\inf_{A \in \XXA}h(A) = -h^{-}(\XX)$.
\end{definition}

\noindent
When both measures have densities with respect to some common reference measure, the
distance can be computed from them. Below the reference is always the target $\pi$,
but nothing in the statement requires that.

\begin{lemma}[Total variation in terms of densities]\label{lem:tvformulas}
Let $(\XX,\XXA)$ be a measurable space, let $\lambda$ be a $\sigma$-finite measure on
it, and let $f, g : \XX \to [0,\infty]$ be measurable, finite $\lambda$-almost
everywhere, with $\int_{\XX} f \dd\lambda = \int_{\XX} g \dd\lambda = 1$. Let
$\mu,\nu \in \Prob$ be given by $\mu(A) := \int_{A} f \dd\lambda$ and
$\nu(A) := \int_{A} g \dd\lambda$ for $A \in \XXA$. Then
\[
  \tv{\mu - \nu}
  \;=\; \int_{\XX} (f-g)^{+} \dd\lambda
  \;=\; \tfrac12 \int_{\XX} |f-g| \dd\lambda ,
\]
the integrands being defined $\lambda$-almost everywhere.
\end{lemma}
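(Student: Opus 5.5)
The plan is to work with the signed measure $h := \mu - \nu$, which has $\lambda$-density $f-g$ (defined $\lambda$-almost everywhere since both are finite a.e.), and to exhibit an explicit Hahn set. Put $E := \{f > g\}$, understood on the $\lambda$-full set where both $f$ and $g$ are finite; it is measurable. For measurable $A \subseteq E$ one has $h(A) = \int_{A}(f-g)\dd\lambda \ge 0$, and for $A \subseteq \XX\setminus E$ one has $h(A) \le 0$. So $E$ is a Hahn set for $h$ in the sense of Definition \ref{def:tv}. Splitting $\int_A f\dd\lambda - \int_A g\dd\lambda$ as an integral of $f-g$ is legitimate because both integrals are finite (at most $1$).

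The conclusion then follows from the identity recalled in Definition \ref{def:tv}. Since $h(\XX) = 1-1 = 0$, we get $\tv{h} = h^{+}(\XX) = h(E) = \int_{E}(f-g)\dd\lambda = \int_{\XX}(f-g)^{+}\dd\lambda$. Symmetrically, $h^{-}(\XX) = \int_{\XX}(f-g)^{-}\dd\lambda$, and this equals $\tv{h}$ as well. Adding the two expressions gives $2\tv{h} = \int_{\XX}|f-g|\dd\lambda$, which is the second formula.

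The only point requiring care, which I expect to be the main (if minor) obstacle, is the handling of infinite values and null sets. The functions $f$ and $g$ may take the value $+\infty$ on a $\lambda$-null set, so $f-g$ is undefined there. I will redefine both to be $0$ on that null set. This changes neither $\mu$, $\nu$, nor any of the integrals. It also makes all integrands measurable real-valued maps, so that linearity of the integral applies to the integrable maps $f\ind_A$ and $g\ind_A$. The $\sigma$-finiteness of $\lambda$ is not actually needed beyond making the densities meaningful.
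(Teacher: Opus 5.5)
Your proposal is correct and follows essentially the paper's own argument. The paper also identifies $\{f>g\}$ (and $\{f<g\}$) as the sets on which $\mu(A)-\nu(A)=\int_A(f-g)\dd\lambda$ is extremal, uses $\int(f-g)\dd\lambda=0$ to equate the two extremes, and sums them; your Hahn-set formulation via Definition \ref{def:tv} is the same computation, with your explicit treatment of the null set where $f$ or $g$ is infinite a harmless extra precision the paper leaves implicit.
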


\begin{proof}
For $A \in \XXA$ the number $\mu(A) - \nu(A) = \int_{A}(f-g)\dd\lambda$ is largest for
$A := \{f > g\} \in \XXA$, with value $\int (f-g)^{+}\dd\lambda$, and smallest for
$A := \{f < g\}$, with value $-\int (f-g)^{-}\dd\lambda$. Since
$\int (f-g)\dd\lambda = 1-1 = 0$, these two numbers agree up to sign, which gives the
first equality; their sum is $\int|f-g|\dd\lambda$, which gives the second.
\end{proof}

\subsection{The standing assumptions \texorpdfstring{\hyp{A}{A}}{(A)}}\label{ssec:standing}

\begin{assumption}[Standing assumptions]\label{ass:standing}
Throughout these notes:
\begin{enumerate}[nosep,topsep=3pt,leftmargin=3em]
  \item[\hyptgt{A1}{A1}] $(\XX,\XXA)$ is a measurable space, i.e.\ $\XX$ is a set and $\XXA$ is a $\sigma$-algebra on $\XX$;
  \item[\hyptgt{A2}{A2}] $\pi \in \Prob$, i.e.\ $\pi : \XXA \to [0,1]$ is a probability measure;
  \item[\hyptgt{A3}{A3}] $T$ is a Markov kernel on $(\XX,\XXA)$ with $T \circ \pi = \pi$.
\end{enumerate}
We write \hyptgt{A}{A} for the conjunction of \hyp{A1}{A1}, \hyp{A2}{A2} and
\hyp{A3}{A3}, and use that abbreviation in every statement that assumes all three; the
three individual labels remain available, and are used where only one of them is
needed. Everything in Section \ref{sec:core} uses \hyp{A}{A} only.
\end{assumption}

\begin{remark}[A guide to the labels]\label{rem:labels}
The further hypotheses are introduced where they are first needed, and each is
labelled by a single mnemonic letter. Every occurrence of a label in these notes,
including those in section headings, is a hyperlink to the place where it is defined.
\begin{center}
\begin{tabular}{@{}lll@{}}
\toprule
Label & Reads & Introduced in \\
\midrule
\hyp{A}{A} & the standing \emph{a}ssumptions \hyp{A1}{A1}, \hyp{A2}{A2}, \hyp{A3}{A3} together & Section \ref{sec:setting} \\
\midrule
\hyp{D}{D} & a strictly positive transition \emph{d}ensity & Section \ref{sec:first} \\
\hyp{M}{M} & a \emph{m}ixed kernel: absolutely continuous part plus an atom & Section \ref{sec:mh} \\
\hyp{E}{E} & such a density \emph{e}ventually, after finitely many steps & Section \ref{sec:general} \\
\hyp{P}{P} & a strictly \emph{p}ositive minorant density after finitely many steps & Section \ref{sec:core} \\
\hyp{J}{J} & a \emph{j}ointly measurable density for $T^{n}_{x,\acp{\pi}}$ & Section \ref{sec:core} \\
\hyp{L}{L} & asymptotic domination: $\sing(\pi \mid T^{n}_{x}) \to 0$, in the \emph{L}ebesgue sense & Section \ref{sec:core} \\
\hyp{S}{S} & asymptotic absolute continuity: the \emph{s}ingular part of $T^{n}_{x}$ vanishes & Section \ref{sec:core} \\
\midrule
\eqref{prop:U} & \emph{u}niform overlap of the images of dominated laws & Section \ref{sec:core} \\
\eqref{prop:R} & \emph{r}egularisation: closeness to a dominated law & Section \ref{sec:core} \\
\midrule
\hyp{C}{C} & $\XXA$ is \emph{c}ountably generated --- not assumed, but sufficient for \hyp{J}{J} & Section \ref{sec:core} \\
\bottomrule
\end{tabular}
\end{center}
\noindent
The first is not a hypothesis but an abbreviation: \hyp{A}{A} names the standing
assumptions of Assumption \ref{ass:standing}, which are in force everywhere, and every
numbered statement below begins by recalling it. The seven in the second block are
hypotheses on the kernel, and appear as assumptions:
\hyp{D}{D}, \hyp{M}{M} and \hyp{E}{E} are the three settings, and each of them implies
the pair \hyp{P}{P}, \hyp{S}{S} from which the convergence theorem is proved
(Theorem \ref{thm:criterion}), and also the weaker pair \hyp{S}{S}, \hyp{L}{L} in
which the criterion is most simply read, under the further hypothesis \hyp{J}{J} that
supplies a measurable density (Theorem \ref{thm:asympequiv}). The two in the
third block are the two halves of that proof; they are the hypotheses of
Proposition \ref{thm:dominated} and Theorem \ref{thm:abstract} and are verified once
and for all in Subsection \ref{ssec:PS}. After that they are referred to only in
Remark \ref{rem:aposteriori}, which upgrades them from the points of $\XX$ to
arbitrary initial laws, and in Remarks \ref{rem:uniform} and \ref{rem:pcn}. The
last, \hyp{C}{C}, is the one condition on the \emph{space} that occurs in these notes,
and no convergence theorem assumes it: it is assumed only in
Proposition \ref{prop:jointdensity}, as a sufficient condition for \hyp{J}{J}
(Remark \ref{rem:nostructure}).
\end{remark}

\begin{remark}[Which reference measure the proofs use]\label{rem:whypi}
The hypotheses of Sections \ref{sec:first}--\ref{sec:general} are stated in terms of
densities with respect to $\pi$ itself, and each of the first three convergence
theorems is accompanied by a corollary stated in terms of densities with respect to a
$\sigma$-finite reference measure $\lambda$, which is the form in which they are
usually applied (Corollaries \ref{cor:reference}, \ref{cor:mhreference} and
\ref{cor:referencegen}). The two formulations are related by
$t(y \mid x) = \tau(y \mid x)/p(y)$ on $\{p > 0\}$, where $\pi = p\lambda$, and the
translation is carried out in the proof of each corollary.

One could instead fix $\lambda$ once and for all and phrase the entire development in
terms of $\lambda$-densities, the case $\lambda = \pi$ being included. We have not
done so, because the quantitative content of the argument is normalised by $\pi$ and
not by $\lambda$. What Lemma \ref{lem:positivity} and
Proposition \ref{prop:minorisation} bound from below are the $\pi$-measure of a set
of starting points, $\pi(L_{\mu}) \ge \delta$, obtained from the domination
$\mu \le M\pi$, and the $\pi$-measure of a set of endpoints, $\pi(G) \ge \tfrac12$.
Neither statement survives the replacement of $\pi$ by a $\sigma$-finite measure, for
which ``a set of large measure'' is not meaningful. Sections
\ref{sec:core}--\ref{sec:general} therefore work with $\pi$, and $\lambda$ appears in
the statements addressed to applications.
\end{remark}

\section{General Markov chain convergence criteria}\label{sec:core}

Throughout this section only \hyp{A}{A} are assumed: a measurable
space, an invariant probability measure and a Markov kernel, and nothing else.

The section falls into two halves. Subsections
\ref{ssec:contraction}--\ref{ssec:criterion} prove the convergence theorem from two
properties of the pair $(T,\pi)$, called \eqref{prop:U} and \eqref{prop:R}. They
involve no density of any kind, and rest on two short lemmas. Subsections
\ref{ssec:minorants}--\ref{ssec:PS} then replace those two properties by two
hypotheses, \hyp{P}{P} and \hyp{S}{S}, which can be checked on a given kernel and
which are what the rest of these notes verifies; that is where densities enter the
argument. Subsection \ref{ssec:examples} shows that the two hypotheses are
independent of one another and that neither may be dropped.

\subsection{Contraction and preservation of domination}\label{ssec:contraction}

Two consequences of \hyp{A}{A} are all that the convergence theorem
needs. The first holds because $T$ is a Markov kernel, the second because $\pi$ is
invariant.

\begin{lemma}[Contraction]\label{lem:contraction}
For every $h \in \Sign$ with $h(\XX) = 0$ and every number $n \in \Nz$,
\[
  \tv{T^{n} \circ h} \;\le\; \tv{h} .
\]
In particular $\tv{T^{n}\circ\mu - T^{n}\circ\nu} \le \tv{\mu-\nu}$ for all
$\mu,\nu \in \Prob$, and the map
$\Nz \to [0,1]$, $n \mapsto \tv{T^{n}\circ\mu - T^{n}\circ\nu}$, is non-increasing.
\end{lemma}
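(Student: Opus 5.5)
The plan is to reduce everything to the case $n=1$ and then iterate. The identity $T\circ(T^{n}\circ h)=T^{n+1}\circ h$ from Definition \ref{def:action} gives this reduction. The other fact needed is that $T\circ h$ again has total mass zero: $(T\circ h)(\XX)=\int T(\XX\mid x)\,h(\dd x)=h(\XX)=0$. With these two facts, induction on $n$ gives $\tv{T^{n}\circ h}\le\tv{T^{n-1}\circ h}\le\dots\le\tv{h}$, since each step only uses the one-step inequality. The case $n=0$ is trivial because $T^{0}\circ h=h$.

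For the one-step inequality, fix $h\in\Sign$ with $h(\XX)=0$ and take the Hahn--Jordan decomposition $h=h^{+}-h^{-}$ from Definition \ref{def:tv}. There $h^{+}(\XX)=h^{-}(\XX)=\tv{h}$. Fix $A\in\XXA$. By linearity of the action,
\[
  (T\circ h)(A)=\int_{\XX} T(A\mid x)\,h^{+}(\dd x)-\int_{\XX} T(A\mid x)\,h^{-}(\dd x).
\]
Because $0\le T(A\mid x)\le 1$, the first integral lies in $[0,h^{+}(\XX)]$ and the second in $[0,h^{-}(\XX)]$. The difference of two numbers in $[0,\tv{h}]$ has absolute value at most $\tv{h}$, so $|(T\circ h)(A)|\le\tv{h}$. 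Taking the supremum over $A$ gives $\tv{T\circ h}\le\tv{h}$.

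For the consequences, apply the inequality to $h:=\mu-\nu$, which has $h(\XX)=0$. Linearity gives $T^{n}\circ h=T^{n}\circ\mu-T^{n}\circ\nu$, and hence $\tv{T^{n}\circ\mu-T^{n}\circ\nu}\le\tv{\mu-\nu}$. For monotonicity in $n$, apply the one-step inequality to $h_{n}:=T^{n}\circ\mu-T^{n}\circ\nu$, again of mass zero. Since $T\circ h_{n}=h_{n+1}$, this yields $\tv{h_{n+1}}\le\tv{h_{n}}$.

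There is no real obstacle here. The only point needing care is the bound $|a-b|\le\tv{h}$ for $a,b\in[0,\tv{h}]$. It relies on $h^{+}(\XX)=h^{-}(\XX)$, which is exactly where the hypothesis $h(\XX)=0$ enters. Without it the argument would only give the weaker bound $\max\{h^{+}(\XX),h^{-}(\XX)\}$, which is not $\tv{h}$ in general.
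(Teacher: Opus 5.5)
Your proof is correct and is the paper's own argument: reduce to $n=1$ using $T\circ(T^{n}\circ h)=T^{n+1}\circ h$ and $(T\circ h)(\XX)=h(\XX)=0$, then bound $(T\circ h)(A)$ via the Jordan decomposition and $0\le T(A\mid x)\le 1$. One remark in your last paragraph is wrong, though the proof does not depend on it: under the paper's definition $\tv{h}=\sup_{A}|h(A)|$ one always has $\tv{h}=\max\{h^{+}(\XX),h^{-}(\XX)\}$, since a Hahn set and its complement attain the two extremes, so your argument in fact gives $\tv{T\circ h}\le\tv{h}$ for every $h\in\Sign$, and the hypothesis $h(\XX)=0$ is only a convenience.
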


\begin{proof}
It suffices to treat $n = 1$ and to iterate, using Definition \ref{def:action} and
$(T\circ h)(\XX) = h(\XX) = 0$. Let $h = h^{+}-h^{-}$ be
the Jordan decomposition into nonnegative measures
$h^{\pm} : \XXA \to [0,\infty)$, so that
$h^{+}(\XX) = h^{-}(\XX) = \tv{h}$ by Definition \ref{def:tv}. For $A \in \XXA$, using
$0 \le T(A\mid x) \le 1$,
\[
  (T\circ h)(A)
  = \int_{\XX} T(A \mid x)\,h^{+}(\dd x) - \int_{\XX} T(A \mid x)\,h^{-}(\dd x)
  \;\le\; h^{+}(\XX) - 0 \;=\; \tv{h},
\]
and symmetrically $(T\circ h)(A) \ge -\tv{h}$. Take the supremum over
$A \in \XXA$.
\end{proof}

\begin{lemma}[Domination by $M\pi$ is preserved]\label{lem:domination}
Assume \hyp{A}{A} and let $M \in [0,\infty)$ be a number and let $\mu : \XXA \to [0,\infty)$ be a measure
with $\mu \le M\pi$. Then $T^{n} \circ \mu \le M\pi$ for every number $n \in \Nz$.
\end{lemma}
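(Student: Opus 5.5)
The plan is to reduce to the case $n = 1$ and then induct on $n$. The case $n = 0$ is trivial, since $T^{0}\circ\mu = \mu$: indeed $(T^{0}\circ\mu)(A) = \int \ind_{A}(x)\,\mu(\dd x) = \mu(A)$.

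For $n=1$, fix $A \in \XXA$. The map $x \mapsto T(A\mid x)$ is measurable with values in $[0,1]$, so in particular it is a nonnegative measurable function. By the monotonicity recalled in \ref{conv:order}, $\mu \le M\pi$ gives
\[
  (T\circ\mu)(A) = \int_{\XX} T(A\mid x)\,\mu(\dd x) \;\le\; \int_{\XX} T(A\mid x)\,M\pi(\dd x) = M\,(T\circ\pi)(A) = M\pi(A),
\]
where the last equality is the invariance in \hyp{A3}{A3}. Since $A$ was arbitrary, $T\circ\mu \le M\pi$.

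Two small points need checking along the way. First, $\mu$ is a finite nonnegative measure rather than necessarily a probability measure; it is finite because $\mu(\XX) \le M$. Definition \ref{def:action} defines $T\circ h$ for every $h \in \Sign$, so $T\circ\mu$ is again a finite nonnegative measure and the integral above is an ordinary integral of a bounded measurable function against a finite measure. Second, the monotonicity $\int g\dd\alpha \le \int g\dd\beta$ for $\alpha \le \beta$ and $g \ge 0$ is the step that genuinely uses $\mu \le M\pi$. It follows from the set inequality for indicators, then for simple functions by linearity, and then for general $g \ge 0$ by monotone convergence.

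For the induction, assume $T^{n}\circ\mu \le M\pi$. Apply the $n=1$ case to the finite nonnegative measure $T^{n}\circ\mu$, and use $T\circ(T^{n}\circ\mu) = T^{n+1}\circ\mu$ from Definition \ref{def:action}; this gives $T^{n+1}\circ\mu \le M\pi$.

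There is no real obstacle. The only point needing care is that the $n=1$ step is stated for arbitrary finite nonnegative measures, not only probability measures, so that it can be reapplied at each stage of the induction.
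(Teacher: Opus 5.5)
Your proof is correct and follows the paper's argument: reduce to $n=1$, bound $\int T(A\mid x)\,\mu(\dd x)$ by $M\int T(A\mid x)\,\pi(\dd x)$ using the monotonicity in \ref{conv:order}, and conclude with the invariance $T\circ\pi=\pi$. You also note that the one-step bound must hold for arbitrary finite nonnegative measures, so that it can be reapplied to $T^{n}\circ\mu$; this is exactly what the paper's ``it suffices to treat $n=1$'' leaves implicit.
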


\begin{proof}
Again it suffices to treat $n=1$. Let $A \in \XXA$. The map
$T(A \mid \cdot\,) : \XX \to [0,1]$ is measurable and nonnegative, so
\ref{conv:order} and the invariance of $\pi$ give
\[
  (T \circ \mu)(A) = \int_{\XX} T(A \mid x)\,\mu(\dd x)
  \;\le\; M \int_{\XX} T(A \mid x)\, \pi(\dd x)
  \;=\; M\,(T\circ\pi)(A) \;=\; M\,\pi(A). \qedhere
\]
\end{proof}

\noindent
This is what makes the contradiction argument in Proposition \ref{thm:dominated} close:
the domination constant does not deteriorate with the number of steps.

\subsection{The two properties \texorpdfstring{\eqref{prop:U} and \eqref{prop:R}}{(U) and (R)}}\label{ssec:UR}

The convergence theorem is proved from two properties of the pair $(T,\pi)$. The
first isolates what has to be true for two \emph{dominated} laws to be brought
together; the second says that every law becomes dominated, up to an arbitrarily
small error, after finitely many steps. Both are properties to be verified, not
standing assumptions; Subsection \ref{ssec:PS} derives both from the two hypotheses
that the rest of these notes verifies, after which they are needed only in
Remarks \ref{rem:aposteriori}, \ref{rem:uniform} and \ref{rem:pcn}.

\begin{property}[Uniform overlap]\label{prop:Ulab}
For every number $M \in [1,\infty)$ there exist numbers $N \in \N$ and
$\gamma \in (0,1]$ such that
\begin{equation}
  \tv{T^{N}\circ\mu - T^{N}\circ\nu} \le 1-\gamma
  \qquad \text{for all } \mu,\nu \in \Prob \text{ with } \mu \le M\pi,\ \nu \le M\pi .
  \tag{U}\label{prop:U}
\end{equation}
\end{property}

\begin{property}[Regularisation]\label{prop:Rlab}
For every point $x \in \XX$ and every number $\varepsilon \in (0,1)$ there exist
numbers $n_{1} \in \Nz$ and $M \in [1,\infty)$ and a probability measure
$\rho \in \Prob$ with
\begin{equation}
  \rho \le M\pi
  \qquad\text{and}\qquad
  \tv{T^{n_{1}}_{x} - \rho} \le \varepsilon .
  \tag{R}\label{prop:R}
\end{equation}
\end{property}

\noindent
The two properties quantify differently, and deliberately so.

In \eqref{prop:U} the numbers $N$ and $\gamma$ must serve \emph{every} pair of laws
dominated by $M\pi$ at once, and not merely each pair separately: the constant
$\gamma$ will have to serve the renormalised measures $\alpha_{n},\beta_{n}$
constructed in the proof of Proposition \ref{thm:dominated} simultaneously for all
$n \in \Nz$, which is why \eqref{prop:U} quantifies over the class and not over a
pair. Nor can that class be replaced by the points of $\XX$: a Dirac measure
$\delta_{x}$ satisfies $\delta_{x} \le M\pi$ only if $\pi(\{x\}) \ge 1/M$, so on a
space without atoms \eqref{prop:U} read at Dirac measures would be vacuous.
Domination is precisely the hypothesis under which two laws can be forced together,
and Dirac measures are the extreme case of its failure.

In \eqref{prop:R}, by contrast, everything may depend on $x$ and on $\varepsilon$, and
only \emph{one} good time $n_{1}$ is asked for, the passage from one such time to all
later ones being Lemma \ref{lem:monotone}(iv) (Remark \ref{rem:whymonotone}). Note
also that \eqref{prop:R} is imposed only at the points of $\XX$, that is, only at the
Dirac measures, and not at every initial law: Lemma \ref{lem:pointwise} below shows
that nothing is lost by this, and Remark \ref{rem:aposteriori} shows that the
apparently stronger statement for every $\mu \in \Prob$ follows a posteriori.

\subsection{The convergence theorem under \texorpdfstring{\hyp{A}{A}, \protect\eqref{prop:U} and \protect\eqref{prop:R}}{(A), (U) and (R)}}\label{ssec:criterion}

\begin{proposition}[Convergence for dominated pairs]\label{thm:dominated}
Assume \hyp{A}{A} and \eqref{prop:U}. Let $M \in [1,\infty)$ be a
number and let $\mu,\nu \in \Prob$ satisfy $\mu \le M\pi$ and $\nu \le M\pi$. Then
\[
  \lim_{n\to\infty} \tv{T^{n}\circ\mu - T^{n}\circ\nu} = 0 .
\]
In particular $\tv{T^{n}\circ\mu - \pi} \to 0$ for every $\mu \in \Prob$ with
$\mu \le M\pi$, because $\pi \le M\pi$ and $T^{n}\circ\pi = \pi$.
\end{proposition}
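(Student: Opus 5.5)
The plan is to run the renormalisation argument described in the introduction, phrased as a proof by contradiction.

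First, set $\Delta_{n} := \tv{T^{n}\circ\mu - T^{n}\circ\nu}$. By Lemma \ref{lem:contraction} the map $n\mapsto\Delta_{n}$ is non-increasing with values in $[0,1]$, so the limit $d := \lim_{n}\Delta_{n}$ exists and $\Delta_{n}\ge d$ for all $n$. Suppose $d>0$.

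Second, for each $n$ consider the signed measure $h_{n} := T^{n}\circ\mu - T^{n}\circ\nu$. It has $h_{n}(\XX)=0$, and by Definition \ref{def:tv} its Jordan parts satisfy $h_{n}^{+}(\XX)=h_{n}^{-}(\XX)=\Delta_{n}>0$. Put $\alpha_{n} := h_{n}^{+}/\Delta_{n}$ and $\beta_{n} := h_{n}^{-}/\Delta_{n}$; these are probability measures. Domination carries over. Lemma \ref{lem:domination} gives $T^{n}\circ\mu\le M\pi$, hence
\[
h_{n}^{+}(A) = h_{n}(A\cap E) \le (T^{n}\circ\mu)(A\cap E)\le M\pi(A),
\]
with $E$ a Hahn set, and symmetrically $h_{n}^{-}\le T^{n}\circ\nu\le M\pi$. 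Therefore $\alpha_{n},\beta_{n}\le (M/\Delta_{n})\pi\le (M/d)\pi$. Apply \eqref{prop:U} with the single constant $M' := M/d\in[1,\infty)$. This yields $N$ and $\gamma$, independent of $n$, with $\tv{T^{N}\circ\alpha_{n}-T^{N}\circ\beta_{n}}\le 1-\gamma$ for every $n$.

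Third, scale back. Linearity of the action (Definition \ref{def:action}) gives $T^{N}\circ h_{n} = \Delta_{n}(T^{N}\circ\alpha_{n}-T^{N}\circ\beta_{n})$, and $T^{N}\circ h_{n} = h_{n+N}$. Hence $\Delta_{n+N}\le(1-\gamma)\Delta_{n}$ for all $n$. Iterating gives $\Delta_{kN}\le(1-\gamma)^{k}\Delta_{0}\to0$. If $\gamma=1$ this is immediate, and otherwise it holds since $\gamma>0$. This contradicts $\Delta_{n}\ge d>0$, so $d=0$.

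The "in particular" clause follows by taking $\nu:=\pi$. Here $\pi\le M\pi$ because $M\ge1$, and $T^{n}\circ\pi=\pi$ by invariance.

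The only delicate step is the second one: the domination constant of the renormalised parts must be uniform in $n$. That uniformity is exactly what the assumption $d>0$ buys, through $\Delta_{n}\ge d$. I also need to check $h_{n}^{+}\le T^{n}\circ\mu$, which follows directly from the Hahn-set formula above.
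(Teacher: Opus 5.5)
Your proof is correct and follows the paper's argument essentially step for step. It argues by contradiction on $d>0$ and renormalises the Jordan parts into probability measures dominated by $(M/d)\pi$ uniformly in $n$. It then applies \eqref{prop:U} once with $M'=M/d$ and iterates $\Delta_{n+N}\le(1-\gamma)\Delta_{n}$ to reach the contradiction.
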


\begin{proof}
Define the signed measures $h_{n} := T^{n}\circ\mu - T^{n}\circ\nu \in \Sign$
$(n \in \Nz)$ and the map
\[
  \Delta : \Nz \to [0,1], \qquad \Delta_{n} := \tv{h_{n}} .
\]
By Lemma \ref{lem:contraction} the sequence $(\Delta_{n})_{n\in\Nz}$ is
non-increasing, so the number $d := \lim_{n\to\infty}\Delta_{n} \in [0,1]$ exists.
Assume, towards a contradiction, that $d > 0$; then $\Delta_{n} \ge d > 0$ for every
$n \in \Nz$.

Let $h_{n} = h_{n}^{+} - h_{n}^{-}$ be the Jordan decomposition into nonnegative
measures $h_{n}^{\pm} : \XXA \to [0,\infty)$. Since
$h_{n}(\XX) = 0$ we have $h_{n}^{+}(\XX) = h_{n}^{-}(\XX) = \Delta_{n}$ by
Definition \ref{def:tv}, so that
\[
  \alpha_{n} := \frac{h_{n}^{+}}{\Delta_{n}} \in \Prob,
  \qquad
  \beta_{n} := \frac{h_{n}^{-}}{\Delta_{n}} \in \Prob
  \qquad (n \in \Nz)
\]
are probability measures. If $E \in \XXA$ is a Hahn set for $h_{n}$ in the sense of Definition \ref{def:tv},
then for every set $A \in \XXA$
\[
  h_{n}^{+}(A) = h_{n}(A \cap E) \le (T^{n}\circ\mu)(A \cap E) \le (T^{n}\circ\mu)(A),
\]
so $h_{n}^{+} \le T^{n}\circ\mu \le M\pi$ by Lemma \ref{lem:domination}, and likewise
$h_{n}^{-} \le T^{n}\circ\nu \le M\pi$. Hence, with the number
$M' := M/d \in [1,\infty)$,
\[
  \alpha_{n} \le M'\pi \qquad\text{and}\qquad \beta_{n} \le M'\pi
  \qquad\text{for every } n \in \Nz ,
\]
and, crucially, $M'$ does not depend on $n$.

Let $N \in \N$ and $\gamma \in (0,1]$ be the numbers provided by \eqref{prop:U} for
the number $M'$. Since $T^{N}\circ h_{n} = h_{n+N}$ by linearity, and since
$T^{N}\circ\alpha_{n}$ and $T^{N}\circ\beta_{n}$ are probability measures,
\[
  \frac{\Delta_{n+N}}{\Delta_{n}}
  = \frac{\tv{T^{N}\circ h_{n}}}{\Delta_{n}}
  = \tv{T^{N}\circ\alpha_{n} - T^{N}\circ\beta_{n}}
  \;\le\; 1-\gamma
  \qquad (n \in \Nz),
\]
that is, $\Delta_{n+N} \le (1-\gamma)\Delta_{n}$. By induction
$\Delta_{kN} \le (1-\gamma)^{k}\Delta_{0}$ for every $k \in \N$, and the right-hand
side tends to the number $0$. This contradicts $\Delta_{n} \ge d > 0$. Hence $d=0$.
\end{proof}

\begin{remark}[Why the contradiction is legitimate]
The number $M' = M/d$ is defined in terms of the limit $d$, which is unknown; but $d$
is a fixed number once $\mu$ and $\nu$ are fixed, so $\gamma$ is a fixed number as
well and the geometric decay $\Delta_{kN} \le (1-\gamma)^{k}\Delta_{0}$ is a
legitimate consequence. Only the \emph{assumption} $d>0$ is provisional. Observe that
the argument produces no rate for the original problem, since $\gamma$ is not known
before $d$ is.
\end{remark}

Property \eqref{prop:R} is imposed only at the points of $\XX$, so the theorem below
first delivers convergence from every point. The passage from there to an arbitrary
initial law is free, and in particular needs no hypothesis on $(\XX,\XXA)$; we record
it first.

\begin{lemma}[Convergence from every point suffices]\label{lem:pointwise}
Assume \hyp{A}{A}, let $\mu \in \Prob$, and suppose there is a set $\XX' \in \XXA$
with $\mu(\XX') = 1$ such that
\[
  \lim_{n\to\infty} \tv{T^{n}_{x} - \pi} = 0
  \qquad\text{for every } x \in \XX' .
\]
Then $\lim_{n\to\infty}\tv{T^{n}\circ\mu - \pi} = 0$. In particular, if the chain
converges in total variation from every starting point, then it converges from every
initial distribution.
\end{lemma}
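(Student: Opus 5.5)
The plan is to write $T^{n}\circ\mu - \pi$ as an integral over starting points and to push the supremum over sets through it with Fatou's lemma, as the dependency table suggests ("Definition \ref{def:tv}, Fatou"). This avoids any need for measurability of $x \mapsto \tv{T^{n}_{x}-\pi}$.

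Fix $A \in \XXA$. Since $\mu$ is a probability measure and $\pi$ is constant in $x$, Definition \ref{def:action} gives
\[
  (T^{n}\circ\mu)(A) - \pi(A) = \int_{\XX} \bigl( T^{n}(A\mid x) - \pi(A) \bigr)\,\mu(\dd x).
\]
To bound this uniformly in $A$, I would not integrate $\tv{T^{n}_{x}-\pi}$ directly. Instead I would use the Hahn set $E_{n}$ of the signed measure $T^{n}\circ\mu - \pi$. By Definition \ref{def:tv},
\[
  \tv{T^{n}\circ\mu-\pi} = (T^{n}\circ\mu)(E_{n}) - \pi(E_{n}) = \int_{\XX} g_{n}(x)\,\mu(\dd x),
  \qquad g_{n}(x) := T^{n}(E_{n}\mid x) - \pi(E_{n}).
\]
The map $g_{n}$ is measurable, because $E_{n}$ is a fixed set and $T^{n}(E_{n}\mid\cdot\,)$ is measurable. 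Moreover $g_{n}(x) \le \tv{T^{n}_{x}-\pi}$ for every $x$, and $g_{n}(x) \le 1$ everywhere.

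Apply the reverse Fatou lemma to the functions $1 - g_{n} \ge 0$, which is legitimate since $\mu$ is finite and the $g_{n}$ are bounded above by the integrable constant $1$. This gives
\[
  \limsup_{n} \int g_{n}\dd\mu \le \int \limsup_{n} g_{n}\dd\mu .
\]
On $\XX'$ we have $\limsup_{n} g_{n}(x) \le \lim_{n}\tv{T^{n}_{x}-\pi} = 0$, and $\mu(\XX')=1$. Hence $\limsup_{n}\tv{T^{n}\circ\mu-\pi} \le 0$, which is the first claim. The second claim is the special case $\XX' = \XX$.

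The only delicate point is the measurability issue flagged in Remark \ref{rem:tvnotmeasurable}. It is resolved by choosing the set $E_{n}$ before integrating, so that only the measurable function $x \mapsto T^{n}(E_{n}\mid x)$ is ever integrated, and the pointwise bound by the total variation is used only inside the $\limsup$ on $\XX'$. That $\limsup$ need not be measurable, so I would phrase the last step with $\limsup_{n} g_{n}$, which is measurable and is dominated pointwise on $\XX'$ by $0$.
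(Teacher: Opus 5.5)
Your proposal is correct and matches the paper's proof: the paper also fixes one Hahn set $E_{n}$ of $T^{n}\circ\mu-\pi$ per time step, integrates the measurable map $x\mapsto T^{n}(E_{n}\mid x)-\pi(E_{n})$, bounds it pointwise by $\tv{T^{n}_{x}-\pi}$, and applies Fatou's lemma to $1-\varphi_{n}$. This is what avoids any measurability of $x\mapsto\tv{T^{n}_{x}-\pi}$.
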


\begin{proof}
Let $n \in \Nz$. The signed measure $h_{n} := T^{n}\circ\mu - \pi$ lies in $\Sign$
and satisfies $h_{n}(\XX) = 0$, so by Definition \ref{def:tv} there is a Hahn set
$E_{n} \in \XXA$ for $h_{n}$ with $\tv{h_{n}} = h_{n}(E_{n})$. Fix one such $E_{n}$
for each $n$ and define
\[
  \varphi_{n} : \XX \longrightarrow [-1,1],
  \qquad
  \varphi_{n}(x) := T^{n}(E_{n} \mid x) - \pi(E_{n}) ,
\]
which is measurable by Definition \ref{def:kernel}, the set $E_{n}$ being fixed.
By Definition \ref{def:action} and $\mu(\XX) = 1$,
\[
  \tv{T^{n}\circ\mu - \pi} \;=\; h_{n}(E_{n})
  \;=\; \int_{\XX} \varphi_{n} \dd\mu .
\]
By Definition \ref{def:tv} again, $\varphi_{n}(x) \le \tv{T^{n}_{x}-\pi}$
for every $x \in \XX$, so $\limsup_{n\to\infty}\varphi_{n}(x) \le 0$ for every
$x \in \XX'$ and hence for $\mu$-almost every $x$. The maps $1 - \varphi_{n}$ are
nonnegative and measurable, so Fatou's lemma gives
\[
  \int_{\XX} \Bigl( 1 - \limsup_{n\to\infty}\varphi_{n} \Bigr) \dd\mu
  \;\le\; \liminf_{n\to\infty} \int_{\XX} (1-\varphi_{n}) \dd\mu
  \;=\; 1 - \limsup_{n\to\infty}\int_{\XX}\varphi_{n}\dd\mu ,
\]
that is,
$\limsup_{n}\int\varphi_{n}\dd\mu \le \int \limsup_{n}\varphi_{n}\dd\mu \le 0$. As
$\int\varphi_{n}\dd\mu = \tv{T^{n}\circ\mu-\pi} \ge 0$ for every $n$, the limit is
the number $0$.
\end{proof}

\begin{remark}[Why one Hahn set per step, and not dominated convergence]\label{rem:pointwise}
The expected proof of Lemma \ref{lem:pointwise} is to bound
$\tv{T^{n}\circ\mu-\pi} \le \int \tv{T^{n}_{x}-\pi}\,\mu(\dd x)$ and appeal
to dominated convergence. That proof is not available here: the integrand is a
supremum over $\XXA$ and need not be measurable
(Remarks \ref{rem:nostructure} and \ref{rem:tvnotmeasurable}). Fixing a single Hahn
set $E_{n}$ for each time $n$ replaces that supremum by one measurable map
$\varphi_{n}$, which is all Fatou's lemma needs; the supremum then enters only as a
pointwise upper bound on $\varphi_{n}$, where measurability is irrelevant. The device
costs nothing, and it is what allows \eqref{prop:R} and \hyp{S}{S} to be imposed at
the points of $\XX$ alone.
\end{remark}

\begin{theorem}[The general convergence criterion without densities]\label{thm:abstract}
Assume \hyp{A}{A}, \eqref{prop:U} and \eqref{prop:R}. Then
\[
  \lim_{n\to\infty} \tv{T^{n}\circ\mu - \pi} = 0
  \qquad\text{for every } \mu \in \Prob,
\]
and $\pi$ is the unique invariant probability measure of $T$.
\end{theorem}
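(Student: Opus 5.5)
The argument first establishes convergence from every point and then passes to arbitrary initial laws by Lemma \ref{lem:pointwise}. Fix $x \in \XX$ and $\varepsilon \in (0,1)$. Property \eqref{prop:R} supplies $n_{1} \in \Nz$, $M \in [1,\infty)$ and $\rho \in \Prob$ with $\rho \le M\pi$ and $\tv{T^{n_{1}}_{x} - \rho} \le \varepsilon$. For $n \ge n_{1}$ I will use the semigroup identity $T^{n}_{x} = T^{n-n_{1}}\circ T^{n_{1}}_{x}$ (Definition \ref{def:action}) and the triangle inequality of Definition \ref{def:tv}:
\[
  \tv{T^{n}_{x} - \pi}
  \;\le\; \tv{T^{n-n_{1}}\circ\bigl(T^{n_{1}}_{x} - \rho\bigr)}
  + \tv{T^{n-n_{1}}\circ\rho - \pi} .
\]
The first term is at most $\tv{T^{n_{1}}_{x} - \rho} \le \varepsilon$ by Lemma \ref{lem:contraction}, since the signed measure $T^{n_{1}}_{x} - \rho$ has total mass $0$. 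The second term tends to $0$ as $n \to \infty$ by Proposition \ref{thm:dominated}, which applies because $\rho \le M\pi$. Hence $\limsup_{n}\tv{T^{n}_{x}-\pi} \le \varepsilon$, and since $\varepsilon$ was arbitrary, $\tv{T^{n}_{x}-\pi} \to 0$ for every $x \in \XX$.

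Lemma \ref{lem:pointwise} with $\XX' := \XX$ then gives $\tv{T^{n}\circ\mu - \pi} \to 0$ for every $\mu \in \Prob$. This step is what avoids the measurability problem with $x \mapsto \tv{T^{n}_{x}-\pi}$, so integrating the pointwise bound directly must not be done.

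For uniqueness, let $\pi' \in \Prob$ satisfy $T\circ\pi' = \pi'$. Then $T^{n}\circ\pi' = \pi'$ for all $n$ by induction. Applying the convergence just proved with $\mu := \pi'$ gives $\tv{\pi' - \pi} = \tv{T^{n}\circ\pi' - \pi} \to 0$, so $\tv{\pi'-\pi} = 0$ and hence $\pi' = \pi$ setwise.

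There is no real obstacle here, since the heavy lifting is already in Proposition \ref{thm:dominated} and Lemma \ref{lem:pointwise}. The only point needing care is the bookkeeping in the first step. One must use linearity of $h \mapsto T^{n-n_{1}}\circ h$ on $\Sign$ to split $T^{n}_{x}-\pi$ as $T^{n-n_{1}}\circ(T^{n_{1}}_{x}-\rho) + (T^{n-n_{1}}\circ\rho - \pi)$. One also needs $T^{n-n_{1}}\circ\pi = \pi$, which is what lets Proposition \ref{thm:dominated} be applied in its ``in particular'' form.
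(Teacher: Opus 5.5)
Your proposal is correct and follows the paper's proof essentially step for step. You fix $x$ and apply \eqref{prop:R}, then split with the triangle inequality, using Lemma~\ref{lem:contraction} for the first term and Proposition~\ref{thm:dominated} for the second. You then pass to arbitrary initial laws via Lemma~\ref{lem:pointwise} with $\XX' := \XX$, and get uniqueness by applying the convergence to an invariant $\pi'$.
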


\begin{proof}
Let $x \in \XX$, put $\mu := \delta_{x} \in \Prob$, so that
$T^{n}\circ\mu = T^{n}_{x}$ for every $n$, and let
$\varepsilon \in (0,1)$ be a number. Choose $n_{1} \in \Nz$, $M \in [1,\infty)$ and
$\rho \in \Prob$ with $\rho \le M\pi$ and
$\tv{T^{n_{1}}\circ\mu - \rho} \le \varepsilon$, as provided by \eqref{prop:R}. For
every $n \in \Nz$, Lemma \ref{lem:contraction} and $T^{n}\circ\pi = \pi$ give
\[
  \tv{T^{n+n_{1}}\circ\mu - \pi}
  \;\le\; \tv{T^{n}\circ(T^{n_{1}}\circ\mu) - T^{n}\circ\rho} + \tv{T^{n}\circ\rho - \pi}
  \;\le\; \varepsilon + \tv{T^{n}\circ\rho - \pi} .
\]
By Proposition \ref{thm:dominated} the last term tends to $0$, so
$\limsup_{n\to\infty}\tv{T^{n}_{x} - \pi} \le \varepsilon$. As the number
$\varepsilon \in (0,1)$ was arbitrary, the limit is $0$, and this holds for every
$x \in \XX$. Lemma \ref{lem:pointwise}, applied with $\XX' := \XX$, therefore gives
$\tv{T^{n}\circ\nu-\pi} \to 0$ for every $\nu \in \Prob$.

Uniqueness: let $\pi' \in \Prob$ satisfy $T \circ \pi' = \pi'$. Then for every
$n \in \Nz$ the number $\tv{\pi' - \pi}$ equals $\tv{T^{n}\circ\pi' - \pi}$, which
tends to $0$. Hence $\tv{\pi'-\pi} = 0$, i.e.\ $\pi'(A) = \pi(A)$ for every
$A \in \XXA$.
\end{proof}

\noindent
Neither property may be dispensed with, and neither implies the other. Examples are
given in Subsection \ref{ssec:examples}, phrased in terms of the two equivalent
hypotheses introduced below.

\subsection{Minorants and quantitative positivity}\label{ssec:minorants}

The rest of this section produces \eqref{prop:U} and \eqref{prop:R} from hypotheses
that can be read off a given kernel. We begin with \eqref{prop:U}, which needs two
ingredients: a criterion for two probability measures to be close, and a quantitative
consequence of positivity.

\begin{lemma}[A common minorant bounds the distance]\label{lem:minorant}
Let $\alpha,\beta \in \Prob$ be probability measures and let
$\zeta : \XXA \to [0,\infty)$ be a nonnegative measure with $\zeta \le \alpha$
and $\zeta \le \beta$ in the sense of \ref{conv:order}. Then
\[
  \tv{\alpha - \beta} \;\le\; 1 - \zeta(\XX) .
\]
\end{lemma}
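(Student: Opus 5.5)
The plan is to bound $(\alpha-\beta)(A)$ from above and from below for a fixed set $A \in \XXA$ using only the two inequalities $\zeta \le \alpha$ and $\zeta \le \beta$, and then take the supremum over $A$ as in Definition \ref{def:tv}. No Hahn decomposition and no densities should be needed, since the minorant is handled setwise.

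Fix $A \in \XXA$. First, $\alpha(A) \le 1 - \alpha(\XX\setminus A)$ holds with equality because $\alpha$ is a probability measure. Second, $\beta(A) \ge \zeta(A)$ by $\zeta \le \beta$. Third, $\alpha(\XX\setminus A) \ge \zeta(\XX\setminus A)$ by $\zeta \le \alpha$. Combining these,
\[
  \alpha(A) - \beta(A) \;\le\; 1 - \zeta(\XX\setminus A) - \zeta(A) \;=\; 1 - \zeta(\XX),
\]
using additivity of the finite measure $\zeta$.

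Interchanging the roles of $\alpha$ and $\beta$, which is legitimate since the hypotheses are symmetric, gives $\beta(A) - \alpha(A) \le 1 - \zeta(\XX)$. Hence $|\alpha(A)-\beta(A)| \le 1-\zeta(\XX)$ for every $A$, and taking the supremum yields the claim.

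There is no real obstacle here. The only point to check is that $\zeta(\XX) \le 1$ is automatic from $\zeta \le \alpha$, so the bound is meaningful, and that no finiteness issue arises because $\zeta$ is finite. The step that carries the content is applying the minorant on $A$ for one measure and on the complement $\XX\setminus A$ for the other.
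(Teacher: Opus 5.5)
Your proof is correct and is essentially the paper's argument: the paper bounds $\alpha(A)-\beta(A)\le(\alpha-\zeta)(A)\le(\alpha-\zeta)(\XX)=1-\zeta(\XX)$, using that $\alpha-\zeta$ is a nonnegative (hence monotone) measure, and this monotonicity is exactly your inequality $\zeta(\XX\setminus A)\le\alpha(\XX\setminus A)$ written without passing to the complement. Both versions then finish in the same way, by exchanging $\alpha$ and $\beta$ and taking the supremum over $A$.
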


\begin{proof}
The set function $\alpha - \zeta$ is a nonnegative measure, hence monotone, and
$(\alpha-\zeta)(\XX) = 1 - \zeta(\XX)$. Therefore, for every $A \in \XXA$,
\[
  \alpha(A) - \beta(A) \;\le\; \alpha(A) - \zeta(A) \;=\; (\alpha-\zeta)(A)
  \;\le\; 1 - \zeta(\XX),
\]
using $\beta(A) \ge \zeta(A)$ in the first step. Exchanging the roles of $\alpha$
and $\beta$ gives the same bound for $\beta(A)-\alpha(A)$; now take the supremum over
$A \in \XXA$.
\end{proof}

\noindent
Lemma \ref{lem:minorant} is the engine of the whole development, and note that it
mentions no densities: to show that two laws are close it suffices to exhibit
\emph{one} nonnegative measure of substantial total mass lying below both of them.

The next lemma is pure measure theory: it contains no reference to $T$ at all. It is
the only place where strict positivity is used, and it is stated in the form needed
for both theorems.

\begin{lemma}[Quantitative positivity]\label{lem:positivity}
Let $\YY, \YY' \in \XXA$ be sets, let $\delta \in (0,1)$ be a number with
$\pi(\XX\setminus\YY') \le \delta/4$, and let
\[
  s : \XX \times \XX \longrightarrow [0,\infty), \qquad (x,y) \longmapsto s(y \mid x),
\]
be a $\XXA \otimes \XXA$-measurable map such that
\[
  \pi\bigl( \{ x \in \YY : s(y \mid x) = 0 \} \bigr) \;\le\; \frac{\delta}{4}
  \qquad\text{for every } y \in \YY' .
\]
Then there exist a number $\eta \in (0,1]$ and a set $G \in \XXA$ with
$\pi(G) \ge \tfrac12$ such that
\[
  \int_{L} s(y \mid x)\, \pi(\dd x) \;\ge\; \frac{\eta\delta}{2}
  \qquad
  \text{for every } y \in G \text{ and every } L \in \XXA \text{ with }
  L \subseteq \YY \text{ and } \pi(L) \ge \delta .
\]
\end{lemma}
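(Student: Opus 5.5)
Fix $y \in \YY'$. The set $L$ has $\pi(L)\ge\delta$, and only a part of $\pi$-mass at most $\delta/4$ of $\YY$ (hence of $L$) has $s(y\mid x)=0$. So the $x$ in $L$ with $s(y\mid x)>0$ carry $\pi$-mass at least $3\delta/4$. To get a quantitative lower bound we must also control where $s(y\mid x)$ is positive but small. The plan is to choose one threshold $\eta$ that works, uniformly in $L$, for all $y$ in a set $G$ of $\pi$-measure at least $\tfrac12$. The threshold must be uniform in $L$ but may discard some $y$.

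\emph{Step 1 (measurable bad-set function).} For $\eta\in(0,1]$ define
\[
  \Phi_{\eta}(y) := \pi\bigl(\{x\in\YY : 0 < s(y\mid x) < \eta\}\bigr).
\]
By Tonelli and the $\XXA\otimes\XXA$-measurability of $s$, the map $\Phi_\eta:\XX\to[0,1]$ is measurable. For each fixed $y$, the sets $\{x\in\YY: 0<s(y\mid x)<\eta\}$ decrease to $\emptyset$ as $\eta\downarrow 0$. Hence $\Phi_{\eta}(y)\to 0$ for every $y$, by continuity from above of the finite measure $\pi$.

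\emph{Step 2 (choosing $\eta$ and $G$).} Put
\[
  G_{\eta} := \{y\in\YY' : \Phi_{\eta}(y)\le \delta/4\} \in \XXA .
\]
As $\eta \downarrow 0$ along $\eta=1/k$, the sets $G_{1/k}$ increase to $\YY'$ by Step 1. Hence $\pi(G_{1/k})\to\pi(\YY')\ge 1-\delta/4 > \tfrac12$, using $\delta<1$. Fix $k$ with $\pi(G_{1/k})\ge \tfrac12$ and set $\eta:=1/k$, $G:=G_{\eta}$. This monotone-limit step is where measurability matters, and it is the only non-routine point. I expect no real obstacle beyond checking that $\Phi_\eta$ is measurable, which Tonelli gives.

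\emph{Step 3 (the bound).} Let $y\in G$ and $L\subseteq\YY$ with $\pi(L)\ge\delta$. Split $L$ into three parts: the zero set $\{s(y\mid x)=0\}$, of mass at most $\delta/4$ by hypothesis since $y\in\YY'$; the small set $\{0<s(y\mid x)<\eta\}$, of mass at most $\delta/4$ since $y\in G$; and $L_{+}:=\{x\in L: s(y\mid x)\ge\eta\}$. Then $\pi(L_+)\ge \delta-\delta/2=\delta/2$. Therefore
\[
  \int_L s(y\mid x)\,\pi(\dd x)\ \ge\ \eta\,\pi(L_+)\ \ge\ \frac{\eta\delta}{2},
\]
as claimed, with $\eta$ and $G$ independent of $y$ and $L$.
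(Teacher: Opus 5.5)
Your proof is correct and follows essentially the same route as the paper's: a Tonelli-measurable function of $y$, continuity from above along $\eta = 1/k$, an increasing family of sets exhausting $\YY'$ from which one of measure at least $\tfrac12$ is chosen, and a final splitting of $L$. The only difference is bookkeeping. The paper bounds the whole sublevel set $\{x \in \YY : s(y \mid x) < \eta\}$ at level $\delta/2$. You separate off the zero set, which the hypothesis handles directly, and bound only $\{x \in \YY : 0 < s(y \mid x) < \eta\}$ at level $\delta/4$; this yields the same estimate.
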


\begin{proof}
Define the map
\[
  \Phi : (0,\infty) \times \XX \longrightarrow [0,1],
  \qquad
  \Phi(\eta,y) := \pi\bigl( \{ x \in \YY : s(y \mid x) < \eta \} \bigr).
\]
For fixed $\eta \in (0,\infty)$ the map $\Phi(\eta,\cdot\,) : \XX \to [0,1]$ is
measurable: the map
\[
  \XX\times\XX \to \{0,1\}, \qquad (x,y) \longmapsto \ind_{\YY}(x)\,\ind_{\{s(y \mid x)<\eta\}} ,
\]
is $\XXA\otimes\XXA$-measurable, so Tonelli's theorem applies.

Fix a point $y \in \YY'$. The sets $\{x \in \YY : s(y \mid x) < 1/k\}$, $k \in \N$,
decrease as $k$ increases and their intersection is
$\{x \in \YY : s(y \mid x) = 0\}$, of $\pi$-measure at most $\delta/4$ by hypothesis.
Since $\pi$ is finite, continuity from above gives
$\lim_{k\to\infty}\Phi(1/k,y) \le \delta/4$ for every $y \in \YY'$. Consequently the
sets
\[
  G_{k} := \{ y \in \YY' : \Phi(1/k,y) \le \delta/2 \} \in \XXA, \qquad k \in \N,
\]
increase in $k$ --- because $\eta \mapsto \Phi(\eta,y)$ is non-decreasing, so that
$k \mapsto \Phi(1/k,y)$ is non-increasing --- and their union is $\YY'$, since
$\delta/4 < \delta/2$. Hence
$\pi\bigl(\bigcup_{k \in \N} G_{k}\bigr) = \pi(\YY') \ge 1 - \delta/4 > \tfrac12$, and
by continuity from below there is a number $k_{0} \in \N$ with
$\pi(G_{k_{0}}) \ge \tfrac12$. Put $\eta := 1/k_{0} \in (0,1]$ and
$G := G_{k_{0}} \in \XXA$.

Let $y \in G$ and let $L \in \XXA$ with $L \subseteq \YY$ and $\pi(L) \ge \delta$.
With $E_{y} := \{x \in \XX : s(y \mid x) \ge \eta\} \in \XXA$ we have, using
$L \subseteq \YY$ and the definition of $G$,
$\pi(L \setminus E_{y}) \le \Phi(\eta,y) \le \delta/2$, hence
$\pi(L \cap E_{y}) \ge \delta/2$ and
\[
  \int_{L} s(y \mid x)\, \pi(\dd x)
  \;\ge\; \int_{L \cap E_{y}} s(y \mid x)\, \pi(\dd x)
  \;\ge\; \eta\,\pi(L \cap E_{y})
  \;\ge\; \frac{\eta\delta}{2}. \qedhere
\]
\end{proof}

\begin{remark}[What is \emph{not} claimed: there is no rate]\label{rem:notclaimed}
This is the one place where the absence of a rate is discussed; every other mention
of it in these notes --- in the Introduction, in Remark \ref{rem:uniform}, in
Remark \ref{rem:temperingtheta} and in Remark \ref{rem:whybirkhoff} --- refers back
here.
The number $\eta$ depends on the number $\delta$, and in general
$\eta \to 0$ as $\delta \downarrow 0$; consequently the constant $\gamma$ produced by
Proposition \ref{prop:minorisation} degenerates as $M \to \infty$, and the argument
yields no rate. This is not an artefact of the proof. Example \ref{ex:norate} below
exhibits a chain satisfying \hyp{D}{D} for which there is no uniform lower bound
$T_{x} \ge \eta\,\pi$, and for which the convergence of
Theorem \ref{thm:main}, although valid for every single starting point, is not
uniform in the starting point. Compare Remark \ref{rem:uniform}, where a uniform
lower bound is assumed and a geometric rate results.
\end{remark}

\begin{example}[A strictly positive density with no rate]\label{ex:norate}
Let $\XX := \N$ with $\XXA$ its power set, let
\[
  \pi(\{i\}) := 2^{-i} \quad (i \in \N),
  \qquad
  b : \N \to (0,1], \quad b(i) := 1/i,
\]
\[
  m := \sum_{i \in \N} b(i)\,\pi(\{i\}) = \ln 2 \in (0,1),
\]
and define
\[
  T(\{j\} \mid i) := \bigl(1 - b(i)\,m\bigr)\,\ind_{\{i=j\}} + b(i)\,b(j)\,\pi(\{j\})
  \qquad (i,j \in \N).
\]
Then:
\begin{enumerate}[label=(\roman*),nosep,topsep=3pt]
  \item $T$ is a Markov kernel. Indeed $b(i)m \le m < 1$, so all entries are
        nonnegative, and
        $\sum_{j} T(\{j\}\mid i) = \bigl(1-b(i)m\bigr) + b(i)\sum_{j}b(j)\pi(\{j\}) = 1$.
  \item $\pi$ is invariant, because $T$ is reversible with respect to $\pi$:
        $\pi(\{i\})\,T(\{j\}\mid i) = \bigl(1-b(i)m\bigr)\pi(\{i\})\ind_{\{i=j\}}
        + b(i)b(j)\pi(\{i\})\pi(\{j\})$ is symmetric in $(i,j)$.
  \item \hyp{D}{D} holds, with the everywhere strictly positive density
        \[
          t(j \mid i) := \frac{T(\{j\}\mid i)}{\pi(\{j\})}
          = b(i)\,b(j) + \frac{1-b(i)\,m}{\pi(\{i\})}\,\ind_{\{i=j\}} > 0 .
        \]
        All measurability requirements are vacuous on a countable space.
  \item Nevertheless
        \[
          \sup_{i \in \N}\ \tv{T^{n}_{i} - \pi} = 1
          \qquad\text{for every } n \in \N .
        \]
        Indeed $T(\{i\}\mid i) \ge 1 - b(i)m = 1 - m/i$, and the probability of the
        path that stays at $i$ for $n$ steps gives
        $T^{n}(\{i\}\mid i) \ge (1-m/i)^{n}$, whence
        \[
          \tv{T^{n}_{i} - \pi}
          \;\ge\; T^{n}(\{i\}\mid i) - \pi(\{i\})
          \;\ge\; \Bigl(1-\frac{m}{i}\Bigr)^{n} - 2^{-i} .
        \]
        For fixed $n$ the right-hand side tends to $1$ as $i \to \infty$, and
        $\tv{\cdot} \le 1$ always.
\end{enumerate}
By Theorem \ref{thm:main}, $\tv{T^{n}_{i}-\pi} \to 0$ for each fixed
$i \in \N$; by (iv) this convergence is not uniform in $i$, so no rate valid for all
starting points --- and a fortiori none valid for all initial distributions --- can
be attached to Theorem \ref{thm:main}. The mechanism is transparent: the state $i$ is
left with probability only $m/i$ per step, so the chain started at $i$ has not moved
at all, with probability bounded away from $0$, until time of order $i$.
\end{example}

\subsection{The overlap bound}\label{ssec:overlap}

Combining the two lemmas of the previous subsection gives \eqref{prop:U} for a single
value of $M$ at a time. This is the statement that the hypothesis \hyp{P}{P} of
Subsection \ref{ssec:PS} is designed to feed.

\begin{proposition}[Uniform overlap after $N$ steps]\label{prop:minorisation}
Let $M \in [1,\infty)$ and $N \in \N$ be numbers, let $\YY, \YY' \in \XXA$ be sets and
let $s : \XX \times \XX \to [0,\infty)$ be a $\XXA\otimes\XXA$-measurable map. Put
$\delta := 1/(4M)$ and assume:
\begin{enumerate}[label=(\roman*),nosep,topsep=3pt]
  \item $\pi(\XX \setminus \YY) \le \delta$ and $\pi(\XX\setminus\YY') \le \delta/4$;
  \item $T^{N}(A \mid x) \ge \displaystyle\int_{A} s(y \mid x)\,\pi(\dd y)$ for every $x \in \YY$ and every $A \in \XXA$;
  \item $\pi(\{x \in \YY : s(y\mid x) = 0\}) \le \delta/4$ for every $y \in \YY'$.
\end{enumerate}
Then there is a number $\gamma \in (0,1]$, depending only on $M, N, \YY, \YY', s$ and
$\pi$, such that
\[
  \tv{T^{N}\circ\mu - T^{N}\circ\nu} \;\le\; 1 - \gamma
  \qquad\text{for all } \mu,\nu \in \Prob \text{ with } \mu \le M\pi \text{ and } \nu \le M\pi .
\]
\end{proposition}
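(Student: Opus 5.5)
The plan is to produce, for every pair of dominated laws, a common minorant of $T^{N}\circ\mu$ and $T^{N}\circ\nu$ of mass at least a fixed $\gamma$, and then conclude by Lemma \ref{lem:minorant}. First apply Lemma \ref{lem:positivity} with the given $\YY,\YY',s$ and $\delta = 1/(4M)$; hypotheses (i) and (iii) are exactly its assumptions. This yields $\eta\in(0,1]$ and $G\in\XXA$ with $\pi(G)\ge\tfrac12$ such that $\int_{L}s(y\mid x)\,\pi(\dd x)\ge \eta\delta/2$ for every $y\in G$ and every measurable $L\subseteq\YY$ with $\pi(L)\ge\delta$. These numbers depend only on $M,\YY,\YY',s,\pi$, so they are uniform over all dominated laws.

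Next, fix $\mu\le M\pi$ and produce a "flat" set of starting points. Let $f = \dd\mu/\dd\pi$; we have $f\le M$ $\pi$-a.e. Put $L_\mu := \{x\in\YY : f(x)\ge \tfrac12\}$. Then
\[
\mu(\YY\setminus L_\mu)\le \tfrac12,\qquad \mu(\XX\setminus\YY)\le M\pi(\XX\setminus\YY)\le M\delta=\tfrac14 .
\]
Hence $\mu(L_\mu)\ge \tfrac14$, and since $\mu\le M\pi$ this gives $\pi(L_\mu)\ge 1/(4M)=\delta$. The threshold $\tfrac12$ is the one I will adjust if the constants do not close, and the same adjustment may be needed in the choice of $\delta$.

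Then minorise. For $A\in\XXA$, hypothesis (ii) together with Tonelli gives
\[
(T^{N}\circ\mu)(A)\ge\int_{L_\mu}T^{N}(A\mid x)f(x)\,\pi(\dd x)\ge \tfrac12\int_{A}\int_{L_\mu}s(y\mid x)\,\pi(\dd x)\,\pi(\dd y).
\]
Restricting to $A\cap G$ and using Lemma \ref{lem:positivity} yields $T^{N}\circ\mu\ge \tfrac{\eta\delta}{4}\,\pi(\cdot\cap G)=:\zeta$. The same bound holds for $\nu$, with the same $\zeta$, because $\zeta$ depends on neither law. Its mass is $\zeta(\XX)\ge \eta\delta/8$. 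Lemma \ref{lem:minorant} then gives the claim with $\gamma=\eta\delta/8$, which matches the constant announced in the introduction.

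The main obstacle is the lower bound on $\pi(L_\mu)$. The set $L_\mu$ must lie inside $\YY$ and carry $\pi$-mass at least $\delta$ simultaneously for all dominated $\mu$. This is where the exact calibration $\delta=1/(4M)$ and $\pi(\XX\setminus\YY)\le\delta$ is used. Measurability of the inner integral in $y$ is routine, by Tonelli, since $s$ is jointly measurable.
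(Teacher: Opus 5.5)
Your proof is correct and follows the paper's argument essentially step for step: the same application of Lemma \ref{lem:positivity}, the same set $L_{\mu}$, the same common minorant $\zeta = \frac{\eta\delta}{4}\,\pi(\cdot\cap G)$ and the same $\gamma = \eta\delta/8$. The only difference is minor. You obtain $\pi(L_{\mu})\ge\delta$ by first proving $\mu(L_{\mu})\ge\tfrac14$ and then using $\mu\le M\pi$, whereas the paper first proves $\pi(\{\text{density}\ge\tfrac12\})\ge 2\delta$ and then removes $\XX\setminus\YY$; both routes close with the given calibration, so the threshold $\tfrac12$ needs no adjustment.
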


\begin{proof}
Recall $\delta = \dfrac{1}{4M} \in (0,\tfrac14]$ and apply Lemma \ref{lem:positivity}
with this number $\delta$ and with the given $\YY$, $\YY'$ and $s$, whose hypotheses
are (i) and (iii); this yields a number
$\eta \in (0,1]$ and a set $G \in \XXA$ with $\pi(G) \ge \tfrac12$. Define the number
\[
  \gamma := \frac{\eta\delta}{8} \in \Bigl(0,\tfrac{1}{32}\Bigr] \subseteq (0,1]
\]
--- the upper bound because $\eta \le 1$ and $\delta \le \tfrac14$ --- and the
nonnegative measure
\[
  \zeta : \XXA \to [0,\infty), \qquad \zeta(A) := \frac{\eta\delta}{4}\,\pi(A \cap G),
\]
so that $\zeta(\XX) = \frac{\eta\delta}{4}\pi(G) \ge \gamma$.

\medskip
\noindent\textit{Step 1: a dominated probability measure charges a set of substantial
$\pi$-measure.} Let $\mu \in \Prob$ with $\mu \le M\pi$. Then $\mu \ll \pi$; let
$g : \XX \to [0,\infty)$ be a density of $\mu$ with respect to $\pi$, so that
$g \le M$ $\aewhere$. Put
$L_{\mu} := \{ x \in \XX : g(x) \ge \tfrac12 \} \cap \YY \in \XXA$. Splitting the
integral,
\[
  1 = \int_{\{g \ge 1/2\}} g \dd\pi + \int_{\{g < 1/2\}} g \dd\pi
    \;\le\; M\,\pi\bigl(\{g \ge \tfrac12\}\bigr) + \tfrac12 ,
\]
whence $\pi(\{g \ge \tfrac12\}) \ge \frac{1}{2M} = 2\delta$ and therefore, by (i),
\[
  \pi(L_{\mu}) \;\ge\; 2\delta - \pi(\XX\setminus\YY) \;\ge\; 2\delta - \delta = \delta .
\]
Moreover, since $g \ge \tfrac12$ pointwise on $L_{\mu}$, the restricted measures
satisfy
\begin{equation}
  \mu(A \cap L_{\mu}) \;\ge\; \tfrac12\,\pi(A \cap L_{\mu})
  \qquad (A \in \XXA),
  \label{eq:restrictedminorant}
\end{equation}
and consequently, by \ref{conv:order} applied to the two measures
$A \mapsto \mu(A\cap L_{\mu})$ and $A \mapsto \tfrac12\pi(A \cap L_{\mu})$,
\begin{equation}
  \int_{L_{\mu}} \varphi \dd\mu \;\ge\; \tfrac12 \int_{L_{\mu}} \varphi \dd\pi
  \qquad\text{for every measurable } \varphi : \XX \to [0,\infty] .
  \label{eq:restrictedminorantint}
\end{equation}

\medskip
\noindent\textit{Step 2: $\zeta$ is a common minorant of the two images.}
Let $A \in \XXA$. Using \eqref{eq:restrictedminorantint} with
$\varphi := T^{N}(A \mid \cdot\,)$, then (ii) --- which applies because
$L_{\mu} \subseteq \YY$ --- then Tonelli's theorem, then
Lemma \ref{lem:positivity} with $L := L_{\mu}$ (legitimate since
$L_{\mu} \subseteq \YY$ and $\pi(L_{\mu}) \ge \delta$):
\begin{align*}
  (T^{N}\circ\mu)(A)
  &\;\ge\; \int_{L_{\mu}} T^{N}(A \mid x)\, \mu(\dd x)
   \;\ge\; \frac12 \int_{L_{\mu}} T^{N}(A \mid x)\, \pi(\dd x) \\[1mm]
  &\;\ge\; \frac12 \int_{L_{\mu}} \Bigl( \int_{A} s(y \mid x)\,\pi(\dd y) \Bigr) \pi(\dd x) \\[1mm]
  &\;=\; \frac12 \int_{A} \Bigl( \int_{L_{\mu}} s(y \mid x)\,\pi(\dd x) \Bigr) \pi(\dd y)\\[1mm]
  &\;\ge\; \frac12 \int_{A \cap G} \frac{\eta\delta}{2} \,\pi(\dd y)
   \;=\; \zeta(A) .
\end{align*}
The same computation applies to $\nu$, so $\zeta \le T^{N}\circ\mu$ and
$\zeta \le T^{N}\circ\nu$.

\medskip
\noindent\textit{Step 3.} Lemma \ref{lem:minorant} gives
$\tv{T^{N}\circ\mu - T^{N}\circ\nu} \le 1 - \zeta(\XX) \le 1 - \gamma$.
\end{proof}

\subsection{The Lebesgue decomposition and the singular mass}\label{ssec:singmass}\label{ssec:lebesgue}

What \eqref{prop:R} asks of a law is that it be close to a dominated one. As
Lemma \ref{lem:truncation} below will show, the only obstruction to this is the part of the law that
has no density at all. That part is named by the Lebesgue decomposition, which we
state first, and its size is measured by the quantity defined immediately afterwards.

\begin{theorem}[Lebesgue decomposition]\label{thm:lebesgue}
Let $(\XX,\XXA)$ be a measurable space and let $\mu$ and $\nu$ be $\sigma$-finite
measures on $\XXA$. \emph{In this theorem and its proof alone, $\mu$, $\nu$, $\rho$
and $\alpha_{i},\sigma_{i}$ denote $\sigma$-finite measures, not necessarily finite,
departing from Table \ref{tab:symbols}.} Then there are measures $\nu_{\acp{\mu}}$ and $\nu_{\sgp{\mu}}$ on $\XXA$
with
\[
  \nu = \nu_{\acp{\mu}} + \nu_{\sgp{\mu}},
  \qquad \nu_{\acp{\mu}} \ll \mu,
  \qquad \nu_{\sgp{\mu}}(\XX\setminus S) = 0 \ \text{ for some } S \in \XXA
  \text{ with } \mu(S) = 0 ,
\]
and they are uniquely determined by these requirements. Moreover $\nu_{\acp{\mu}}$ has a
density with respect to $\mu$, and both $\nu_{\acp{\mu}} \le \nu$ and
$\nu_{\sgp{\mu}} \le \nu$, so that both are finite whenever $\nu$ is.
\end{theorem}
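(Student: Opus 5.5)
We prove existence first for finite $\mu$ and $\nu$ and then pass to the $\sigma$-finite case by partitioning $\XX$. The finite case uses only the Radon--Nikodym theorem, as Remark \ref{rem:nostructure} announces.

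For the finite case, put $\rho := \mu + \nu$. Then $\mu \ll \rho$ and $\nu \ll \rho$, so Radon--Nikodym gives measurable densities $u := \dd\mu/\dd\rho$ and $v := \dd\nu/\dd\rho$ with values in $[0,\infty)$. Set $S := \{u = 0\}$. Then $\mu(S) = \int_{S} u \dd\rho = 0$. Define
\[
  \nu_{\sgp{\mu}}(A) := \nu(A \cap S), \qquad \nu_{\acp{\mu}}(A) := \nu(A \setminus S).
\]
Their sum is $\nu$, both lie below $\nu$, and $\nu_{\sgp{\mu}}$ lives on $S$. For absolute continuity, let $\mu(A) = 0$. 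Then $u = 0$ $\rho$-a.e.\ on $A$, so $A \setminus S$ is $\rho$-null, hence $\nu$-null. For the density, write
\[
  \nu_{\acp{\mu}}(A) = \int_{A\setminus S} v \dd\rho = \int_{A} \frac{v}{u}\,\ind_{\{u>0\}}\, u \dd\rho = \int_{A} \frac{v}{u}\ind_{\{u>0\}} \dd\mu,
\]
using $\dd\mu = u\dd\rho$. This exhibits an explicit density of $\nu_{\acp{\mu}}$ with respect to $\mu$.

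For the $\sigma$-finite case, choose a countable measurable partition $(X_i)_{i\in\N}$ of $\XX$ on which both $\mu$ and $\nu$ are finite. One gets it by intersecting the two exhausting sequences and disjointifying. Apply the finite case to the restrictions $\mu(\cdot\cap X_i)$ and $\nu(\cdot\cap X_i)$. This yields measures $\alpha_i$, $\sigma_i$, sets $S_i \subseteq X_i$ and densities $f_i$. Then put
\[
  \nu_{\acp{\mu}} := \sum_i \alpha_i, \qquad \nu_{\sgp{\mu}} := \sum_i \sigma_i, \qquad S := \bigcup_i S_i, \qquad f := \sum_i f_i\ind_{X_i}.
\]
Countable additivity gives $\mu(S)=0$, $\nu_{\sgp{\mu}}(\XX\setminus S)=0$, $\nu_{\acp{\mu}}\ll\mu$ and the density $f$. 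The bounds $\le \nu$ follow termwise. Finiteness whenever $\nu$ is finite is then immediate from $\nu_{\acp{\mu}},\nu_{\sgp{\mu}} \le \nu$.

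For uniqueness, suppose $\nu = a + s = a' + s'$ with $a,a' \ll \mu$, $s$ concentrated on $S$, $s'$ concentrated on $S'$, and $\mu(S)=\mu(S')=0$. Put $Z := S\cup S'$, so $\mu(Z)=0$. Both $a$ and $a'$ vanish on subsets of $Z$. Both $s$ and $s'$ vanish on subsets of $\XX\setminus Z$. Hence, for every $A\in\XXA$,
\[
  s(A) = s(A\cap Z) = \nu(A\cap Z) = s'(A \cap Z) = s'(A).
\]
On $\XX\setminus Z$ we similarly get $a(A\setminus Z) = \nu(A\setminus Z) = a'(A\setminus Z)$. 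Since $a(A\cap Z) = 0 = a'(A\cap Z)$, this gives $a = a'$.

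Note that this argument avoids any subtraction of possibly infinite quantities, which is the one point to watch in the $\sigma$-finite setting. The main obstacle is otherwise only bookkeeping: making sure the representative density is a genuine map, as required by \ref{conv:representative}. The explicit formula $f$ above already provides such a map.
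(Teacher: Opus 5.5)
Your proof is correct and is essentially the paper's argument: Radon--Nikodym densities with respect to $\mu+\nu$ (the paper uses $\tfrac12(\mu+\nu)$), the singular part taken as $\nu$ restricted to the zero set of $\dd\mu/\dd\rho$ (the paper's set $B$ differs from your $S$ only by a $\nu$-null set), and the same uniqueness argument through $Z = S\cup S'$. The one difference is your detour through finite measures, which is unnecessary because $\mu+\nu$ is itself $\sigma$-finite and Radon--Nikodym applies to it directly, as in the paper; if you keep the detour, use $S_i\cap X_i$ in place of $S_i$, since $\dd\mu_i/\dd\rho_i$ is arbitrary off $X_i$, so $\{u_i=0\}$ need not lie in $X_i$ and need not be $\mu$-null.
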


\begin{proof}
\emph{Existence.} Put $\rho := \tfrac12(\mu+\nu)$, a $\sigma$-finite measure with
$\mu \ll \rho$ and $\nu \ll \rho$. By the Radon--Nikodym theorem there are measurable
maps $f,g : \XX \to [0,\infty)$ with $\mu = f\rho$ and $\nu = g\rho$; since
$\mu + \nu = 2\rho$ we have $f + g = 2$ $\rho$-almost everywhere, so the set
$\{f = 0\}\cap\{g = 0\}$ is $\rho$-null. Put
\[
  A := \{ f > 0 \}\cap\{ g = 0 \},
  \qquad
  B := \{ f = 0 \}\cap\{ g > 0 \},
  \qquad
  C := \{ f > 0 \}\cap\{ g > 0 \} ,
\]
three disjoint measurable sets whose union is $\XX$ up to a $\rho$-null set, and
define
\[
  \nu_{\acp{\mu}} := \nu\restriction C,
  \qquad
  \nu_{\sgp{\mu}} := \nu\restriction B ,
\]
where $(\nu\restriction D)(E) := \nu(E \cap D)$. Since $\nu(A) = \int_{A}g\dd\rho = 0$
and the complement of $A\cup B\cup C$ is $\rho$-null, hence $\nu$-null, we have
$\nu = \nu_{\acp{\mu}} + \nu_{\sgp{\mu}}$. The measure $\nu_{\sgp{\mu}}$ is carried by $B$, and
$\mu(B) = \int_{B} f \dd\rho = 0$, which is the singularity. For the absolute
continuity, let $E \in \XXA$ with $\mu(E) = 0$; then $\int_{E \cap C} f\dd\rho = 0$
with $f > 0$ on $C$, so $\rho(E \cap C) = 0$ and hence
$\nu_{\acp{\mu}}(E) = \int_{E\cap C} g \dd\rho = 0$. Finally the map $h$ defined by $h := g/f$ on $C$ and $h := 0$ off $C$ --- well
defined, since $f > 0$ on $C$ --- is a density of $\nu_{\acp{\mu}}$ with respect to $\mu$,
since for $E \in \XXA$
\[
  \int_{E} h \dd\mu
  = \int_{E} h\, f \dd\rho
  = \int_{E \cap C} g \dd\rho
  = \nu_{\acp{\mu}}(E) .
\]

\emph{Uniqueness.} Let $\nu = \alpha_{1}+\sigma_{1} = \alpha_{2}+\sigma_{2}$ be two
such decompositions, with $\alpha_{i} \ll \mu$ and $\sigma_{i}$ carried by a
$\mu$-null set $S_{i}$, and put $S := S_{1}\cup S_{2}$, which is $\mu$-null. Fix
$E \in \XXA$. Then $\alpha_{i}(E \cap S) = 0$ because $\mu(E\cap S) = 0$, and
$\sigma_{i}(E\setminus S) = 0$ because $\sigma_{i}$ is carried by $S_{i} \subseteq S$.
Hence
\[
  \sigma_{i}(E) = \sigma_{i}(E \cap S) = \nu(E \cap S),
  \qquad
  \alpha_{i}(E) = \alpha_{i}(E\setminus S) = \nu(E \setminus S) ,
\]
for $i = 1,2$; the right-hand sides do not depend on $i$.
\end{proof}

\noindent
The proof uses no structure on $(\XX,\XXA)$ beyond the Radon--Nikodym theorem, which
needs none either (Remark \ref{rem:nostructure}); the statement is classical, see for
instance \cite[Chapter~1]{Kallenberg21}, and it is included with its proof because the
decomposition is what the criterion of Theorem \ref{thm:crit} is about and the proof
is short. Only finite measures occur below, but both orders do: $\pi$ is decomposed
with respect to a law of the chain as often as the other way round.

The next definition measures the singular part by a variational formula instead, and
does so deliberately: with it, the proof of Theorem \ref{thm:criterion} --- the
convergence theorem in the form from which
Sections \ref{sec:first}--\ref{sec:applications} are deduced --- needs no
decomposition theorem at all. Theorem \ref{thm:lebesgue} is used in
Proposition \ref{prop:singforms}, to identify that formula with five other descriptions of
the same number, and then from
Definition \ref{def:lebesgue} onwards, where it is load-bearing: the passage from
\hyp{L}{L} to \hyp{P}{P} in Proposition \ref{prop:LimpliesP}, and with it
Theorem \ref{thm:asympequiv}, runs through the density of the absolutely continuous
part.

\begin{definition}[Singular mass]\label{def:singmass}
For finite measures $\alpha, \beta : \XXA \to [0,\infty)$ put
\[
  \sing(\alpha \mid \beta) := \alpha(\XX) - \sup\, \Sigma(\alpha\mid\beta)
  \;\in\; [0,\alpha(\XX)],
  \qquad
  \Sigma(\alpha\mid\beta) := \bigl\{\, \gamma(\XX) \ :\ \gamma \le \alpha,\ \gamma \ll \beta \,\bigr\},
\]
the supremum being over the masses of all nonnegative measures
$\gamma : \XXA \to [0,\infty)$ lying below $\alpha$ and absolutely continuous with
respect to $\beta$. The set $\Sigma(\alpha\mid\beta)$ is nonempty, since
$\gamma := 0$ is admissible.

The bar is read as in \ref{conv:kernelnotation}: the reference measure stands to the
right of it. Both orders occur below and they say different things, and the criterion
of Theorem \ref{thm:crit} asks that both vanish in the limit, which is why the
reference measure has to be displayed rather than fixed by a subscript; see
Remark \ref{rem:singmass}.
\end{definition}

\noindent
This is one of six descriptions of the same number, and it is worth having all six in
front of one at once, because different ones are convenient at different points below:
the definition just given is the only one that presupposes no decomposition theorem,
\eqref{eq:singsg} and \eqref{eq:singnull} are the two that are used most often,
\eqref{eq:singdens} is the form in which \hyp{L}{L} will be read off a density
(Remark \ref{rem:LviaJ}), and \eqref{eq:singmin} exhibits the singular mass as a
distance. Any one of them could serve as the definition.

\begin{proposition}[Six descriptions of the singular mass]\label{prop:singforms}
Assume \hyp{A1}{A1} and let $\alpha, \beta : \XXA \to [0,\infty)$ be finite measures,
with Lebesgue decomposition $\alpha = \alpha_{\acp{\beta}} + \alpha_{\sgp{\beta}}$ of
$\alpha$ with respect to $\beta$ as in Theorem \ref{thm:lebesgue}, and let
$S \in \XXA$ be a set with $\beta(S) = 0$ carrying $\alpha_{\sgp{\beta}}$. Then the
following six numbers coincide, and their common value is $\sing(\alpha\mid\beta)$.
\begin{enumerate}[label=\textup{(\roman*)},nosep,topsep=3pt,leftmargin=2.4em,itemsep=4pt]
  \item \emph{The variational form}, which is Definition \ref{def:singmass}:
        \begin{equation}
          \alpha(\XX) \;-\; \max\,\bigl\{\, \gamma(\XX) \ :\ \gamma \le \alpha,\ \gamma \ll \beta \,\bigr\} ,
          \label{eq:singvar}
        \end{equation}
        the maximum being attained at $\gamma := \alpha_{\acp{\beta}}$.
  \item \emph{The mass of the singular part}:
        \begin{equation}
          \alpha_{\sgp{\beta}}(\XX) .
          \label{eq:singsg}
        \end{equation}
  \item \emph{The mass the absolutely continuous part leaves over}:
        \begin{equation}
          \alpha(\XX) \;-\; \alpha_{\acp{\beta}}(\XX) .
          \label{eq:singac}
        \end{equation}
  \item \emph{The largest mass hidden on a $\beta$-null set}:
        \begin{equation}
          \max\,\bigl\{\, \alpha(A) \ :\ A \in \XXA,\ \beta(A) = 0 \,\bigr\} ,
          \label{eq:singnull}
        \end{equation}
        the maximum being attained at $A := S$.
  \item \emph{The null set of the reversed density}:
        \begin{equation}
          \alpha\bigl( \{ g = 0 \} \bigr) ,
          \qquad\text{$g$ any density of $\beta_{\acp{\alpha}}$ with respect to $\alpha$.}
          \label{eq:singdens}
        \end{equation}
  \item \emph{The distance to the measures that have a density}:
        \begin{equation}
          \min\,\bigl\{\, \tv{\alpha - \gamma} \ :\ \gamma : \XXA \to [0,\infty)
          \text{ a finite measure},\ \gamma \ll \beta \,\bigr\} ,
          \label{eq:singmin}
        \end{equation}
        the minimum being attained at $\gamma := \alpha_{\acp{\beta}}$; and if
        $\alpha,\beta \in \Prob$ the same minimum is attained over the smaller set of
        competitors $\gamma \in \Prob$ with $\gamma \ll \beta$.
\end{enumerate}
Moreover $\alpha_{\acp{\beta}}(S) = 0$, so that the carrier $S$ splits the mass of
$\alpha$ as
\begin{equation}
  \alpha(S) = \sing(\alpha\mid\beta)
  \qquad\text{and}\qquad
  \alpha(\XX\setminus S) = \alpha(\XX) - \sing(\alpha\mid\beta) .
  \label{eq:singcarrier}
\end{equation}
In particular $\sing(\alpha\mid\beta) = 0$ if and only if $\alpha \ll \beta$, and
$\sing(\alpha\mid\beta) = \alpha(\XX)$ if and only if $\alpha \perp \beta$, that is
if and only if $\alpha$ is carried by a $\beta$-null set.
\end{proposition}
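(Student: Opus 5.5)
I will anchor everything on the Lebesgue decomposition of Theorem \ref{thm:lebesgue} and the carrier $S$, and compare each of the six numbers with \eqref{eq:singsg}, $\alpha_{\sgp{\beta}}(\XX)$. The first step is the carrier identity \eqref{eq:singcarrier}. Since $\beta(S)=0$ and $\alpha_{\acp{\beta}}\ll\beta$, we get $\alpha_{\acp{\beta}}(S)=0$. Hence $\alpha(S)=\alpha_{\sgp{\beta}}(S)=\alpha_{\sgp{\beta}}(\XX)$ and $\alpha(\XX\setminus S)=\alpha_{\acp{\beta}}(\XX)$. The equality of (ii) and (iii) then follows at once from $\alpha=\alpha_{\acp{\beta}}+\alpha_{\sgp{\beta}}$, with all masses finite.

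For (i) and (iv) I will use the following key observation. If $\gamma\le\alpha$ and $\gamma\ll\beta$, then $\gamma(S)=0$, so $\gamma(\XX)=\gamma(\XX\setminus S)\le\alpha(\XX\setminus S)=\alpha_{\acp{\beta}}(\XX)$. Since $\gamma:=\alpha_{\acp{\beta}}$ is itself admissible, the supremum in Definition \ref{def:singmass} is a maximum attained there, which gives (i)$=$(iii). For (iv), take $A$ with $\beta(A)=0$. Then $\alpha_{\acp{\beta}}(A)=0$, so $\alpha(A)=\alpha_{\sgp{\beta}}(A)\le\alpha_{\sgp{\beta}}(\XX)$, and equality holds at $A=S$.

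For (vi), fix a finite $\gamma\ll\beta$. Evaluating at $S$ gives $\tv{\alpha-\gamma}\ge|\alpha(S)-\gamma(S)|=\alpha(S)$, and this value is attained at $\gamma=\alpha_{\acp{\beta}}$. Indeed, $\alpha-\alpha_{\acp{\beta}}=\alpha_{\sgp{\beta}}\ge0$, so its sup over sets is its total mass. For probability measures the issue is that $\alpha_{\acp{\beta}}$ is not normalised. In that case I will take $\gamma:=\alpha_{\acp{\beta}}+\alpha(S)\,\beta$ when $\alpha(S)>0$. This $\gamma$ is a probability measure with $\gamma\ll\beta$. Its difference $\alpha-\gamma=\alpha_{\sgp{\beta}}-\alpha(S)\beta$ has positive part carried by $S$ and negative part carried by $\XX\setminus S$. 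Hence its sup over sets equals $\alpha(S)$, attained at $S$.

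I expect (v) to be the main obstacle, because it reverses the roles. Decompose $\beta=\beta_{\acp{\alpha}}+\beta_{\sgp{\alpha}}$ with carrier $S'$, $\alpha(S')=0$, and write $\beta_{\acp{\alpha}}=g\alpha$. Put $Z:=\{g=0\}$. Then $\beta(Z\setminus S')=\int_{Z\setminus S'}g\,\dd\alpha=0$, so $Z\setminus S'$ is $\beta$-null, and (iv) gives $\alpha(Z)=\alpha(Z\setminus S')\le\sing(\alpha\mid\beta)$. Conversely, $\beta(S)=0$ forces $\int_S g\,\dd\alpha=0$, so $g=0$ $\alpha$-a.e.\ on $S$. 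Hence $\alpha(Z)\ge\alpha(S)$, and this holds for any version of $g$. The final two equivalences are then read off from (iv) and \eqref{eq:singcarrier}. The mass vanishes iff every $\beta$-null set is $\alpha$-null. It equals $\alpha(\XX)$ iff $\alpha(\XX\setminus S)=0$, i.e.\ iff $\alpha$ is carried by the $\beta$-null set $S$; conversely, a $\beta$-null carrier gives the value $\alpha(\XX)$ by (iv).
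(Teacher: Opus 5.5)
Your proof is correct and follows the paper's argument almost step for step. The carrier identity, the bound $\gamma(\XX)=\gamma(\XX\setminus S)\le\alpha(\XX\setminus S)$ for (i), the null-set argument for (iv), and the evaluation at $S$ for the lower bound in (vi) are exactly what the paper does. The two deviations are minor:
\begin{itemize}
\item For (v), you sandwich $\alpha(\{g=0\})$ between $\alpha(S)$ and the bound from (iv). The paper instead invokes uniqueness of the Lebesgue decomposition to identify $\alpha_{\sgp{\beta}}=\alpha\restriction\{g=0\}$, so it obtains equality of measures where you obtain only the (sufficient) equality of masses.
\item For the probability refinement of (vi), your competitor $\alpha_{\acp{\beta}}+\alpha(S)\,\beta$ handles every value of $\alpha(S)\in[0,1]$ at once. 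The paper normalises $\alpha_{\acp{\beta}}$ by $1-\alpha(S)$ and treats $\alpha(S)=1$ separately with $\gamma:=\beta$.
\end{itemize}
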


\begin{proof}
Throughout, $\alpha_{\acp{\beta}} \ll \beta$ and $\alpha_{\sgp{\beta}}$ is carried by
$S$ with $\beta(S) = 0$; since $\alpha_{\acp{\beta}} \ll \beta$ we have
$\alpha_{\acp{\beta}}(S) = 0$, whence
\begin{equation}
  \alpha(S) = \alpha_{\sgp{\beta}}(\XX)
  \qquad\text{and}\qquad
  \alpha(\XX\setminus S) = \alpha_{\acp{\beta}}(\XX) ,
  \label{eq:singcarrieraux}
\end{equation}
which is \eqref{eq:singcarrier} once \eqref{eq:singsg} is proved.

\emph{\eqref{eq:singvar} $=$ \eqref{eq:singsg} $=$ \eqref{eq:singac}.} The measure
$\alpha_{\acp{\beta}}$ is admissible in \eqref{eq:singvar}, being below $\alpha$ and
absolutely continuous with respect to $\beta$. Conversely, if $\gamma \le \alpha$ with
$\gamma \ll \beta$, then $\gamma(S) = 0$, so
$\gamma(\XX) = \gamma(\XX\setminus S) \le \alpha(\XX\setminus S) = \alpha_{\acp{\beta}}(\XX)$
by \eqref{eq:singcarrieraux}. So the supremum in Definition \ref{def:singmass} is a
maximum, attained at $\alpha_{\acp{\beta}}$, and \eqref{eq:singvar} equals
$\alpha(\XX) - \alpha_{\acp{\beta}}(\XX)$, which is \eqref{eq:singac} and, by
additivity of the decomposition, also \eqref{eq:singsg}.

\emph{\eqref{eq:singsg} $=$ \eqref{eq:singnull}.} If $\beta(A) = 0$ then
$\alpha_{\acp{\beta}}(A) = 0$, so
$\alpha(A) = \alpha_{\sgp{\beta}}(A) \le \alpha_{\sgp{\beta}}(\XX)$; and $A := S$
attains this, by \eqref{eq:singcarrieraux}.

\emph{\eqref{eq:singsg} $=$ \eqref{eq:singdens}.} Let $g$ be a density of
$\beta_{\acp{\alpha}}$ with respect to $\alpha$, put $Z := \{g = 0\} \in \XXA$, and let
$S' \in \XXA$ carry $\beta_{\sgp{\alpha}}$ with $\alpha(S') = 0$. We show that
$\alpha\restriction(\XX\setminus Z)$ and $\alpha\restriction Z$ are the two parts of the
Lebesgue decomposition of $\alpha$ with respect to $\beta$; the uniqueness in
Theorem \ref{thm:lebesgue} then gives $\alpha_{\sgp{\beta}} = \alpha\restriction Z$ and
hence $\alpha_{\sgp{\beta}}(\XX) = \alpha(Z)$. First,
$\alpha\restriction(\XX\setminus Z) \ll \beta$: if $\beta(A) = 0$ then
$\int_{A} g \dd\alpha = \beta_{\acp{\alpha}}(A) \le \beta(A) = 0$, so the nonnegative
map $g$ vanishes $\alpha$-almost everywhere on $A$, that is
$\alpha(A\setminus Z) = 0$. Second, $\alpha\restriction Z$ is carried by a $\beta$-null
set: put $E := Z \setminus S'$; then
$\beta(E) = \int_{E} g \dd\alpha + \beta_{\sgp{\alpha}}(E) = 0 + 0 = 0$, the first term
because $g = 0$ on $Z$ and the second because $\beta_{\sgp{\alpha}}$ is carried by
$S'$, while $(\alpha\restriction Z)(\XX\setminus E) = \alpha(Z \cap S') = 0$ because
$\alpha(S') = 0$.

\emph{\eqref{eq:singsg} $=$ \eqref{eq:singmin}.} If $\gamma \ll \beta$ is a finite
measure then $\gamma(S) = 0$, so by Definition \ref{def:tv} and \eqref{eq:singcarrieraux}
\[
  \tv{\alpha-\gamma} \;\ge\; \alpha(S) - \gamma(S) \;=\; \alpha_{\sgp{\beta}}(\XX) ;
\]
and $\gamma := \alpha_{\acp{\beta}}$ is admissible, with
$\alpha - \alpha_{\acp{\beta}} = \alpha_{\sgp{\beta}} \ge 0$, so that
$\tv{\alpha-\alpha_{\acp{\beta}}} = \alpha_{\sgp{\beta}}(\XX)$.

\emph{The refinement in \eqref{eq:singmin} for laws.} Let $\alpha,\beta \in \Prob$ and
write $s := \sing(\alpha\mid\beta)$. If $s = 1$, then $\gamma := \beta$ is a
probability measure with $\gamma \ll \beta$ and $\tv{\alpha-\beta} \le 1 = s$, so the
bound just proved is attained. If $s < 1$, put
$\gamma := \alpha_{\acp{\beta}}/(1-s) \in \Prob$, which is absolutely continuous with
respect to $\beta$, and $h := \alpha - \gamma \in \Sign$. Then
\[
  h \;=\; \alpha_{\sgp{\beta}} \;-\; \frac{s}{1-s}\,\alpha_{\acp{\beta}} ,
\]
a difference of two nonnegative measures carried by the disjoint sets $S$ and
$\XX\setminus S$ respectively. So $S$ is a Hahn set for $h$ in the sense of
Definition \ref{def:tv}, with $h^{+} = \alpha_{\sgp{\beta}}$ and
$h^{-} = \tfrac{s}{1-s}\alpha_{\acp{\beta}}$; both have total mass $s$, so
$h(\XX) = 0$ and $\tv{h} = h^{+}(\XX) = s$.

\emph{The last two sentences.} Display \eqref{eq:singcarrier} is
\eqref{eq:singcarrieraux} combined with \eqref{eq:singsg} and \eqref{eq:singac}. If
$\alpha \ll \beta$ then $\gamma := \alpha$ is admissible in \eqref{eq:singvar};
conversely $\sing(\alpha\mid\beta) = 0$ forces $\alpha_{\sgp{\beta}} = 0$ by
\eqref{eq:singsg}, that is $\alpha = \alpha_{\acp{\beta}} \ll \beta$. Finally
$\sing(\alpha\mid\beta) = \alpha(\XX)$ says $\alpha_{\acp{\beta}} = 0$ by
\eqref{eq:singac}, that is $\alpha = \alpha_{\sgp{\beta}}$, which is carried by the
$\beta$-null set $S$; and conversely, if $\alpha$ is carried by a set $N$ with
$\beta(N) = 0$, then $A := N$ is admissible in \eqref{eq:singnull} and gives
$\sing(\alpha\mid\beta) \ge \alpha(N) = \alpha(\XX)$.
\end{proof}

\begin{remark}[How to read the two orders]\label{rem:singmass}
Form \eqref{eq:singnull} is the one to keep in mind when reading the two hypotheses of
Theorem \ref{thm:crit}, and it is worth spelling out in both orders. In words,
$\sing(\alpha\mid\pi)$ is the largest amount of mass that $\alpha$ can hide on a set
the target ignores; read with the two arguments exchanged, $\sing(\pi\mid\alpha)$ is
the largest amount of $\pi$-mass that can hide on a set $\alpha$ ignores --- the part
of the target the chain has not yet learned to see. Neither number is symmetric in its
two arguments, and neither dominates the other: the example of
Remark \ref{rem:threepieces} has $\sing(\alpha\mid\pi) = \tfrac12$ and
$\sing(\pi\mid\alpha) = 0$, and exchanging $\alpha$ and $\pi$ there exchanges the two
values.

Proposition \ref{prop:singforms} is stated for arbitrary finite measures, and is
applied below with several different reference measures --- with $\pi$ in
Definition \ref{def:lebesgue}, with a law of the chain in
Lemma \ref{lem:monotone}(v), and with an auxiliary $\rho$ in
Corollary \ref{cor:singtv}. Nothing about $\pi$ beyond finiteness is used.
\end{remark}

\noindent
The necessity half of the criterion is one line, and it comes with an identity that
says exactly what the bound discards. Recall from
Definition \ref{def:tv} that $h^{+}$ and $h^{-}$ denote the two parts of the Jordan
decomposition of $h \in \Sign$, and that $\tv{h} = h^{+}(\XX) = h^{-}(\XX)$ whenever
$h(\XX) = 0$, as it is for a difference of two probability measures.

\begin{corollary}[Convergence forces both singular masses to vanish]\label{cor:singtv}
Assume \hyp{A1}{A1} and let $\alpha, \rho \in \Prob$. Then the distance splits, in
either order, as
\begin{equation}
  \tv{\alpha - \rho}
  \;=\; \sing(\alpha \mid \rho) + \bigl( \alpha_{\acp{\rho}} - \rho \bigr)^{+}(\XX)
  \;=\; \sing(\rho \mid \alpha) + \bigl( \rho_{\acp{\alpha}} - \alpha \bigr)^{+}(\XX) ,
  \label{eq:singsplit}
\end{equation}
and consequently
\[
  \sing(\alpha \mid \rho) \;\le\; \tv{\alpha - \rho}
  \qquad\text{and}\qquad
  \sing(\rho \mid \alpha) \;\le\; \tv{\alpha - \rho} .
\]
In particular, under \hyp{A}{A}, if $\tv{T^{n}_{x} - \pi} \to 0$ for some $x \in \XX$,
then $\sing(T^{n}_{x} \mid \pi) \to 0$ and $\sing(\pi \mid T^{n}_{x}) \to 0$ for that
same $x$.
\end{corollary}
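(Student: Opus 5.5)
I would prove the identity \eqref{eq:singsplit} by decomposing the signed measure $h := \alpha - \rho$ along the carrier of the singular part, and then read off the two inequalities and the "in particular" from it.

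Fix the Lebesgue decomposition $\alpha = \alpha_{\acp{\rho}} + \alpha_{\sgp{\rho}}$ from Theorem \ref{thm:lebesgue}, and a set $S \in \XXA$ with $\rho(S) = 0$ carrying $\alpha_{\sgp{\rho}}$. By Proposition \ref{prop:singforms} we have $\alpha_{\acp{\rho}}(S) = 0$ and $\alpha(S) = \sing(\alpha\mid\rho)$. Then
\[
  h \;=\; \alpha_{\sgp{\rho}} \;+\; \bigl(\alpha_{\acp{\rho}} - \rho\bigr) .
\]
Here the first summand is a nonnegative measure carried by $S$. The second summand $k := \alpha_{\acp{\rho}} - \rho$ vanishes on every subset of $S$, since both $\alpha_{\acp{\rho}}(S) = 0$ and $\rho(S) = 0$. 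So $k$ is carried by $\XX\setminus S$.

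Next take a Hahn set $E'$ for $k$ with $E' \subseteq \XX\setminus S$; intersecting any Hahn set with $\XX\setminus S$ gives one. I claim that $E := S \cup E'$ is a Hahn set for $h$.
\begin{itemize}
\item On subsets of $S$, $h$ agrees with $\alpha_{\sgp{\rho}}$, which is $\ge 0$.
\item On subsets of $E'$, $h$ agrees with $k$, which is $\ge 0$.
\item On subsets of $\XX\setminus E$, $h$ agrees with $k$, which is $\le 0$.
\end{itemize}
Since $h(\XX) = 0$, Definition \ref{def:tv} gives
\[
  \tv{h} = h^{+}(\XX) = h(E) = \alpha_{\sgp{\rho}}(\XX) + k(E') = \sing(\alpha\mid\rho) + k^{+}(\XX),
\]
using \eqref{eq:singsg}. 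This is the first equality in \eqref{eq:singsplit}.

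The second equality follows by applying the first with the roles of $\alpha$ and $\rho$ exchanged, because $\tv{\rho-\alpha} = \tv{\alpha-\rho}$ by the definition as a supremum of $|h(A)|$. Both inequalities are then immediate, since the positive-part masses are nonnegative. For the final claim, set $\alpha := T^{n}_{x}$ and $\rho := \pi$; both singular masses are bounded by $\tv{T^{n}_{x}-\pi}$, which tends to $0$.

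The only mildly delicate point is checking that $S \cup E'$ is a genuine Hahn set, which needs the observation that $k$ charges no subset of $S$. Everything else is bookkeeping with Proposition \ref{prop:singforms}.
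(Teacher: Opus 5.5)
Your proof is correct and is essentially the paper's argument. Both split $h = \alpha_{\sgp{\rho}} + (\alpha_{\acp{\rho}} - \rho)$ into pieces carried by the disjoint sets $S$ and $\XX\setminus S$, read off $h^{+}$, and apply \eqref{eq:singsg}; the only difference is that you check explicitly that $S \cup E'$ is a Hahn set for $h$, a step the paper compresses into the phrase that the Jordan parts of $h$ split over the two carriers.
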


\begin{proof}
Put $h := \alpha - \rho$; then $h(\XX) = 0$, so $\tv{h} = h^{+}(\XX)$. Let
$S \in \XXA$ be a set with $\rho(S) = 0$ carrying $\alpha_{\sgp{\rho}}$, and split
\[
  h \;=\; \alpha_{\sgp{\rho}} \;+\; \bigl( \alpha_{\acp{\rho}} - \rho \bigr) .
\]
The first summand is nonnegative and carried by $S$; the second vanishes on every
measurable subset of $S$, since $\rho(S) = 0$ and $\alpha_{\acp{\rho}} \ll \rho$. The
two are therefore carried by the disjoint sets $S$ and $\XX\setminus S$, so the Jordan
parts of $h$ split over them:
\[
  h^{+} \;=\; \alpha_{\sgp{\rho}} \;+\; \bigl( \alpha_{\acp{\rho}} - \rho \bigr)^{+} .
\]
Taking total masses and using \eqref{eq:singsg} gives the first form of
\eqref{eq:singsplit}; exchanging the roles of $\alpha$ and $\rho$ gives the second,
since $\tv{\rho-\alpha} = \tv{\alpha-\rho}$. The two bounds follow because the
remaining term is nonnegative.
\end{proof}

\begin{remark}[What the splitting says, and what the bound costs]\label{rem:threepieces}
Identity \eqref{eq:singsplit} separates the two ways in which $\alpha$ can carry mass
that $\rho$ does not: it can put mass where $\rho$ is blind, which is the first term,
and on the region $\rho$ does see it can put down too much, which is the second. The
criterion of Theorem \ref{thm:asympequiv} kills the first term in both orders and says
nothing at all about the second; the entire content of its sufficiency half is that,
under \hyp{A}{A} and \hyp{J}{J}, killing the first forces the second to vanish as well.
That is why the necessity half is one line and the sufficiency half needs the whole
apparatus of Section \ref{sec:core}.

Both bounds are sharp, and \eqref{eq:singsplit} says by how much each fails to be an
equality. For $\alpha := \tfrac12(\delta_{0}+\lambda)$ and $\rho := \lambda$ the
uniform law on $[0,1]$ the first is an equality and the second is not:
$\sing(\alpha\mid\rho) = \tfrac12 = \tv{\alpha-\rho}$, because
$\alpha_{\acp{\rho}} = \tfrac12\lambda \le \rho$ leaves no second term, whereas
$\sing(\rho\mid\alpha) = 0$, the whole distance sitting in
$(\rho_{\acp{\alpha}}-\alpha)^{+}(\XX) = \tfrac12$. Exchanging $\alpha$ and $\rho$
exchanges the two readings, which is the sense in which neither singular mass dominates
the other (Remark \ref{rem:singmass}).
\end{remark}

\begin{remark}[The reading of \hyp{S}{S} that the name records]\label{rem:singdist}
Write $\Probac := \{\beta \in \Prob : \beta \ll \pi\}$ for the set of laws possessing a
density with respect to $\pi$. The refinement in \eqref{eq:singmin} says that, for
$\alpha \in \Prob$, the number $\sing(\alpha\mid\pi)$ is the distance from $\alpha$ to
$\Probac$ in the total
variation metric of
Definition \ref{def:tv}; so the hypothesis \hyp{S}{S} of
Subsection \ref{ssec:PS}, namely $\sing(T^{n}_{x}\mid\pi) \to 0$ for every $x$, says exactly
that
\[
  \min_{\beta \in \Probac} \tv{T^{n}_{x} - \beta} \;\longrightarrow\; 0
  \qquad\text{as } n \to \infty, \text{ for every } x \in \XX .
\]
The law of the chain converges to the \emph{set} of laws with a density, not to any
particular one; it need not itself have a density at any time, and
Example \ref{ex:UnotR} and Remark \ref{rem:pcn} exhibit chains for which
$T^{n}_{x,\sgp{\pi}} \ne 0$ for every $n$. This is what the phrase \emph{asymptotic
absolute continuity} is meant to record, and by \eqref{eq:singmin} it records it
exactly rather than by analogy.
\end{remark}

\begin{lemma}[Propagation and monotonicity of the two singular masses]\label{lem:monotone}
Assume \hyp{A}{A}, let $\alpha, \sigma : \XXA \to [0,\infty)$ be finite measures and
let $\mu, \nu \in \Prob$.
\begin{enumerate}[label=\textup{(\roman*)},nosep,topsep=3pt,leftmargin=2.4em,itemsep=3pt]
  \item \emph{Absolute continuity propagates forwards:} if $\nu \ll \pi$ then
        $T\circ\nu \ll \pi$.
  \item \emph{So does domination of the target:} if $\pi \ll \nu$ then
        $\pi \ll T\circ\nu$. Hence $T\circ\nu \sim \pi$ whenever $\nu \sim \pi$, and,
        for every $x \in \XX$ and every $m \in \N$, each of the relations
        $T^{m}_{x} \ll \pi$, $\pi \ll T^{m}_{x}$ and $T^{m}_{x} \sim \pi$, once it
        holds, holds at every later time $n \ge m$.
  \item \emph{The singular mass is monotone in the measure:} if $\sigma \le \alpha$
        then $\sing(\sigma\mid\pi) \le \sing(\alpha\mid\pi)$.
  \item \emph{It does not increase along the dynamics:}
        $\sing(T\circ\alpha\mid\pi) \le \sing(\alpha\mid\pi)$. Consequently the map
        $\Nz \to [0,1]$, $n \mapsto \sing(T^{n}\circ\mu\mid\pi)$, is non-increasing.
  \item \emph{Nor does the reversed singular mass:}
        $\sing(\pi \mid T\circ\nu) \le \sing(\pi\mid\nu)$. Consequently the map
        $\Nz \to [0,1]$, $n \mapsto \sing(\pi\mid T^{n}_{x})$, is non-increasing, for
        every $x \in \XX$.
\end{enumerate}
Part (i), and the first assertion of (ii), are the vanishing cases of (iv) and (v), by
the last sentence of Proposition \ref{prop:singforms}; they are proved first because
(iv) uses (i).
\end{lemma}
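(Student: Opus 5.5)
The plan is to prove the five parts in the order (i), (iii), (iv), (ii), (v), using only invariance of $\pi$, the Lebesgue decomposition (Theorem \ref{thm:lebesgue}) and the descriptions of the singular mass in Proposition \ref{prop:singforms}.

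For (i), let $A \in \XXA$ with $\pi(A) = 0$. Invariance gives $0 = \pi(A) = \int T(A\mid x)\,\pi(\dd x)$, so the map $T(A\mid\cdot\,)$ vanishes $\pi$-almost everywhere. Since $\nu \ll \pi$, it also vanishes $\nu$-almost everywhere, and hence $(T\circ\nu)(A) = 0$.

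Part (iii) is immediate from \eqref{eq:singnull}: if $\beta(A)=0$ and $\sigma \le \alpha$, then $\sigma(A) \le \alpha(A)$, and taking maxima gives the claim.

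For (iv), write $\alpha = \alpha_{\acp{\pi}} + \alpha_{\sgp{\pi}}$, so that $T\circ\alpha = T\circ\alpha_{\acp{\pi}} + T\circ\alpha_{\sgp{\pi}}$. The first summand is absolutely continuous with respect to $\pi$; this follows from (i) after normalising to a probability measure, and the case of zero mass is trivial. The measure $\gamma := T\circ\alpha_{\acp{\pi}}$ is therefore admissible in \eqref{eq:singvar}. This gives
\[
\sing(T\circ\alpha\mid\pi) \le (T\circ\alpha)(\XX) - (T\circ\alpha_{\acp{\pi}})(\XX) = \alpha_{\sgp{\pi}}(\XX) = \sing(\alpha\mid\pi),
\]
where we use that $T$ preserves total mass. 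Monotonicity of $n \mapsto \sing(T^{n}\circ\mu\mid\pi)$ then follows by iteration.

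For (ii), let $\pi \ll \nu$ and let $A$ satisfy $(T\circ\nu)(A) = 0$. Then $T(A\mid\cdot\,) = 0$ $\nu$-almost everywhere, hence $\pi$-almost everywhere, and invariance gives $\pi(A) = \int T(A\mid x)\,\pi(\dd x) = 0$. Combined with (i), this yields $T\circ\nu \sim \pi$ whenever $\nu \sim \pi$. The statement about later times follows by applying these facts to $\nu := T^{m}_{x}$ and inducting on $n$.

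The main obstacle is (v), since the reference measure now moves with the dynamics. The first idea is to generalise the argument of (ii). Let $S$ be a $\nu$-null set carrying $\pi_{\sgp{\nu}}$, so that $\pi(\XX\setminus S) = \pi(\XX) - \sing(\pi\mid\nu)$ by \eqref{eq:singcarrier}. Take any $A$ with $(T\circ\nu)(A) = 0$; we must show $\pi(A) \le \sing(\pi\mid\nu)$, and then \eqref{eq:singnull} gives the claim. As before, $T(A\mid\cdot\,) = 0$ $\nu$-almost everywhere. Since $\pi\restriction(\XX\setminus S) = \pi_{\acp{\nu}} \ll \nu$, it also vanishes $\pi$-almost everywhere on $\XX\setminus S$. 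Invariance then gives
\[
\pi(A) = \int_{S} T(A\mid x)\,\pi(\dd x) \le \pi(S) = \sing(\pi\mid\nu),
\]
which completes (v). Iterating with $\nu := T^{n}_{x}$ gives the monotonicity of $n \mapsto \sing(\pi\mid T^{n}_{x})$. The step to check most carefully is the identification $\pi\restriction(\XX\setminus S) \ll \nu$, which comes from the uniqueness part of Theorem \ref{thm:lebesgue} together with \eqref{eq:singcarrieraux}.
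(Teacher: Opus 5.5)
Your proposal is correct. Parts (i), (ii) and (iv) follow the paper's arguments. In (iv) the paper pushes forward an arbitrary admissible $\beta \le \alpha$ with $\beta \ll \pi$, where you push forward the maximiser $\alpha_{\acp{\pi}}$; this makes no difference.

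You genuinely differ in (iii) and (v). There you argue through the null-set form \eqref{eq:singnull}, whereas the paper works from the variational Definition \ref{def:singmass}.

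\emph{Part (iii).} The paper builds the admissible measure $A \mapsto \sigma(A\setminus S)$ from a carrier $S$ of $\alpha_{\sgp{\pi}}$. Your comparison of $\sigma(A) \le \alpha(A)$ over $\pi$-null sets $A$ is a one-liner. (The ``$\beta(A)=0$'' there should read $\pi(A)=0$.)

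\emph{Part (v).} The paper pushes forward the absolutely continuous part $\alpha' := \pi_{\acp{\nu}}$. It checks three things: $T\circ\alpha' \le T\circ\pi = \pi$, $T\circ\alpha' \ll T\circ\nu$, and the mass of $T\circ\alpha'$ is $1-\sing(\pi\mid\nu)$. Hence $T\circ\alpha'$ is admissible in $\Sigma(\pi\mid T\circ\nu)$. You instead rerun the proof of (ii) quantitatively, bounding the $\pi$-mass of every $T\circ\nu$-null set by $\pi(S) = \sing(\pi\mid\nu)$.

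Each route buys something different. Yours makes the lemma's closing sentence literal at the level of proofs: the first assertion of (ii) is exactly your argument for (v) in the case $\pi(S)=0$. The paper's treats (iv) and (v) by a single mechanism, pushing a witness of the supremum forward. In (v) invariance then enters as the domination $T\circ\alpha' \le \pi$ of Lemma \ref{lem:domination} with $M=1$, rather than through $\pi(A) = \int T(A\mid x)\,\pi(\dd x)$.

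The step you flag as delicate needs no uniqueness argument. If $\nu(E)=0$, then
$\pi(E\setminus S) = \pi_{\acp{\nu}}(E\setminus S) + \pi_{\sgp{\nu}}(E\setminus S) = 0 + 0$.
The first term vanishes because $\pi_{\acp{\nu}} \ll \nu$, and the second because $\pi_{\sgp{\nu}}$ is carried by $S$. This is the same computation the paper uses in its proof of (iii).
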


\begin{proof}
\emph{(i).} Let $A \in \XXA$ with $\pi(A) = 0$. Invariance gives
$0 = \pi(A) = \int T(A \mid x)\,\pi(\dd x)$, and the integrand is nonnegative, so
$T(A \mid \cdot\,) = 0$ $\aewhere$, hence $\nu$-almost everywhere because
$\nu \ll \pi$. Therefore $(T\circ\nu)(A) = \int T(A\mid x)\,\nu(\dd x) = 0$.

\emph{(ii).} Let $A \in \XXA$ with $(T\circ\nu)(A) = 0$. Since
$T(A\mid\cdot\,) \ge 0$, this forces $T(A\mid\cdot\,) = 0$ $\nu$-almost everywhere,
hence $\aewhere$ because $\pi \ll \nu$. Therefore
$\pi(A) = (T\circ\pi)(A) = \int T(A\mid x)\,\pi(\dd x) = 0$. The statement about $\sim$
follows by combining this with (i), and the propagation statements by iteration, using
$T \circ T^{n}_{x} = T^{n+1}_{x}$.

\emph{(iii).} Let $S \in \XXA$ be a set with $\pi(S) = 0$ carrying $\alpha_{\sgp{\pi}}$
and put $\beta(A) := \sigma(A\setminus S)$. Then $\beta \le \sigma$, and
$\beta \ll \pi$: if $\pi(A) = 0$ then $\alpha_{\acp{\pi}}(A) = 0$, so
$\alpha(A\setminus S) = \alpha_{\acp{\pi}}(A\setminus S) + \alpha_{\sgp{\pi}}(A\setminus S) = 0$,
the second term because $\alpha_{\sgp{\pi}}$ is carried by $S$, and hence
$\beta(A) \le \alpha(A\setminus S) = 0$. So $\beta$ is admissible in
Definition \ref{def:singmass} and, using \eqref{eq:singcarrier} for $\alpha$,
\[
  \sing(\sigma\mid\pi) \;\le\; \sigma(\XX) - \beta(\XX) \;=\; \sigma(S)
  \;\le\; \alpha(S) \;=\; \sing(\alpha\mid\pi) .
\]

\emph{(iv).} Let $\beta \le \alpha$ with $\beta \ll \pi$; we may assume $\beta \ne 0$.
Then $T\circ\beta \le T\circ\alpha$ by \ref{conv:order}, since
$(T\circ\beta)(A) = \int T(A\mid x)\beta(\dd x) \le \int T(A \mid x)\alpha(\dd x)$ for
every $A \in \XXA$; and $T \circ \beta \ll \pi$ by (i) applied to
$\beta/\beta(\XX) \in \Prob$. Since $(T\circ\beta)(\XX) = \beta(\XX)$, every
number admissible in the supremum for $\alpha$ is admissible in the supremum for
$T\circ\alpha$; as moreover $(T\circ\alpha)(\XX) = \alpha(\XX)$, the claim follows by
taking suprema. The second statement follows by iteration, using
$T \circ (T^{n}\circ\mu) = T^{n+1}\circ\mu$.

\emph{(v).} Let $\pi = \alpha' + \sigma'$ be the Lebesgue decomposition of $\pi$ with
respect to $\nu$, so that $\alpha' \ll \nu$, the measure $\sigma'$ is carried by a
$\nu$-null set, and $\alpha'(\XX) = 1 - s$ with $s := \sing(\pi\mid\nu)$ by
\eqref{eq:singac}. Three observations about $T\circ\alpha'$:
\begin{enumerate}[label=\textup{(\alph*)},nosep,topsep=3pt,leftmargin=2.4em]
  \item $T\circ\alpha' \le T\circ\pi = \pi$, by \ref{conv:order} applied to
        $\alpha' \le \pi$ and by the invariance \hyp{A3}{A3};
  \item $T\circ\alpha' \ll T\circ\nu$: if $(T\circ\nu)(A) = 0$ then the nonnegative map
        $T(A\mid\cdot\,)$ vanishes $\nu$-almost everywhere, hence $\alpha'$-almost
        everywhere because $\alpha' \ll \nu$, and so $(T\circ\alpha')(A) = 0$;
  \item $(T\circ\alpha')(\XX) = \alpha'(\XX) = 1-s$.
\end{enumerate}
So $T\circ\alpha'$ is admissible in the supremum $\Sigma(\pi \mid T\circ\nu)$ of
Definition \ref{def:singmass}, whence
$\sing(\pi\mid T\circ\nu) \le \pi(\XX) - (1-s) = s$. The second statement follows by
iteration, using $T\circ T^{n}_{x} = T^{n+1}_{x}$.
\end{proof}

\noindent
Part (ii) is what allows a hypothesis with a step number depending on the starting
point to be used for a \emph{pair} of starting points at once: given $x$ and $y$ with
$\pi \ll T^{m_{x}}_{x}$ and $\pi \ll T^{m_{y}}_{y}$, the single number
$m := \max\{m_{x},m_{y}\}$ serves both; this is how Section \ref{sec:general} uses it,
in the form of Corollary \ref{cor:selfimprovement}. That no aperiodicity hypothesis is
needed anywhere in these notes has a more basic reason, given after
Assumption \ref{ass:L}: a chain of period $d \ge 2$ violates \hyp{L}{L} outright.

\noindent
In the notation of \ref{conv:order} the hypothesis $\pi \ll \nu$ of (ii) reads
$\nu \gg \pi$: the measure $\nu$ \emph{dominates} $\pi$, and this is strictly weaker
than $\nu \sim \pi$, which would in addition require $\nu \ll \pi$, that is
$\sing(\nu\mid\pi) = 0$. The two hypotheses of Subsection \ref{ssec:L} are exactly
these two halves, the second only in the limit.

\noindent
So far the Lebesgue decomposition has served only to name a quantity that is then
handled variationally. From here on the density of the absolutely continuous part is
the object of interest itself, and it is given a symbol.

\begin{definition}[The density of the absolutely continuous part]\label{def:lebesgue}
Let $\alpha : \XXA \to [0,\infty)$ be a finite measure and let
$\alpha = \alpha_{\acp{\pi}} + \alpha_{\sgp{\pi}}$ be its Lebesgue decomposition with
respect to $\pi$, as in Theorem \ref{thm:lebesgue} applied with $\mu := \pi$ and
$\nu := \alpha$, with $\alpha_{\sgp{\pi}}$ carried by a set $S \in \XXA$ with
$\pi(S) = 0$; both summands are finite, and
$\sing(\alpha\mid\pi) = \alpha_{\sgp{\pi}}(\XX)$ by \eqref{eq:singsg}. By the
Radon--Nikodym theorem we may fix a map
\[
  g_{\alpha} : \XX \longrightarrow [0,\infty),
  \qquad
  \alpha_{\acp{\pi}}(A) = \int_{A} g_{\alpha} \dd\pi \quad (A \in \XXA),
\]
finite at every point, and call it \emph{the density of the absolutely continuous part
of $\alpha$}; by \ref{conv:representative} it is a genuine map, and every statement
below about it is invariant under changing it on a $\pi$-null set.

Read with the pair $(\pi,\alpha)$ in place of $(\alpha,\beta)$, form
\eqref{eq:singdens} of Proposition \ref{prop:singforms} expresses the \emph{reversed}
singular mass through this same density:
\begin{equation}
  \sing(\pi\mid\alpha) \;=\; \pi\bigl( \{ g_{\alpha} = 0 \} \bigr) ,
  \label{eq:acpositive}
\end{equation}
so that $\pi \ll \alpha$ if and only if $g_{\alpha} > 0$ $\aewhere$, by the last
sentence of Proposition \ref{prop:singforms}. The set on which
the density of the absolutely continuous part still vanishes is thus exactly the part
of the target that $\alpha$ does not see.
\end{definition}

\begin{remark}[Why the singular mass is not asserted to be measurable in the starting point]\label{rem:singnotmeasurable}
For fixed $n$ the map $x \mapsto \sing(T^{n}_{x}\mid\pi)$ is a supremum, over the
uncountably many measures $\beta \le T^{n}_{x}$ with $\beta \ll \pi$, of quantities
each of which depends on $x$; nothing in Remark \ref{rem:nostructure} makes it
measurable, and the situation is the same as for $x \mapsto \tv{T^{n}_{x}-\pi}$
(Remark \ref{rem:tvnotmeasurable}). This is why \hyp{S}{S}, when it is stated in
Subsection \ref{ssec:PS}, will be a condition on that number for each
$x \in \XX$ separately, and never an integral of it against a law. Nothing
in these notes needs more, and the passage from the
points to an arbitrary initial law is carried out by Lemma \ref{lem:pointwise},
which fixes one Hahn set per time step precisely in order to avoid integrating a map
that has not been shown to be measurable.
\end{remark}

\subsection{The two assumptions \texorpdfstring{\hyp{P}{P} and \hyp{S}{S}}{(P) and (S)}}
\label{ssec:PS}

Theorem \ref{thm:abstract} is not yet in a form that can be checked on a given
kernel: \eqref{prop:U} quantifies over all pairs of dominated laws, and
\eqref{prop:R} asks for a dominated approximant without saying where to find one. We
now replace the two by hypotheses that are statements about $T$ and $\pi$ alone, and
that are what Sections \ref{sec:first}--\ref{sec:applications} actually verify. Like
\eqref{prop:R}, the second of them is a condition at each point of $\XX$ separately.

\begin{assumption}[An eventual positive minorant transition density]\label{ass:P}
In addition to \hyp{A}{A}:
\begin{enumerate}[nosep,topsep=3pt,leftmargin=3em]
  \item[\hyptgt{P}{P}] For every number $\varepsilon \in (0,1)$ there are a number
        $N \in \N$, sets $\YY, \YY' \in \XXA$ and a $\XXA\otimes\XXA$-measurable map
        \[
          s : \XX \times \XX \longrightarrow [0,\infty), \qquad (x,y) \longmapsto s(y \mid x),
        \]
        all four of which may depend on $\varepsilon$, such that
        \begin{enumerate}[label=(\alph*),nosep,topsep=2pt,leftmargin=2em]
          \item $\pi(\XX \setminus \YY) \le \varepsilon$ and $\pi(\XX\setminus\YY') \le \varepsilon$;
          \item $T^{N}(A \mid x) \ge \displaystyle\int_{A} s(y \mid x)\,\pi(\dd y)$ for every $x \in \YY$ and every $A \in \XXA$;
          \item $\pi(\{ x \in \YY : s(y \mid x) = 0 \}) \le \varepsilon$ for every $y \in \YY'$.
        \end{enumerate}
\end{enumerate}
\end{assumption}

\noindent
Hypotheses (a)--(c) are the hypotheses (i)--(iii) of
Proposition \ref{prop:minorisation}, with the single tolerance $\varepsilon$ in place
of the two thresholds $\delta$ and $\delta/4$ used there;
\hyp{P}{P} says nothing more than that those hypotheses can be met for an arbitrarily
small tolerance. Only an inequality is required in (b).

The tolerance $\varepsilon$ has a double role, and it is worth naming it now. In (a)
it is the $\pi$-measure of the set of starting points that one is allowed to discard;
and it will be instantiated as $\varepsilon := 1/(16M)$ in the proof of
Lemma \ref{lem:PimpliesU}, where $M$ is the domination constant for which
\eqref{prop:U} is being verified. So ``arbitrarily small $\varepsilon$'' is the same
demand as ``arbitrarily large $M$'': the more concentrated the initial laws are
allowed to be, the smaller the exceptional set that \hyp{P}{P} must tolerate.

\begin{lemma}[\hyp{P}{P} implies \eqref{prop:U}]\label{lem:PimpliesU}
Assume \hyp{A}{A} and \hyp{P}{P}. Then \eqref{prop:U} holds.
\end{lemma}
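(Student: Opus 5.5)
The plan is to reduce the statement directly to Proposition \ref{prop:minorisation}, which already delivers the bound in \eqref{prop:U} once its hypotheses (i)--(iii) are met for the given $M$. So fix a number $M \in [1,\infty)$ and put $\delta := 1/(4M) \in (0,\tfrac14]$, as in Proposition \ref{prop:minorisation}. Then choose the tolerance in \hyp{P}{P} as $\varepsilon := \delta/4 = 1/(16M) \in (0,1)$. This is the instantiation announced after Assumption \ref{ass:P}. \hyp{P}{P} then supplies a number $N \in \N$, sets $\YY,\YY' \in \XXA$ and an $\XXA\otimes\XXA$-measurable map $s$ satisfying (a)--(c) with this $\varepsilon$.

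The next step is to check the three hypotheses of Proposition \ref{prop:minorisation} one by one.

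\begin{enumerate}[label=(\roman*),nosep,topsep=3pt]
  \item From (a) we get $\pi(\XX\setminus\YY) \le \delta/4 \le \delta$ and $\pi(\XX\setminus\YY') \le \delta/4$.
  \item Hypothesis (ii) is literally (b).
  \item From (c) we get $\pi(\{x\in\YY : s(y\mid x)=0\}) \le \delta/4$ for every $y \in \YY'$, which is (iii).
\end{enumerate}

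Proposition \ref{prop:minorisation} then yields $\gamma \in (0,1]$, depending only on $M,N,\YY,\YY',s,\pi$, with $\tv{T^{N}\circ\mu - T^{N}\circ\nu} \le 1-\gamma$ for all $\mu,\nu\in\Prob$ dominated by $M\pi$.

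Since $N$ and $\gamma$ were obtained from $M$ alone, and not from the pair $(\mu,\nu)$, this is exactly the quantifier order demanded by \eqref{prop:U}. As $M$ was arbitrary, \eqref{prop:U} holds.

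There is no genuine obstacle. The only point needing care is choosing one $\varepsilon$ that meets simultaneously the looser threshold $\delta$ for $\YY$ and the tighter thresholds $\delta/4$ for $\YY'$ and for the zero set in (iii). Taking the minimum, $\varepsilon = \delta/4$, handles all three at once. One should also note that $\varepsilon<1$, which holds because $M \ge 1$, so that \hyp{P}{P} may be invoked.
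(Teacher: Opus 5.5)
Your proposal is correct and is the paper's own argument. Fix $M$, put $\delta = 1/(4M)$, and invoke \hyp{P}{P} with $\varepsilon = \delta/4 = 1/(16M)$; then \hyp{P}{P}(a)--(c) give hypotheses (i)--(iii) of Proposition~\ref{prop:minorisation}, which yields $N$ and $\gamma$ depending on $M$ alone, and your checks of the quantifier order and of $\varepsilon<1$ are the same points the paper's proof relies on.
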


\begin{proof}
Let $M \in [1,\infty)$ be a number and put $\delta := 1/(4M)$. Apply \hyp{P}{P} with
the number $\varepsilon := \delta/4 = 1/(16M) \in (0,1)$, obtaining $N$, $\YY$, $\YY'$
and $s$, and then Proposition \ref{prop:minorisation} with these $M$, $N$, $\YY$,
$\YY'$ and $s$: its hypotheses (i), (ii) and (iii) are \hyp{P}{P}(a), (b) and (c),
since $\varepsilon = \delta/4 \le \delta$. This yields a number $\gamma \in (0,1]$ as
required.
\end{proof}

\begin{assumption}[Asymptotic absolute continuity]\label{ass:S}
In addition to \hyp{A}{A}:
\begin{enumerate}[nosep,topsep=3pt,leftmargin=3em]
  \item[\hyptgt{S}{S}] $\displaystyle\lim_{n\to\infty} \sing\bigl( T^{n}_{x} \bigm| \pi\bigr) = 0$ for every $x \in \XX$.
\end{enumerate}
\end{assumption}

\noindent
In words: from every starting point, after enough steps, all but an arbitrarily small
proportion of the mass of the law of the chain is absolutely continuous with respect
to $\pi$. By Lemma \ref{lem:monotone}(iv) applied to $\delta_{x}$ the limit exists
in $[0,1]$ for each $x$, so \hyp{S}{S} asserts only that it vanishes. Like
\eqref{prop:R}, of which it is the exact counterpart, it is a condition at each point
separately: no measurability of $x \mapsto \sing(T^{n}_{x}\mid\pi)$ is asserted,
and none is used (Remark \ref{rem:singnotmeasurable}). That the same statement for an
arbitrary initial law $\mu \in \Prob$ then holds a posteriori is
Remark \ref{rem:aposteriori}.

\noindent
One fact is still needed, and it is the only place where a density is truncated: a law
with a density can be replaced, at the cost of an arbitrarily small error, by one
dominated by a multiple of $\pi$. This is what turns the qualitative statement
``$\nu \ll \pi$'' into the quantitative $\rho \le M\pi$ that \eqref{prop:R} asks for.

\begin{lemma}[Truncation]\label{lem:truncation}
Let $\nu \in \Prob$ with $\nu \ll \pi$ and let $\varepsilon \in (0,1)$ be a number.
Then there exist a number $M \in [1,\infty)$ and a probability measure
$\rho \in \Prob$ with
\[
  \rho \le M\pi \qquad\text{and}\qquad \tv{\nu - \rho} \le \varepsilon .
\]
\end{lemma}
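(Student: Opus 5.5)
The plan is to truncate the density of $\nu$ at a high level and renormalise. By the Radon--Nikodym theorem fix a density $f \in \Dens$ of $\nu$ with respect to $\pi$, finite at every point (changing it on a $\pi$-null set if necessary). For a level $K \in [1,\infty)$ put $f_{K} := \min\{f, K\}$ and $c_{K} := \int_{\XX} f_{K} \dd\pi \in (0,1]$. Since $f_{K} \uparrow f$ pointwise as $K \to \infty$, monotone convergence gives $c_{K} \uparrow \int f \dd\pi = 1$. Choose $K$ so large that $1 - c_{K} \le \varepsilon/2$; positivity of $c_{K}$ holds as soon as $K \ge 1$ and $c_K>0$, which is automatic once $c_K \ge 1-\varepsilon/2 > 0$.

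Define $\rho(A) := c_{K}^{-1}\int_{A} f_{K}\dd\pi$, a probability measure with density $f_{K}/c_{K} \le K/c_{K}$, so $\rho \le M\pi$ with $M := K/c_{K} \in [1,\infty)$ (indeed $M \ge K \ge 1$). It remains to bound the distance, which I will do with Lemma \ref{lem:tvformulas} applied with $\lambda := \pi$, $f$ and $g := f_{K}/c_{K}$:
\[
  \tv{\nu-\rho} = \tfrac12\int_{\XX}\Bigl|f - \tfrac{f_{K}}{c_{K}}\Bigr|\dd\pi
  \le \tfrac12\int_{\XX}(f - f_{K})\dd\pi + \tfrac12\Bigl(\tfrac{1}{c_{K}}-1\Bigr)\int_{\XX} f_{K}\dd\pi
  = \tfrac12(1-c_{K}) + \tfrac12(1-c_{K}) = 1-c_{K},
\]
using the triangle inequality $|f - f_K/c_K| \le (f-f_K) + f_K(1/c_K - 1)$ pointwise. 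Hence $\tv{\nu-\rho} \le \varepsilon/2 \le \varepsilon$.

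There is no real obstacle; the only points needing care are that $f$ may be taken finite (so Lemma \ref{lem:tvformulas} applies and $f_{K} \uparrow f$ everywhere), and that the normalising constant $c_{K}$ is bounded away from $0$, which is what keeps $M = K/c_{K}$ finite. Alternatively one could avoid renormalisation by adding the missing mass $(1-c_K)\pi$, taking $\rho := f_K\pi + (1-c_K)\pi \le (K+1)\pi$, with the same distance bound $1-c_K$ by Lemma \ref{lem:minorant} applied to the common minorant $f_K\pi$; either route works.
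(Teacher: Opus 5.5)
Your proof is correct and is essentially the paper's own argument: truncate the density at a high level, renormalise by $c_{K}$, and bound $\tfrac12\Lone{f - f_{K}/c_{K}} \le 1-c_{K}$ through Lemma \ref{lem:tvformulas} with $\lambda := \pi$. Your alternative, which adds back the missing mass $(1-c_{K})\pi$ and applies Lemma \ref{lem:minorant} to the common minorant $f_{K}\pi$, is also valid and gives $M = K+1$.
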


\begin{proof}
Let $f : \XX \to [0,\infty]$ be a density of $\nu$ with respect to $\pi$, so that
$f \in \Dens$. Consider the map $c_{\bullet} : [1,\infty) \to [0,1]$,
$c_{M_{0}} := \int_{\XX} \min(f,M_{0})\dd\pi$, in which $M_{0}$ is a
\emph{truncation level} and not the domination constant asserted in the statement.
If $c_{M_{0}} = 0$ for some $M_{0}$, then $f = 0$ $\aewhere$, contradicting
$\int f \dd\pi = 1$; so $c_{M_{0}} \in (0,1]$ for every $M_{0} \in [1,\infty)$, and
$c_{M_{0}} \to 1$ as $M_{0} \to \infty$ by monotone convergence.
Choose a number $M_{0} \in [1,\infty)$ with $c_{M_{0}} \ge 1-\varepsilon$ and set
\[
  \tilde f := \frac{\min(f,M_{0})}{c_{M_{0}}} \in \Dens,
  \qquad
  \rho(A) := \int_{A} \tilde f \dd\pi \quad (A \in \XXA).
\]
Then $\tilde f \le M := M_{0}/c_{M_{0}}$ everywhere, i.e.\ $\rho \le M\pi$ with the
number $M \in [1,\infty)$ --- this is the number whose existence is asserted --- and
\begin{align*}
  \Lone{f - \tilde f}
  &\le \Lone{f - \min(f,M_{0})} + \Lone{\min(f,M_{0}) - \tilde f}\\
  &= (1-c_{M_{0}}) + c_{M_{0}}\Bigl( \frac{1}{c_{M_{0}}} - 1 \Bigr)
   = 2\,(1-c_{M_{0}}),
\end{align*}
so that $\tv{\nu - \rho} = \tfrac12\Lone{f-\tilde f} \le 1 - c_{M_{0}} \le \varepsilon$
by Lemma \ref{lem:tvformulas} with $\lambda := \pi$.
\end{proof}

\begin{lemma}[\hyp{S}{S} is equivalent to \eqref{prop:R}]\label{lem:SiffR}
Assume \hyp{A}{A}. Then \hyp{S}{S} holds if and only if \eqref{prop:R} holds. More
precisely, for each fixed $\mu \in \Prob$ the condition
$\lim_{n}\sing(T^{n}\circ\mu\mid\pi) = 0$ is equivalent to the conclusion of
\eqref{prop:R} with $T^{n_{1}}\circ\mu$ in place of $T^{n_{1}}_{x}$; the
two hypotheses are the instances of this equivalence at the Dirac measures.
\end{lemma}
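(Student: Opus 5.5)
The plan is to prove the refined statement for a fixed $\mu \in \Prob$; the equivalence of \hyp{S}{S} and \eqref{prop:R} then follows by taking $\mu := \delta_{x}$ for each $x \in \XX$, since $T^{n}\circ\delta_{x} = T^{n}_{x}$.

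\emph{From vanishing singular mass to \eqref{prop:R}.} Assume $\sing(T^{n}\circ\mu\mid\pi) \to 0$ and let $\varepsilon \in (0,1)$. Choose $n_{1}$ with $\sing(T^{n_{1}}\circ\mu\mid\pi) \le \varepsilon/2$.

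By the refinement in \eqref{eq:singmin} of Proposition \ref{prop:singforms}, applied to $\alpha := T^{n_{1}}\circ\mu$ and $\beta := \pi$, there is $\nu \in \Prob$ with $\nu \ll \pi$ and $\tv{T^{n_{1}}\circ\mu - \nu} \le \varepsilon/2$. This step uses that both measures are probability measures.

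Lemma \ref{lem:truncation}, applied to $\nu$ with tolerance $\varepsilon/2$, gives $M \in [1,\infty)$ and $\rho \in \Prob$ with $\rho \le M\pi$ and $\tv{\nu-\rho} \le \varepsilon/2$. The triangle inequality of Definition \ref{def:tv} then yields $\tv{T^{n_{1}}\circ\mu - \rho} \le \varepsilon$. If one prefers to avoid the refinement, take $\nu := \alpha_{\acp{\pi}}/(1-s)$ with $s := \sing(\alpha\mid\pi)$ directly; the computation is exactly the one already carried out in Proposition \ref{prop:singforms}.

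\emph{From \eqref{prop:R} to vanishing singular mass.} Assume that for every $\varepsilon$ there are $n_{1}$, $M$ and $\rho \le M\pi$ with $\tv{T^{n_{1}}\circ\mu-\rho} \le \varepsilon$.

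Since $\rho \le M\pi$ implies $\rho \ll \pi$, form \eqref{eq:singmin} gives $\sing(T^{n_{1}}\circ\mu\mid\pi) \le \tv{T^{n_{1}}\circ\mu - \rho} \le \varepsilon$. Alternatively, Corollary \ref{cor:singtv} combined with $\sing(\rho\mid\pi) = 0$ gives the same bound.

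Lemma \ref{lem:monotone}(iv) says that $n \mapsto \sing(T^{n}\circ\mu\mid\pi)$ is non-increasing. Hence the bound $\le \varepsilon$ holds for all $n \ge n_{1}$. As $\varepsilon$ was arbitrary, the limit is $0$.

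The main point requiring care is this monotonicity. \eqref{prop:R} provides only a single good time $n_{1}$, while the limit statement concerns all large $n$; the gap is bridged by Lemma \ref{lem:monotone}(iv). Beyond that, the only care needed is to approximate by a probability measure $\nu \ll \pi$ rather than by the subprobability measure $\alpha_{\acp{\pi}}$, so that Lemma \ref{lem:truncation} applies. Everything else is bookkeeping with the triangle inequality.
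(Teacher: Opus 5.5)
Your proof is correct, and its skeleton matches the paper's. You approximate $T^{n_{1}}\circ\mu$ by a law with a density and truncate it with Lemma~\ref{lem:truncation}. In the converse you bound the singular mass at the single time $n_{1}$ and propagate the bound with Lemma~\ref{lem:monotone}(iv).

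The difference is which description of $\sing$ you work with.
\begin{itemize}
\item You route both directions through form \eqref{eq:singmin} of Proposition~\ref{prop:singforms}, the distance to laws with a density, with the minimum attained.
\item The paper uses only the variational Definition~\ref{def:singmass}. For \hyp{S}{S}$\Rightarrow$\eqref{prop:R} it takes a near-maximal competitor $\beta \le T^{n_{1}}\circ\mu$ with $\beta \ll \pi$ and normalises it. This is why it carries the extra slack $\varepsilon/4$: attainment of the supremum is never used.
\item For the converse, the paper writes down an explicit competitor. With $h := T^{n_{1}}\circ\mu - \rho$ it takes $\beta := T^{n_{1}}\circ\mu - h^{+} = \rho - h^{-}$, whose mass is $1 - \tv{h}$.
\end{itemize}

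Your version is shorter once Proposition~\ref{prop:singforms} is available. The paper's version avoids the Lebesgue decomposition, because \eqref{eq:singmin} and the attainment of its minimum rest on Theorem~\ref{thm:lebesgue}. The paper deliberately keeps the proof of Theorem~\ref{thm:criterion} free of that theorem, as stated in the paragraph preceding Definition~\ref{def:singmass}.

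Your alternative via Corollary~\ref{cor:singtv} also works, but it needs one more line. From $\rho \ll \pi$ you get $\Sigma(\alpha\mid\rho) \subseteq \Sigma(\alpha\mid\pi)$, hence $\sing(\alpha\mid\pi) \le \sing(\alpha\mid\rho)$.
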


\begin{proof}
Both implications are proved for one initial law at a time, and \hyp{S}{S} and
\eqref{prop:R} are their instances at $\mu := \delta_{x}$, for which
$T^{n}\circ\delta_{x} = T^{n}_{x}$.

\emph{\hyp{S}{S} implies \eqref{prop:R}.} Let $\mu \in \Prob$ and let
$\varepsilon \in (0,1)$ be a number. Choose a number $n_{1} \in \Nz$ with
$\sing(T^{n_{1}}\circ\mu\mid\pi) \le \varepsilon/4$ and then, by
Definition \ref{def:singmass}, a nonnegative measure $\beta \le T^{n_{1}}\circ\mu$
with $\beta \ll \pi$ and
\[
  \beta(\XX) \;\ge\; 1 - \sing(T^{n_{1}}\circ\mu\mid\pi) - \varepsilon/4 \;\ge\; 1-\varepsilon/2 \;>\; 0 .
\]
Put $\hat\beta := \beta/\beta(\XX) \in \Prob$, so that $\hat\beta \ll \pi$. For every
$A \in \XXA$,
\[
  (T^{n_{1}}\circ\mu)(A) - \hat\beta(A)
  = \bigl( T^{n_{1}}\circ\mu - \beta \bigr)(A) - \bigl( \hat\beta - \beta \bigr)(A)
  \;\in\; \bigl[ -\varepsilon/2,\ \varepsilon/2 \bigr],
\]
because the two subtracted terms are nonnegative measures of total mass
$1 - \beta(\XX) \le \varepsilon/2$ each. Hence
$\tv{T^{n_{1}}\circ\mu - \hat\beta} \le \varepsilon/2$. Applying
Lemma \ref{lem:truncation} to $\hat\beta$ with the number $\varepsilon/2$ yields a
number $M \in [1,\infty)$ and a probability measure $\rho \le M\pi$ with
$\tv{\hat\beta - \rho} \le \varepsilon/2$, and the triangle inequality gives
$\tv{T^{n_{1}}\circ\mu - \rho} \le \varepsilon$.

\emph{\eqref{prop:R} implies \hyp{S}{S}.} Let $\mu \in \Prob$ and let
$\varepsilon \in (0,1)$ be a number, and choose $n_{1}$, $M$ and $\rho \le M\pi$ with
$\tv{T^{n_{1}}\circ\mu - \rho} \le \varepsilon$. Put
$h := T^{n_{1}}\circ\mu - \rho \in \Sign$, let $E \in \XXA$ be a Hahn set for $h$ in
the sense of Definition \ref{def:tv}, and put $\beta := T^{n_{1}}\circ\mu - h^{+}$.
Then $\beta$ is a nonnegative measure, since for $A \in \XXA$
\[
  \beta(A) = (T^{n_{1}}\circ\mu)(A) - h(A \cap E)
           = (T^{n_{1}}\circ\mu)(A \setminus E) + \rho(A \cap E) \ \ge\ 0 ;
\]
moreover $\beta \le T^{n_{1}}\circ\mu$ because $h^{+} \ge 0$, and
$\beta = \rho - h^{-} \le \rho \le M\pi$, so $\beta \ll \pi$. Since $h(\XX) = 0$ we
have $h^{+}(\XX) = \tv{h}$ by Definition \ref{def:tv}, whence
$\beta(\XX) = 1 - \tv{h} \ge 1-\varepsilon$ and therefore
$\sing(T^{n_{1}}\circ\mu\mid\pi) \le \varepsilon$ by Definition \ref{def:singmass}. By
Lemma \ref{lem:monotone}(iv), $\sing(T^{n}\circ\mu\mid\pi) \le \varepsilon$ for every
$n \ge n_{1}$. As $\varepsilon \in (0,1)$ was arbitrary, the limit is $0$.
\end{proof}

\begin{remark}[Where the monotonicity is needed]\label{rem:whymonotone}
Property \eqref{prop:R} asserts the existence of \emph{one} good time $n_{1}$,
whereas \hyp{S}{S} asserts a limit. The passage from the first to the second is
exactly Lemma \ref{lem:monotone}(iv), and without it the two would not be
equivalent. It is also why \hyp{S}{S} may always be verified at a single convenient
time, as it is in each of Sections \ref{sec:first}--\ref{sec:applications}.
\end{remark}

\begin{remark}[\hyp{S}{S} is contiguity to $\pi$]\label{rem:contiguity}
Le Cam's \emph{contiguity} \cite{LeCam60} is the asymptotic form of absolute
continuity: for sequences $(P_{n})$ and $(Q_{n})$ of probability measures on
$(\XX,\XXA)$ one writes $Q_{n} \contig P_{n}$ when
\[
  P_{n}(A_{n}) \to 0
  \quad\Longrightarrow\quad
  Q_{n}(A_{n}) \to 0
  \qquad\text{for every sequence } (A_{n}) \text{ in } \XXA ;
\]
see \cite[Chapter~6]{vanderVaart98}. Assumption \hyp{S}{S} is exactly this relation
between the law of the chain and the constant sequence $\pi$:
\[
  \hyp{S}{S}
  \qquad\Longleftrightarrow\qquad
  \bigl( T^{n}_{x} \bigr)_{n \in \Nz} \contig \pi
  \quad\text{for every } x \in \XX .
\]
For the implication from left to right, let $\varepsilon \in (0,1)$ and let $(A_{n})$
satisfy $\pi(A_{n}) \to 0$. By Lemma \ref{lem:SiffR} there are $n_{1}$, $M$ and
$\rho \le M\pi$ with $\tv{T^{n_{1}}_{x} - \rho} \le \varepsilon$; for $n \ge n_{1}$,
Lemma \ref{lem:contraction} gives
$\tv{T^{n}_{x} - T^{n-n_{1}}\circ\rho} \le \varepsilon$ and
Lemma \ref{lem:domination} gives $T^{n-n_{1}}\circ\rho \le M\pi$, whence
$T^{n}_{x}(A_{n}) \le M\pi(A_{n}) + \varepsilon$ and
$\limsup_{n}T^{n}_{x}(A_{n}) \le \varepsilon$. For the converse, suppose \hyp{S}{S}
fails at $x$; by Lemma \ref{lem:monotone}(iv) the limit
$\delta := \lim_{n}\sing(T^{n}_{x}\mid\pi)$ exists and is positive, and taking for $A_{n}$ a
carrier of $T^{n}_{x,\sgp{\pi}}$ as in \eqref{eq:singnull} gives $\pi(A_{n}) = 0$
while $T^{n}_{x}(A_{n}) \ge \delta$ for every $n$.

Two remarks on the connection. It is a genuine instance of Le Cam's notion, but a
degenerate one: one of the two sequences is constant, which is the case his theory
does not need and ours does. And it is not used below --- \hyp{S}{S} is applied only
through Lemma \ref{lem:SiffR} --- so the equivalence is recorded for orientation, and
because it locates \hyp{S}{S} among notions already in use.

The reversed relation $\pi \contig (T^{n}_{x})$ is \emph{not} \hyp{L}{L}: it is
strictly stronger. Contiguity in that direction would require
$T^{n}_{x}(A_{n}) \to 0 \Rightarrow \pi(A_{n}) \to 0$, and the absolute continuity of
the surviving part of $\pi$ with respect to $T^{n}_{x}$ carries no uniform bound on
the corresponding density, so nothing prevents $T^{n}_{x}$ from putting arbitrarily
little mass on a set that $\pi$ charges. What \hyp{L}{L} is, by
\eqref{eq:singnull}, is the null-set form of the same idea:
\[
  \pi(A_{n}) \;\longrightarrow\; 0
  \qquad\text{for every sequence } (A_{n}) \text{ in } \XXA
  \text{ with } T^{n}(A_{n}\mid x) = 0 .
\]
So the two hypotheses are the same relation read in the two directions, but the
directions are not equally strong: \hyp{S}{S} is contiguity, \hyp{L}{L} only its
null-set shadow.
\end{remark}

\subsection{The convergence theorem under \texorpdfstring{\hyp{A}{A}, \hyp{P}{P} and \hyp{S}{S}}{(A), (P) and (S)}}

\begin{theorem}[The convergence theorem]\label{thm:criterion}
Assume \hyp{A}{A}, \hyp{P}{P} and \hyp{S}{S}. Then
\[
  \lim_{n\to\infty}\ \sup_{A \in \XXA} \bigl| (T^{n}\circ\mu)(A) - \pi(A) \bigr| = 0
  \qquad\text{for every } \mu \in \Prob,
\]
and $\pi$ is the unique invariant probability measure of $T$.
\end{theorem}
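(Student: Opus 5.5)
The theorem is a direct assembly of results already proved, and the plan is to route it through Theorem \ref{thm:abstract}. That theorem asks for exactly two properties of the pair $(T,\pi)$ beyond \hyp{A}{A}, namely uniform overlap \eqref{prop:U} and regularisation \eqref{prop:R}, and the two hypotheses now in force are designed to deliver them.

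First, invoke Lemma \ref{lem:PimpliesU}. Under \hyp{A}{A} and \hyp{P}{P} it gives \eqref{prop:U}. Recall how: for a given $M$, one applies \hyp{P}{P} with tolerance $\varepsilon := 1/(16M)$ and feeds the resulting $N$, $\YY$, $\YY'$, $s$ into Proposition \ref{prop:minorisation}. Second, invoke Lemma \ref{lem:SiffR} in its direction \hyp{S}{S} $\Rightarrow$ \eqref{prop:R}, applied at the Dirac measures $\mu := \delta_{x}$. For each $x$ and each $\varepsilon$ this yields a time $n_{1}$, a constant $M$ and a law $\rho \le M\pi$ with $\tv{T^{n_{1}}_{x} - \rho} \le \varepsilon$. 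Third, with \eqref{prop:U} and \eqref{prop:R} both available, Theorem \ref{thm:abstract} gives $\tv{T^{n}\circ\mu - \pi} \to 0$ for every $\mu \in \Prob$, together with uniqueness of the invariant probability measure. By Definition \ref{def:tv}, $\tv{T^{n}\circ\mu-\pi}$ is exactly the supremum $\sup_{A}|(T^{n}\circ\mu)(A)-\pi(A)|$ displayed in the statement, so the claimed limit follows.

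No genuine obstacle remains at this stage, because the substantive work has been done earlier. The renormalisation argument of Proposition \ref{thm:dominated} and the passage from points to laws in Lemma \ref{lem:pointwise} are both contained in Theorem \ref{thm:abstract}. The only point worth checking is that the quantifiers match. Property \eqref{prop:U} must hold for every $M \ge 1$, and it does, since \hyp{P}{P} is available for every tolerance. Property \eqref{prop:R} is required only at the points of $\XX$, which is exactly what the instance of Lemma \ref{lem:SiffR} at $\delta_{x}$ supplies.
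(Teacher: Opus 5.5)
Your proposal is correct and follows the paper's proof exactly: \eqref{prop:U} comes from Lemma \ref{lem:PimpliesU}, \eqref{prop:R} comes from Lemma \ref{lem:SiffR} at the Dirac measures, and Theorem \ref{thm:abstract} then gives convergence for every initial law together with uniqueness. Your check that the quantifiers match, with \eqref{prop:U} needed for every $M$ and \eqref{prop:R} needed only at points, is also the reason the paper's three-line proof is complete.
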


\begin{proof}
Property \eqref{prop:U} holds by Lemma \ref{lem:PimpliesU} and \eqref{prop:R} holds
by Lemma \ref{lem:SiffR}. Theorem \ref{thm:abstract} applies.
\end{proof}

\begin{remark}[The two hypotheses hold for every initial law a posteriori]\label{rem:aposteriori}
Both \eqref{prop:R} and \hyp{S}{S} are imposed only at the points of $\XX$, and both
then hold for every initial law --- already under \hyp{A}{A}, \eqref{prop:U} and
\eqref{prop:R}, hence in particular here. Indeed, Corollary \ref{cor:singtv} applied
with $\alpha := T^{n}\circ\mu$ and $\rho := \pi$ gives
\[
  \sing\bigl( T^{n}\circ\mu \bigm| \pi\bigr) \;\le\; \tv{T^{n}\circ\mu - \pi}
  \qquad (\mu \in \Prob,\ n \in \Nz) ,
\]
whose right-hand side tends to $0$ by Theorem \ref{thm:abstract}. Hence
$\sing(T^{n}\circ\mu\mid\pi) \to 0$ for every $\mu \in \Prob$, and \eqref{prop:R} for every
$\mu$ follows by Lemma \ref{lem:SiffR}. So the pointwise and the global forms of the
two hypotheses are equivalent in the presence of \hyp{P}{P}, both being equivalent to
the conclusion; the pointwise form is the weaker hypothesis and the one that is
actually checked, which is why it is the one assumed.
\end{remark}

\noindent
Everything after this point is the verification of \hyp{P}{P} and \hyp{S}{S} in
particular settings. The following criterion disposes of \hyp{S}{S} whenever the
minorant of \hyp{P}{P} happens to be available at \emph{every} starting point and to
carry at least a fixed proportion of the mass; both applications of
Section \ref{sec:applications} are of this kind, and so is the first of the three
settings below.

\begin{lemma}[A uniform minorant is a sufficient criterion for \hyp{S}{S}]\label{lem:uniformminorant}
Assume \hyp{A}{A}. Let $N \in \N$ and $c \in (0,1]$ be numbers and
let $u : \XX\times\XX \to [0,\infty)$ be a $\XXA\otimes\XXA$-measurable map with
\begin{enumerate}[label=(\alph*),nosep,topsep=3pt]
  \item $T^{N}(A \mid x) \ge \displaystyle\int_{A} u(y \mid x)\,\pi(\dd y)$ for every $x \in \XX$ and every $A \in \XXA$;
  \item $\displaystyle\int_{\XX} u(y \mid x)\,\pi(\dd y) \ge c$ for every $x \in \XX$.
\end{enumerate}
Then
\[
  \sing\bigl( T^{n}\circ\mu \bigm| \pi\bigr) \;\le\; (1-c)^{\lfloor n/N \rfloor}
  \qquad (\mu \in \Prob,\ n \in \Nz),
\]
and in particular \hyp{S}{S} holds, since $\delta_{x} \in \Prob$ for every
$x \in \XX$. The bound is uniform in $\mu$ and quantitative, so it gives considerably
more than \hyp{S}{S} asks.
\end{lemma}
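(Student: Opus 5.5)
The plan is to prove the one-block contraction
\[
  \sing\bigl(T^{N}\circ\alpha \bigm| \pi\bigr) \;\le\; (1-c)\,\sing(\alpha\mid\pi)
\]
for every finite measure $\alpha$, and then iterate it. Between blocks we use Lemma \ref{lem:monotone}(iv), which says that the singular mass never increases under a single application of $T$. Concretely, write $n = kN + j$ with $k = \lfloor n/N\rfloor$ and $0\le j<N$. Iterating the block bound $k$ times starting from $\mu$ gives $\sing(T^{kN}\circ\mu\mid\pi)\le(1-c)^{k}\sing(\mu\mid\pi)\le(1-c)^k$. Applying Lemma \ref{lem:monotone}(iv) for the remaining $j$ steps then yields the claim. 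Lemma \ref{lem:monotone}(iv) is stated for finite measures $\alpha$, so it also applies to the sub-probability pieces that occur below.

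For the block bound, let $\alpha = \alpha_{\acp{\pi}} + \alpha_{\sgp{\pi}}$ be the Lebesgue decomposition and put $\sigma := \alpha_{\sgp{\pi}}$, so $\sigma(\XX) = \sing(\alpha\mid\pi)$. By linearity, $T^{N}\circ\alpha = T^{N}\circ\alpha_{\acp{\pi}} + T^{N}\circ\sigma$. Applying Lemma \ref{lem:monotone}(i) $N$ times (after normalising) shows $T^{N}\circ\alpha_{\acp{\pi}}\ll\pi$.

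Next we split the singular piece using hypothesis (a). Define
\[
  \kappa(A) := \int_{\XX}\Bigl(\int_A u(y\mid x)\,\pi(\dd y)\Bigr)\sigma(\dd x).
\]
This is a measure by Tonelli, since $u$ is jointly measurable. By (a), $\kappa \le T^{N}\circ\sigma$. It is absolutely continuous with respect to $\pi$, because Tonelli gives it the density $y\mapsto\int u(y\mid x)\,\sigma(\dd x)$. By (b), $\kappa(\XX)\ge c\,\sigma(\XX)$. Hence
\[
  \gamma := T^{N}\circ\alpha_{\acp{\pi}} + \kappa
\]
satisfies $\gamma\le T^{N}\circ\alpha$ and $\gamma\ll\pi$. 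Its mass is at least $\alpha(\XX)-\sigma(\XX)+c\,\sigma(\XX)$. Since $(T^{N}\circ\alpha)(\XX)=\alpha(\XX)$, Definition \ref{def:singmass} gives $\sing(T^{N}\circ\alpha\mid\pi)\le(1-c)\,\sigma(\XX)$, which is the block bound.

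The only delicate point is the measurability bookkeeping for $\kappa$. The inner integral $x\mapsto\int_A u(y\mid x)\,\pi(\dd y)$ must be measurable, and its integral against $\sigma$ must itself have a density with respect to $\pi$. Joint measurability of $u$ together with Tonelli's theorem supplies both. Hypothesis (a) is a pointwise inequality between measurable functions of $x$, so integrating it against $\sigma$ is legitimate. Everything else is linearity of $T^{N}\circ$ on finite measures together with the monotonicity already proved in Lemma \ref{lem:monotone}.
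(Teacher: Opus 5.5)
Your proof is correct, but it is organised differently from the paper's.

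\emph{Your route.} You prove a one-block contraction of the singular mass itself, $\sing(T^{N}\circ\alpha\mid\pi)\le(1-c)\,\sing(\alpha\mid\pi)$ for every finite measure $\alpha$. You do this by re-taking the Lebesgue decomposition before each block and letting the minorant act only on the current singular part $\alpha_{\sgp{\pi}}$.

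\emph{The paper's route.} The paper never decomposes. It forms the residual sub-Markov kernel $W_{x} := T^{N}_{x} - u(\,\cdot\mid x)\,\pi$, whose mass is at most $1-c$ by (b). It then shows by induction that $T^{Nn}\circ\mu = \gamma_{n} + \nu_{n}$, with $\gamma_{n}\ll\pi$ and $\nu_{n}$ the $n$-fold iterate of $W$ applied to all of $\mu$. Hence $\sing(T^{Nn}\circ\mu\mid\pi)\le\nu_{n}(\XX)\le(1-c)^{n}$.

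\emph{What each buys.} Your version gives a reusable quantitative form of Lemma \ref{lem:monotone}(iv). It also gives the slightly sharper bound $(1-c)^{\lfloor n/N\rfloor}\,\sing(\mu\mid\pi)$, which vanishes when $\mu\ll\pi$. The paper lets the minorant act also on mass that may already be absolutely continuous, and so loses that factor. In exchange, the paper's version uses only the variational Definition \ref{def:singmass} and Lemma \ref{lem:monotone}(i),(iv). It needs neither Theorem \ref{thm:lebesgue} nor Proposition \ref{prop:singforms}, in line with its stated aim that the proof of Theorem \ref{thm:criterion} need no decomposition theorem. Its $\nu_{n}$ also has a direct meaning: the mass the minorant has never caught, parallel to $\int r^{n}\dd\mu$ in Theorem \ref{thm:mh}.

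\emph{A decomposition-free variant of your argument.} Replace $\alpha_{\acp{\pi}}$ by any $\beta\le\alpha$ with $\beta\ll\pi$ and set $\sigma:=\alpha-\beta$. The same reasoning gives $\sing(T^{N}\circ\alpha\mid\pi)\le(1-c)\bigl(\alpha(\XX)-\beta(\XX)\bigr)$. Taking the infimum over $\beta$ then recovers your contraction inequality without the Lebesgue decomposition.
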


\begin{proof}
For $x \in \XX$ the set function
$W_{x} := T^{N}_{x} - \int_{(\cdot)} u(y \mid x)\pi(\dd y)$
is a nonnegative measure by (a), of total mass
$W_{x}(\XX) = 1 - \int_{\XX} u(y\mid x)\pi(\dd y) \le 1-c$ by (b), and
$x \mapsto W_{x}(A)$ is measurable for every $A$. Fix $\mu \in \Prob$ and
define finite measures $\nu_{n} : \XXA \to [0,\infty)$ recursively by
\[
  \nu_{0} := \mu, \qquad \nu_{n+1}(A) := \int_{\XX} W_{x}(A)\,\nu_{n}(\dd x)
  \qquad (A \in \XXA,\ n \in \Nz),
\]
so that $\nu_{n+1}(\XX) \le (1-c)\,\nu_{n}(\XX)$ and hence
$\nu_{n}(\XX) \le (1-c)^{n}$.

We claim that $T^{Nn}\circ\mu = \gamma_{n} + \nu_{n}$ for a nonnegative measure
$\gamma_{n} \ll \pi$. For $n = 0$ this holds with $\gamma_{0} := 0$. Assume it for
$n$. Splitting $T^{N}$ according to (a),
\[
  T^{N(n+1)}\circ\mu
  = T^{N}\circ\gamma_{n}
    \;+\; \Bigl( A \mapsto \int_{A}\Bigl(\int_{\XX} u(y \mid x)\,\nu_{n}(\dd x)\Bigr)\pi(\dd y) \Bigr)
    \;+\; \nu_{n+1} .
\]
The first summand is absolutely continuous with respect to $\pi$ by
Lemma \ref{lem:monotone}(i), applied $N$ times to $\gamma_{n}/\gamma_{n}(\XX)$ if
$\gamma_{n} \ne 0$; the second is absolutely continuous by construction, its density
being measurable by Tonelli's theorem. Their sum is the required $\gamma_{n+1}$.

Consequently $\sing(T^{Nn}\circ\mu\mid\pi) \le \nu_{n}(\XX) \le (1-c)^{n}$ by
Definition \ref{def:singmass}, and the bound for a general $n \in \Nz$ follows from
Lemma \ref{lem:monotone}(iv), since $\lfloor n/N\rfloor N \le n$.
\end{proof}

\begin{remark}[What the criterion does and does not cover]\label{rem:uniformminorantscope}
Lemma \ref{lem:uniformminorant} contains \hyp{D}{D}, with $N := 1$, $u := t$ and
$c := 1$, and it is what makes both applications below immediate. It covers neither
\hyp{M}{M} nor \hyp{E}{E}, which is why Sections \ref{sec:mh} and \ref{sec:general}
carry out their own verifications: under \hyp{M}{M} the mass of the minorant is
$\theta(x)$, which need not be bounded away from $0$, and under \hyp{E}{E} the
minorant is available only on the set $\XX_{N}$ and not at every starting point.
\end{remark}

\begin{remark}[The main hypotheses, and where they are verified]\label{rem:roadmap}
The table records how the two hypotheses are met in the settings treated below. In
each line, $N$, $\YY$ and $s$ are the data of \hyp{P}{P}.
\begin{center}
\begin{tabular}{@{}llll@{}}
\toprule
Setting & \hyp{P}{P} via & \hyp{S}{S} via & Proved in \\
\midrule
\hyp{D}{D} & $N=1$, $\YY=\XX$, $s=t$
           & $T\circ\mu \ll \pi$
           & Theorem \ref{thm:main} \\
\hyp{M}{M} & $N=1$, $\YY=\XX$, $s=k$
           & $\sing \le \int r^{n}\dd\mu$
           & Theorem \ref{thm:mh} \\
\hyp{E}{E} & $N=N(\varepsilon)$, $\YY=\XX_{N}$, $s=t_{N}$
           & $\sing \le 1-\mu(\XX_{N})$
           & Theorem \ref{thm:general} \\
Gibbs      & one sweep, $s = d^{-d}\tau/p$
           & Lemma \ref{lem:uniformminorant}, same $s$
           & Corollary \ref{cor:gibbs} \\
Tempering  & one parallel update, $s = u$
           & Lemma \ref{lem:uniformminorant}, same $s$
           & Corollary \ref{cor:tempering} \\
\bottomrule
\end{tabular}
\end{center}
\noindent
Each of these five verifications also establishes, in the stronger form
$\pi \ll T^{n}_{x}$ at a finite time, the hypothesis \hyp{L}{L} of
Subsection \ref{ssec:L}; the corresponding table is Remark \ref{rem:Lverification},
and Remark \ref{rem:whylimit} explains why \hyp{L}{L} is nevertheless stated in the
limit.
\end{remark}

\subsection{Two examples: \texorpdfstring{\hyp{P}{P} versus \hyp{S}{S}}{(P) versus (S)}}\label{ssec:examples}

Neither hypothesis may be dropped, and neither implies the other.

\begin{example}[\hyp{S}{S} without \hyp{P}{P}]\label{ex:RnotU}
Let $\XX := \{1,2\}$ with $\XXA$ its power set, let $\pi$ be the uniform
distribution and let $T(A \mid x) := \ind_{A}(x)$ be the identity kernel. Then
\hyp{A}{A} hold, and \hyp{S}{S} holds because every $\mu \in \Prob$
satisfies $\mu \le 2\pi$, so that $\sing(T^{n}\circ\mu\mid\pi) = 0$ for every $n$. But
$T^{n}\circ\delta_{1} = \delta_{1}$ for every $n$, so the conclusion of
Theorem \ref{thm:criterion} fails, and with it \hyp{P}{P}. Concretely, \hyp{P}{P} for
a number $\varepsilon < \tfrac12$ would force $\YY = \XX$ and, by (c), a map $s$ with
$s(2 \mid 1) > 0$, whereas $T^{N}(\{2\} \mid 1) = 0$. The weaker hypothesis
\hyp{L}{L} of Subsection \ref{ssec:L} fails as well, and at every point:
$T^{n}_{x} = \delta_{x}$ for every $n$, while $\pi(\{y\}) = \tfrac12 > 0$
for the point $y \ne x$, so that $\sing(\pi \mid T^{n}_{x}) = \tfrac12$ for every
$x \in \XX$ and every $n \in \N$, and in particular does not tend to $0$.
\end{example}

\begin{example}[\hyp{P}{P} without \hyp{S}{S}]\label{ex:UnotR}
Let $\XX := [0,1]$ with its Borel $\sigma$-algebra and let $\pi$ be Lebesgue measure.
Fix a sequence of pairwise distinct points $x_{0},x_{1},x_{2},\dots \in [0,1]$, put
$G := \{x_{i} : i \in \Nz\} \in \XXA$, and fix numbers $a_{i} \in (0,1)$ with
$\sum_{i \in \Nz} a_{i} < \infty$. Define
\[
  T_{x_{i}} := (1-a_{i})\,\delta_{x_{i+1}} + a_{i}\,\pi
  \quad (i \in \Nz),
  \qquad
  T_{x} := \pi \quad (x \in \XX \setminus G).
\]
Then:
\begin{enumerate}[label=(\roman*),nosep,topsep=3pt]
  \item $T$ is a Markov kernel: each $T_{x}$ is a probability measure, and
        for $A \in \XXA$ the map
        $x \mapsto T(A \mid x) = \pi(A) + \sum_{i}\ind_{\{x = x_{i}\}}
        \bigl[(1-a_{i})\delta_{x_{i+1}}(A) + a_{i}\pi(A) - \pi(A)\bigr]$
        is measurable.
  \item $\pi$ is invariant, because $\pi(G) = 0$ and therefore
        $(T\circ\pi)(A) = \int_{\XX} T(A\mid x)\pi(\dd x) = \pi(A)$. Thus
        \hyp{A}{A} hold.
  \item \hyp{P}{P} holds with $N := 1$, $\YY := \XX$ and the everywhere strictly
        positive map $s(y \mid x) := \sum_{i} a_{i}\ind_{\{x = x_{i}\}} + \ind_{\XX\setminus G}(x)$,
        which does not depend on $y$: for $x \notin G$ one has
        $T_{x} = \pi$, and for $x = x_{i}$ one has
        $T_{x_{i}} \ge a_{i}\pi$. Hypothesis (c) is vacuous because $s$
        never vanishes.
  \item The conclusion of Theorem \ref{thm:criterion}, and therefore \hyp{S}{S},
        fails. Writing $p_{n} := \prod_{i<n}(1-a_{i})$, an induction using
        Lemma \ref{lem:monotone}(i) gives
        \[
          T^{n}_{x_{0}} = p_{n}\,\delta_{x_{n}} + \sigma_{n},
          \qquad \sigma_{n} \ll \pi ,
        \]
        because one step maps $p_{n}\delta_{x_{n}}$ to
        $p_{n}(1-a_{n})\delta_{x_{n+1}} + p_{n}a_{n}\pi$ and maps the absolutely
        continuous part to an absolutely continuous measure. Since
        $\sum_{i}a_{i} < \infty$ we have $p_{n} \downarrow p_{\infty} > 0$, whence
        \[
          \tv{T^{n}\circ\delta_{x_{0}} - \pi}
          \;\ge\; T^{n}(\{x_{n}\} \mid x_{0}) - \pi(\{x_{n}\})
          \;=\; p_{n} \;\ge\; p_{\infty} \;>\; 0
          \qquad (n \in \N),
        \]
        and likewise $\sing(T^{n}\circ\delta_{x_{0}}\mid\pi) \ge p_{\infty}$, since any
        $\beta \le T^{n}_{x_{0}}$ with $\beta \ll \pi$ gives no mass to
        $\{x_{n}\}$.
\end{enumerate}
This example shows what the two
properties do and do not control: the chain mixes perfectly as soon as the initial
law is dominated by a multiple of $\pi$, but from the point $x_{0}$ it never acquires
such a law, because with probability $p_{\infty} > 0$ it stays forever in the
$\pi$-null set $G$.
\end{example}

\subsection{The assumption \texorpdfstring{\hyp{P}{P}}{(P)} in its simplest form: \texorpdfstring{\hyp{L}{L} under \hyp{J}{J}}{(L) under (J)}}\label{ssec:L}

Assumption \hyp{P}{P} is shaped by the decision to assume no structure whatever on
$(\XX,\XXA)$: what it asks for is a \emph{jointly measurable} minorant density,
because on a bare measurable space a family of Radon--Nikodym derivatives indexed by
the starting point need not admit a jointly measurable version. In every setting
treated below such a minorant is at hand, and \hyp{P}{P} therefore costs nothing. But
it is not the shortest way to say what is being assumed. If one is willing to grant a
jointly measurable density for the absolutely continuous part of each iterate --- no
minorant, no positivity, no uniformity, only measurability in the two variables at
once --- then the hypothesis can be stated with no density in it at all, and the two
hypotheses of the convergence theorem become the two halves of a single sentence about
the Lebesgue decomposition of $T^{n}_{x}$. That grant is the following assumption.

\begin{assumption}[A jointly measurable density of the absolutely continuous part]\label{ass:J}
In addition to \hyp{A}{A}:
\begin{enumerate}[nosep,topsep=3pt,leftmargin=3em]
  \item[\hyptgt{J}{J}] For every number $n \in \N$ there is a
        $\XXA\otimes\XXA$-measurable map
        \[
          t_{n} : \XX\times\XX \longrightarrow [0,\infty),
          \qquad (x,y) \longmapsto t_{n}(y\mid x) ,
        \]
        such that $t_{n}(\,\cdot\mid x)$ is a density of $T^{n}_{x,\acp{\pi}}$ with
        respect to $\pi$, for \emph{every} $x \in \XX$:
        \[
          T^{n}_{x,\acp{\pi}}(A) \;=\; \int_{A} t_{n}(y \mid x)\,\pi(\dd y)
          \qquad (x \in \XX,\ A \in \XXA) .
        \]
\end{enumerate}
\end{assumption}

\noindent
Assumption \hyp{J}{J} is a hypothesis on the kernel, not on the space, and it is the
last of the measurable selections that these notes assume rather than construct: it
stands to the Lebesgue density of a general iterate as \hyp{D}{D} stands to the
transition density and \hyp{E}{E} to the family $(t_{n})$
(Remarks \ref{rem:jointmeasurability} and \ref{rem:measurableXn}). By
Proposition \ref{prop:jointdensity} below it follows from the sufficient condition
\hyp{C}{C}, which is easier to check and is what one verifies in practice.

\begin{assumption}[The $\sigma$-algebra is countably generated]\label{ass:C}
In addition to \hyp{A1}{A1}:
\begin{enumerate}[nosep,topsep=3pt,leftmargin=3em]
  \item[\hyptgt{C}{C}] There is a countable family $\mathcal{G} \subseteq \XXA$ with
        $\sigma(\mathcal{G}) = \XXA$.
\end{enumerate}
\end{assumption}

\begin{remark}[\hyp{J}{J} against \hyp{P}{P}: measurability against positivity]\label{rem:PvsJ}
Both \hyp{P}{P} and \hyp{J}{J} ask for a jointly measurable function of $(x,y)$, and it
is worth being clear about how they differ, since neither implies the other and the
paper uses both.

\hyp{J}{J} asks for the \emph{exact} density of $T^{n}_{x,\acp{\pi}}$, for every $n$,
and asks nothing else of it: it may vanish identically, and \hyp{J}{J} still holds ---
with $t_{n} \equiv 0$ --- for a kernel every one of whose iterates is singular with
respect to $\pi$, such as $T_{x} := \delta_{x}$ when the singletons are measurable and
$\pi$ is atomless. It is a pure
measurable-selection hypothesis, with no analytic content whatever. \hyp{P}{P}, by
contrast, asks for a \emph{minorant} $s$ of $T^{N}_{x}$ for a single well-chosen $N$,
and demands that the minorant be strictly positive on a large set, quantitatively so;
that positivity is the entire analytic content of the convergence theorem, and no
measurable selection could supply it. Neither hypothesis implies the other, and the
paper uses both.

The two therefore sit above and below each other in different senses, and it is the
combination of \hyp{J}{J} with \hyp{L}{L} that produces \hyp{P}{P}
(Proposition \ref{prop:LimpliesP}): \hyp{L}{L} contributes the positivity and
\hyp{J}{J} contributes the measurability. The gap between a minorant of $T^{N}_{x}$ and
a density is not an artefact either. In three of the five settings verified below, what
is exhibited for \hyp{P}{P} is a strict minorant of $T^{N}_{x}$ and nothing more: for
the random scan Gibbs sampler \eqref{eq:gibbsminorant} keeps a single one of the
$d^{d}$ update sequences, the systematic scan $(1,2,\dots,d)$, at the cost of the
factor $d^{-d}$, and discards the rest; for parallel tempering
\eqref{eq:temperingminorant} keeps only the part of the kernel in which no exchange is
performed, at the cost of the factor $1-\omega$; and under \hyp{M}{M} the atom at the
starting point is discarded outright. Whether the minorant happens also to be a density
of the absolutely continuous part varies, and one has to know more about $\pi$ to
decide: when the singletons are measurable and $\pi$ is atomless it does under
\hyp{M}{M} and for parallel tempering, everything discarded being then carried by a
$\pi$-null set, while for the Gibbs sampler it does not, the absolutely continuous part
of $T^{d}_{x}$ having a density that is a sum over the $d!$ sweeps --- the surjective
sequences among the $d^{d}$, the others contributing nothing absolutely continuous ---
of which $s$ retains one. Under the hypotheses of Lemmas \ref{lem:gibbssingular} and
\ref{lem:temperingsingular} one knows in addition that $T^{n}_{x}$ itself has no
density at all for any $n$, being not absolutely continuous with respect to $\pi$.
Assumption \hyp{P}{P} needs none of this: it asks about $T^{N}_{x}$ and not about its
absolutely continuous part, and the minorant is written down in one line either way.
\end{remark}

\begin{assumption}[Asymptotic domination of the target]\label{ass:L}
In addition to \hyp{A}{A}:
\begin{enumerate}[nosep,topsep=3pt,leftmargin=3em]
  \item[\hyptgt{L}{L}] $\displaystyle\lim_{n\to\infty}\ \sing\bigl( \pi \bigm| T^{n}_{x} \bigr) = 0$
        for $\aeevery$ $x \in \XX$.
\end{enumerate}
\end{assumption}

\noindent
Three readings of \hyp{L}{L} are worth having side by side. By
\eqref{eq:acpositive} it says that
$\pi(\{ g_{T^{n}_{x}} = 0 \}) \to 0$: the set on which the density of the absolutely
continuous part of $T^{n}_{x}$ still vanishes shrinks to $\pi$-measure zero. By
Definition \ref{def:singmass} it says that $\pi$ has minorants $\gamma_{n} \ll
T^{n}_{x}$ of mass tending to $1$: all but arbitrarily little of the target is
eventually seen by the chain. And by Lemma \ref{lem:monotone}(v) the sequence
$n \mapsto \sing(\pi\mid T^{n}_{x})$ is non-increasing, so \hyp{L}{L} is not
strengthened by demanding the bound for all large $n$ rather than along a subsequence.
Aperiodicity is not assumed separately, and does not have to be: a chain of period
$d \ge 2$ has $\sing(\pi\mid T^{n}_{x}) \ge 1 - 1/d$ for every $n$, since $T^{n}_{x}$
is then carried by a single cyclic class, so \hyp{L}{L} excludes periodicity of its
own accord. Like \hyp{S}{S}, it is a statement
about the chain started at a point; between them they make no reference to any initial
law, to any pair of starting points, or to any density.

\begin{remark}[\hyp{L}{L} read through \hyp{J}{J}]\label{rem:LviaJ}
Under \hyp{J}{J} the quantity in \hyp{L}{L} is the $\pi$-measure of the null set of a
single jointly measurable map:
\[
  \sing\bigl( \pi \bigm| T^{n}_{x} \bigr)
  \;=\; \pi\bigl( \{ y \in \XX : t_{n}(y \mid x) = 0 \} \bigr)
  \qquad (x \in \XX,\ n \in \N) .
\]
Indeed $t_{n}(\,\cdot\mid x)$ is a density of $T^{n}_{x,\acp{\pi}}$, so it is a
version of $g_{T^{n}_{x}}$, and \eqref{eq:acpositive} gives
$\sing(\pi\mid T^{n}_{x}) = \pi(\{g_{T^{n}_{x}} = 0\})$.

Two consequences, both used below. The right-hand side is measurable in $x$ by
Tonelli's theorem, which is what makes the sets $\XX_{n}$ of
Proposition \ref{prop:LimpliesP} measurable; that, together with supplying the jointly
measurable map that \hyp{P}{P} asks for, is all that \hyp{J}{J} does there --- no
analytic content of it is used. And \hyp{L}{L} becomes a statement one can read off a
single formula:
the set on which the density still vanishes shrinks to $\pi$-measure zero. Without
\hyp{J}{J} the same identity holds for each fixed $x$ with $g_{T^{n}_{x}}$ in place of
$t_{n}(\,\cdot\mid x)$, but $g_{T^{n}_{x}}$ is determined only up to a $\pi$-null set
for each $x$ separately, and no measurability in $x$ is available.

Measurability in $x$ also makes an integrated form of \hyp{L}{L} available, and that
form is not weaker. Write
\[
  Z_{n} \;:=\; \bigl\{ (x,y) \in \XX\times\XX \ :\ t_{n}(y\mid x) = 0 \bigr\}
  \;\in\; \XXA\otimes\XXA .
\]
\emph{Under \hyp{A}{A} and \hyp{J}{J}, hypothesis \hyp{L}{L} holds if and only if}
\begin{equation}
  (\pi\otimes\pi)(Z_{n}) \;\longrightarrow\; 0 \qquad\text{as } n \to \infty .
  \label{eq:Lintegrated}
\end{equation}
Indeed, put $\varphi_{n}(x) := \pi(\{y : t_{n}(y\mid x) = 0\})$, so that
$\varphi_{n}(x) = \sing(\pi\mid T^{n}_{x})$ for every $x$ by the identity above, so
that $0 \le \varphi_{n} \le 1$, and so that
$(\pi\otimes\pi)(Z_{n}) = \int_{\XX}\varphi_{n}\dd\pi$
by Tonelli's theorem. By Lemma \ref{lem:monotone}(v) the sequence
$n \mapsto \varphi_{n}(x)$ is non-increasing for \emph{every} $x$, so it converges
pointwise to some measurable $\varphi_{\infty} : \XX \to [0,1]$; since $\pi$ is finite,
dominated convergence gives
$\int_{\XX}\varphi_{n}\dd\pi \to \int_{\XX}\varphi_{\infty}\dd\pi$. Hence
\eqref{eq:Lintegrated} holds if and only if $\int_{\XX}\varphi_{\infty}\dd\pi = 0$, if
and only if $\varphi_{\infty} = 0$ $\aewhere$, which is \hyp{L}{L}.

The two directions cost different things. That \hyp{L}{L} implies
\eqref{eq:Lintegrated} is bounded convergence alone, and needs no monotonicity. The
converse does need Lemma \ref{lem:monotone}(v): convergence in $L^{1}(\pi)$ by
itself yields convergence $\aewhere$ only along a subsequence, and it is the
monotonicity in $n$ --- which makes the limit exist at every $x$ --- that upgrades this
to the full sequence. Two further points are worth recording. The quantity in
\eqref{eq:Lintegrated} does not depend on the version of $t_{n}$ chosen in \hyp{J}{J},
since for each fixed $x$ any two versions agree $\aewhere$, so that $\varphi_{n}(x)$ is
unchanged for every $x$. And reading the double integral in the other order exhibits
the same number as
$\int_{\XX}\pi(\{x : t_{n}(y\mid x) = 0\})\,\pi(\dd y)$, the average over targets $y$ of
the mass of starting points from which the chain still does not see $y$ --- which is
what asymptotic domination of the target asserts, read backwards. Form
\eqref{eq:Lintegrated} is a single number for each $n$, so it is the form that admits a
rate; \hyp{L}{L} is kept as the hypothesis because it is a statement about the chain
started at one point, and it is in that form that it stands beside \hyp{S}{S}.
\end{remark}

\begin{remark}[Why the limit, and not domination at a finite time]\label{rem:whylimit}
The stronger hypothesis that $\pi \ll T^{n}_{x}$ for \emph{some} $n = n(x)$ --- that
is, $\sing(\pi\mid T^{n}_{x}) = 0$ eventually --- is what one verifies in each of the
five settings of Remark \ref{rem:Lverification}, and it implies \hyp{L}{L} at once by
Lemma \ref{lem:monotone}(v). It is nevertheless strictly stronger, and the gap
is not a curiosity: it is exactly the case of a \emph{local} proposal.

Run random-walk Metropolis on $\XX = \R^{d}$ with a proposal that is uniform on the
ball of radius $r$ about the current point, against a target $\pi = p\lambda$ with
$p > 0$ everywhere and unbounded support. Every proposal, accepted or not, moves at
most $r$, so $T^{n}_{x}$ is carried by the closed ball $\overline{B}(x,nr)$; since
$\pi(\R^{d}\setminus\overline{B}(x,nr)) > 0$, we have $\pi \not\ll T^{n}_{x}$ for
\emph{every} $n$, and domination at a finite time fails at every point. On the other
hand the component of $T^{n}_{x}$ in which all $n$ proposals are accepted has, with
respect to $\lambda$, a density bounded below by the $n$-fold convolution of the
uniform density times the product of the acceptance probabilities, which is strictly
positive $\lambda$-almost everywhere on $B(x,nr)$ because $p$ is; hence
$\pi\restriction B(x,nr) \ll T^{n}_{x}$ and
\[
  \sing\bigl( \pi \bigm| T^{n}_{x} \bigr) \;\le\; \pi\bigl( \R^{d}\setminus B(x,nr) \bigr)
  \;\longrightarrow\; 0 ,
\]
so \hyp{L}{L} holds. The same happens for a \emph{lazy} nearest-neighbour chain on a
connected, countably infinite graph with $\pi$ of full support: the $n$-step reachable
set is the graph ball of radius $n$, which is finite for every $n$ but increases to
the whole space, so $\sing(\pi\mid T^{n}_{x})$ equals the $\pi$-measure of its
complement and tends to $0$. Laziness is what is needed here and not elsewhere: on a
bipartite graph the chain without self-loops has period $2$, and then \hyp{L}{L} fails,
as it must.

Local proposals on unbounded state spaces are precisely the case that domination at a
finite time cannot reach and \hyp{L}{L} can, which is why the hypothesis is stated in
the limit. None of the five settings of Remark \ref{rem:Lverification} is of this
kind --- each of them exhibits a minorant valid on the whole space --- so the gain is
not used below; it is recorded because it is the first thing a reader with a local
proposal will ask about.
\end{remark}

\begin{lemma}[\hyp{P}{P} implies \hyp{L}{L}]\label{lem:PimpliesL}
Assume \hyp{A}{A} and \hyp{P}{P}. Then \hyp{L}{L} holds.
\end{lemma}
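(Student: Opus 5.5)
The plan is to pass from the pair-wise, averaged positivity in \hyp{P}{P}(c) to a pointwise bound on $\sing(\pi\mid T^{N}_{x})$ for most $x$. This bound comes from Tonelli's theorem together with Markov's inequality. I then apply \hyp{P}{P} along a summable sequence of tolerances and use Borel--Cantelli, and the monotonicity of Lemma \ref{lem:monotone}(v) turns a bound at the times $N_{k}$ into the full limit.

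\emph{Step 1: a pointwise bound from the minorant.} Fix $\varepsilon$, and let $N,\YY,\YY',s$ be as in \hyp{P}{P}. Put
\[
  \varphi(x) := \pi\bigl(\{y : s(y\mid x) = 0\}\bigr).
\]
This map is measurable by Tonelli's theorem, because $s$ is $\XXA\otimes\XXA$-measurable. For $x \in \YY$ let $\gamma := \pi\restriction\{y : s(y\mid x) > 0\}$. Then $\gamma \le \pi$. Moreover $\gamma \ll T^{N}_{x}$: if $T^{N}(A\mid x) = 0$, then \hyp{P}{P}(b) gives $\int_{A} s(y\mid x)\,\pi(\dd y) = 0$, so $\pi(A \cap \{s(\cdot\mid x) > 0\}) = 0$. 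Hence Definition \ref{def:singmass} yields
\[
  \sing(\pi\mid T^{N}_{x}) \le 1-\gamma(\XX) = \varphi(x) \qquad \text{for every } x \in \YY .
\]

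\emph{Step 2: averaging.} By Tonelli's theorem and then \hyp{P}{P}(a),(c),
\[
  \int_{\YY}\varphi \dd\pi
  = \int_{\XX} \pi\bigl(\{x\in\YY : s(y\mid x)=0\}\bigr)\,\pi(\dd y)
  \le \varepsilon + \pi(\XX\setminus\YY') \le 2\varepsilon .
\]
Markov's inequality then gives $\pi(\{x \in \YY : \varphi(x) > \eta\}) \le 2\varepsilon/\eta$ for every $\eta > 0$.

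\emph{Step 3: Borel--Cantelli and monotonicity.} Take $\varepsilon_{k} := 4^{-k}$ and $\eta_{k} := 2^{-k}$, with the corresponding data $N_{k},\YY_{k},\YY'_{k},s_{k},\varphi_{k}$. Define the measurable sets
\[
  B_{k} := (\XX\setminus\YY_{k}) \cup \{x \in \YY_{k} : \varphi_{k}(x) > 2^{-k}\}.
\]
By Step 2, $\pi(B_{k}) \le 4^{-k} + 2\cdot 2^{-k}$, and this is summable in $k$. So by Borel--Cantelli, $\pi$-almost every $x$ lies in only finitely many $B_{k}$. For such $x$, Step 1 gives $\sing(\pi\mid T^{N_{k}}_{x}) \le 2^{-k}$ for all large $k$. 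By Lemma \ref{lem:monotone}(v) the map $n \mapsto \sing(\pi\mid T^{n}_{x})$ is non-increasing, so its limit exists, and it is at most $2^{-k}$ for every large $k$, hence equal to $0$. This is \hyp{L}{L}.

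The only delicate point is Step 1. There the minorant in \hyp{P}{P}(b) must be turned into an absolutely continuous minorant of $\pi$ with respect to $T^{N}_{x}$, the reverse order. The variational form of Definition \ref{def:singmass} does this without needing any Lebesgue density of $T^{N}_{x}$. The other potential snag is measurability of $\varphi$ and of the sets $B_{k}$, which is exactly what the joint measurability of $s$ supplies.
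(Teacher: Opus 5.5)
Your proposal is correct and follows the paper's proof essentially step for step. It uses the same pointwise bound $\sing(\pi\mid T^{N}_{x}) \le \pi(\{y : s(y\mid x)=0\})$ for $x \in \YY$, the same Tonelli-plus-Markov averaging with $\varepsilon_{k} = 4^{-k}$ and threshold $2^{-k}$, and the same Borel--Cantelli argument closed by the monotonicity of Lemma \ref{lem:monotone}(v).
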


\begin{proof}
We use twice the observation that, by \hyp{P}{P}(b), the measure
$A \mapsto \int_{A}s(y\mid x)\pi(\dd y)$ lies below $T^{N}_{x}$, so that
$\pi\restriction\{s(\,\cdot\mid x) > 0\} \ll T^{N}_{x}$ and hence, by
Definition \ref{def:singmass},
\begin{equation}
  \sing\bigl( \pi \bigm| T^{N}_{x} \bigr)
  \;\le\; \pi\bigl( \{ y \in \XX : s(y \mid x) = 0 \} \bigr)
  \qquad (x \in \YY) .
  \label{eq:PtoL}
\end{equation}

Let $k \in \N$ and apply \hyp{P}{P} with $\varepsilon_{k} := 4^{-k}$, obtaining
$N_{k}$, $\YY_{k}$, $\YY_{k}'$ and $s_{k}$. The set
$Z_{k} := \{(x,y) \in \YY_{k}\times\XX : s_{k}(y\mid x) = 0\}$ is
$\XXA\otimes\XXA$-measurable, and integrating \hyp{P}{P}(c) over $y$, splitting off
$\XX\setminus\YY_{k}'$, Tonelli's theorem gives
\[
  (\pi\otimes\pi)(Z_{k})
  \;=\; \int_{\XX} \pi\bigl(\{x\in\YY_{k} : s_{k}(y\mid x) = 0\}\bigr)\,\pi(\dd y)
  \;\le\; \varepsilon_{k} + \varepsilon_{k}
  \;=\; 2\cdot 4^{-k} .
\]
Reading the same double integral in the other order and applying Markov's inequality,
the set
\[
  A_{k} := \bigl\{ x \in \YY_{k} \ :\ \pi(\{y : s_{k}(y\mid x) = 0\}) \le 2^{-k} \bigr\}
\]
satisfies $\pi(\YY_{k}\setminus A_{k}) \le 2\cdot4^{-k}/2^{-k} = 2\cdot 2^{-k}$, so
that $\pi(\XX\setminus A_{k}) \le 4^{-k} + 2\cdot2^{-k} \le 3\cdot 2^{-k}$. By
\eqref{eq:PtoL}, every $x \in A_{k}$ satisfies
$\sing(\pi \mid T^{N_{k}}_{x}) \le 2^{-k}$, hence
$\sing(\pi \mid T^{n}_{x}) \le 2^{-k}$ for every $n \ge N_{k}$ by
Lemma \ref{lem:monotone}(v).

Since $\sum_{k}\pi(\XX\setminus A_{k}) < \infty$, the Borel--Cantelli lemma gives
$\pi(\liminf_{k} A_{k}) = 1$. So for $\aeevery$ $x$ there is a number $k_{0}$ with
$x \in A_{k}$ for every $k \ge k_{0}$, and then
\[
  \limsup_{n\to\infty}\ \sing(\pi\mid T^{n}_{x}) \;\le\; 2^{-k}
  \qquad (k \ge k_{0}) .
\]
Hence $\sing(\pi\mid T^{n}_{x}) \to 0$ for $\aeevery$ $x$, which is \hyp{L}{L}.
\end{proof}

\noindent
Under \hyp{J}{J} the converse of Lemma \ref{lem:PimpliesL} holds as well, so that
there \hyp{L}{L} and \hyp{P}{P} are equivalent. We give the argument in two steps:
first that \hyp{J}{J} is available on a countably generated $\sigma$-algebra, then the
deduction of \hyp{P}{P} from it. The first step is the one place in these notes where a
genuine piece of martingale theory is used, and it is precisely the piece that both
\hyp{P}{P} and \hyp{J}{J} were introduced in order to avoid assuming; a reader content
to assume \hyp{J}{J} outright, as we do in Theorem \ref{thm:asympequiv}, may skip it.

\begin{proposition}[\hyp{C}{C} implies \hyp{J}{J}]\label{prop:jointdensity}
Assume \hyp{A}{A} and \hyp{C}{C}. Then \hyp{J}{J} holds. Moreover the map $t_{n}$ it
provides satisfies $t_{n}(\,\cdot\mid x) = g_{T^{n}_{x}}$ $\aewhere$ and
$T^{n}(A\mid x) \ge \int_{A}t_{n}(y\mid x)\,\pi(\dd y)$, for every $x \in \XX$ and
every $A \in \XXA$.
\end{proposition}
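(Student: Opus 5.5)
The plan is the classical Doob construction of a jointly measurable Radon--Nikodym derivative by martingale approximation along an increasing sequence of finite partitions. The target is a density for the absolutely continuous part of $T^{n}_{x}$, not for $T^{n}_{x}$ itself. I will therefore work with the reference measure $\lambda_{x} := \pi + T^{n}_{x}$ and extract the density through the decomposition of Theorem \ref{thm:lebesgue}.

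\textbf{Step 1: partitions and ratio functions.} Fix $n$ and the countable generating family $\mathcal{G} = \{G_{1},G_{2},\dots\}$ from \hyp{C}{C}. Let $\mathcal{P}_{k}$ be the finite partition generated by $G_{1},\dots,G_{k}$, and let $\mathcal{F}_{k} := \sigma(\mathcal{P}_{k})$. These $\sigma$-algebras increase and $\sigma(\bigcup_{k}\mathcal{F}_{k}) = \XXA$. For $x,y \in \XX$ let $P_{k}(y)$ be the cell of $\mathcal{P}_{k}$ containing $y$, and define
\[
  u_{k}(y\mid x) := \frac{\pi(P_{k}(y))}{\pi(P_{k}(y)) + T^{n}(P_{k}(y)\mid x)},
  \qquad
  v_{k}(y\mid x) := \frac{T^{n}(P_{k}(y)\mid x)}{\pi(P_{k}(y)) + T^{n}(P_{k}(y)\mid x)},
\]
with both set to $0$ when the denominator vanishes. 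Each is a finite sum over cells $P$ of $\ind_{P}(y)$ times a measurable function of $x$, the latter measurable by Definition \ref{def:kernel}(ii). Hence $u_{k}$ and $v_{k}$ are $\XXA\otimes\XXA$-measurable.

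\textbf{Step 2: limits and the candidate density.} For each fixed $x$, $u_{k}(\cdot\mid x)$ and $v_{k}(\cdot\mid x)$ are versions of $\Ex_{\lambda_{x}}[\,d\pi/d\lambda_{x}\mid\mathcal{F}_{k}]$ and $\Ex_{\lambda_{x}}[\,dT^{n}_{x}/d\lambda_{x}\mid\mathcal{F}_{k}]$ under the probability measure $\lambda_{x}/2$. They are bounded martingales, so Lévy's upward theorem gives $\lambda_{x}$-a.e.\ convergence to $u = d\pi/d\lambda_{x}$ and $v = dT^{n}_{x}/d\lambda_{x}$. Set $\bar u := \limsup_{k} u_{k}$ and $\bar v := \limsup_{k} v_{k}$; both are jointly measurable. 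Then put
\[
  t_{n}(y\mid x) := \frac{\bar v(y\mid x)}{\bar u(y\mid x)}
  \quad\text{on } \{\bar u > 0\},
  \qquad
  t_{n}(y\mid x) := 0 \text{ otherwise},
\]
which is jointly measurable and finite everywhere.

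\textbf{Step 3: identifying the absolutely continuous part.} Fix $x$. By the existence half of the proof of Theorem \ref{thm:lebesgue}, with $\rho$ replaced by $\lambda_{x}$ (the factor $\tfrac12$ is irrelevant), $T^{n}_{x,\acp{\pi}}$ is $T^{n}_{x}$ restricted to $\{u>0,\ v>0\}$ and has density $v/u$ there. Since $\bar u = u$ and $\bar v = v$ hold $\lambda_{x}$-a.e., and since $\pi \ll \lambda_{x}$ and $T^{n}_{x} \ll \lambda_{x}$, integrals over any $A$ agree:
\[
  \int_{A} t_{n}(y\mid x)\,\pi(\dd y) = \int_{A\cap\{u>0\}} (v/u)\,u\,\dd\lambda_{x} = T^{n}_{x,\acp{\pi}}(A).
\]
This gives \hyp{J}{J} for every $x$, and $t_{n}(\cdot\mid x) = g_{T^{n}_{x}}$ $\aewhere$ by uniqueness of Radon--Nikodym derivatives. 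The minorant inequality follows from $T^{n}_{x,\acp{\pi}} \le T^{n}_{x}$ (Theorem \ref{thm:lebesgue}).

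\textbf{Main obstacle.} The key difficulty is that the exceptional null set in Lévy's theorem depends on $x$. This is why $t_{n}$ must be defined pointwise through $\limsup$ rather than as a limit on a good set, so that joint measurability holds everywhere while the density property only needs an $x$-by-$x$ a.e.\ statement. A secondary check is that the ratio formulas for $u_{k}$ and $v_{k}$ really are the conditional expectations on cells of positive $\lambda_{x}$-mass, with cells of $\lambda_{x}$-mass zero being irrelevant.
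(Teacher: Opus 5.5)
Your proof is correct and follows the paper's argument. Both use the finite partitions supplied by \hyp{C}{C}, the auxiliary measure $\lambda_{x} = \pi + T^{n}_{x}$, L\'evy's upward theorem at each fixed $x$, a $\limsup$ that gives joint measurability without an $x$-dependent exceptional set, and identification of the limit through the Lebesgue decomposition. The only difference is cosmetic: the paper takes the $\limsup$ of the elementary ratios $T^{n}(P\mid x)/\pi(P)$ and cuts it off where it is infinite, whereas you take the $\limsup$s of the bounded cell averages $u_{k}$ and $v_{k}$ separately and then divide, which makes finiteness automatic.
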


\begin{proof}
Fix $n \in \N$. \emph{The construction.} Let $\mathcal{G} = \{G_{1},G_{2},\dots\}$ be a countable
family with $\sigma(\mathcal{G}) = \XXA$, as provided by \hyp{C}{C}, and put
$\mathcal{F}_{k} := \sigma(G_{1},\dots,G_{k})$. Each $\mathcal{F}_{k}$ is a finite
$\sigma$-algebra; let $\mathcal{P}_{k} \subseteq \XXA$ be its finite partition of
$\XX$ into atoms. The $\mathcal{F}_{k}$ increase, each $\mathcal{P}_{k+1}$ refines
$\mathcal{P}_{k}$, and $\sigma\bigl(\bigcup_{k}\mathcal{F}_{k}\bigr) = \XXA$. Define
\[
  t_{n,k}(y \mid x) := \sum_{\substack{P \in \mathcal{P}_{k} \\ \pi(P) > 0}}
  \frac{T^{n}(P \mid x)}{\pi(P)}\;\ind_{P}(y)
  \qquad (x,y \in \XX,\ k \in \N) ,
\]
a finite sum of products of a measurable map of $x$ with a measurable map of $y$, and
therefore $\XXA\otimes\XXA$-measurable; and put
\[
  t_{n} := \Bigl( \limsup_{k\to\infty} t_{n,k} \Bigr)\,
           \ind_{\{\,\limsup_{k} t_{n,k} \,<\, \infty\,\}} ,
\]
which is $\XXA\otimes\XXA$-measurable with values in $[0,\infty)$.

\emph{The limit, at a fixed starting point.} Fix $x \in \XX$, write
$\alpha := T^{n}_{x} \in \Prob$ and put $\lambda := \pi + \alpha$, a finite
measure with $\pi \ll \lambda$ and $\alpha \ll \lambda$. Let
$u := \dd\pi/\dd\lambda$ and $v := \dd\alpha/\dd\lambda$; these lie in
$L^{1}(\lambda)$ and satisfy $u+v = 1$ $\lambda$-almost everywhere. Write
$u_{k} := \Ex_{\lambda}[\,u \mid \mathcal{F}_{k}\,]$ and
$v_{k} := \Ex_{\lambda}[\,v \mid \mathcal{F}_{k}\,]$, conditional expectations under
$\lambda$; since $\mathcal{F}_{k}$ is generated by the finite partition
$\mathcal{P}_{k}$, these are the corresponding cell averages, so that on a cell
$P \in \mathcal{P}_{k}$ with $\pi(P) > 0$ --- whence $\lambda(P) > 0$ ---
\[
  u_{k} = \frac{\pi(P)}{\lambda(P)} \;>\; 0,
  \qquad
  v_{k} = \frac{\alpha(P)}{\lambda(P)},
  \qquad\text{and therefore}\qquad
  t_{n,k}(\,\cdot\mid x) = \frac{v_{k}}{u_{k}}
\]
there. The remaining cells of $\mathcal{P}_{k}$ are finitely many sets of $\pi$-measure
$0$, so the displayed identity holds $\aewhere$. By L\'evy's upward theorem applied
under $\lambda$, $u_{k} \to u$ and $v_{k} \to v$ $\lambda$-almost everywhere, hence
$\pi$-almost everywhere. Moreover
$\pi(\{u = 0\}) = \int_{\{u=0\}} u \dd\lambda = 0$, so $u > 0$ $\aewhere$. Therefore
$t_{n,k}(y\mid x) \to v(y)/u(y)$ for $\aeevery$ $y$; in particular the limit superior
is finite $\aewhere$, and
\[
  t_{n}(\,\cdot\mid x) = \frac{v}{u} \qquad \aewhere .
\]

\emph{Identification of the limit.} Put $S := \{u = 0\} \in \XXA$, so $\pi(S) = 0$.
The measure $A \mapsto \alpha(A \cap S)$ is carried by the $\pi$-null set $S$. The
measure $A \mapsto \alpha(A \setminus S)$ is absolutely continuous with respect to
$\pi$: if $\pi(A) = 0$ then $\int_{A}u\dd\lambda = 0$, and $u > 0$ on $A \setminus S$,
so $\lambda(A\setminus S) = 0$ and hence $\alpha(A \setminus S) = 0$. By the
uniqueness in Definition \ref{def:lebesgue} we conclude
$\alpha_{\acp{\pi}}(A) = \alpha(A\setminus S)$ for every $A \in \XXA$. Finally, since
$\dd\pi = u \dd\lambda$ and $\pi(S) = 0$,
\[
  \int_{A} \frac{v}{u}\,\ind_{\XX\setminus S} \dd\pi
  \;=\; \int_{A} \frac{v}{u}\,\ind_{\XX\setminus S}\,u \dd\lambda
  \;=\; \int_{A\setminus S} v \dd\lambda
  \;=\; \alpha(A\setminus S)
  \;=\; \alpha_{\acp{\pi}}(A)
  \qquad (A \in \XXA) ,
\]
so $v/u$, and with it $t_{n}(\,\cdot\mid x)$, is a density of $\alpha_{\acp{\pi}}$ with
respect to $\pi$. The last assertion of the statement follows from
$\alpha_{\acp{\pi}} \le \alpha$.
\end{proof}

\begin{remark}[Attribution]\label{rem:AJ}
For a single fixed $x$ the convergence established in the proof --- that the
elementary densities $t_{n,k}(\,\cdot\mid x)$ of $\alpha$ over a filtration generating
$\XXA$ converge $\aewhere$ to the density of the \emph{absolutely continuous part} of
$\alpha$, the singular part washing out in the limit --- is the theorem of Andersen
and Jessen \cite{AndersenJessen48}; see also \cite{Horowitz78}. Note that
$k \mapsto t_{n,k}(\,\cdot\mid x)$ is in general only a nonnegative
\emph{super}\-martingale under $\pi$, the defect at stage $k$ being the mass that
$\alpha$ places on the $\pi$-null cells of $\mathcal{P}_{k+1}$; it is a martingale
exactly when $\alpha$ charges no $\pi$-null cell of any $\mathcal{P}_{k}$, which is
implied by $\alpha \ll \pi$ but is strictly weaker than it, since whether a given
singular $\alpha$ meets a $\pi$-null cell depends on the generating family chosen in
\hyp{C}{C}. We have given the proof rather than quoted it for two
reasons: what is needed here is not the classical statement for one measure but the
\emph{joint} measurability of the resulting map in $(x,y)$, which the classical
statement does not address and which the explicit construction supplies for free; and
the route through $\lambda = \pi + \alpha$ makes the identification of the limit a
three-line computation from L\'evy's upward theorem, so that nothing is being taken on
trust.
\end{remark}

\begin{proposition}[\hyp{L}{L} implies \hyp{P}{P} under \hyp{J}{J}]\label{prop:LimpliesP}
Assume \hyp{A}{A}, \hyp{J}{J} and \hyp{L}{L}. Then \hyp{P}{P} holds. Consequently,
under \hyp{A}{A} and \hyp{J}{J} the hypotheses \hyp{L}{L} and \hyp{P}{P} are
equivalent, and the same holds under \hyp{A}{A} and \hyp{C}{C} by
Proposition \ref{prop:jointdensity}.
\end{proposition}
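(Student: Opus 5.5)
Given $\varepsilon \in (0,1)$, I will produce the data of \hyp{P}{P} directly from the maps $t_{n}$ of \hyp{J}{J}, taking $s := t_{N}$ for a suitably large $N$. Condition (b) is then automatic: $T^{N}_{x} \ge T^{N}_{x,\acp{\pi}}$ and $t_{N}(\,\cdot\mid x)$ is a density of the latter for every $x$. Everything therefore reduces to choosing $N$, $\YY$ and $\YY'$ so that (a) and (c) hold. Put
\[
  Z_{n} := \{(x,y) \in \XX\times\XX : t_{n}(y\mid x) = 0\} \in \XXA\otimes\XXA .
\]
By Remark \ref{rem:LviaJ}, \hyp{L}{L} is equivalent under \hyp{J}{J} to $(\pi\otimes\pi)(Z_{n}) \to 0$, with the identity $\varphi_{n}(x) = \pi(\{y : t_{n}(y\mid x)=0\}) = \sing(\pi\mid T^{n}_{x})$, the monotonicity of Lemma \ref{lem:monotone}(v), and dominated convergence doing the work. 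This integrated form is the whole input.

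Choose $N$ with $(\pi\otimes\pi)(Z_{N}) \le \varepsilon^{2}$, and take $\YY := \XX$, so that the first half of (a) is trivial. Reading the double integral in the order ``$x$ inside'', Tonelli's theorem shows that $\psi(y) := \pi(\{x : t_{N}(y\mid x) = 0\})$ is measurable with $\int \psi \dd\pi \le \varepsilon^{2}$. Now set $\YY' := \{y : \psi(y) \le \varepsilon\} \in \XXA$. Markov's inequality gives $\pi(\XX\setminus\YY') \le \varepsilon$, which completes (a). For $y \in \YY'$ we get $\pi(\{x \in \YY : s(y\mid x)=0\}) = \psi(y) \le \varepsilon$, which is (c). Joint measurability of $s = t_{N}$ comes from \hyp{J}{J}.

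The only delicate point is the use of \hyp{L}{L}, which holds merely for $\aeevery$ $x$ and in the limit. We must pass to the uniform-in-$y$ bound in (c), and (c) integrates over the \emph{starting} point rather than the endpoint. This is exactly why I route the argument through the product-measure formulation and Tonelli instead of working pointwise in $x$. Measurability of $\varphi_{n}$ and $\psi$ is where \hyp{J}{J} is indispensable; without it the sets involved need not be measurable.

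The equivalence statements then follow. Lemma \ref{lem:PimpliesL} gives \hyp{P}{P} $\Rightarrow$ \hyp{L}{L} unconditionally, and the argument above gives the converse under \hyp{J}{J}. The version under \hyp{C}{C} follows by inserting Proposition \ref{prop:jointdensity}.
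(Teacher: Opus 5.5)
Your proof is correct and is essentially the paper's argument: $s := t_{N}$, hypothesis (b) from $T^{N}_{x,\acp{\pi}} \le T^{N}_{x}$, and $\YY'$ obtained by reading the $(\pi\otimes\pi)$-measure of the zero set of $t_{N}$ in the other order and applying Markov's inequality with threshold $\varepsilon$ against the bound $\varepsilon^{2}$. The only difference is in the choice of $\YY$. The paper takes $\YY := \XX_{N}$, the set of $x$ with $\pi(\{y : t_{N}(y\mid x) = 0\}) \le \varepsilon^{2}$, found via Lemma \ref{lem:monotone}(v) and continuity from below. You take $\YY := \XX$ and use the forward half of \eqref{eq:Lintegrated}, which needs only bounded convergence and not the monotonicity you cite --- a slight economy.
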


\begin{proof}
Let $\varepsilon \in (0,1)$ be a number. For $n \in \N$ let $t_{n}$ be the map
provided by \hyp{J}{J} and put
\[
  \XX_{n} := \bigl\{ x \in \XX \ :\ \pi(\{ y \in \XX : t_{n}(y \mid x) = 0 \})
  \le \varepsilon^{2} \bigr\} .
\]
Each $\XX_{n}$ lies in $\XXA$: the map
$x \mapsto \int_{\XX}\ind_{\{t_{n}(y\mid x) = 0\}}\,\pi(\dd y)$ is measurable by
Tonelli's theorem, the set $\{(x,y) : t_{n}(y\mid x) = 0\}$ being
$\XXA\otimes\XXA$-measurable. By Remark \ref{rem:LviaJ},
\[
  \XX_{n} = \bigl\{ x \in \XX \ :\ \sing(\pi \mid T^{n}_{x}) \le \varepsilon^{2} \bigr\} ,
\]
so $\XX_{n} \subseteq \XX_{n+1}$ by Lemma \ref{lem:monotone}(v), and \hyp{L}{L}
gives $\pi\bigl(\bigcup_{n\in\N}\XX_{n}\bigr) = 1$. By continuity from
below there is $N \in \N$ with $\pi(\XX\setminus\XX_{N}) \le \varepsilon$. Take this
$N$, the set $\YY := \XX_{N}$ and the map $s := t_{N}$, which is
$\XXA\otimes\XXA$-measurable by \hyp{J}{J}. Then \hyp{P}{P}(b) holds for every
$x \in \XX$, hence in particular on $\YY$: by \hyp{J}{J} and
Theorem \ref{thm:lebesgue},
\[
  \int_{A} s(y \mid x)\,\pi(\dd y) \;=\; T^{N}_{x,\acp{\pi}}(A) \;\le\; T^{N}(A \mid x)
  \qquad (x \in \XX,\ A \in \XXA) ,
\]
the inequality being $\nu_{\acp{\mu}} \le \nu$ of Theorem \ref{thm:lebesgue}.

It remains to produce $\YY'$. By the definition of $\XX_{N}$ and Tonelli's theorem,
the $\XXA\otimes\XXA$-measurable set
$Z := \{(x,y) \in \YY\times\XX : s(y\mid x) = 0\}$ satisfies
\[
  \int_{\XX} \pi\bigl( \{ x \in \YY : s(y\mid x) = 0 \} \bigr) \pi(\dd y)
  \;=\; (\pi\otimes\pi)(Z)
  \;=\; \int_{\YY} \pi\bigl( \{ y : s(y\mid x) = 0 \} \bigr) \pi(\dd x)
  \;\le\; \varepsilon^{2} .
\]
By Markov's inequality the set
$\YY' := \{ y \in \XX : \pi(\{x \in \YY : s(y\mid x) = 0\}) \le \varepsilon \}$
satisfies $\pi(\XX\setminus\YY') \le \varepsilon^{2}/\varepsilon = \varepsilon$. So
\hyp{P}{P}(a) holds for $\YY$ and $\YY'$, and \hyp{P}{P}(c) holds by the definition of
$\YY'$. The converse implication is Lemma \ref{lem:PimpliesL}.
\end{proof}

\subsection{The main theorem: asymptotic equivalence with the target}\label{ssec:main}

\begin{theorem}[Asymptotic equivalence with the target is necessary and sufficient]\label{thm:asympequiv}
Assume \hyp{A}{A} and \hyp{J}{J} --- in particular, by
Proposition \ref{prop:jointdensity}, whenever \hyp{A}{A} and \hyp{C}{C} hold. Then the
following are equivalent.
\begin{enumerate}[label=\textup{(\roman*)},nosep,topsep=3pt,leftmargin=2.4em,itemsep=3pt]
  \item \emph{Asymptotic equivalence with the target:} both
        \[
          \sing\bigl( T^{n}_{x} \bigm| \pi \bigr) \;\longrightarrow\; 0
          \qquad\text{and}\qquad
          \sing\bigl( \pi \bigm| T^{n}_{x} \bigr) \;\longrightarrow\; 0
        \]
        as $n \to \infty$, the first for every $x \in \XX$ and the second for
        $\aeevery$ $x \in \XX$; that is, \hyp{S}{S} and \hyp{L}{L}.
  \item $\displaystyle\lim_{n\to\infty}\tv{T^{n}_{x} - \pi} = 0$ for every
        $x \in \XX$.
  \item $\displaystyle\lim_{n\to\infty}\ \sup_{A\in\XXA}\bigl|(T^{n}\circ\mu)(A) - \pi(A)\bigr| = 0$
        for every $\mu \in \Prob$.
\end{enumerate}
In that case $\pi$ is the unique invariant probability measure of $T$.
\end{theorem}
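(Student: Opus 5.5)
I will prove the cycle (i) $\Rightarrow$ (iii) $\Rightarrow$ (ii) $\Rightarrow$ (i). Every step is already available in Section \ref{sec:core}, so the work is in assembling it correctly.

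\emph{(i) $\Rightarrow$ (iii).} Assume \hyp{S}{S} and \hyp{L}{L}. Since \hyp{J}{J} is in force, Proposition \ref{prop:LimpliesP} turns \hyp{L}{L} into \hyp{P}{P}. Then \hyp{A}{A}, \hyp{P}{P} and \hyp{S}{S} are exactly the hypotheses of Theorem \ref{thm:criterion}. That theorem gives convergence in total variation from every $\mu \in \Prob$, and it also gives that $\pi$ is the unique invariant probability measure. This last point settles the final sentence of the statement, because every one of (i)--(iii) will be shown to imply (i).

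\emph{(iii) $\Rightarrow$ (ii).} Specialise (iii) to $\mu := \delta_{x}$, using $T^{n}\circ\delta_{x} = T^{n}_{x}$ from \ref{conv:Tx}.

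\emph{(ii) $\Rightarrow$ (i).} Apply Corollary \ref{cor:singtv} with $\alpha := T^{n}_{x}$ and $\rho := \pi$. This gives both
\[
  \sing(T^{n}_{x}\mid\pi) \le \tv{T^{n}_{x}-\pi}
  \qquad\text{and}\qquad
  \sing(\pi\mid T^{n}_{x}) \le \tv{T^{n}_{x}-\pi}
\]
for every $x$. Both right-hand sides tend to $0$ by (ii), so both singular masses vanish in the limit at \emph{every} $x$. That yields \hyp{S}{S} and, a fortiori, \hyp{L}{L}, whose requirement is only for $\aeevery$ $x$.

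The only step with real content is (i) $\Rightarrow$ (iii), and even that is contained in Proposition \ref{prop:LimpliesP} and Theorem \ref{thm:criterion}. The points to check are two. First, \hyp{J}{J} is used only to pass from \hyp{L}{L} to \hyp{P}{P}. Second, the necessity direction needs no measurability in $x$, because Corollary \ref{cor:singtv} is applied pointwise. The parenthetical claim about \hyp{C}{C} follows directly from Proposition \ref{prop:jointdensity}.
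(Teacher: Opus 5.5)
Your proposal is correct and follows the paper's own proof: (i) $\Rightarrow$ (iii) through Proposition \ref{prop:LimpliesP} and Theorem \ref{thm:criterion}, (iii) $\Rightarrow$ (ii) by taking $\mu := \delta_{x}$, and (ii) $\Rightarrow$ (i) by Corollary \ref{cor:singtv} applied pointwise. Uniqueness also comes from the convergence statement, as in the paper; you get it from Theorem \ref{thm:criterion}, and the paper cites the argument of Theorem \ref{thm:abstract}.
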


\begin{proof}
\emph{(i) $\Rightarrow$ (iii).} \hyp{P}{P} holds by
Proposition \ref{prop:LimpliesP}, and Theorem \ref{thm:criterion} applies. This is the
only implication that uses \hyp{J}{J}.

\emph{(iii) $\Rightarrow$ (ii)} is the case $\mu := \delta_{x}$.

\emph{(ii) $\Rightarrow$ (i)} is Corollary \ref{cor:singtv}, applied at each
$x \in \XX$ with $\alpha := T^{n}_{x}$ and $\rho := \pi$; it gives both halves of (i)
at \emph{every} $x$, which is more than (i) asks. Uniqueness follows as in
Theorem \ref{thm:abstract}.
\end{proof}

\begin{remark}[What the equivalence says, and what it costs]\label{rem:iff}
Theorem \ref{thm:asympequiv} is Theorem \ref{thm:crit} of the introduction together with
its converse. The two hypotheses are the two halves of the single relation
$T^{n}_{x} \sim \pi$ of \ref{conv:order}, each asked only in the limit: the law of the
chain and the target become mutually absolutely continuous as $n \to \infty$, in the
sense that both singular masses vanish. Neither half alone suffices
(Examples \ref{ex:RnotU} and \ref{ex:UnotR}), and neither can be weakened, since
together they are implied by the conclusion.

Two remarks on the price. Only the implication (i) $\Rightarrow$ (iii) uses
\hyp{J}{J}, and it uses it only through Proposition \ref{prop:LimpliesP}, that is only
to make the sets $\XX_{n}$ there measurable (Remark \ref{rem:LviaJ}); the implication
(ii) $\Rightarrow$ (i) is Corollary \ref{cor:singtv} and is free of every hypothesis
but \hyp{A1}{A1}. And Theorem \ref{thm:criterion} is the same sufficiency statement
with \hyp{L}{L} replaced by \hyp{P}{P}, which assumes the jointly measurable minorant
outright and therefore needs neither \hyp{J}{J} nor anything else
(Remark \ref{rem:PvsJ}).

Finally, a word on the quantifiers, which are not the same in the two halves of (i):
\hyp{S}{S} is asked at \emph{every} $x$ and \hyp{L}{L} only at $\aeevery$ $x$. The
asymmetry is forced by how each is used. Assumption \hyp{S}{S} is consumed through
\eqref{prop:R}, which Theorem \ref{thm:abstract} applies at $\delta_{x}$ separately
for each $x$, so it must hold at every starting point one wants convergence from; and
by Corollary \ref{cor:singtv} it \emph{has} to, since convergence at $x$ bounds
$\sing(T^{n}_{x}\mid\pi)$ by $\tv{T^{n}_{x}-\pi}$. Assumption \hyp{L}{L}, by contrast,
is consumed only through \hyp{P}{P} and hence through \eqref{prop:U}, which quantifies
over laws dominated by $M\pi$ and is therefore blind to $\pi$-null sets.

The equivalence then conceals a self-improvement, which is worth stating because the
mismatched quantifiers otherwise look like an oversight: since (i) implies (ii), and
(ii) gives both halves at \emph{every} $x$ by Corollary \ref{cor:singtv},
\begin{quote}
under \hyp{A}{A} and \hyp{J}{J}, and in the presence of \hyp{S}{S} at every $x$,
hypothesis \hyp{L}{L} at $\aeevery$ $x$ implies \hyp{L}{L} at every $x$.
\end{quote}
We know no direct argument for this; it is obtained by going round the equivalence,
and the passage (i) $\Rightarrow$ (iii) that it goes through is exactly the one that
uses \hyp{J}{J}.
\end{remark}

\begin{remark}[The almost everywhere variant]\label{rem:aevariant}
Weakening \hyp{S}{S} to $\aeevery$ $x$ as well weakens the conclusion in exactly the
same way, and the equivalence survives. Assume \hyp{A}{A} and \hyp{J}{J}. Then
\[
  \begin{aligned}
    &\Bigl[\ \sing(T^{n}_{x}\mid\pi) \to 0
      \ \text{ and } \ \sing(\pi\mid T^{n}_{x}) \to 0
      \ \text{ for } \aeevery\ x \ \Bigr] \\[2pt]
    &\qquad\Longleftrightarrow\quad
      \Bigl[\ \tv{T^{n}_{x} - \pi} \to 0 \ \text{ for } \aeevery\ x \ \Bigr] ,
  \end{aligned}
\]
and either implies $\tv{T^{n}\circ\mu-\pi} \to 0$ for every $\mu \ll \pi$.

For ``$\Rightarrow$'': \hyp{P}{P} holds by Proposition \ref{prop:LimpliesP}, which
uses only the $\aeevery$ form of \hyp{L}{L}, hence \eqref{prop:U} holds by
Lemma \ref{lem:PimpliesU}; and for each $x$ at which the first limit vanishes,
Lemma \ref{lem:SiffR} gives \eqref{prop:R} at $\delta_{x}$, so the proof of
Theorem \ref{thm:abstract} gives $\tv{T^{n}_{x}-\pi} \to 0$ there. For
``$\Leftarrow$'': Corollary \ref{cor:singtv}. The final clause is
Lemma \ref{lem:pointwise}, whose hypothesis is convergence at $\mu$-almost every
point, applied with $\XX'$ the set of full $\pi$-measure just obtained --- which has
full $\mu$-measure whenever $\mu \ll \pi$.

What is lost is exactly the Dirac measures at the exceptional points: convergence for
every $\mu \in \Prob$ is not available, because $\delta_{x}$ is not absolutely
continuous with respect to $\pi$ when $\pi$ is atomless. This is the distinction
between convergence of \emph{all} and of $\aeevery$ transition probability drawn by
Scheutzow and Schindler \cite{ScheutzowSchindler21}; see Remark \ref{rem:necessary}.
\end{remark}

\begin{remark}[The criterion in one sentence]\label{rem:onesentence}
In words, and without a symbol: \emph{a Markov chain with an invariant probability
measure, whose iterates have jointly measurable Lebesgue densities, converges in total
variation from every starting point if and only if, from every starting point, its law
and the target become mutually absolutely continuous in the limit.} The measurability
proviso is needed for one direction only, and is automatic on a countably generated
$\sigma$-algebra (Proposition \ref{prop:jointdensity}).
Theorem \ref{thm:criterion} is that direction with \hyp{L}{L} traded for \hyp{P}{P},
and needs no proviso at all.
\end{remark}

\begin{remark}[How \hyp{L}{L} is met in the five settings]\label{rem:Lverification}
Assumption \hyp{L}{L} holds in every setting treated in these notes, and each
verification is one line, being a weaker statement than the verification of
\hyp{P}{P} already carried out there. In every one of the five it holds in the
stronger form $\pi \ll T^{n}_{x}$, that is $\sing(\pi\mid T^{n}_{x}) = 0$, at a finite
time; \hyp{L}{L} then follows by Lemma \ref{lem:monotone}(v). In each line, the
number $n$ is that finite time.
\begin{center}
\begin{tabular}{@{}lll@{}}
\toprule
Setting & \hyp{L}{L} via & with \\
\midrule
\hyp{D}{D} & $T_{x} = t(\,\cdot\mid x)\pi$, $t > 0$ & $n=1$, every $x$ \\
\hyp{M}{M} & $T_{x} \ge k(\,\cdot\mid x)\pi$, $k > 0$ a.e. & $n=1$, $\aeevery$ $x$ \\
\hyp{E}{E} & $T^{n}_{x} = t_{n}(\,\cdot\mid x)\pi$ on $\XX_{n}$ & $n = n(x)$, every $x$ \\
Gibbs, random scan & $T^{d}_{x} \ge s(\,\cdot\mid x)\pi$, $s>0$ & $n=d$, every $x$ \\
Parallel tempering & $T_{x} \ge u(\,\cdot\mid x)\pi$, $u>0$ a.e. & $n=1$, $\aeevery$ $x$ \\
\bottomrule
\end{tabular}
\end{center}
In detail, and in each case by \eqref{eq:acpositive} together with the fact that
$\pi \ll \sigma \le \alpha$ implies $\pi \ll \alpha$:
\begin{enumerate}[label=\textup{(\roman*)},nosep,topsep=3pt,leftmargin=2.4em,itemsep=3pt]
  \item \hyp{D}{D}: the density $t(\,\cdot\mid x)$ is strictly positive at every point,
        so $g_{T_{x}} = t(\,\cdot\mid x) > 0$ and $\pi \ll T_{x}$
        for every $x \in \XX$. In the almost everywhere form of
        Remark \ref{rem:aepositivity} the same follows for $\aeevery$ $x$, by Tonelli's
        theorem, which is all \hyp{L}{L} asks.
  \item \hyp{M}{M}: by \hyp{M}{M}(a) and $r \ge 0$ one has
        $T(A\mid x) \ge \int_{A}k(y\mid x)\pi(\dd y)$ for every $x$, and \hyp{M}{M}(c)
        with Tonelli's theorem gives $k(\,\cdot\mid x) > 0$ $\aewhere$ for $\aeevery$
        $x$; for such $x$, $\pi \ll k(\,\cdot\mid x)\pi \le T_{x}$.
  \item \hyp{E}{E}: for $x \in \XX_{n}$ one has $T^{n}_{x} = t_{n}(\,\cdot\mid x)\pi$
        with $t_{n}(\,\cdot\mid x) > 0$ $\aewhere$ by \hyp{E}{E}(b),(c), so
        $\pi \ll T^{n}_{x}$; and $\bigcup_{n}\XX_{n} = \XX$ by
        \hyp{E}{E}(a). This is the setting in which $n$ genuinely depends on $x$, and
        it is exactly what forces \hyp{P}{P} to carry the tolerance $\varepsilon$ and
        the set $\YY$: a single number $N$ serving all starting points does not exist,
        whereas \hyp{L}{L} never asks for one.
  \item Random scan Gibbs sampler: \eqref{eq:gibbsminorant} gives
        $T^{d}(A\mid x) \ge \int_{A}s(y\mid x)\pi(\dd y)$ for every $x \in \XX$ with
        $s > 0$ at every point, so $\pi \ll T^{d}_{x}$ for every $x$.
  \item Parallel tempering: \eqref{eq:temperingminorant} gives
        $T(A \mid x) \ge \int_{A}u(y\mid x)\pi(\dd y)$ for every $x \in \XX$, and the
        proof of Corollary \ref{cor:tempering} shows
        $\{(x,y) : u(y\mid x) = 0\}$ to be $(\pi\otimes\pi)$-null, so that by Tonelli's
        theorem $u(\,\cdot\mid x) > 0$ $\aewhere$ for $\aeevery$ $x$.
\end{enumerate}
Two observations. In four of the five settings a single number $n$ serves all
starting points at once --- every $x$ under \hyp{D}{D} and for the Gibbs sampler,
$\aeevery$ $x$ under \hyp{M}{M} and for parallel tempering --- and only under
\hyp{E}{E} must $n$ depend on the starting point.
And in every case what is exhibited is a minorant \emph{density}, which is more than
\hyp{L}{L} asks: \hyp{L}{L} does not require the domination of $\pi$ to be witnessed
by any density at all, jointly measurable or not, and that is the whole of its
advantage. Consequently each of Theorems \ref{thm:main}, \ref{thm:mh} and
\ref{thm:general} and each of Corollaries \ref{cor:gibbs} and \ref{cor:tempering} may
be read off from Theorem \ref{thm:asympequiv} in place of Theorem \ref{thm:criterion},
at the price of \hyp{J}{J}; we have kept the proofs through \hyp{P}{P}, which needs no
such measurable selection (Remark \ref{rem:PvsJ}).
\end{remark}

\section{Convergence under a positive transition density}\label{sec:first}

\subsection{The assumption \texorpdfstring{\hyp{D}{D}}{(D)}}

\begin{assumption}[A strictly positive transition density]\label{ass:A4}
In addition to \hyp{A}{A}:
\begin{enumerate}[nosep,topsep=3pt,leftmargin=3em]
  \item[\hyptgt{D}{D}] There is a $\XXA\otimes\XXA$-measurable map
        \[
          t : \XX \times \XX \longrightarrow (0,\infty), \qquad (x,y) \longmapsto t(y \mid x),
        \]
        such that
        \[
          T(A \mid x) = \int_{A} t(y \mid x)\, \pi(\dd y)
          \qquad\text{for every } x \in \XX \text{ and every } A \in \XXA .
        \]
        In particular $t(y \mid x) > 0$ for \emph{every} pair $(x,y) \in \XX\times\XX$.
\end{enumerate}
\end{assumption}

\begin{remark}[The two normalisations]\label{rem:normalisation}
If $T$ has a density $t$ as in \hyp{D}{D}, then the invariance \hyp{A3}{A3} of
$\pi$ is precisely the statement that $\int t(y \mid x)\,\pi(\dd x) = 1$ for
$\aeevery$ $y$ --- indeed $\pi(A) = (T\circ\pi)(A)
= \int_{A}\bigl(\int t(y\mid x)\pi(\dd x)\bigr)\pi(\dd y)$ for every $A \in \XXA$ ---
whereas the kernel property of $T$ is the statement that
$\int t(y \mid x)\,\pi(\dd y) = 1$ for every $x$: the density is ``doubly
stochastic'', the rows normalising because $T$ is a kernel and the columns because
$\pi$ is invariant. Lemma \ref{lem:domination} is one consequence of the first of
these two normalisations. We shall not need this reformulation, but it explains the
structure of what follows.
\end{remark}

\begin{remark}[On joint measurability]\label{rem:jointmeasurability}
Suppose that $\XXA$ is countably generated, as it is when $(\XX,\XXA)$ is standard
Borel. If one assumes
only that $T_{x} \ll \pi$ for every $x \in \XX$, a jointly measurable
version of the density can be constructed: choose a refining sequence of countable
measurable partitions generating $\XXA$, form the ratios $T(P \mid x)/\pi(P)$ over
the partition cells $P$, and pass to the limit, which exists $\aewhere$ by the
martingale convergence theorem and is jointly measurable as a pointwise limit of
jointly measurable maps. We do not use this construction and simply assume \hyp{D}{D};
it is written out and proved, in the form needed for a general iterate and for the
absolutely continuous \emph{part} rather than the whole of $T^{n}_{x}$, in
Proposition \ref{prop:jointdensity}, which supplies \hyp{J}{J} and with it, through
Proposition \ref{prop:LimpliesP}, the passage from \hyp{L}{L} to \hyp{P}{P}.
Measurable Radon--Nikodym derivatives and the Lebesgue decomposition of a kernel are
treated in \cite{Douc18} and \cite{MeynTweedie09}; see also
Remark \ref{rem:referencegen}, where the whole issue disappears once a reference
measure is available.
\end{remark}

\begin{remark}[Almost sure positivity suffices]\label{rem:aepositivity}
Theorem \ref{thm:main} and its proof remain valid word for word if, in \hyp{D}{D},
the map $t$ is only assumed to be an $\XXA\otimes\XXA$-measurable map
$\XX\times\XX \to [0,\infty)$ with
$(\pi\otimes\pi)(\{(x,y) : t(y \mid x) = 0\}) = 0$, the identity
$T(A\mid x) = \int_{A}t(y\mid x)\pi(\dd y)$ still being required for \emph{every}
$x \in \XX$. Indeed positivity enters the proof of Theorem \ref{thm:main} only
through hypothesis (iii) of Proposition \ref{prop:minorisation}, which follows from
the displayed condition by Fubini's theorem, while Lemma \ref{lem:onestep} uses no
positivity at all. We refer to this as the \emph{almost everywhere form} of
\hyp{D}{D}. It should be noted that Remark \ref{rem:CK} below, which derives
\hyp{E}{E} from \hyp{D}{D}, does use positivity at every point.
\end{remark}

\subsection{The convergence theorem under \texorpdfstring{\hyp{A}{A} and \hyp{D}{D}}{(A) and (D)}}

The verification of \hyp{S}{S} needs one observation, and only this one.

\begin{lemma}[One step produces a density]\label{lem:onestep}
Assume \hyp{A}{A} and \hyp{D}{D}. Then $T\circ\mu \ll \pi$ for every
$\mu \in \Prob$; the density is given by the map
\[
  f_{\mu} : \XX \longrightarrow [0,\infty], \qquad
  f_{\mu}(y) := \int_{\XX} t(y \mid x)\, \mu(\dd x) .
\]
\end{lemma}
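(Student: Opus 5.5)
The plan is a direct Tonelli computation; no earlier lemma beyond Definition \ref{def:action} is needed. First I check that $f_{\mu}$ is a legitimate map $\XX \to [0,\infty]$ that is measurable. The map $t$ is $\XXA\otimes\XXA$-measurable and nonnegative by \hyp{D}{D}, so Tonelli's theorem, applied to the product measure $\mu\otimes\pi$ (both finite, hence $\sigma$-finite), shows that $y \mapsto \int_{\XX} t(y\mid x)\,\mu(\dd x)$ is $\XXA$-measurable with values in $[0,\infty]$.

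Next, fix $A \in \XXA$. By Definition \ref{def:action} and \hyp{D}{D}, valid for \emph{every} $x$,
\[
  (T\circ\mu)(A) = \int_{\XX} T(A\mid x)\,\mu(\dd x)
  = \int_{\XX}\Bigl(\int_{A} t(y\mid x)\,\pi(\dd y)\Bigr)\mu(\dd x).
\]
Exchanging the order of integration by Tonelli, applied to the nonnegative jointly measurable integrand $\ind_{A}(y)\,t(y\mid x)$, turns this into $\int_{A} f_{\mu}\dd\pi$. Hence $T\circ\mu$ has density $f_{\mu}$ with respect to $\pi$, and in particular $T\circ\mu \ll \pi$. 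Taking $A := \XX$ gives $\int f_{\mu}\dd\pi = 1$, so $f_{\mu} \in \Dens$ and $f_{\mu}$ is finite $\aewhere$. This is the only sense in which finiteness is claimed, and it is why the codomain $[0,\infty]$ is allowed.

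There is no real obstacle here. The one point requiring care is that the kernel identity in \hyp{D}{D} holds at every starting point rather than only $\mu$-almost everywhere, so that the substitution inside the $\mu$-integral is legitimate for an arbitrary $\mu$, including Dirac measures and laws singular with respect to $\pi$. Joint measurability of $t$ is what makes Tonelli applicable. Positivity of $t$ is not used at all, which is consistent with Remark \ref{rem:aepositivity}.
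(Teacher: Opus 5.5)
Your proposal is correct and is the paper's proof: a single application of Tonelli's theorem to the jointly measurable nonnegative density, using that the representation of $T$ by $t$ holds at every starting point. Your extra checks (measurability of $f_{\mu}$, $\int f_{\mu}\dd\pi = 1$, and that positivity is not used) are sound and consistent with Remark~\ref{rem:aepositivity}.
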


\begin{proof}
Since $t$ is $\XXA\otimes\XXA$-measurable and nonnegative, Tonelli's theorem gives,
for every $A \in \XXA$,
\[
  (T\circ\mu)(A) = \int_{\XX} T(A \mid x)\,\mu(\dd x)
  = \int_{\XX}\Bigl( \int_{A} t(y \mid x)\,\pi(\dd y) \Bigr)\mu(\dd x)
  = \int_{A} f_{\mu} \dd\pi . \qedhere
\]
\end{proof}

\noindent
This is the only consequence of the density needed for \hyp{S}{S}: after a single
step every initial law, however singular, acquires a density with respect to $\pi$.

\begin{theorem}[Convergence under a strictly positive density]\label{thm:main}
Assume \hyp{A}{A} and \hyp{D}{D}. Then
\[
  \lim_{n\to\infty}\ \sup_{A \in \XXA} \bigl| (T^{n}\circ\mu)(A) - \pi(A) \bigr| = 0
  \qquad\text{for every } \mu \in \Prob,
\]
and $\pi$ is the unique invariant probability measure of $T$. In particular
$\tv{T^{n}_{x} - \pi} \to 0$ for every point $x \in \XX$, since
$T^{n}_{x} = T^{n}\circ\delta_{x}$.
\end{theorem}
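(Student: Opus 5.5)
The plan is to deduce Theorem \ref{thm:main} from Theorem \ref{thm:criterion}. So it suffices to verify \hyp{P}{P} and \hyp{S}{S} under \hyp{A}{A} and \hyp{D}{D}. Uniqueness of $\pi$ and the statement for Dirac measures then come for free, because Theorem \ref{thm:criterion} already asserts convergence for every $\mu \in \Prob$ and $T^{n}_{x} = T^{n}\circ\delta_{x}$.

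For \hyp{S}{S} I would use Lemma \ref{lem:onestep}. For every $x$, $T_{x} = T\circ\delta_{x} \ll \pi$, so $\sing(T_{x}\mid\pi) = 0$ by the last sentence of Proposition \ref{prop:singforms}. By Lemma \ref{lem:monotone}(iv), $\sing(T^{n}_{x}\mid\pi) = 0$ for all $n \ge 1$, which gives the limit $0$. Alternatively, Lemma \ref{lem:uniformminorant} applies with $N := 1$, $u := t$ and $c := 1$: hypothesis (a) holds with equality by \hyp{D}{D}, and (b) holds because $\int t(y\mid x)\,\pi(\dd y) = T(\XX\mid x) = 1$. Either route is immediate.

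For \hyp{P}{P}, given $\varepsilon \in (0,1)$, I would take $N := 1$, $\YY := \YY' := \XX$ and $s := t$, which is $\XXA\otimes\XXA$-measurable by \hyp{D}{D}. The three conditions then check as follows.
\begin{itemize}
\item Condition (a) is trivial, since $\pi(\XX\setminus\XX) = 0$.
\item Condition (b) holds with equality by the defining identity in \hyp{D}{D}.
\item Condition (c) holds because $t(y\mid x) > 0$ at every pair, so $\{x : s(y\mid x) = 0\} = \emptyset$ for every $y$, and its $\pi$-measure is $0 \le \varepsilon$.
\end{itemize}
Here the data do not even depend on $\varepsilon$.

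There is essentially no obstacle: all the analytic work sits in Proposition \ref{prop:minorisation} and Proposition \ref{thm:dominated}, which Theorem \ref{thm:criterion} packages. The only point needing care is that \hyp{P}{P}(b) must hold for every $x \in \YY$ and every $A$, not just almost everywhere. Because \hyp{D}{D} postulates the integral representation at every $x$, this holds. Assembling, Theorem \ref{thm:criterion} yields $\tv{T^{n}\circ\mu - \pi} \to 0$ for all $\mu$ together with uniqueness.
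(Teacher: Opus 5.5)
Your proposal is correct and follows the paper's proof essentially verbatim: it reduces to Theorem~\ref{thm:criterion}, verifies \hyp{P}{P} with $N:=1$, $\YY:=\YY':=\XX$, $s:=t$, and obtains \hyp{S}{S} from Lemma~\ref{lem:onestep}, or alternatively from Lemma~\ref{lem:uniformminorant} with $N:=1$, $u:=t$, $c:=1$. The only cosmetic difference is that you apply Lemma~\ref{lem:onestep} at $\delta_{x}$ and then propagate with Lemma~\ref{lem:monotone}(iv), whereas the paper applies Lemma~\ref{lem:onestep} directly to $T^{n-1}\circ\mu$ for every $n \ge 1$.
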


\noindent
This is the discrete-time version of Doob's theorem; for the continuous-time
statement, and for its use in infinite dimensions, see \cite{DaPratoZabczyk96}.

\begin{proof}
By Theorem \ref{thm:criterion} it suffices to verify \hyp{P}{P} and \hyp{S}{S}.

\emph{Assumption \hyp{P}{P}.} Let $\varepsilon \in (0,1)$ be a number and take
$N := 1$, $\YY := \YY' := \XX$ and $s := t$, none of which depends on $\varepsilon$.
Hypothesis (a) holds because $\pi(\XX\setminus\XX) = 0$; hypothesis (b) holds with
equality by \hyp{D}{D}; hypothesis (c) holds because $t(y \mid x) > 0$ for every pair
$(x,y)$, so that the set occurring in it is empty and its measure is $0 \le \varepsilon$.

\emph{Assumption \hyp{S}{S}.} By Lemma \ref{lem:onestep} the probability measure
$T^{n}\circ\mu$ is absolutely continuous with respect to $\pi$ for every
$\mu \in \Prob$ and every $n \in \N$, so $\sing(T^{n}\circ\mu\mid\pi) = 0$ by
Definition \ref{def:singmass}. Alternatively, apply
Lemma \ref{lem:uniformminorant} with $N := 1$, $u := t$ and $c := 1$.
\end{proof}

\noindent
One should note what the two verifications mean: under \hyp{D}{D}
\emph{one} step already both mixes ($N=1$ in \hyp{P}{P}) and smooths
($\sing(T\circ\mu\mid\pi) = 0$ in \hyp{S}{S}). The two sections that follow keep these same
two hypotheses and
pay for them differently: Section \ref{sec:mh} gives up exact absolute continuity and
recovers \hyp{S}{S} only approximately, while Section \ref{sec:general} gives up
$N = 1$ and pays with a number of steps depending on $M$, respectively on $\mu$.

\subsection{The convergence theorem under \texorpdfstring{\hyp{A}{A} and \hyp{D}{D}}{(A) and (D)} with reference measure}

\begin{corollary}[Convergence, stated with a reference measure; Theorem \ref{thm:posdens}]\label{cor:reference}
Let $\XX$ be a set, $\XXA$ a $\sigma$-algebra on $\XX$, and $\lambda$ a
$\sigma$-finite measure on $(\XX,\XXA)$. Let
\[
  p : \XX \longrightarrow [0,\infty)
  \qquad\text{and}\qquad
  \tau : \XX\times\XX \longrightarrow [0,\infty), \quad (x,y) \longmapsto \tau(y \mid x),
\]
be measurable, respectively $\XXA\otimes\XXA$-measurable, and define the probability
measure $\pi$ and the Markov kernel $T$ by
\[
  \pi(A) := \int_{A} p \dd\lambda,
  \qquad
  T(A \mid x) := \int_{A} \tau(y \mid x)\,\lambda(\dd y)
  \qquad (A \in \XXA,\ x \in \XX),
\]
where $\int_{\XX} p \dd\lambda = 1$ and $\int_{\XX}\tau(y \mid x)\,\lambda(\dd y) = 1$
for every $x \in \XX$. Write $\Xp := \{ y \in \XX : p(y) > 0 \} \in \XXA$ and assume:
\begin{enumerate}[label=(\roman*),nosep,topsep=3pt]
  \item \emph{(invariance)} $\displaystyle\int_{\XX} \tau(y \mid x)\,p(x)\,\lambda(\dd x) = p(y)$ for $\lambda$-almost every $y \in \XX$;
  \item \emph{(the support is not left)} for every $x \in \XX$: $\ \tau(y \mid x) = 0$ for $\lambda$-almost every $y \in \XX\setminus\Xp$;
  \item \emph{(positivity)} $\tau(y \mid x) > 0$ for $(\lambda\otimes\lambda)$-almost every $(x,y) \in \Xp \times \Xp$.
\end{enumerate}
Then $\pi$ is the unique invariant probability measure of $T$, and
\[
  \lim_{n\to\infty} \tv{T^{n}\circ\mu - \pi} = 0 \qquad\text{for every } \mu \in \Prob .
\]
Theorem \ref{thm:posdens} of the introduction is the special case in which $p > 0$
everywhere, so that $\Xp = \XX$ and hypothesis (ii) is vacuous.
\end{corollary}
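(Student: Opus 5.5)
The proof reduces Corollary \ref{cor:reference} to Theorem \ref{thm:criterion} by translating the $\lambda$-density $\tau$ into a $\pi$-density and checking \hyp{P}{P} and \hyp{S}{S} directly. Only the almost everywhere form of positivity (Remark \ref{rem:aepositivity}) is available, so \hyp{D}{D} itself cannot be invoked verbatim. Define the $\XXA\otimes\XXA$-measurable map
\[
  t(y\mid x) := \frac{\tau(y\mid x)}{p(y)}\,\ind_{\Xp}(y) \qquad (x,y \in \XX),
\]
with values in $[0,\infty)$.

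\emph{Step 1: $t$ is a transition density with respect to $\pi$ at every point.} For every $x\in\XX$ and $A\in\XXA$, hypothesis (ii) gives
\[
  T(A\mid x)=\int_{A\cap\Xp}\tau(y\mid x)\,\lambda(\dd y)=\int_{A} t(y\mid x)\,p(y)\,\lambda(\dd y)=\int_A t(y\mid x)\,\pi(\dd y).
\]
Invariance $T\circ\pi=\pi$ follows from (i) by Tonelli's theorem:
\[
  (T\circ\pi)(A)=\int_A\Bigl(\int_{\XX}\tau(y\mid x)\,p(x)\,\lambda(\dd x)\Bigr)\lambda(\dd y)=\int_A p\dd\lambda=\pi(A).
\]
So \hyp{A}{A} holds.

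\emph{Step 2: \hyp{S}{S}.} Since $T_x$ has a $\pi$-density for every $x$, Lemma \ref{lem:onestep} (whose proof uses only this identity, not positivity) gives $T\circ\mu\ll\pi$ for every $\mu$. Hence $\sing(T^n\circ\mu\mid\pi)=0$ for every $n\ge1$. Equivalently, Lemma \ref{lem:uniformminorant} applies with $N=1$, $u=t$, $c=1$.

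\emph{Step 3: \hyp{P}{P}.} Take $N:=1$, $s:=t$, and $\YY:=\XX$. The zero set $Z:=\{(x,y):t(y\mid x)=0\}$ meets $\Xp\times\Xp$ only in a $(\lambda\otimes\lambda)$-null set by (iii). Moreover $\pi\otimes\pi$ is absolutely continuous with respect to $\lambda\otimes\lambda$ and carried by $\Xp\times\Xp$. Hence $(\pi\otimes\pi)(Z)=0$. By Tonelli's theorem $\pi(\{x:t(y\mid x)=0\})=0$ for all $y$ in a set $\YY'$ with $\pi(\XX\setminus\YY')=0$. So (a)--(c) hold for every $\varepsilon$, and (b) holds with equality by Step 1. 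Theorem \ref{thm:criterion} then gives convergence for every $\mu$ and uniqueness of $\pi$.

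The main obstacle is the measure-theoretic bookkeeping. First, hypothesis (ii) is exactly what makes $t$ a density at \emph{every} $x$ rather than $\pi$-a.e.; without it, mass leaking into $\{p=0\}$ would be $\pi$-singular. Second, the passage from $(\lambda\otimes\lambda)$-null to $(\pi\otimes\pi)$-null requires $\sigma$-finiteness of $\lambda$, so that Tonelli's theorem and the product measure $\lambda\otimes\lambda$ are available. Both points are routine, and the statement then follows as the special case $p>0$ recovers Theorem \ref{thm:posdens}.
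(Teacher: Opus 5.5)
Your proof is correct and follows essentially the paper's route: the same density $t(y\mid x)=\tau(y\mid x)/p(y)$ on $\Xp$ (and $0$ off $\Xp$), hypothesis (ii) used at every $x$ to make $t$ a $\pi$-density of $T_{x}$, invariance from (i) by Tonelli, and (iii) transferred to $(\pi\otimes\pi)$-almost everywhere positivity of $t$. The only difference is packaging. The paper concludes by citing Theorem \ref{thm:main} in the almost-everywhere form of Remark \ref{rem:aepositivity}. You instead verify \hyp{P}{P} (with the exceptional set $\YY'$ made explicit) and \hyp{S}{S} directly and apply Theorem \ref{thm:criterion}, which is precisely what that remark asserts.
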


\begin{proof}
Hypothesis \hyp{A1}{A1} holds by assumption and \hyp{A2}{A2} because
$\int p \dd\lambda = 1$. For \hyp{A3}{A3}, Tonelli's theorem gives, for
$A \in \XXA$,
\[
  (T\circ\pi)(A) = \int_{\XX}\Bigl(\int_{A}\tau(y\mid x)\lambda(\dd y)\Bigr) p(x)\lambda(\dd x)
  = \int_{A}\Bigl(\int_{\XX}\tau(y \mid x)\,p(x)\,\lambda(\dd x)\Bigr)\lambda(\dd y),
\]
which equals $\int_{A} p \dd\lambda = \pi(A)$ by (i).

Define $t : \XX\times\XX \to [0,\infty)$ by $t(y \mid x) := \tau(y \mid x)/p(y)$ for
$y \in \Xp$ and $t(y \mid x) := 0$ for $y \notin \Xp$; this map is
$\XXA\otimes\XXA$-measurable, since $p > 0$ on $\Xp$. By (ii), for every $x \in \XX$
and $A \in \XXA$,
\[
  \int_{A} t(y \mid x)\,\pi(\dd y) = \int_{A \cap \Xp} \tau(y \mid x)\,\lambda(\dd y)
  = \int_{A} \tau(y \mid x)\,\lambda(\dd y) = T(A \mid x),
\]
so $t$ is a jointly measurable density of $T$ with respect to $\pi$. It is here that
(ii) is used, and it is used for \emph{every} $x$: invariance (i) by itself yields
$\tau(y \mid x) = 0$ for $\lambda$-almost every $y \notin \Xp$ only for
$\lambda$-almost every $x \in \Xp$, that is, for $\pi$-almost every $x$, whereas the
conclusion is asserted for every initial distribution $\mu$, in particular for
$\mu := \delta_{x}$ with $p(x) = 0$. Finally
$(\pi\otimes\pi)(\{(x,y) : t(y \mid x) = 0\}) = \int\!\!\int \ind_{\{\tau(y\mid x)=0\}}\,
p(x)p(y)\,\lambda(\dd x)\lambda(\dd y)$, which vanishes precisely under (iii), since
$p > 0$ exactly on $\Xp$. Thus \hyp{D}{D} holds in the almost-everywhere form of
Remark \ref{rem:aepositivity}, and Theorem \ref{thm:main} applies.
\end{proof}

\begin{remark}[The usual special case, and the remaining dictionary]\label{rem:reference}
If $p > 0$ $\lambda$-almost everywhere --- equivalently $\pi \sim \lambda$ --- then
$\lambda(\XX\setminus\Xp) = 0$, hypothesis (ii) of Corollary \ref{cor:reference} is
vacuous and (iii) reads simply: $\tau > 0$ $(\lambda\otimes\lambda)$-almost
everywhere. In words: the chain has a transition density with respect to $\lambda$
which is almost everywhere strictly positive, and $\pi$ is invariant. That is the
form in which the hypothesis is usually met, and it is the form used in
Section \ref{sec:mh}.

Two further translations are worth recording. The pair of identities
$\int \tau(y \mid x)\lambda(\dd y) = 1$ $(x \in \XX)$ and (i) of
Corollary \ref{cor:reference} is the $\lambda$-form of the double normalisation of
Remark \ref{rem:normalisation}. And a probability measure
$\mu(A) = \int_{A} m \dd\lambda$, given by a measurable map
$m : \XX \to [0,\infty)$, satisfies $\mu \le M\pi$ for a number $M \in [1,\infty)$ if
and only if $m \le M\,p$ $\lambda$-almost everywhere.
\end{remark}

\begin{remark}[A uniformly positive density: geometric rate]\label{rem:uniform}
Suppose, instead of \hyp{D}{D}, that there is a number $\varepsilon_{0} \in (0,1]$ with
$t(y \mid x) \ge \varepsilon_{0}$ for all $(x,y) \in \XX\times\XX$. Then for every
$\mu \in \Prob$ and every $A \in \XXA$,
\[
  (T\circ\mu)(A) = \int_{\XX}\Bigl(\int_{A} t(y\mid x)\,\pi(\dd y)\Bigr)\mu(\dd x)
  \;\ge\; \varepsilon_{0}\,\pi(A),
\]
so the measure $\zeta := \varepsilon_{0}\pi$ is a common minorant of $T\circ\mu$
and $T\circ\nu$ for \emph{all} $\mu,\nu \in \Prob$, dominated or not. By
Lemma \ref{lem:minorant}, $\tv{T\circ\mu - T\circ\nu} \le 1-\varepsilon_{0}$;
running the renormalisation of Proposition \ref{thm:dominated} without the domination
step --- that is, applying the last display to $\alpha_{n} := h_{n}^{+}/\Delta_{n}$
and $\beta_{n} := h_{n}^{-}/\Delta_{n}$ whenever $\Delta_{n} > 0$, the case
$\Delta_{n} = 0$ being trivial --- gives
$\Delta_{n+1} \le (1-\varepsilon_{0})\Delta_{n}$ and hence
\[
  \tv{T^{n}\circ\mu - \pi} \le (1-\varepsilon_{0})^{n} \qquad (n \in \Nz).
\]
This is the classical Doeblin situation; for what can be extracted from a
minorisation valid only on a small set, rather than on all of $\XX$, see
\cite{MeynTweedie09} and \cite{HairerMattingly11}, and for a catalogue of conditions
equivalent to geometric ergodicity see \cite{Gallegos24}. The content of these notes is
precisely that convergence survives, without a rate, when the uniform bound
$\varepsilon_{0}$ is replaced by pointwise positivity, and even when positivity is
reached only after finitely many steps.
\end{remark}

\section{Convergence under a positive transition density plus an atom}\label{sec:mh}

Assumption \hyp{D}{D} requires $T_{x}$ to be absolutely continuous with respect
to $\pi$, and this already excludes the single most important example, namely the
Metropolis--Hastings algorithm \cite{Metropolis53,Hastings70}. Its kernel rejects the
proposed move with a positive probability $r(x)$, and therefore keeps an atom at the
starting point at every time and in every iterate; when $\pi$ is atomless, no iterate
is absolutely continuous. The core of Section \ref{sec:core} covers such kernels
nevertheless, and the verification is short. The reason is that
Assumption \hyp{S}{S} asks only that the singular mass be \emph{small}, not that it
vanish, and that the atom, although never absent, carries a mass which tends to $0$.

\subsection{The assumption \texorpdfstring{\hyp{M}{M}}{(M)}}

\begin{assumption}[Absolutely continuous part plus an atom]\label{ass:M}
In addition to \hyp{A}{A}:
\begin{enumerate}[nosep,topsep=3pt,leftmargin=3em]
  \item[\hyptgt{M}{M}] There is a $\XXA\otimes\XXA$-measurable map
        \[
          k : \XX \times \XX \longrightarrow [0,\infty), \qquad (x,y) \longmapsto k(y \mid x),
        \]
        such that, with the maps
        \[
          \theta : \XX \to [0,\infty], \quad \theta(x) := \int_{\XX} k(y \mid x)\,\pi(\dd y),
          \qquad
          r : \XX \to [-\infty,1], \quad r(x) := 1 - \theta(x),
        \]
        the following hold:
        \begin{enumerate}[label=(\alph*),nosep,topsep=2pt,leftmargin=2em]
          \item $\theta(x) \le 1$ for every $x \in \XX$ --- so that in fact
                $\theta : \XX \to [0,1]$ and $r : \XX \to [0,1]$ --- and
                $T(A \mid x) = \displaystyle\int_{A} k(y \mid x)\,\pi(\dd y) + r(x)\,\ind_{A}(x)$ for every $x \in \XX$ and every $A \in \XXA$;
          \item $\theta(x) > 0$ for \emph{every} $x \in \XX$;
          \item $k(y \mid x) > 0$ for $(\pi\otimes\pi)$-almost every $(x,y) \in \XX\times\XX$.
        \end{enumerate}
\end{enumerate}
\end{assumption}

\begin{remark}[The bound $\theta \le 1$ is a hypothesis, not a consequence]\label{rem:thetabound}
The requirement $\theta \le 1$ in \hyp{M}{M}(a) does not follow from the
representation of $T$ alone. Take $\XX := \{x\}$ a single point, $\pi := \delta_{x}$
and $k(x \mid x) := 2$; then $\theta(x) = 2$, $r(x) = -1$, and
$\int_{A}k(y\mid x)\pi(\dd y) + r(x)\ind_{A}(x) = 2\ind_{A}(x) - \ind_{A}(x)
= \ind_{A}(x) = T(A\mid x)$ for the Markov kernel $T := \delta_{x}$. So the
representation holds while $\theta > 1$. The bound is genuinely used below: it is what
makes $r$ nonnegative, hence what makes $T(A\mid x) \ge \int_{A}k(y\mid x)\pi(\dd y)$
in the verification of \hyp{P}{P}, and what makes the set functions $\nu_{n}$ in
the verification of \hyp{S}{S} nonnegative measures. If singletons are measurable
and $\pi$ is atomless --- the situation the assumption is designed for --- the bound
is automatic, since then $\pi(\{x\}) = 0$ and hence
$\theta(x) = \int_{\XX\setminus\{x\}} k(y \mid x)\pi(\dd y)
= T(\XX \setminus \{x\} \mid x) \le 1$, the term $r(x)\ind_{A}(x)$ contributing
nothing to $A := \XX\setminus\{x\}$.
\end{remark}

\noindent
Condition (b) says that from every single point the chain has a positive chance of
moving; without it a point $x$ with $\theta(x) = 0$ would be absorbing and
$T^{n}\circ\delta_{x} = \delta_{x}$ for every $n$.

\noindent
In the applications, invariance \hyp{A3}{A3} is not checked directly but obtained from
a symmetry. The following lemma is stated so as to \emph{produce} \hyp{A3}{A3}, and
therefore assumes only \hyp{A1}{A1}, \hyp{A2}{A2} and the representation of
\hyp{M}{M}(a); it does not presuppose the invariance that the rest of this section
assumes.

\begin{lemma}[Symmetry implies invariance and reversibility]\label{lem:symmetry}
Assume \hyp{A1}{A1} and \hyp{A2}{A2}, let $T$ be a Markov kernel on $(\XX,\XXA)$ and
let $k : \XX\times\XX \to [0,\infty)$ be a $\XXA\otimes\XXA$-measurable map such that,
with $\theta(x) := \int_{\XX}k(y \mid x)\pi(\dd y)$ and $r := 1-\theta$, one has
$\theta \le 1$ and
$T(A \mid x) = \int_{A} k(y \mid x)\pi(\dd y) + r(x)\ind_{A}(x)$ for every
$x \in \XX$ and every $A \in \XXA$. Assume in addition
that $k(y \mid x) = k(x \mid y)$ for $(\pi\otimes\pi)$-almost every $(x,y)$. Then
$\pi$ is invariant for $T$, i.e.\ \hyp{A3}{A3} holds, and $T$ is reversible with respect to
$\pi$.
\end{lemma}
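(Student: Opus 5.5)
The plan is to prove reversibility first, in the form of the detailed balance identity
\[
  \int_{A} T(B \mid x)\,\pi(\dd x) \;=\; \int_{B} T(A \mid x)\,\pi(\dd x)
  \qquad (A,B \in \XXA),
\]
and then read off invariance as the case $B := \XX$.

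Fix $A,B \in \XXA$ and insert the representation of $T$. The left-hand side splits into two terms:
\[
  \int_{A}\Bigl(\int_{B} k(y\mid x)\,\pi(\dd y)\Bigr)\pi(\dd x)
  \;+\; \int_{A} r(x)\,\ind_{B}(x)\,\pi(\dd x).
\]
The second term is $\int_{A\cap B} r \dd\pi$, which is symmetric in $(A,B)$. Here $r$ is measurable with values in $[0,1]$, because $\theta \le 1$ and $\theta$ is measurable by Tonelli's theorem.

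The first term is the integral of the nonnegative jointly measurable map $\ind_{A}(x)\ind_{B}(y)k(y\mid x)$ against $\pi\otimes\pi$. Tonelli's theorem justifies this and allows both the exchange of the order of integration and the renaming of variables. The symmetry $k(y\mid x) = k(x\mid y)$ holds $(\pi\otimes\pi)$-almost everywhere, so it does not change the double integral. Hence the first term equals
\[
  \int_{B}\Bigl(\int_{A} k(y\mid x)\,\pi(\dd y)\Bigr)\pi(\dd x).
\]
Adding the two terms back together gives the right-hand side of the detailed balance identity. All quantities are finite, since each term is bounded by $1$, so no $\infty-\infty$ issue arises.

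For invariance, take $B := \XX$ in the identity. The left-hand side becomes $\int_{A} T(\XX\mid x)\,\pi(\dd x) = \pi(A)$. The right-hand side becomes $\int_{\XX} T(A\mid x)\,\pi(\dd x) = (T\circ\pi)(A)$. Therefore $T\circ\pi = \pi$, which is \hyp{A3}{A3}.

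The only delicate point is the symmetric exchange. It requires applying the almost-everywhere symmetry inside the product integral rather than pointwise in $x$. I would do this by writing the term as a single $(\pi\otimes\pi)$-integral before using symmetry, and then applying Tonelli's theorem to the swapped map $(x,y)\mapsto(y,x)$, which preserves $\pi\otimes\pi$.
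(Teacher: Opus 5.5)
Your proof is correct, but it is organised differently from the paper's. You prove detailed balance on rectangles, $\int_{A} T(B\mid x)\,\pi(\dd x) = \int_{B} T(A\mid x)\,\pi(\dd x)$, with one Tonelli-plus-swap computation. You then read off invariance by taking $B := \XX$.

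The paper proves invariance first, independently of reversibility. Fubini converts the $(\pi\otimes\pi)$-a.e.\ symmetry of $k$ into the column identity $\int_{\XX} k(y\mid x)\,\pi(\dd x) = \theta(y)$ for $\pi$-a.e.\ $y$. Hence $(T\circ\pi)(A) = \int_{A}\theta\,\dd\pi + \int_{A}(1-\theta)\,\dd\pi = \pi(A)$. Reversibility is then shown separately, as swap-invariance of the joint measure $\pi(\dd x)\,T(\dd y\mid x)$ on all of $\XXA\otimes\XXA$. That measure is split into the $k$-part and the diagonal part $C \mapsto \int_{\XX} r(x)\ind_{C}(x,x)\,\pi(\dd x)$.

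Your route is more economical: one computation gives both conclusions. On a rectangle the atom contributes simply $\int_{A\cap B} r\,\dd\pi$, so the diagonal never has to be handled as a subset of $\XX\times\XX$. The paper's route gives reversibility on the full product $\sigma$-algebra directly. It also isolates the column identity, which for kernels of this shape is equivalent to invariance. This makes visible that the invariance half uses the symmetry of $k$ only through that identity (cf.\ Remark~\ref{rem:detailedbalance}).

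The two forms of reversibility agree. $\pi(\dd x)\,T(\dd y\mid x)$ and its image under the swap are finite measures of equal mass. By your identity they coincide on the $\pi$-system of rectangles, and hence everywhere. Indeed, the paper itself uses your rectangle form in Lemma~\ref{lem:gibbsinvariance}.
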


\begin{proof}
Let $A \in \XXA$. By Fubini's theorem the $(\pi\otimes\pi)$-almost everywhere
symmetry of $k$ implies that for $\aeevery$ $y$ one has $k(y\mid x) = k(x \mid y)$ for
$\aeevery$ $x$, whence $\int_{\XX}k(y \mid x)\pi(\dd x) = \int_{\XX}k(x\mid y)\pi(\dd x)
= \theta(y)$ for $\aeevery$ $y$. Therefore, by Tonelli's theorem,
\begin{align*}
  (T\circ\pi)(A)
  &= \int_{A} \Bigl( \int_{\XX} k(y \mid x)\,\pi(\dd x) \Bigr)\pi(\dd y) + \int_{A} r \dd\pi \\
  &= \int_{A} \theta \dd\pi + \int_{A} (1-\theta) \dd\pi
   = \pi(A) .
\end{align*}
Reversibility is the symmetry of the measure
\[
  \pi(\dd x)\,T(\dd y \mid x)
  = k(y\mid x)\,\pi(\dd x)\,\pi(\dd y) + r(x)\,\pi(\dd x)\,\delta_{x}(\dd y)
\]
on $(\XX\times\XX, \XXA\otimes\XXA)$, the second summand being the measure
$C \mapsto \int_{\XX} r(x)\ind_{C}(x,x)\,\pi(\dd x)$, which is well defined because the
diagonal map $x \mapsto (x,x)$ is measurable. Both summands are invariant under the
swap $(x,y)\mapsto(y,x)$, the first because $k$ is $(\pi\otimes\pi)$-almost everywhere
symmetric and the second because $\ind_{C}(x,x)$ is.
\end{proof}

\noindent
Lemma \ref{lem:symmetry} is not invoked in any proof below. It is recorded for the
reader who wishes to \emph{produce} \hyp{A3}{A3} from a symmetry of $k$, which is how
invariance is verified in practice and which is its evident purpose; the corresponding
statement in terms of a reference measure, which is the form actually used in the
applications, is Remark \ref{rem:detailedbalance}.

\subsection{The convergence theorem under \texorpdfstring{\hyp{A}{A} and \hyp{M}{M}}{(A) and (M)}}

\begin{theorem}[Convergence for a strictly positive density plus an atom]\label{thm:mh}
Assume \hyp{A}{A} and \hyp{M}{M}. Then
\[
  \lim_{n\to\infty}\ \sup_{A\in\XXA} \bigl| (T^{n}\circ\mu)(A) - \pi(A) \bigr| = 0
  \qquad\text{for every } \mu \in \Prob,
\]
and $\pi$ is the unique invariant probability measure of $T$. More precisely, the
singular mass is bounded by the probability that no move has yet been made:
\[
  \sing\bigl( T^{n}\circ\mu \bigm| \pi\bigr) \;\le\; \int_{\XX} r^{n} \dd\mu
  \qquad (\mu \in \Prob,\ n \in \Nz) ,
\]
and the right-hand side tends to $0$ for every $\mu \in \Prob$.
\end{theorem}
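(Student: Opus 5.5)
By Theorem \ref{thm:criterion} it suffices to verify \hyp{P}{P} and \hyp{S}{S}, and the quantitative bound on the singular mass will give \hyp{S}{S} directly. For \hyp{P}{P} take, for every $\varepsilon$, the data $N := 1$, $\YY := \XX$ and $s := k$, together with a set $\YY'$ still to be chosen. Hypothesis (b) holds because $r \ge 0$ by \hyp{M}{M}(a), so that $T(A\mid x) \ge \int_{A}k(y\mid x)\pi(\dd y)$ for every $x$. For (c), \hyp{M}{M}(c) and Tonelli's theorem give $\pi(\{x : k(y\mid x) = 0\}) = 0$ for $\aeevery$ $y$. Let $\YY'$ be this set of $y$; it is measurable by Tonelli. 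Then (a) and (c) hold with the value $0$.

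For \hyp{S}{S}, I would imitate the proof of Lemma \ref{lem:uniformminorant}, using the atom in place of the residual $W_{x}$. Define $\nu_{0} := \mu$ and $\nu_{n+1}(A) := \int_{A} r \dd\nu_{n}$, which is a nonnegative measure because $r \ge 0$. By induction $\nu_{n}(A) = \int_{A} r^{n}\dd\mu$. The claim to prove by induction is
\[
  T^{n}\circ\mu = \gamma_{n} + \nu_{n}, \qquad \gamma_{n} \ll \pi .
\]
One step gives
\[
  T^{n+1}\circ\mu = T\circ\gamma_{n} + \Bigl(A\mapsto\int_{A}\Bigl(\int k(y\mid x)\,\nu_{n}(\dd x)\Bigr)\pi(\dd y)\Bigr) + \nu_{n+1}.
\]
The first summand is absolutely continuous by Lemma \ref{lem:monotone}(i), applied after normalising $\gamma_{n}$. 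The second has a density by Tonelli's theorem. The third comes from the identity $\int r(x)\ind_{A}(x)\,\nu_{n}(\dd x) = \nu_{n+1}(A)$. Since the absolutely continuous part $\gamma_{n}$ is admissible in Definition \ref{def:singmass}, this gives $\sing(T^{n}\circ\mu\mid\pi) \le \nu_{n}(\XX) = \int r^{n}\dd\mu$.

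Finally, \hyp{M}{M}(b) gives $0 \le r(x) < 1$ for every $x$, so $r^{n} \to 0$ pointwise. Dominated convergence, with the bound $r^{n} \le 1$, then gives $\int r^{n}\dd\mu \to 0$ for every $\mu$, in particular at every $\delta_{x}$. This is \hyp{S}{S}.

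The main care point is the pointwise-everywhere requirement in \hyp{S}{S}. It is exactly where (b) must hold at every $x$ rather than almost everywhere, since at $\delta_{x}$ the bound is $r(x)^{n}$. Beyond that, the remaining checks are the measurability of the sets and maps involved, which Tonelli's theorem supplies.
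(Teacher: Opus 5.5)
Your proof is correct and follows the paper's argument essentially step for step. For \hyp{P}{P} you use the same data $N=1$, $\YY=\XX$, $s=k$, with $\YY'$ obtained from \hyp{M}{M}(c) by Tonelli. For \hyp{S}{S} you use the same inductive splitting of $T^{n}\circ\mu$ into an absolutely continuous part plus $\nu_{n}(A)=\int_{A} r^{n}\dd\mu$, followed by dominated convergence using $0\le r<1$ everywhere. The only cosmetic difference is that you build $\nu_{n}$ recursively in the style of Lemma \ref{lem:uniformminorant}, whereas the paper writes it in closed form from the outset.
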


\begin{proof}
By Theorem \ref{thm:criterion} it suffices to verify \hyp{P}{P} and \hyp{S}{S}.

\emph{Assumption \hyp{P}{P}.} Let $\varepsilon \in (0,1)$ be a number and take
$N := 1$, $\YY := \XX$ and $s := k$, none of which depends on $\varepsilon$.
Hypothesis (a) is trivial; hypothesis (b)
holds because $r(x)\ind_{A}(x) \ge 0$, which is where the bound $\theta \le 1$ of
\hyp{M}{M}(a) is used, so that \hyp{M}{M}(a) gives
$T(A\mid x) \ge \int_{A}k(y\mid x)\pi(\dd y)$ --- this is the one place in these
notes where the inequality in \hyp{P}{P}(b) is used, rather than an
equality; and hypothesis (c) follows from \hyp{M}{M}(c) by Fubini's theorem applied
to the $\XXA\otimes\XXA$-measurable set $\{(x,y) \in \XX\times\XX : k(y \mid x) = 0\}$,
which is $(\pi\otimes\pi)$-null by \hyp{M}{M}(c):
\[
  0 = \int_{\XX} \pi\bigl( \{ y \in \XX : k(y \mid x) = 0 \} \bigr)\,\pi(\dd x)
    = \int_{\XX} \pi\bigl( \{ x \in \XX : k(y \mid x) = 0 \} \bigr)\,\pi(\dd y) ,
\]
so that the inner term on the right vanishes for $\aeevery$ $y \in \XX$. Taking for
$\YY'$ the full-measure set of such $y$, this is \hyp{P}{P}(a) and (c) with
$\YY = \XX$, the bound in (c) holding with $0$ in place of $\varepsilon$.

\emph{Assumption \hyp{S}{S}.} Fix $\mu \in \Prob$. We claim that for every
$n \in \Nz$
\[
  T^{n}\circ\mu = \nu_{n} + \sigma_{n},
  \qquad\text{where}\qquad
  \nu_{n}(A) := \int_{A} r^{n} \dd\mu \quad (A \in \XXA)
\]
and $\sigma_{n} : \XXA \to [0,\infty)$ is a nonnegative measure with
$\sigma_{n} \ll \pi$. For $n=0$ this
holds with $\sigma_{0} := 0$. Assume it for $n$. Then
$T^{n+1}\circ\mu = T\circ\nu_{n} + T\circ\sigma_{n}$, and:
\begin{itemize}[nosep,topsep=2pt,leftmargin=1.4em]
  \item $T\circ\sigma_{n} \ll \pi$ by Lemma \ref{lem:monotone}(i) (applied to
        $\sigma_{n}/\sigma_{n}(\XX)$ if $\sigma_{n} \ne 0$);
  \item by \hyp{M}{M}(a) and Tonelli's theorem, for $A \in \XXA$,
        \[
          (T\circ\nu_{n})(A)
          = \int_{A} \Bigl( \int_{\XX} k(y\mid x)\,r(x)^{n}\,\mu(\dd x) \Bigr)\pi(\dd y)
            + \int_{A} r^{n+1} \dd\mu ,
        \]
        whose first summand is a nonnegative measure absolutely continuous with
        respect to $\pi$ and whose second summand is $\nu_{n+1}(A)$.
\end{itemize}
This proves the claim with $\sigma_{n+1}$ the sum of the two absolutely continuous
contributions.

Since $\sigma_{n} \le T^{n}\circ\mu$ and $\sigma_{n} \ll \pi$, the measure
$\sigma_{n}$ is admissible in Definition \ref{def:singmass}, whence
\[
  \sing\bigl( T^{n}\circ\mu \bigm| \pi\bigr) \;\le\; 1 - \sigma_{n}(\XX)
  \;=\; \nu_{n}(\XX) \;=\; \int_{\XX} r^{n}\dd\mu
  \qquad (n \in \Nz) .
\]
By \hyp{M}{M}(a) and \hyp{M}{M}(b) we have $0 \le r(x) < 1$ for every $x \in \XX$,
hence $r(x)^{n} \to 0$ for every $x$, and dominated convergence gives
$\int_{\XX}r^{n}\dd\mu \to 0$. This is \hyp{S}{S}.
\end{proof}

\subsection{The convergence theorem under \texorpdfstring{\hyp{A}{A} and \hyp{M}{M}}{(A) and (M)} with reference measure}

\begin{corollary}[Convergence with an atom, stated with a reference measure]\label{cor:mhreference}
Let $(\XX,\XXA)$ be a measurable space and $\lambda$ a $\sigma$-finite measure on
it. Let $p : \XX \to [0,\infty)$ be measurable with $\int_{\XX} p \dd\lambda = 1$, put
$\pi(A) := \int_{A} p \dd\lambda$ and $\Xp := \{p > 0\} \in \XXA$. Let
\[
  \kappa : \XX \times \XX \longrightarrow [0,\infty), \qquad (x,y) \longmapsto \kappa(y \mid x),
\]
be $\XXA\otimes\XXA$-measurable, put
$\theta(x) := \int_{\XX}\kappa(y \mid x)\,\lambda(\dd y)$, assume $\theta(x) \le 1$
for every $x \in \XX$, so that
\[
  T(A \mid x) := \int_{A} \kappa(y \mid x)\,\lambda(\dd y) + \bigl(1-\theta(x)\bigr)\,\ind_{A}(x)
  \qquad (A \in \XXA,\ x \in \XX)
\]
defines a Markov kernel $T$ on $(\XX,\XXA)$. Assume:
\begin{enumerate}[label=(\roman*),nosep,topsep=3pt]
  \item \emph{(the chain can move)} $\theta(x) > 0$ for every $x \in \XX$;
  \item \emph{(invariance)} $\displaystyle\int_{\XX} \kappa(y \mid x)\,p(x)\,\lambda(\dd x) = \theta(y)\,p(y)$ for $\lambda$-almost every $y \in \XX$;
  \item \emph{(the support is not left)} for every $x \in \XX$: $\ \kappa(y \mid x) = 0$ for $\lambda$-almost every $y \in \XX \setminus \Xp$;
  \item \emph{(positivity)} $\kappa(y \mid x) > 0$ for $(\lambda\otimes\lambda)$-almost every $(x,y) \in \Xp\times\Xp$.
\end{enumerate}
Then $\pi$ is the unique invariant probability measure of $T$, and
$\tv{T^{n}\circ\mu - \pi} \to 0$ for every $\mu \in \Prob$. If $p > 0$
$\lambda$-almost everywhere, hypothesis (iii) is vacuous.
\end{corollary}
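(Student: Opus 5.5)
The plan is to reduce Corollary \ref{cor:mhreference} to Theorem \ref{thm:mh}, in the same way that Corollary \ref{cor:reference} was reduced to Theorem \ref{thm:main}. Concretely, I will verify \hyp{A}{A} and then \hyp{M}{M} for a $\pi$-density $k$ obtained from $\kappa$ by dividing by $p$.

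Step 1: standing assumptions. \hyp{A1}{A1} and \hyp{A2}{A2} are immediate. For \hyp{A3}{A3}, fix $A \in \XXA$ and compute $(T\circ\pi)(A)$ with Tonelli's theorem:
\[
  (T\circ\pi)(A) = \int_{A}\Bigl(\int_{\XX}\kappa(y\mid x)\,p(x)\,\lambda(\dd x)\Bigr)\lambda(\dd y) + \int_{A}(1-\theta)\,p \dd\lambda .
\]
By hypothesis (ii) the first term equals $\int_{A}\theta p\dd\lambda$, so the sum is $\pi(A)$.

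Step 2: the density. Define $k(y\mid x) := \kappa(y\mid x)/p(y)$ for $y \in \Xp$ and $k(y\mid x):=0$ otherwise. This is $\XXA\otimes\XXA$-measurable, and by (iii), used at every $x$ exactly as in the proof of Corollary \ref{cor:reference},
\[
  \int_{A}k(y\mid x)\,\pi(\dd y) = \int_{A\cap\Xp}\kappa(y\mid x)\,\lambda(\dd y) = \int_{A}\kappa(y\mid x)\,\lambda(\dd y)
\]
for every $x$ and every $A$. In particular the $\theta$ of \hyp{M}{M}, computed from $k$ and $\pi$, coincides with the $\theta$ of the statement. So \hyp{M}{M}(a) holds, including the bound $\theta\le 1$ and the representation of $T$, and \hyp{M}{M}(b) is hypothesis (i).

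Step 3: positivity. For \hyp{M}{M}(c), write
\[
  (\pi\otimes\pi)(\{k=0\}) = \iint \ind_{\{k(y\mid x)=0\}}\,p(x)\,p(y)\,\lambda(\dd x)\,\lambda(\dd y).
\]
The integrand vanishes unless $(x,y)\in\Xp\times\Xp$, and there $k=0$ exactly when $\kappa=0$. Hypothesis (iv) then makes the integral $0$. Theorem \ref{thm:mh} now gives convergence from every $\mu\in\Prob$ and uniqueness of $\pi$. If $p>0$ $\lambda$-a.e., then $\lambda(\XX\setminus\Xp)=0$, so (iii) holds trivially.

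The only delicate point is Step 2. Hypothesis (iii) must hold at every $x$, not merely $\pi$-a.e., because the identity must hold at every starting point for \hyp{M}{M}(a). Dividing by $p$ on $\Xp$ is harmless once that is in place. Everything else is Tonelli bookkeeping.
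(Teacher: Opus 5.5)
Your proposal is correct and follows the paper's proof step for step. You obtain invariance from (ii) via Tonelli, and you build the density $k := \kappa/p$ on $\Xp$ (zero elsewhere), using (iii) at every starting point to get \hyp{M}{M}(a) and to identify $\theta$; you then get \hyp{M}{M}(c) from (iv) and apply Theorem \ref{thm:mh}. Your explicit computation of $(\pi\otimes\pi)(\{k=0\})$ with the weight $p(x)p(y)$ just spells out what the paper compresses into the remark that $\pi\otimes\pi$ and $\lambda\otimes\lambda$ have the same null subsets of $\Xp\times\Xp$.
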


\begin{proof}
For \hyp{A3}{A3}, Tonelli's theorem gives, for $A \in \XXA$,
\[
  (T\circ\pi)(A)
  = \int_{A}\Bigl( \int_{\XX}\kappa(y \mid x)\,p(x)\,\lambda(\dd x) \Bigr)\lambda(\dd y)
    + \int_{A} \bigl(1-\theta(y)\bigr)\,p(y)\,\lambda(\dd y),
\]
which equals $\int_{A} p \dd\lambda = \pi(A)$ for every $A$ precisely under (ii).

Define $k : \XX\times\XX \to [0,\infty)$ by $k(y \mid x) := \kappa(y \mid x)/p(y)$ for
$y \in \Xp$ and $k(y \mid x) := 0$ for $y \notin \Xp$; this map is
$\XXA\otimes\XXA$-measurable. By (iii), for every $x \in \XX$ and $A \in \XXA$,
\[
  \int_{A} k(y \mid x)\,\pi(\dd y) = \int_{A \cap \Xp}\kappa(y \mid x)\,\lambda(\dd y)
  = \int_{A} \kappa(y \mid x)\,\lambda(\dd y),
\]
so that $T(A \mid x) = \int_{A}k(y\mid x)\pi(\dd y) + (1-\theta(x))\ind_{A}(x)$ and,
taking $A := \XX$, $\int_{\XX} k(y \mid x)\,\pi(\dd y) = \theta(x)$. Thus
\hyp{M}{M}(a) holds with this $k$, and \hyp{M}{M}(b) is hypothesis (i).
Hypothesis \hyp{M}{M}(c) is (iv), because $p > 0$ exactly on $\Xp$ and therefore the
$(\pi\otimes\pi)$-null subsets of $\Xp \times \Xp$ are exactly the
$(\lambda\otimes\lambda)$-null ones. Theorem \ref{thm:mh} applies.
\end{proof}

\begin{remark}[Detailed balance as a sufficient condition]\label{rem:detailedbalance}
Hypothesis (ii) of Corollary \ref{cor:mhreference} is invariance, and nothing more is
needed. It holds in particular under \emph{detailed balance},
\[
  p(x)\,\kappa(y \mid x) = p(y)\,\kappa(x \mid y)
  \qquad\text{for } (\lambda\otimes\lambda)\text{-almost every } (x,y) \in \XX\times\XX ,
\]
since integrating this identity in $x$ against $\lambda$ gives
$\int \kappa(y\mid x)p(x)\lambda(\dd x) = p(y)\int\kappa(x \mid y)\lambda(\dd x)
= p(y)\theta(y)$. Detailed balance is the $\lambda$-form of the symmetry hypothesis
of Lemma \ref{lem:symmetry}, and it is what the Metropolis--Hastings construction
supplies; but the corollary does not require it, and chains that are invariant
without being reversible are covered as well.
\end{remark}

\subsection{The convergence theorem for the Metropolis--Hastings algorithm}

\begin{corollary}[The Metropolis--Hastings algorithm converges; Theorem \ref{thm:mhintro}]\label{cor:mh}
Let $(\XX,\XXA)$ be a measurable space, $\lambda$ a $\sigma$-finite measure on it,
and let the target be $\pi(A) = \int_{A} p \dd\lambda$ for a measurable map
$p : \XX \to (0,\infty)$ with $\int_{\XX} p \dd\lambda = 1$. Let the proposal be the
Markov kernel $Q(A \mid x) = \int_{A} q(y \mid x)\,\lambda(\dd y)$, given by an
$\XXA\otimes\XXA$-measurable map $q : \XX\times\XX \to [0,\infty)$ with
$\int_{\XX} q(y \mid x)\,\lambda(\dd y) = 1$ for every $x \in \XX$, define the
acceptance probability
\[
  a : \XX\times\XX \to [0,1], \qquad
  a(y \mid x) := \min\Bigl\{ 1,\ \frac{p(y)\,q(x \mid y)}{p(x)\,q(y \mid x)} \Bigr\}
  \quad\bigl(:= 1 \text{ when } q(y \mid x) = 0\bigr),
\]
and let $T$ be the Metropolis--Hastings kernel
\[
  T(A \mid x) := \int_{A} a(y \mid x)\,q(y \mid x)\,\lambda(\dd y) + r(x)\,\ind_{A}(x),
  \qquad
  r(x) := 1 - \int_{\XX} a(y \mid x)\,q(y \mid x)\,\lambda(\dd y) .
\]
Assume
\begin{enumerate}[label=(\roman*),nosep,topsep=3pt]
  \item $q(y \mid x) > 0$ for $(\lambda\otimes\lambda)$-almost every $(x,y) \in \XX\times\XX$;
  \item for every $x \in \XX$ the set $\{ y \in \XX : q(y \mid x) > 0 \text{ and } q(x \mid y) > 0 \} \in \XXA$ has positive $\lambda$-measure.
\end{enumerate}
Then $\pi$ is the unique invariant probability measure of $T$, and
$\tv{T^{n}\circ\mu - \pi} \to 0$ for every initial distribution $\mu \in \Prob$.

Both hypotheses hold in either of the following two cases, which are the ones met in
practice and which give Theorem \ref{thm:mhintro}:
\begin{enumerate}[label=(\alph*),nosep,topsep=3pt]
  \item $q(y \mid x) > 0$ for every $(x,y) \in \XX\times\XX$;
  \item $q$ is symmetric, $q(y \mid x) = q(x \mid y)$ for all $(x,y)$, and
        $q(y \mid x) > 0$ for $(\lambda\otimes\lambda)$-almost every $(x,y)$.
\end{enumerate}
\end{corollary}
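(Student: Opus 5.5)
The plan is to reduce Corollary \ref{cor:mh} to Corollary \ref{cor:mhreference}, with $\kappa(y\mid x) := a(y\mid x)\,q(y\mid x)$. Then $\theta(x) = \int a q \dd\lambda = 1 - r(x) \le 1$, and the kernel defined there coincides with the Metropolis--Hastings kernel $T$. Since $p > 0$ everywhere, $\Xp = \XX$, so hypothesis (iii) of Corollary \ref{cor:mhreference} is vacuous. The map $\kappa$ is $\XXA\otimes\XXA$-measurable: $a$ is built from measurable maps by products, quotients and a minimum, with the convention $a := 1$ on the measurable set $\{q(y\mid x) = 0\}$. Three hypotheses then remain to be checked: (i) $\theta > 0$ everywhere, (ii) invariance, and (iv) $\kappa > 0$ $(\lambda\otimes\lambda)$-almost everywhere.

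\emph{Invariance (ii).} I would verify detailed balance and then invoke Remark \ref{rem:detailedbalance}. On the set where both $q(y\mid x) > 0$ and $q(x\mid y) > 0$, we have $p(x)\kappa(y\mid x) = \min\{p(x)q(y\mid x),\, p(y)q(x\mid y)\}$, which is symmetric in $(x,y)$. On the set where $q(y\mid x) > 0$ but $q(x\mid y) = 0$, the ratio is $0$, so $a(y\mid x) = 0$ and $\kappa(y\mid x) = 0$. Symmetrically, if $q(y\mid x) = 0$ then $\kappa(y\mid x) = 0$ directly. Hence $p(x)\kappa(y\mid x) = p(y)\kappa(x\mid y)$ holds for \emph{every} pair $(x,y)$, and hypothesis (i) of the corollary is not needed for this step.

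\emph{Positivity (iv) and movement (i).} By hypothesis (i) and the measurability of the swap $(x,y)\mapsto(y,x)$, which preserves $\lambda\otimes\lambda$, the set where $q(y\mid x) > 0$ and $q(x\mid y) > 0$ has $(\lambda\otimes\lambda)$-null complement. On this set $a > 0$ because $p > 0$, so $\kappa > 0$ there, giving (iv). For movement, fix $x$. On the set $B_x$ from hypothesis (ii) of the corollary, $\kappa(\cdot\mid x) > 0$ pointwise, and $\lambda(B_x) > 0$. Hence $\theta(x) \ge \int_{B_x}\kappa(y\mid x)\,\lambda(\dd y) > 0$. Corollary \ref{cor:mhreference} then applies and yields the conclusion.

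\emph{The two special cases.} In case (a), hypothesis (i) is immediate. For (ii), $B_x = \XX$, and $\lambda(\XX) > 0$ because $\int p\dd\lambda = 1$. In case (b), hypothesis (i) is assumed. For (ii), symmetry gives $B_x = \{y : q(y\mid x) > 0\}$, and this set has positive $\lambda$-measure because $\int q(y\mid x)\,\lambda(\dd y) = 1$.

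The only delicate point I expect is the bookkeeping in (i). It must hold for \emph{every} $x$, not almost every $x$, because Corollary \ref{cor:mhreference}(i) is pointwise. This is exactly why the pointwise hypothesis (ii) of the corollary is needed: the almost-everywhere hypothesis (i) alone cannot supply it.
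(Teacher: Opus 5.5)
Your proposal is correct and follows essentially the same route as the paper. Both arguments reduce to Corollary~\ref{cor:mhreference} with $\kappa := a\,q$, obtain invariance from detailed balance at every pair through Remark~\ref{rem:detailedbalance}, deduce $\kappa>0$ $(\lambda\otimes\lambda)$-almost everywhere from hypothesis (i) and the swap-invariance of $\lambda\otimes\lambda$, and get $\theta(x)>0$ for every $x$ from hypothesis (ii). Your case analysis is simply an unpacked form of the paper's identity $p(x)\kappa(y\mid x)=\min\{p(x)q(y\mid x),\,p(y)q(x\mid y)\}$; note also that $\theta\le 1$ rests on $a\le 1$ together with $\int q(y\mid x)\,\lambda(\mathrm{d}y)=1$, not on $\theta=1-r$.
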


\begin{proof}
Case (a) implies (i) trivially, and implies (ii) because $\lambda(\XX) > 0$. In case
(b), $\{ y : q(y \mid x) > 0 \text{ and } q(x \mid y) > 0 \} = \{ y : q(y \mid x) > 0 \}$,
which has positive $\lambda$-measure because $\int q(y \mid x)\lambda(\dd y) = 1$.

Apply Corollary \ref{cor:mhreference} with
\[
  \kappa(y \mid x) := a(y \mid x)\,q(y \mid x)
  = \frac{\min\bigl\{ p(x)\,q(y \mid x),\ p(y)\,q(x \mid y) \bigr\}}{p(x)} ,
\]
the second equality holding for every pair $(x,y)$, including those with
$q(y\mid x) = 0$, where both sides vanish by the convention $a := 1$. The map
$\kappa$ is $\XXA\otimes\XXA$-measurable because $p$ and $q$ are measurable and
$p > 0$ everywhere.

The kernel hypothesis of Corollary \ref{cor:mhreference} is met:
$\theta(x) = \int_{\XX}a(y\mid x)q(y\mid x)\lambda(\dd y)
\le \int_{\XX}q(y\mid x)\lambda(\dd y) = 1$ because $a \le 1$, and $\theta = 1-r$ by
the definition of $r$, so that the kernel written there is the kernel $T$ written
here.

Since $p > 0$ everywhere we have $\Xp = \XX$, so hypothesis (iii) there is vacuous.
Detailed balance holds identically, not merely almost
everywhere, because $p(x)\kappa(y \mid x) = \min\{p(x)q(y\mid x),\,p(y)q(x \mid y)\}$
is symmetric in $(x,y)$; by Remark \ref{rem:detailedbalance} this gives hypothesis
(ii) there.

For hypothesis (iv) there, note that, $p$ being strictly positive,
\[
  \kappa(y \mid x) > 0
  \quad\Longleftrightarrow\quad
  q(y \mid x) > 0 \ \text{ and } \ q(x \mid y) > 0 .
\]
Now (i) here says that the set $\{(x,y) : q(y\mid x) = 0\}$ is
$(\lambda\otimes\lambda)$-null; its image under the swap $(x,y)\mapsto(y,x)$ is
$\{(x,y) : q(x \mid y) = 0\}$, and $\lambda\otimes\lambda$ is invariant under that
swap because $\lambda$ is $\sigma$-finite, so this second set is
$(\lambda\otimes\lambda)$-null as well. Hence $\kappa > 0$
$(\lambda\otimes\lambda)$-almost everywhere, which is (iv) there.

Finally, the displayed equivalence shows that
$\theta(x) = \int_{\XX}\kappa(y \mid x)\lambda(\dd y) > 0$ if and only if the set
$\{y : q(y\mid x) > 0 \text{ and } q(x \mid y) > 0\}$ has positive $\lambda$-measure,
so (ii) here gives hypothesis (i) there, namely $\theta(x) > 0$ for every
$x \in \XX$.
\end{proof}

\begin{remark}[Hypothesis (ii) is not a technicality]\label{rem:mhsharp}
Almost everywhere positivity of the proposal density is by itself \emph{not} enough,
and condition (ii) is exactly what excludes the following object. Let $\XX := \R$
with its Borel $\sigma$-algebra, let $\lambda$ be Lebesgue measure, let
$p := \varphi$ be the standard normal density --- so that $p > 0$ everywhere and
$\pi = \varphi\lambda$ --- and put
\[
  q(y \mid x) := \varphi(y-x)\,\ind_{\{y \ne 0\}} \qquad (x,y \in \R) .
\]
Then $q$ is $\XXA\otimes\XXA$-measurable and
$\int_{\R} q(y \mid x)\,\lambda(\dd y) = 1$ for every $x \in \R$, since $\{0\}$ is
$\lambda$-null; so $Q$ is a Markov kernel of the required shape and the
Metropolis--Hastings kernel $T$ built from it is well defined. Hypothesis (i) holds,
because $\{(x,y) : q(y \mid x) = 0\} = \R\times\{0\}$ is
$(\lambda\otimes\lambda)$-null.

But $q(0 \mid y) = 0$ for \emph{every} $y \in \R$. Hence, by the identity for
$\kappa$ in the proof above,
\[
  a(y \mid 0)\,q(y \mid 0)
  = \frac{\min\bigl\{ p(0)\,q(y \mid 0),\ p(y)\,q(0 \mid y) \bigr\}}{p(0)} = 0
  \qquad (y \in \R),
\]
so that $\theta(0) = 0$, $r(0) = 1$ and $T_{0} = \delta_{0}$. The point
$0$ is absorbing; $\delta_{0}$ is therefore a second invariant probability measure,
and $T^{n}\circ\delta_{0} = \delta_{0}$ for every $n$, so that both conclusions of
Corollary \ref{cor:mh} fail from the initial distribution $\mu := \delta_{0}$.

What fails among the hypotheses is (ii), and only (ii): the set
$\{ y : q(y \mid 0) > 0 \text{ and } q(0 \mid y) > 0 \}$ is empty. In the language of
\hyp{M}{M} this is a starting point with $\theta(0) = 0$, which is what \hyp{M}{M}(b)
forbids and why it does so. The example also shows why Theorem \ref{thm:mhintro} of the
introduction asks for $q > 0$ at \emph{every} point rather than almost everywhere,
and it is ruled out by case (b) of Corollary \ref{cor:mh} as well, the map $q$ above
not being symmetric. Between them, the two cases (a) and (b) cover what occurs in
practice.
\end{remark}

\begin{remark}[Reading the hypotheses]\label{rem:mhkernel}
Condition (i) of Corollary \ref{cor:mh} is the substantive one: the proposal must be
able to move, in one step, to almost every point of the state space, and the reverse
move must be possible as well. Condition (ii) merely excludes starting points from
which every proposal is rejected with probability one. The construction goes back to
\cite{Metropolis53} and \cite{Hastings70}; its measure-theoretic formulation on a
general state space, and the standard convergence results for it, are due to Tierney
\cite{Tierney94}, and a detailed survey is given by Roberts and Rosenthal
\cite{RobertsRosenthal04}.
\end{remark}

\begin{remark}[The jump chain, and why we did not use it]\label{rem:jumpchain}
A natural alternative strategy is to remove the atom by conditioning on a move being
made, and then to appeal to Theorem \ref{thm:main} for the resulting kernel. Assume
\hyp{M}{M} and that $k$ is symmetric, and put $\bar\theta := \int_{\XX}\theta\dd\pi \in (0,1]$, a
number which is positive by \hyp{M}{M}(b). Define
\[
  \tilde T(A \mid x) := \frac{1}{\theta(x)} \int_{A} k(y \mid x)\,\pi(\dd y),
  \qquad
  \tilde\pi(A) := \frac{1}{\bar\theta}\int_{A} \theta \dd\pi
  \qquad (A \in \XXA,\ x \in \XX).
\]
Then $\tilde T$ is a Markov kernel, $\tilde\pi \in \Prob$ is invariant for
$\tilde T$ and $\tilde T$ is reversible with respect to $\tilde\pi$, because
$\tilde\pi(\dd x)\tilde T(\dd y\mid x) = \bar\theta^{-1}k(y \mid x)\,\pi(\dd x)\pi(\dd y)$ is
symmetric. Moreover $\tilde T$ has a density with respect to $\tilde\pi$, namely
\[
  \tilde t(y \mid x) = \frac{\bar\theta\,k(y\mid x)}{\theta(x)\,\theta(y)} ,
\]
which is jointly measurable and strictly positive $(\pi\otimes\pi)$-almost
everywhere, hence also $(\tilde\pi\otimes\tilde\pi)$-almost everywhere: indeed
$\tilde\pi = \bar\theta^{-1}\theta\,\pi$ has a density with respect to $\pi$ that is
strictly positive everywhere, so $\tilde\pi \sim \pi$ and the two product measures
have the same null sets. Hence Theorem \ref{thm:main}, in the almost everywhere form
of Remark \ref{rem:aepositivity} --- applied with $\tilde\pi$ in the role of $\pi$ ---
applies to the pair $(\tilde T,\tilde\pi)$ and gives
$\tv{\tilde T^{n}\circ\nu - \tilde\pi} \to 0$ for every $\nu \in \Prob$.

This is exactly the chain of accepted moves, and the reweighting is the expected one:
$\tilde\pi = \bar\theta^{-1}\theta\pi$, that is, $\pi$ is recovered from $\tilde\pi$ by
weighting with the mean holding time $1/\theta(x)$ at the point $x$. What the
construction does \emph{not} deliver for free is the transfer back. The original
chain arises from the jump chain by a random time change with state-dependent holding
times: $X_{n} = Y_{J_{n}}$, where $(Y_{j})_{j\in\Nz}$ is the jump chain and $J_{n}$
counts the accepted moves up to time $n$. The number $J_{n}$ is not independent of the
sequence $(Y_{j})$, so deducing $\tv{T^{n}\circ\mu-\pi} \to 0$ from
$\tv{\tilde T^{j}\circ\nu - \tilde\pi} \to 0$ requires a renewal argument controlling
the joint behaviour of $J_{n}$ and $Y_{J_{n}}$. That argument is longer than the
direct verification of \hyp{S}{S} in the proof of Theorem \ref{thm:mh}, which is
why we did not take this route; for chains observed at random times and the
associated renewal theory see \cite{Douc18} and \cite{MeynTweedie09}.
\end{remark}

\section{Convergence under an eventually positive transition density}\label{sec:general}

We now return to absolutely continuous kernels and weaken \hyp{D}{D} in the other
direction: instead of requiring that $T_{x}$ itself have a strictly
positive density, we require only that \emph{some} iterate
$T^{n}_{x}$ have one, where the number $n$ may depend on the point $x$.
In the general theory this is the point at which irreducibility, aperiodicity and
Harris recurrence are introduced; see \cite{Nummelin84}, \cite{MeynTweedie09} and
\cite{Douc18}. Here no aperiodicity hypothesis has to be added: the condition
improves itself, as the first subsection below shows.

\subsection{The condition improves itself}

The two ingredients were proved above: absolute continuity
propagates forwards (Lemma \ref{lem:monotone}(i)) and so does domination of $\pi$
(Lemma \ref{lem:monotone}(ii)). Together they give the following, which is the form in
which the mutual relation $\sim$ is needed here.

\begin{corollary}[Self-improvement]\label{cor:selfimprovement}
Assume \hyp{A}{A} and put
$\XX^{(n)} := \{ x \in \XX : T^{n}_{x} \sim \pi \}$ for $n \in \N$. Then
$\XX^{(n)} \subseteq \XX^{(n+1)}$ for every $n \in \N$. Hence the condition
``for every $x \in \XX$ there is a number $n \in \N$ with
$T^{n}_{x} \sim \pi$'' already implies ``for every $x \in \XX$ there is a
number $n_{0}(x) \in \N$ such that $T^{n}_{x} \sim \pi$ for \emph{every}
$n \ge n_{0}(x)$''.
\end{corollary}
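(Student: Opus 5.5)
The statement follows from two propagation facts already proved, Lemma \ref{lem:monotone}(i) and (ii), applied to the single law $\nu := T^{n}_{x}$. The plan is therefore a direct verification, with no limiting argument.

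Fix $n \in \N$ and $x \in \XX^{(n)}$, so that $T^{n}_{x} \ll \pi$ and $\pi \ll T^{n}_{x}$. Apply Lemma \ref{lem:monotone}(i) with $\nu := T^{n}_{x} \in \Prob$ to get $T \circ T^{n}_{x} \ll \pi$. Apply Lemma \ref{lem:monotone}(ii) with the same $\nu$ to get $\pi \ll T\circ T^{n}_{x}$. By \ref{conv:kernelnotation} and Definition \ref{def:action} we have $T\circ T^{n}_{x} = T^{n+1}_{x}$, so $T^{n+1}_{x} \sim \pi$, that is, $x \in \XX^{(n+1)}$. This proves the inclusion $\XX^{(n)} \subseteq \XX^{(n+1)}$. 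Equivalently, one may quote the sentence of Lemma \ref{lem:monotone}(ii) stating that $T^{m}_{x} \sim \pi$, once it holds, holds at every later time; the inclusion is just that sentence read at $m := n$.

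For the second assertion, let $x \in \XX$ and choose $n_{0}(x) \in \N$ with $T^{n_{0}(x)}_{x} \sim \pi$, as the hypothesis provides. Then $x \in \XX^{(n_{0}(x))}$. Induction on $n \ge n_{0}(x)$, using the inclusion at each step, gives $x \in \XX^{(n)}$, that is, $T^{n}_{x} \sim \pi$, for every $n \ge n_{0}(x)$.

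I expect no genuine obstacle here. The only points to watch are bookkeeping. Invariance of $\pi$ is what Lemma \ref{lem:monotone}(i),(ii) consume, and it is available under \hyp{A}{A}. The identity $T\circ T^{n}_{x} = T^{n+1}_{x}$ is the composition rule of Definition \ref{def:action} applied to $\delta_{x}$. No measurability of $\XX^{(n)}$ is claimed or needed, since the statement is pointwise in $x$.
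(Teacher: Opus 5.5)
Your proof is correct and is essentially the paper's. The paper applies Lemma~\ref{lem:monotone}(ii) to $\nu := T^{n}_{x}$; that lemma already packages (i) and (ii) into the statement that $T\circ\nu \sim \pi$ whenever $\nu \sim \pi$. It then uses $T\circ T^{n}_{x} = T^{n+1}_{x}$, and your induction for the second assertion spells out what the paper leaves implicit.
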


\begin{proof}
Apply Lemma \ref{lem:monotone}(ii) to $\nu := T^{n}_{x}$ and use
$T \circ T^{n}_{x} = T^{n+1}_{x}$.
\end{proof}

\noindent
Corollary \ref{cor:selfimprovement} says that the property
$T^{n}_{x} \sim \pi$, once acquired, is never lost. This is what makes an
aperiodicity hypothesis unnecessary below, at a cost of two lines.

\subsection{The assumption \texorpdfstring{\hyp{E}{E}}{(E)}}

\begin{assumption}[A strictly positive density after finitely many steps]\label{ass:A4prime}
In addition to \hyp{A}{A}:
\begin{enumerate}[nosep,topsep=3pt,leftmargin=3.6em]
  \item[\hyptgt{E}{E}] There are sets $\XX_{n} \in \XXA$ $(n \in \N)$ and
        $\XXA\otimes\XXA$-measurable maps
        \[
          t_{n} : \XX \times \XX \longrightarrow [0,\infty), \qquad (x,y) \longmapsto t_{n}(y \mid x)
          \qquad (n \in \N),
        \]
        such that
        \begin{enumerate}[label=(\alph*),nosep,topsep=2pt,leftmargin=2em]
          \item $\XX_{n} \subseteq \XX_{n+1}$ for every $n \in \N$, and $\bigcup_{n\in\N}\XX_{n} = \XX$;
          \item $T^{n}(A \mid x) = \displaystyle\int_{A} t_{n}(y \mid x)\,\pi(\dd y)$ for every $n \in \N$, every $x \in \XX_{n}$ and every $A \in \XXA$;
          \item for every $n \in \N$ and every $x \in \XX_{n}$: $\ t_{n}(y \mid x) > 0$ for $\aeevery$ $y \in \XX$.
        \end{enumerate}
\end{enumerate}
\end{assumption}

\begin{remark}[Relation to the informal condition]\label{rem:measurableXn}
The mathematical content of \hyp{E}{E} is exactly the condition ``for every $x \in \XX$
there is a number $n=n(x) \in \N$ with $T^{n}_{x} \sim \pi$''; by
Corollary \ref{cor:selfimprovement} the sets $\XX^{(n)}$ associated with that
condition are automatically increasing with union $\XX$, which is (a). What \hyp{E}{E}
adds is the measurability of the sets $\XX_{n}$ together with the existence of
jointly measurable densities $t_{n}$ on them. Both are automatic under \hyp{J}{J}, the
sets $\XX_{n}$ being measurable by Tonelli's theorem (Remark \ref{rem:LviaJ}); and
\hyp{J}{J} is itself automatic when $\XXA$ is countably generated, by the measurable
Lebesgue decomposition of a kernel --- the martingale construction of
Remark \ref{rem:jointmeasurability} applied to $T^{n}$. But that construction
is genuine extra machinery, and we prefer to assume its conclusion, exactly as \hyp{D}{D}
does for $n = 1$. The construction is carried out in Proposition \ref{prop:jointdensity}, and
the informal condition itself is the hypothesis \hyp{L}{L} of
Subsection \ref{ssec:L} strengthened in three ways: from $\aeevery$ $x$ to every $x$,
from a vanishing limit to exact domination at a finite time, and from
$T^{n}_{x} \gg \pi$ to $T^{n}_{x} \sim \pi$; so
Theorem \ref{thm:asympequiv} yields Theorem \ref{thm:general} directly from the informal
condition, without the technical parts of \hyp{E}{E}, as soon as \hyp{J}{J} holds ---
which is exactly the remaining, purely measure-theoretic, part of what \hyp{E}{E}
assumes (Remark \ref{rem:Lverification}(iii)).
\end{remark}

\subsection{The convergence theorem under \texorpdfstring{\hyp{A}{A} and \hyp{E}{E}}{(A) and (E)}}

\begin{theorem}[Convergence under a strictly positive density after finitely many steps]\label{thm:general}
Assume \hyp{A}{A} and \hyp{E}{E}. Then
\[
  \lim_{n\to\infty}\ \sup_{A \in \XXA} \bigl| (T^{n}\circ\mu)(A) - \pi(A) \bigr| = 0
  \qquad\text{for every } \mu \in \Prob,
\]
and $\pi$ is the unique invariant probability measure of $T$. More precisely, the
singular mass is bounded by the mass that has not yet entered the good sets:
\[
  \sing\bigl( T^{n}\circ\mu \bigm| \pi\bigr) \;\le\; 1 - \mu(\XX_{n})
  \qquad (\mu \in \Prob,\ n \in \N) ,
\]
and the right-hand side tends to $0$ for every $\mu \in \Prob$.
\end{theorem}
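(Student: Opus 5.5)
By Theorem \ref{thm:criterion} it suffices to verify \hyp{P}{P} and \hyp{S}{S}; I will do \hyp{S}{S} in the quantitative form displayed in the statement.

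\emph{Assumption \hyp{P}{P}.} Given $\varepsilon \in (0,1)$, use \hyp{E}{E}(a) and continuity from below to choose $N \in \N$ with $\pi(\XX\setminus\XX_{N}) \le \varepsilon$. Take $\YY := \XX_{N}$ and $s := t_{N}$. Condition (b) then holds with equality on $\YY$ by \hyp{E}{E}(b).

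For (c) I will argue as in Proposition \ref{prop:LimpliesP}. The set $Z := \{(x,y) \in \YY\times\XX : t_{N}(y\mid x) = 0\}$ is $\XXA\otimes\XXA$-measurable. By \hyp{E}{E}(c), its $x$-sections for $x \in \YY$ are $\pi$-null, so Tonelli gives $(\pi\otimes\pi)(Z) = 0$. Reading the double integral in the other order, $\pi(\{x \in \YY : t_{N}(y\mid x) = 0\}) = 0$ for $\aeevery$ $y$. Let $\YY'$ be that full-measure set of $y$; it is measurable by Tonelli. This gives (a) and (c), with $0$ in place of $\varepsilon$ in (c).

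\emph{Assumption \hyp{S}{S}.} Fix $\mu \in \Prob$ and $n \in \N$. Split $\mu = \mu\restriction\XX_{n} + \mu\restriction(\XX\setminus\XX_{n})$ and apply $T^{n}$ to each piece:
\[
  T^{n}\circ\mu = T^{n}\circ(\mu\restriction\XX_{n}) + T^{n}\circ(\mu\restriction(\XX\setminus\XX_{n})).
\]
By \hyp{E}{E}(b) and Tonelli, the first summand has $\pi$-density $y \mapsto \int_{\XX_{n}} t_{n}(y\mid x)\,\mu(\dd x)$, exactly as in Lemma \ref{lem:onestep}. So it is a nonnegative measure below $T^{n}\circ\mu$, absolutely continuous with respect to $\pi$, and of mass $\mu(\XX_{n})$. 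By Definition \ref{def:singmass} it is admissible, and therefore
\[
  \sing(T^{n}\circ\mu\mid\pi) \le 1 - \mu(\XX_{n}).
\]
Since the sets $\XX_{n}$ increase to $\XX$, continuity from below gives $\mu(\XX_{n}) \to 1$. Specialising to $\mu := \delta_{x}$ yields \hyp{S}{S}. Uniqueness of $\pi$ and convergence for every $\mu$ then come from Theorem \ref{thm:criterion}.

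There is no serious obstacle. The only point needing care is measurability in (c): the null-set condition in \hyp{E}{E}(c) is pointwise in $x$ and has to be turned into a statement for almost every $y$. This works because $t_{N}$ is jointly measurable and $\YY$ is measurable, so the set $Z$ is product-measurable and Tonelli applies in both orders.
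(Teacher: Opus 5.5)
Your proposal is correct and follows the paper's proof essentially step for step. For \hyp{P}{P} you make the same choices, $N$ with $\pi(\XX\setminus\XX_{N}) \le \varepsilon$, $\YY := \XX_{N}$ and $s := t_{N}$, with the same Tonelli argument giving (c). For \hyp{S}{S}, your summand $T^{n}\circ(\mu\restriction\XX_{n})$ is exactly the paper's measure $\beta_{n}(A) = \int_{\XX_{n}} T^{n}(A\mid x)\,\mu(\dd x)$, used in the same way to get $\sing(T^{n}\circ\mu\mid\pi) \le 1-\mu(\XX_{n})$.
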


\begin{proof}
Since $\XX_{n} \uparrow \XX$, continuity from below gives
\[
  \lim_{n\to\infty}\pi(\XX_{n}) = 1
  \qquad\text{and}\qquad
  \lim_{n\to\infty}\mu(\XX_{n}) = 1 \quad \text{for every } \mu \in \Prob .
\]
By Theorem \ref{thm:criterion} it suffices to verify \hyp{P}{P} and \hyp{S}{S}.

\emph{Assumption \hyp{P}{P}.} Let $\varepsilon \in (0,1)$ be a number. Choose a number
$N \in \N$ with $\pi(\XX \setminus \XX_{N}) \le \varepsilon$ and take the set
$\YY := \XX_{N}$ and the map $s := t_{N}$. Hypothesis (a) holds by the choice of
$N$; hypothesis (b) holds with equality by \hyp{E}{E}(b); hypothesis (c) holds
because, by \hyp{E}{E}(c) and Fubini's theorem applied to the
$\XXA\otimes\XXA$-measurable set
$\{(x,y) \in \XX_{N}\times\XX : t_{N}(y\mid x) = 0\}$,
\[
  0 = \int_{\XX_{N}} \pi\bigl(\{ y \in \XX : t_{N}(y \mid x) = 0 \}\bigr)\,\pi(\dd x)
    = \int_{\XX} \pi\bigl(\{ x \in \XX_{N} : t_{N}(y \mid x) = 0 \}\bigr)\,\pi(\dd y),
\]
so that the inner term on the right vanishes for $\aeevery$ $y$; taking for $\YY'$
the full-measure set of such $y$ gives \hyp{P}{P}(a) and (c), the bound in (c) holding
with $0$ in place of $\varepsilon$. Note that here $N$ depends on $\varepsilon$.

\emph{Assumption \hyp{S}{S}.} Let $\mu \in \Prob$ and let $K \in \N$ be a number, and
define the nonnegative measure
\[
  \beta_{K} : \XXA \to [0,\infty), \qquad
  \beta_{K}(A) := \int_{\XX_{K}} T^{K}(A \mid x)\,\mu(\dd x) .
\]
Then $\beta_{K} \le T^{K}\circ\mu$, and $\beta_{K} \ll \pi$ because, by
\hyp{E}{E}(b) and Tonelli's theorem,
$\beta_{K}(A) = \int_{A}\bigl(\int_{\XX_{K}} t_{K}(y \mid x)\mu(\dd x)\bigr)\pi(\dd y)$.
Since $\beta_{K}(\XX) = \mu(\XX_{K})$, Definition \ref{def:singmass} gives
\[
  \sing\bigl( T^{K}\circ\mu \bigm| \pi\bigr) \;\le\; 1 - \mu(\XX_{K})
  \qquad (K \in \N) ,
\]
and the right-hand side tends to $0$. This is \hyp{S}{S}.
\end{proof}

\begin{remark}[Theorem \ref{thm:main} is the special case of Theorem \ref{thm:general} with $\XX_{n} = \XX$]\label{rem:CK}
Assume \hyp{D}{D}. Then all iterates automatically have jointly measurable, strictly
positive densities, given recursively by the Chapman--Kolmogorov formula
\[
  t_{1} := t, \qquad
  t_{n+1}(y \mid x) := \int_{\XX} t_{n}(y \mid z)\, t(z \mid x)\, \pi(\dd z)
  \qquad (n \in \N).
\]
Indeed, if $t_{n}$ is $\XXA\otimes\XXA$-measurable then so is $t_{n+1}$, by Tonelli's
theorem applied to the $\XXA\otimes\XXA\otimes\XXA$-measurable map
$(x,y,z) \mapsto t_{n}(y \mid z)\,t(z \mid x)$; that $t_{n+1}$ is a density of
$T^{n+1}$ follows from
\[
  T^{n+1}(A \mid x)
  = \int_{\XX} T^{n}(A \mid z)\,t(z \mid x)\,\pi(\dd z)
  = \int_{A} \Bigl( \int_{\XX} t_{n}(y \mid z)\,t(z \mid x)\,\pi(\dd z) \Bigr)\pi(\dd y);
\]
and $t_{n+1} > 0$ everywhere because the integrand is strictly positive everywhere
and $\pi$ is a probability measure. Hence \hyp{D}{D} implies \hyp{E}{E} with
$\XX_{n} := \XX$ for every $n \in \N$, and Theorem \ref{thm:main} is the special case
$\XX_{1} = \XX$ of Theorem \ref{thm:general}.

The consequence for the reading of \hyp{E}{E} is this: joint measurability of the family
$(t_{n})_{n\in\N}$ is \emph{not} an additional hypothesis whenever $T$ itself has a
jointly measurable density, since it is then inherited by convolution. It is a
genuine hypothesis only in the situation \hyp{E}{E} is designed for, namely when $T$
itself has \emph{no} density at all: there is then no $t_{1}$ to convolve with, and
joint measurability of $t_{n}$ must either be assumed or produced by the measurable
Lebesgue decomposition (Remark \ref{rem:measurableXn}).
\end{remark}

\begin{remark}[On a finite state space Theorem \ref{thm:general} is the classical theorem]\label{rem:finite}
Let $\XX$ be a finite set, let $\XXA$ be its power set, let $T$ be a Markov kernel on
$(\XX,\XXA)$ --- that is, a stochastic matrix with entries
$T(\{y\} \mid x) \in [0,1]$ --- and let $\pi \in \Prob$ be invariant for $T$ with
$\pi(\{x\}) > 0$ for every $x \in \XX$, which is automatic when $T$ is irreducible.
Then \hyp{E}{E} holds if and only if $T$ is irreducible and aperiodic. Indeed, since
$\pi$ has full support, $T^{n}_{x} \sim \pi$ says exactly that the
$x$-th row of the matrix $T^{n}$ has all entries strictly positive, and the condition
that for every $x \in \XX$ some such $n \in \N$ exists is equivalent to primitivity
of $T$; that in turn is the classical characterisation of irreducibility together
with aperiodicity for a finite chain. All measurability requirements in \hyp{E}{E} are
vacuous here, since every map on a finite set is measurable. Theorem
\ref{thm:general} restricted to a finite state space is therefore precisely the
classical convergence theorem for finite irreducible aperiodic Markov chains, and the
role played in the classical proof by aperiodicity is played here by
Corollary \ref{cor:selfimprovement}. We omit the proof of the equivalence; it is
the Perron--Frobenius characterisation of primitive stochastic matrices, for which
see \cite{LevinPeres17}, where the finite theory is developed in detail.

The assumption that $\pi$ have full support is not a technicality: without it,
\hyp{E}{E} requires the chain to leave the complement of the support of $\pi$
\emph{completely} after finitely many steps, which is strictly more than the
convergence $\tv{T^{n}\circ\mu - \pi} \to 0$ demands. On $\XX := \{1,2\}$ with
$T(\{1\}\mid 1) = 1$ and $T(\{1\}\mid 2) = T(\{2\}\mid 2) = \tfrac12$ the chain
converges to $\pi = \delta_{1}$, yet $T^{n}(\{2\}\mid 2) = 2^{-n} > 0$ for every
$n \in \N$, so \hyp{E}{E} fails. The theorems of these notes give sufficient conditions,
not necessary ones; for criteria that are necessary as well, see
\cite{ScheutzowSchindler21} and Remark \ref{rem:necessary}.
\end{remark}

\begin{remark}[Necessary versus sufficient, and where \texorpdfstring{\hyp{E}{E}}{(E)} sits]\label{rem:necessary}
Assumption \hyp{E}{E} is a condition of the ``equivalence of transition
probabilities'' type in the classification of Scheutzow and Schindler
\cite{ScheutzowSchindler21}, who determine which conditions of this kind can be
sharpened into criteria that are necessary as well as sufficient for total variation
convergence of all, respectively of $\aeevery$, transition probability. Theorem
\ref{thm:general} gives a sufficient condition only; the criterion behind it,
Theorem \ref{thm:asympequiv}, is necessary as well, so under \hyp{J}{J} it must be
equivalent to their (A$_{1}$) below --- the same characterisation, read against the
target instead of against a second starting point. The necessity half is not quoted
from anywhere: it is Corollary \ref{cor:singtv}, one line from the splitting identity
\eqref{eq:singsplit}, and it needs neither \hyp{J}{J} nor invariance. Its exact place in that
classification is the following chain, in which every implication is strict:
\[
  \hyp{E}{E}
  \ \Longrightarrow\
  \text{(K)}
  \ \Longrightarrow\
  \text{(A}_{1}\text{)}
  \ \Longleftrightarrow\
  \bigl[\ \tv{T^{n}_{x}-\pi} \to 0 \ \text{ for every } x \in \XX\ \bigr] ,
\]
where (K) is the hypothesis of Kulik and Scheutzow \cite[Theorem 1]{KulikScheutzow15}
--- for every pair $x,y \in \XX$ there is a number $n = n_{x,y} \in \N$ with
$T^{n}_{x} \sim T^{n}_{y}$ --- and (A$_{1}$) is the
\emph{asymptotic equivalence} of \cite{ScheutzowSchindler21}, which asks only that
for every pair $x,y$ and every $\varepsilon > 0$ there be an $n$ and a set $A$
carrying at least $1-\varepsilon$ of both $T^{n}_{x}$ and
$T^{n}_{y}$ on which the two are equivalent.

The first implication holds because \hyp{E}{E} together with
Corollary \ref{cor:selfimprovement} supplies, for a prescribed pair $x,y$, a single
$n$ with $T^{n}_{x} \sim \pi \sim T^{n}_{y}$; it is strict
because (K) never mentions $\pi$ and holds, for instance, for a chain confined to a
$\pi$-null set on which it mixes. The second is trivial, with $A := \XX$, and is
strict by an example of \cite{ScheutzowSchindler21}. The equivalence at the right is
their Theorem 2.16. So the conclusion of Theorem \ref{thm:general} \emph{for every
starting point} is characterised by (A$_{1}$), and \hyp{E}{E} is a strictly stronger,
but far more easily checked, sufficient condition; see the second half of
Remark \ref{rem:finite} for a two-state example on which the conclusion holds and
\hyp{E}{E} fails.

Two caveats, which are the reason a self-contained proof is given here rather than a
citation. Both \cite{KulikScheutzow15} and \cite{ScheutzowSchindler21} assume
throughout that $\XXA$ is countably generated and that the diagonal of $\XX\times\XX$
is measurable; neither hypothesis is used in the proof of Theorem \ref{thm:general},
nor anywhere else in these notes. What Subsections \ref{ssec:L} and \ref{ssec:main}
assume in order to restate the criterion is \hyp{J}{J}, a jointly measurable Lebesgue
density for the iterates, which is a hypothesis on the kernel; countable generation
implies it (Proposition \ref{prop:jointdensity}) but is never itself assumed, and the
diagonal is never assumed measurable. And the conclusion characterised by
(A$_{1}$) is convergence from every \emph{point}, from which convergence from every
initial \emph{distribution} --- which is what Theorem \ref{thm:general} asserts and
what an application needs --- is obtained by integrating
$x \mapsto \tv{T^{n}_{x}-\pi}$, a map whose measurability is not
automatic without countable generation (Remark \ref{rem:tvnotmeasurable}).
\end{remark}

\subsection{The convergence theorem under \texorpdfstring{\hyp{A}{A} and \hyp{E}{E}}{(A) and (E)} with reference measure}

\begin{corollary}[Convergence after $n(x)$ steps, stated with a reference measure]\label{cor:referencegen}
Let $(\XX,\XXA)$ be a measurable space and $\lambda$ a $\sigma$-finite measure on
it. Let $p : \XX \to [0,\infty)$ be measurable with $\int_{\XX} p \dd\lambda = 1$, let
$\tau : \XX\times\XX \to [0,\infty)$ be $\XXA\otimes\XXA$-measurable with
$\int_{\XX} \tau(y \mid x)\,\lambda(\dd y) = 1$ for every $x \in \XX$, and define
\[
  \pi(A) := \int_{A} p \dd\lambda, \qquad T(A \mid x) := \int_{A}\tau(y \mid x)\,\lambda(\dd y)
  \qquad (A \in \XXA,\ x \in \XX),
\]
and $\Xp := \{p > 0\} \in \XXA$. Let $\tau_{n} : \XX\times\XX \to [0,\infty)$ be given
by the Chapman--Kolmogorov recursion
\[
  \tau_{1} := \tau, \qquad
  \tau_{n+1}(y \mid x) := \int_{\XX} \tau_{n}(y \mid z)\,\tau(z \mid x)\,\lambda(\dd z)
  \qquad (n \in \N).
\]
Assume:
\begin{enumerate}[label=(\roman*),nosep,topsep=3pt]
  \item \emph{(invariance)} $\displaystyle\int_{\XX} \tau(y \mid x)\,p(x)\,\lambda(\dd x) = p(y)$ for $\lambda$-almost every $y \in \XX$;
  \item \emph{(eventual positivity)} for every $x \in \XX$ there is a number $n \in \N$ with
        \[
          \lambda\bigl(\{ y \in \XX\setminus\Xp : \tau_{n}(y \mid x) > 0 \}\bigr) = 0
          \qquad\text{and}\qquad
          \lambda\bigl(\{ y \in \Xp : \tau_{n}(y \mid x) = 0 \}\bigr) = 0 .
        \]
\end{enumerate}
Then $\pi$ is the unique invariant probability measure of $T$, and
$\tv{T^{n}\circ\mu - \pi} \to 0$ for every $\mu \in \Prob$.
\end{corollary}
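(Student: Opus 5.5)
The plan is to reduce the corollary to Theorem~\ref{thm:general} by translating every $\lambda$-density into a $\pi$-density, exactly as in the proof of Corollary~\ref{cor:reference}.

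\emph{Standing assumptions.} \hyp{A1}{A1} and \hyp{A2}{A2} are immediate. For \hyp{A3}{A3} I would repeat the Tonelli computation of Corollary~\ref{cor:reference} using the invariance hypothesis~(i).

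\emph{Densities of the iterates.} By induction and Tonelli's theorem, $\tau_{n}$ is $\XXA\otimes\XXA$-measurable and $T^{n}(A\mid x)=\int_{A}\tau_{n}(y\mid x)\,\lambda(\dd y)$ for every $x$. This is the computation of Remark~\ref{rem:CK}, done with $\lambda$ in place of $\pi$. Next define
\[
t_{n}(y\mid x):=\tau_{n}(y\mid x)/p(y) \quad\text{on } \Xp, \qquad t_{n}:=0 \quad\text{off } \Xp .
\]
This map is jointly measurable.

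\emph{The sets $\XX_{n}$.} Let $\XX_{n}$ be the set of $x$ satisfying the two conditions in~(ii) at time $n$:
\[
\lambda\bigl(\{y\notin\Xp:\tau_{n}(y\mid x)>0\}\bigr)=0
\quad\text{and}\quad
\lambda\bigl(\{y\in\Xp:\tau_{n}(y\mid x)=0\}\bigr)=0 .
\]
$\sigma$-finiteness of $\lambda$ lets Tonelli apply, so each of these is a measurable function of $x$ and $\XX_{n}\in\XXA$. For $x\in\XX_{n}$:
\begin{itemize}
\item The first condition gives $\int_{A}t_{n}(y\mid x)\,\pi(\dd y)=\int_{A\cap\Xp}\tau_{n}\,\dd\lambda=T^{n}(A\mid x)$, which is \hyp{E}{E}(b).
\item The second condition gives $t_{n}(\,\cdot\mid x)>0$ $\lambda$-a.e.\ on $\Xp$. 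Since $\pi$ is carried by $\Xp$ and $\pi\ll\lambda$, this means $t_{n}(\,\cdot\mid x)>0$ $\pi$-a.e., which is \hyp{E}{E}(c).
\end{itemize}
Both facts together say $T^{n}_{x}\sim\pi$.

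\emph{Monotonicity: the main obstacle.} \hyp{E}{E}(a) requires $\XX_{n}\subseteq\XX_{n+1}$ for the sets defined by the $\lambda$-conditions themselves, not just for $\{x: T^{n}_{x}\sim\pi\}$. I would show that the defining conditions are equivalent to $T^{n}_{x}\sim\pi$:
\begin{itemize}
\item $T^{n}_{x}\ll\pi$ forces $\tau_{n}(\,\cdot\mid x)=0$ $\lambda$-a.e.\ off $\Xp$, since $\pi(\XX\setminus\Xp)=0$.
\item $\pi\ll T^{n}_{x}$ forces $\tau_{n}(\,\cdot\mid x)>0$ $\lambda$-a.e.\ on $\Xp$, because $p>0$ there, so $\lambda$-null and $\pi$-null subsets of $\Xp$ coincide.
\end{itemize}
So $\XX_{n}=\{x:T^{n}_{x}\sim\pi\}$, and Corollary~\ref{cor:selfimprovement} gives $\XX_{n}\subseteq\XX_{n+1}$. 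Hypothesis~(ii) gives $\bigcup_{n}\XX_{n}=\XX$.

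Theorem~\ref{thm:general} then yields both the convergence and the uniqueness. The only care needed is the equivalence of null sets on $\Xp$ and the Tonelli measurability of $\XX_{n}$; both are routine.
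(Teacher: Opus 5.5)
Your proposal is correct and follows the paper's proof essentially step by step: the same $t_{n}$, the same sets $\XX_{n}$ defined by the two $\lambda$-conditions, Tonelli's theorem for their measurability, and the identification $\XX_{n}=\{x : T^{n}_{x}\sim\pi\}$, so that $\XX_{n}\subseteq\XX_{n+1}$ is Corollary~\ref{cor:selfimprovement} and Theorem~\ref{thm:general} finishes. You in fact spell out both directions of that identification, via the coincidence of $\lambda$-null and $\pi$-null subsets of $\Xp$, more explicitly than the paper, which asserts it in one line.
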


\begin{proof}
Each $\tau_{n}$ is $\XXA\otimes\XXA$-measurable by Tonelli's theorem, which applies
because $\lambda$ is $\sigma$-finite, and an induction using Tonelli again shows that
$T^{n}(A \mid x) = \int_{A}\tau_{n}(y \mid x)\lambda(\dd y)$. Invariance of $\pi$
follows from (i) as in the proof of Corollary \ref{cor:reference}, so
\hyp{A}{A} hold. Put
\[
  \XX_{n} := \Bigl\{ x \in \XX :\ \lambda\bigl(\{y \in \XX\setminus\Xp : \tau_{n}(y\mid x) > 0\}\bigr) = 0
  \ \text{ and }\ \lambda\bigl(\{y \in \Xp : \tau_{n}(y \mid x) = 0\}\bigr) = 0 \Bigr\}
\]
and $t_{n}(y\mid x) := \tau_{n}(y \mid x)/p(y)$ for $y \in \Xp$,
$t_{n}(y \mid x) := 0$ for $y \notin \Xp$. These are exactly the data required in
\hyp{E}{E}:
\begin{itemize}[nosep,topsep=3pt,leftmargin=1.4em]
  \item $\XX_{n} \in \XXA$, because $x \mapsto \lambda(\{y \in B : \tau_{n}(y\mid x) = 0\})$
        and $x \mapsto \lambda(\{y \in B : \tau_{n}(y\mid x) > 0\})$ are measurable for
        every $B \in \XXA$, once more by Tonelli;
  \item $x \in \XX_{n}$ holds precisely when $T^{n}_{x} \sim \pi$, so the
        inclusion $\XX_{n} \subseteq \XX_{n+1}$ is Corollary \ref{cor:selfimprovement},
        and $\bigcup_{n}\XX_{n} = \XX$ is hypothesis (ii);
  \item (b) and (c) of \hyp{E}{E} hold for these $\XX_{n}$ and $t_{n}$ by
        construction, exactly as in the proof of Corollary \ref{cor:reference}.
\end{itemize}
Theorem \ref{thm:general} now applies.
\end{proof}

\begin{remark}[The measurability hypotheses are free here]\label{rem:referencegen}
Corollary \ref{cor:referencegen} deserves emphasis. In the
reference-measure picture the technical parts of \hyp{E}{E} --- measurability
of the sets $\XX_{n}$, monotonicity, and the existence of jointly measurable
densities on them --- are \emph{not} additional assumptions: they were produced in
the proof from the hypotheses alone, and the measurable Lebesgue decomposition
invoked in Remark \ref{rem:measurableXn} was not needed. When moreover $p > 0$
$\lambda$-almost everywhere, hypothesis (ii) of the corollary simplifies to: for
every $x \in \XX$ there is a number $n \in \N$ with $\tau_{n}(y \mid x) > 0$ for
$\lambda$-almost every $y \in \XX$.
\end{remark}

\section{Applications}\label{sec:applications}

Sections \ref{sec:first}--\ref{sec:general} verified \hyp{P}{P} and \hyp{S}{S} under
hypotheses on the kernel that can be read off its definition. The two algorithms of
this section satisfy neither \hyp{D}{D}, nor \hyp{M}{M}, nor \hyp{E}{E} --- their
one step kernels are singular with respect to $\pi$, and so is every iterate --- and
they are covered by Theorem \ref{thm:criterion} all the same. In both cases the two
hypotheses are delivered by one and the same map, through
Lemma \ref{lem:uniformminorant}.

\subsection{The Gibbs sampler}\label{ssec:gibbs}

The Gibbs sampler \cite{GemanGeman84,GelfandSmith90} updates one coordinate of the
state at a time, replacing it by a draw from its conditional distribution under the
target. Its two standard forms differ only in how the coordinate is chosen: the
\emph{systematic scan} runs through the coordinates in a fixed order, and the
\emph{random scan} picks one uniformly at each step. The classical convergence
theory is that of Roberts and Smith \cite{RobertsSmith94} and Tierney
\cite{Tierney94}, who deduce it from Harris recurrence. As will be seen, the
systematic scan falls under Theorem \ref{thm:main}, whereas the random scan falls
under none of the three earlier theorems and under Theorem \ref{thm:criterion}.

\subsubsection*{The setting}

Let $d \in \N$ with $d \ge 2$, let $\mathcal{Y}_{1},\dots,\mathcal{Y}_{d}$ be sets
with $\sigma$-algebras $\mathcal{B}_{\mathcal{Y}_{1}},\dots,\mathcal{B}_{\mathcal{Y}_{d}}$
and $\sigma$-finite measures $\lambda_{1},\dots,\lambda_{d}$ on them, and put
\[
  \XX := \mathcal{Y}_{1}\times\cdots\times\mathcal{Y}_{d},
  \qquad
  \XXA := \mathcal{B}_{\mathcal{Y}_{1}}\otimes\cdots\otimes\mathcal{B}_{\mathcal{Y}_{d}},
  \qquad
  \lambda := \lambda_{1}\otimes\cdots\otimes\lambda_{d},
\]
so that $\lambda$ is $\sigma$-finite. Let $p : \XX \to (0,\infty)$ be measurable with
$\int_{\XX} p \dd\lambda = 1$ and let $\pi(A) := \int_{A} p \dd\lambda$ be the
target; since $p > 0$ everywhere, $\pi \sim \lambda$, so that a measure is absolutely
continuous with respect to $\pi$ if and only if it is absolutely continuous with
respect to $\lambda$.

For $x \in \XX$, a number $i \in \{1,\dots,d\}$ and $w \in \mathcal{Y}_{i}$ let
$x^{i:w} \in \XX$ denote the point obtained from $x$ by replacing its $i$-th
coordinate by $w$, and define the \emph{normalising map}
\[
  Z_{i} : \XX \to (0,\infty], \qquad
  Z_{i}(x) := \int_{\mathcal{Y}_{i}} p(x^{i:w})\,\lambda_{i}(\dd w),
\]
which is measurable by Tonelli's theorem and does not depend on the $i$-th coordinate
of $x$. It is strictly positive, because $p > 0$ and
$\lambda_{i}(\mathcal{Y}_{i}) > 0$, the latter since otherwise
$\int p \dd\lambda = 0$. We assume throughout this subsection that
\begin{equation}
  Z_{i}(x) < \infty \qquad \text{for every } i \in \{1,\dots,d\} \text{ and every } x \in \XX ,
  \label{eq:condexists}
\end{equation}
that is, that all full conditional distributions exist at every point; this is what
makes the algorithm well defined. Under \eqref{eq:condexists} the \emph{coordinate
kernels}
\[
  P_{i}(A \mid x) := \frac{1}{Z_{i}(x)}\int_{\mathcal{Y}_{i}} \ind_{A}(x^{i:w})\,p(x^{i:w})\,\lambda_{i}(\dd w)
  \qquad (A \in \XXA,\ x \in \XX,\ i \in \{1,\dots,d\})
\]
are Markov kernels on $(\XX,\XXA)$: each $P_{i,x}$ is a probability
measure by the definition of $Z_{i}$, and $x \mapsto P_{i}(A \mid x)$ is measurable
by Tonelli's theorem.

For Markov kernels $P$ and $Q$ on $(\XX,\XXA)$ let $QP$ denote the Markov kernel
$(QP)(A \mid x) := \int_{\XX} Q(A \mid z)\,P(\dd z \mid x)$, so that $QP$ is
``first $P$, then $Q$'' and $(QP)\circ\mu = Q\circ(P\circ\mu)$. The two samplers are
\[
  T_{\mathrm{sc}} := P_{d}P_{d-1}\cdots P_{1}
  \qquad\text{and}\qquad
  T_{\mathrm{rs}} := \frac{1}{d}\sum_{i=1}^{d} P_{i} .
\]

\begin{lemma}[The coordinate kernels are reversible]\label{lem:gibbsinvariance}
For every $i \in \{1,\dots,d\}$ the measure $\pi$ is invariant for $P_{i}$; indeed
$P_{i}$ is reversible with respect to $\pi$. Consequently $\pi$ is invariant for
$T_{\mathrm{sc}}$ and for $T_{\mathrm{rs}}$.
\end{lemma}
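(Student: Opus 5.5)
The plan is to prove reversibility directly, by showing that the measure $\pi(\dd x)\,P_{i}(\dd y\mid x)$ on $(\XX\times\XX,\XXA\otimes\XXA)$ is invariant under the swap $(x,y)\mapsto(y,x)$. Invariance of $\pi$ for each $P_{i}$ then follows by evaluating both sides on $C := \XX\times A$. The statements for the two samplers follow formally: $T_{\mathrm{rs}}\circ\pi = \frac1d\sum_{i}P_{i}\circ\pi = \pi$ by linearity, and $T_{\mathrm{sc}}\circ\pi = P_{d}\circ(\cdots(P_{1}\circ\pi)\cdots) = \pi$ by iterating $(QP)\circ\mu = Q\circ(P\circ\mu)$.

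For the symmetry I would compute, for a nonnegative $\XXA\otimes\XXA$-measurable $F$,
\[
  \int_{\XX}\int_{\XX} F(x,y)\,P_{i}(\dd y\mid x)\,\pi(\dd x)
  = \int_{\XX}\int_{\mathcal{Y}_{i}} F(x,x^{i:w})\,\frac{p(x^{i:w})\,p(x)}{Z_{i}(x)}\,\lambda_{i}(\dd w)\,\lambda(\dd x),
\]
obtained first for indicators from the definition of $P_{i}$ and then for general $F$ by monotone convergence. Next I would write $x = (x_{-i},v)$ with $v\in\mathcal{Y}_{i}$ and $x_{-i}$ the remaining coordinates, and factor $\lambda = \lambda_{i}\otimes\lambda_{-i}$ after reordering the coordinates. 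Because $Z_{i}(x)$ does not depend on the $i$-th coordinate, write it as $Z_{i}(x_{-i})$. Then, using Tonelli for the $\sigma$-finite measures involved, the right-hand side becomes
\[
  \int \frac{1}{Z_{i}(x_{-i})}\int\!\!\int F\bigl((x_{-i},v),(x_{-i},w)\bigr)\,p(x_{-i},v)\,p(x_{-i},w)\,\lambda_{i}(\dd v)\,\lambda_{i}(\dd w)\,\lambda_{-i}(\dd x_{-i}).
\]
This expression is visibly invariant under exchanging $v$ and $w$, and that exchange is exactly the swap of $(x,y)$. Hence the double integral of $F(x,y)$ equals that of $F(y,x)$, which is reversibility.

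The main obstacle is bookkeeping rather than analysis. First, all maps must be jointly measurable: $(x,w)\mapsto x^{i:w}$ is measurable on the product, so $F(x,x^{i:w})$ and $p(x^{i:w})$ are measurable. Second, the factors must be nonnegative so that Tonelli applies without integrability assumptions. Third, $Z_{i}\in(0,\infty)$ holds at every point by \eqref{eq:condexists} and positivity, so division by it is harmless. Finally, the identification of $\lambda$ with $\lambda_{i}\otimes\lambda_{-i}$ under the coordinate permutation needs to be stated, and it holds by uniqueness of $\sigma$-finite product measures.
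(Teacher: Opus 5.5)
Your proof is correct and follows the paper's route. Both arguments split $\lambda=\lambda_{i}\otimes\lambda_{-i}$, use that $Z_{i}$ does not depend on the $i$-th coordinate, and apply Tonelli to reach an expression symmetric under $v\leftrightarrow w$. The paper does this only for $F=\ind_{B}(x)\ind_{A}(y)$, obtaining $\int_{\mathcal{Y}_{-i}} I_{A}I_{B}/Z_{i}\,\dd\lambda_{-i}$, which is your computation restricted to rectangles; your handling of $T_{\mathrm{sc}}$ and $T_{\mathrm{rs}}$ is the same as the paper's.
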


\begin{proof}
Fix $i$ and write points of $\XX$ as $x = (x_{i},x_{-i})$ with
$x_{-i} \in \mathcal{Y}_{-i} := \prod_{j \ne i}\mathcal{Y}_{j}$, and
$\lambda = \lambda_{i}\otimes\lambda_{-i}$ accordingly. Since $Z_{i}$ does not depend
on the $i$-th coordinate, write $Z_{i}(x_{-i})$ for its common value on the $i$-th
coordinate line through $x_{-i}$, and abbreviate, for $A \in \XXA$,
\[
  I_{A}(x_{-i}) := \int_{\mathcal{Y}_{i}} \ind_{A}(v,x_{-i})\,p(v,x_{-i})\,\lambda_{i}(\dd v),
  \qquad\text{so that}\qquad
  I_{\XX} = Z_{i} .
\]
By Tonelli's theorem, for $A, B \in \XXA$,
\[
  \int_{B} P_{i}(A \mid x)\,\pi(\dd x)
  = \int_{\mathcal{Y}_{-i}} \frac{I_{A}(x_{-i})\,I_{B}(x_{-i})}{Z_{i}(x_{-i})}\,
    \lambda_{-i}(\dd x_{-i}) .
\]
The right-hand side is symmetric in $A$ and $B$, which is reversibility. Taking
$B := \XX$ gives
\[
  (P_{i}\circ\pi)(A)
  = \int_{\mathcal{Y}_{-i}} I_{A}(x_{-i})\,\lambda_{-i}(\dd x_{-i})
  = \pi(A) ,
\]
which is invariance. Invariance for $T_{\mathrm{sc}}$ and $T_{\mathrm{rs}}$ follows,
since both are built from the $P_{i}$ by composition and convex combination, and both
operations preserve invariance of $\pi$.
\end{proof}

\subsubsection*{The systematic scan has a strictly positive density}

For $x,y \in \XX$ and a number $i \in \{0,1,\dots,d\}$ write
\[
  x \oplus_{i} y := (y_{1},\dots,y_{i},x_{i+1},\dots,x_{d}) \in \XX ,
\]
so that $x \oplus_{0} y = x$ and $x \oplus_{d} y = y$: the point $x \oplus_{i}y$ is
the state after the first $i$ coordinates have been updated.

\begin{lemma}[Density of the systematic scan]\label{lem:gibbsscan}
Assume \eqref{eq:condexists} and define
\[
  \tau : \XX\times\XX \to (0,\infty), \qquad
  \tau(y \mid x) := \prod_{i=1}^{d} \frac{p\bigl( x \oplus_{i} y \bigr)}{Z_{i}\bigl( x \oplus_{i-1} y \bigr)} .
\]
Then $\tau$ is $\XXA\otimes\XXA$-measurable, strictly positive at every point of
$\XX\times\XX$, and
\[
  T_{\mathrm{sc}}(A \mid x) = \int_{A} \tau(y \mid x)\,\lambda(\dd y)
  \qquad (A \in \XXA,\ x \in \XX) .
\]
\end{lemma}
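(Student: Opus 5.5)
The proof has three parts: measurability and positivity of $\tau$, which are immediate, and an induction on the number of coordinate updates, which carries the content. \emph{Measurability.} For each $i$ the map $(x,y) \mapsto x \oplus_{i} y$ is $\XXA\otimes\XXA$/$\XXA$-measurable, since each of its coordinates is a coordinate projection of $x$ or of $y$. Composing with the measurable maps $p$ and $Z_{i}$ then shows that each factor of $\tau$, and hence their finite product, is jointly measurable. \emph{Positivity.} Every factor is strictly positive and finite, because $p>0$ everywhere and $0<Z_{i}<\infty$ at every point by \eqref{eq:condexists}.

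For the density identity I would prove by induction on $k \in \{0,\dots,d\}$ the more general claim
\[
  (P_{k}\cdots P_{1})(A \mid x)
  = \int_{\mathcal{Y}_{1}\times\cdots\times\mathcal{Y}_{k}}
    \ind_{A}\bigl(x \oplus_{k} y\bigr)\,
    \prod_{i=1}^{k} \frac{p(x \oplus_{i} y)}{Z_{i}(x \oplus_{i-1} y)}\,
    (\lambda_{1}\otimes\cdots\otimes\lambda_{k})(\dd y_{1},\dots,\dd y_{k}) .
\]
Here $x \oplus_{i} y$ depends only on $x$ and on $y_{1},\dots,y_{i}$, so the integrand is a measurable function of $(y_{1},\dots,y_{k})$ alone. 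The case $k=0$ is $\ind_{A}(x)$. For the step $k \to k+1$, write $P_{k+1}\cdots P_{1} = P_{k+1}(P_{k}\cdots P_{1})$, so that
\[
  (P_{k+1}\cdots P_{1})(A \mid x) = \int P_{k+1}(A \mid z)\,(P_{k}\cdots P_{1})(\dd z \mid x).
\]
By the induction hypothesis, extended from indicators to nonnegative measurable integrands in the usual way (simple functions and monotone convergence, as in Definition \ref{def:action}), this becomes the integral over $(y_{1},\dots,y_{k})$ of $P_{k+1}(A \mid x\oplus_{k} y)$ against the displayed weight. Inserting the definition of $P_{k+1}$ with integration variable $w = y_{k+1}$ uses two facts:
\[
  (x \oplus_{k} y)^{k+1:y_{k+1}} = x \oplus_{k+1} y
  \qquad\text{and}\qquad
  Z_{k+1}(x \oplus_{k} y) \text{ is the normaliser of that step.}
\]
Tonelli's theorem for the $\sigma$-finite measures then merges the integrals and gives the claim for $k+1$.

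At $k=d$ we have $x \oplus_{d} y = y$ and the product measure is $\lambda$, which is exactly the asserted formula $T_{\mathrm{sc}}(A\mid x) = \int_{A}\tau(y \mid x)\,\lambda(\dd y)$.

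The main obstacle is bookkeeping rather than substance: one has to make sure that the function being integrated at stage $k$ genuinely depends only on $y_{1},\dots,y_{k}$, so that integrating against $\lambda_{1}\otimes\cdots\otimes\lambda_{k}$ makes sense, and that the extension of the induction hypothesis from indicators $\ind_{A}$ to the nonnegative measurable function $z \mapsto P_{k+1}(A\mid z)$ is legitimate. Both points follow from joint measurability of $(x,y_{1},\dots,y_{k}) \mapsto x\oplus_{k}y$ and from Tonelli's theorem.
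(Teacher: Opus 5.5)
Your proof is correct and takes the same route as the paper. The paper also unfolds $T_{\mathrm{sc}}$ into an iterated integral over the update variables $w_{1},\dots,w_{d}$, with $z^{(i)} = x \oplus_{i} w$, and concludes by Tonelli's theorem and $z^{(d)} = w$. Your induction on the number of coordinate updates spells out the unfolding step that the paper states in one line.
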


\begin{proof}
Measurability is clear, and strict positivity holds because $p > 0$ everywhere and
$0 < Z_{i} < \infty$ everywhere by \eqref{eq:condexists}. For the displayed identity,
unfold the definition of $T_{\mathrm{sc}}$: writing $z^{(0)} := x$ and
$z^{(i)} := (z^{(i-1)})^{i:w_{i}}$, so that $z^{(i)} = x \oplus_{i} w$ with
$w := (w_{1},\dots,w_{d})$, Tonelli's theorem gives
\[
  T_{\mathrm{sc}}(A \mid x)
  = \int_{\mathcal{Y}_{1}}\!\cdots\!\int_{\mathcal{Y}_{d}}
    \ind_{A}\bigl(z^{(d)}\bigr) \prod_{i=1}^{d}
    \frac{p\bigl(z^{(i)}\bigr)}{Z_{i}\bigl(z^{(i-1)}\bigr)}\,
    \lambda_{d}(\dd w_{d})\cdots\lambda_{1}(\dd w_{1}) .
\]
Since $z^{(d)} = w$, the variable of the outermost integration after reordering is
$w$ itself, and the right-hand side equals $\int_{A}\tau(y \mid x)\lambda(\dd y)$.
\end{proof}

\begin{remark}[The systematic scan is covered by Theorem \ref{thm:main}]\label{rem:gibbssystematic}
By Lemmas \ref{lem:gibbsinvariance} and \ref{lem:gibbsscan} the kernel
$T_{\mathrm{sc}}$ satisfies the hypotheses of Corollary \ref{cor:reference} --- and
hence of Theorem \ref{thm:posdens} --- with the reference measure $\lambda$, the target
density $p$ and the transition density $\tau$: invariance is
Lemma \ref{lem:gibbsinvariance}, hypothesis (ii) there is vacuous because
$\Xp = \XX$, and hypothesis (iii) holds because $\tau > 0$ at \emph{every} point. So
$\tv{T_{\mathrm{sc}}^{n}\circ\mu - \pi} \to 0$ for every $\mu \in \Prob$, and no
further work is needed. It is worth noting where the positivity comes from: a single
coordinate kernel $P_{i}$ has no density with respect to $\pi$ at all, but a full
sweep composes $d$ of them and thereby moves every coordinate.
\end{remark}

\subsubsection*{The random scan satisfies none of \hyp{D}{D}, \hyp{M}{M}, \hyp{E}{E}}

\begin{lemma}[The random scan is singular]\label{lem:gibbssingular}
Assume \eqref{eq:condexists} and assume in addition that
$\{w\} \in \mathcal{B}_{\mathcal{Y}_{j}}$ and $\lambda_{j}(\{w\}) = 0$ for every
$j \in \{1,\dots,d\}$ and every $w \in \mathcal{Y}_{j}$. Then, with
$T := T_{\mathrm{rs}}$:
\begin{enumerate}[label=\textup{(\roman*)},nosep,topsep=3pt]
  \item $T_{x}$ is carried by a $\pi$-null set for every $x \in \XX$; in
        particular \hyp{D}{D} fails;
  \item \hyp{M}{M} fails;
  \item $\sing\bigl(T^{n}_{x} \bigm| \pi\bigr) \ge d^{-n} > 0$ for every $n \in \N$
        and every $x \in \XX$; in particular \hyp{E}{E} fails, for every choice
        of the sets $\XX_{n}$ and the maps $t_{n}$.
\end{enumerate}
\end{lemma}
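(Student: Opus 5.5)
The plan is to work with one coordinate line at a time. For $x \in \XX$ and $i \in \{1,\dots,d\}$ put
\[
  L_{i}(x) := \{ x^{i:w} : w \in \mathcal{Y}_{i} \} = \{ z \in \XX : z_{j} = x_{j} \text{ for all } j \ne i \},
\]
which is measurable because it is a product of the singletons $\{x_{j}\}$, $j\neq i$, with $\mathcal{Y}_{i}$. By Tonelli's theorem
\[
  \lambda(L_{i}(x)) = \lambda_{i}(\mathcal{Y}_{i}) \prod_{j\ne i}\lambda_{j}(\{x_{j}\}) = 0 ,
\]
since $d \ge 2$ and the singletons are $\lambda_{j}$-null; as $\pi \sim \lambda$, also $\pi(L_{i}(x)) = 0$. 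The kernel $P_{i,x}$ is carried by $L_{i}(x)$, so $T_{x} = \frac1d\sum_{i}P_{i,x}$ is carried by the $\pi$-null set $\bigcup_{i} L_{i}(x)$. This proves (i). In particular $T_{x} \not\ll \pi$, so \hyp{D}{D} fails.

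For (ii), suppose \hyp{M}{M} held with some $k$. Take $A := \bigcup_i L_i(x) \setminus \{x\}$, which is measurable because singletons of $\XX$ are measurable as products of measurable singletons. Then $\pi(A)=0$, so by \hyp{M}{M}(a)
\[
  T(A \mid x) = \int_{A} k(y\mid x)\,\pi(\dd y) + r(x)\ind_{A}(x) = 0 .
\]
On the other hand, $T(A\mid x) = 1 - T(\{x\}\mid x)$, and
\[
  P_{i}(\{x\}\mid x) = \frac{\lambda_{i}\text{-integral of } p(x^{i:w}) \text{ over } \{x_{i}\}}{Z_{i}(x)} = 0 .
\]
Hence $T(\{x\}\mid x)=0$ and $T(A\mid x)=1$, a contradiction. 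So \hyp{M}{M} fails.

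For (iii), expand $T^{n}_{x} = d^{-n}\sum_{(i_{1},\dots,i_{n})} P_{i_{n}}\cdots P_{i_{1}}(\cdot\mid x)$ and keep only the constant sequence $i_{1}=\dots=i_{n}=1$. The key observation is that $P_{1}P_{1}\cdots P_{1}(\cdot\mid x)$ stays on $L_{1}(x)$: each step from a point $z$ of $L_{1}(x)$ lands on $L_{1}(z) = L_{1}(x)$. Formally, $P_{1}(L_{1}(x)\mid z) = 1$ for every $z \in L_{1}(x)$, so an induction gives $P_{1}^{n}(L_{1}(x)\mid x) = 1$. Therefore $T^{n}(L_{1}(x)\mid x) \ge d^{-n}$. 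Since $\pi(L_{1}(x))=0$, form \eqref{eq:singnull} of Proposition \ref{prop:singforms} gives $\sing(T^{n}_{x}\mid\pi) \ge d^{-n}$.

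Finally, \hyp{E}{E} would give, for $x \in \XX_{n}$ (and every $x$ lies in some $\XX_{n}$), the identity $T^{n}_{x} = t_{n}(\,\cdot\mid x)\pi \ll \pi$, hence singular mass $0$, contradicting $\sing(T^{n}_{x}\mid\pi) \ge d^{-n} > 0$.

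The main obstacle is the measurability bookkeeping rather than the analysis: checking that $L_{i}(x)$ and $\{x\}$ lie in the product $\sigma$-algebra, and that the induction for $P_{1}^{n}$ stays on the line. Both use only the measurability of singletons in each factor.
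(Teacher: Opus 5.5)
Your proof is correct and has the same structure as the paper's. The coordinate lines $L_{i}(x)$ are $\pi$-null and carry $P_{i,x}$, and in (iii) only the constant sequence $(1,\dots,1)$ is kept in the expansion of $T^{n}$.

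The two deviations are local and harmless. In (ii) you remove $x$ from the null set and use $T(\{x\}\mid x)=0$, which refutes \hyp{M}{M}(a) on its own; the paper instead takes $A=\bigcup_{i}L_{i}(x)$ to force $r(x)=1$ and then contradicts \hyp{M}{M}(b). In (iii) you use that $L_{1}(x)$ is absorbing for $P_{1}$ rather than the idempotence $P_{1}P_{1}=P_{1}$; this suffices because only the mass of $T^{n}_{x}$ on that line is needed.
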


\begin{proof}
For $x \in \XX$ and a number $i$ let
$L_{i}(x) := \{ y \in \XX : y_{j} = x_{j} \text{ for every } j \ne i \} \in \XXA$ be
the $i$-th coordinate line through $x$. It is the measurable rectangle whose $i$-th
factor is $\mathcal{Y}_{i}$ and whose $j$-th factor is the measurable set
$\{x_{j}\}$ for $j \ne i$, and $\lambda(L_{i}(x)) = 0$. To see the latter without
appealing to the convention $0\cdot\infty = 0$, fix $j \ne i$ --- possible because
$d \ge 2$ --- and exhaust $\mathcal{Y}_{i}$ by sets $E_{m} \in \mathcal{B}_{\mathcal{Y}_{i}}$
of finite $\lambda_{i}$-measure, which is possible since $\lambda_{i}$ is
$\sigma$-finite. Each of the measurable rectangles obtained from $L_{i}(x)$ by
replacing its $i$-th factor $\mathcal{Y}_{i}$ by $E_{m}$ has $\lambda$-measure
$\lambda_{i}(E_{m})\prod_{l \ne i}\lambda_{l}(\{x_{l}\}) = 0$, a product of finitely
many finite numbers one of which, the $j$-th, vanishes; and $L_{i}(x)$ is the
increasing union of these rectangles. Hence $\lambda(L_{i}(x)) = 0$ and, since
$\pi \sim \lambda$, also $\pi(L_{i}(x)) = 0$.

(i) $P_{i,x}$ is carried by $L_{i}(x)$, so $T_{x}$ is carried
by $\bigcup_{i} L_{i}(x)$, a $\pi$-null set. Since $T(\XX \mid x) = 1$, the kernel
$T$ has no density with respect to $\pi$.

(ii) Suppose \hyp{M}{M}(a) held, and put $A := \bigcup_{i} L_{i}(x)$. Then
$\pi(A) = 0$, so $1 = T(A \mid x) = \int_{A} k(y \mid x)\pi(\dd y) + r(x)\ind_{A}(x) = r(x)$,
whence $\theta(x) = 0$, contradicting \hyp{M}{M}(b).

(iii) Each $P_{i}$ is idempotent, $P_{i}P_{i} = P_{i}$: the measure
$P_{i,x}$ is carried by $L_{i}(x)$, and $P_{i,z}$ depends on
$z$ only through the coordinates $z_{j}$, $j \ne i$, which agree with those of $x$
for $z \in L_{i}(x)$. Expanding $T^{n} = d^{-n}\sum P_{i_{n}}\cdots P_{i_{1}}$ over
the $d^{n}$ sequences $(i_{1},\dots,i_{n}) \in \{1,\dots,d\}^{n}$ and keeping only
the constant sequence $(1,\dots,1)$ gives $T^{n}_{x} \ge d^{-n}P_{1,x}$
--- it is idempotence that makes the contribution of that one sequence exactly
$d^{-n}P_{1}$, and not $d^{-n}P_{1}^{n}$ with a possibly smaller mass on the line
$L_{1}(x)$; this is why the constant sequence, and not some other, is the one to
retain.
If now $\beta \le T^{n}_{x}$ with $\beta \ll \pi$, then $\beta(L_{1}(x)) = 0$
and therefore
$\beta(\XX) \le T^{n}(\XX \setminus L_{1}(x) \mid x) \le 1 - d^{-n}$, so that
$\sing(T^{n}_{x}\mid\pi) \ge d^{-n}$ by Definition \ref{def:singmass}. In
particular $T^{n}_{x} \not\ll \pi$ for every $n$ and every $x$ by the last sentence of
Proposition \ref{prop:singforms}, so no set $\XX_{n}$ as in \hyp{E}{E}(b) can be
nonempty, and \hyp{E}{E}(a) fails.
\end{proof}

\subsubsection*{The random scan satisfies \hyp{P}{P} and \hyp{S}{S}}

Assumption \hyp{P}{P} will come, as announced, from the single favourable sweep
$P_{d}\cdots P_{1}$. For \hyp{S}{S} one can use the same map through
Lemma \ref{lem:uniformminorant}, and we do; but a sharper and more transparent bound
is available, and since it is what the general weighted scan needs
(Remark \ref{rem:gibbscomment}) we record it first. The point is that a composition
of coordinate kernels is absolutely continuous as soon as \emph{every} coordinate has
been updated at least once, in whatever order and with whatever repetitions.

\begin{lemma}[Refreshing every coordinate produces a density]\label{lem:gibbsrefresh}
Assume \eqref{eq:condexists}, let $n \in \N$ and let
$(i_{1},\dots,i_{n}) \in \{1,\dots,d\}^{n}$ be a sequence with
$\{i_{1},\dots,i_{n}\} = \{1,\dots,d\}$. Then
\[
  \bigl( P_{i_{n}}\cdots P_{i_{1}} \bigr)_{x} \;\ll\; \pi
  \qquad\text{for every } x \in \XX .
\]
\end{lemma}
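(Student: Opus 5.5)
Fix $x \in \XX$ and the sequence $(i_{1},\dots,i_{n})$. The plan is to show that the composite measure has a density with respect to $\lambda$, which suffices since $\pi \sim \lambda$. First I would unfold the composition exactly as in the proof of Lemma \ref{lem:gibbsscan}. Put $z^{(0)} := x$ and $z^{(k)} := (z^{(k-1)})^{i_{k}:w_{k}}$ for $k = 1,\dots,n$. Tonelli's theorem then gives
\[
  \bigl(P_{i_{n}}\cdots P_{i_{1}}\bigr)(A \mid x)
  = \int \ind_{A}\bigl(z^{(n)}\bigr)\prod_{k=1}^{n}\frac{p\bigl(z^{(k)}\bigr)}{Z_{i_{k}}\bigl(z^{(k-1)}\bigr)}\,
    \lambda_{i_{n}}(\dd w_{n})\cdots\lambda_{i_{1}}(\dd w_{1}) ,
\]
an integral over $\mathcal{Y}_{i_{1}}\times\cdots\times\mathcal{Y}_{i_{n}}$ against the product measure $\lambda_{i_{1}}\otimes\cdots\otimes\lambda_{i_{n}}$. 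The integrand is nonnegative and jointly measurable.

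The key observation concerns the endpoint $z^{(n)}$. For each coordinate $j$, its $j$-th entry equals $w_{k_{j}}$, where $k_{j} := \max\{k : i_{k} = j\}$ is the last time coordinate $j$ is updated. This index exists by the surjectivity hypothesis $\{i_{1},\dots,i_{n}\} = \{1,\dots,d\}$. So $z^{(n)} = (w_{k_{1}},\dots,w_{k_{d}})$ depends only on the $d$ distinct \emph{last-update} variables. I would split the variables into these $d$ last-update variables, which I call $y = (y_{1},\dots,y_{d})$ with $y_{j} := w_{k_{j}}$, and the remaining $n-d$ earlier variables $w'$. The product measure splits accordingly as $\lambda \otimes \lambda'$, where $\lambda' := \bigotimes_{k \notin \{k_{1},\dots,k_{d}\}}\lambda_{i_{k}}$.

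Next I apply Tonelli to integrate out $w'$ first. Writing $\Phi(y,w')$ for the integrand product, this gives
\[
  \bigl(P_{i_{n}}\cdots P_{i_{1}}\bigr)(A \mid x)
  = \int_{A}\Bigl(\int \Phi(y,w')\,\lambda'(\dd w')\Bigr)\lambda(\dd y) .
\]
The inner integral is therefore a $\lambda$-density, and it is finite $\lambda$-almost everywhere because the total mass is $1$. Hence the composite measure is $\ll \lambda$, and so $\ll \pi$.

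The main obstacle is bookkeeping, not analysis. One must check that the reindexing from $(w_{1},\dots,w_{n})$ to $(y,w')$ is a measurable bijection carrying the product measure onto $\lambda\otimes\lambda'$. This holds because the reindexing is a permutation of coordinates, and the product of $\sigma$-finite measures is invariant under such permutations. One must also check that the indicator $\ind_{A}(z^{(n)})$ becomes $\ind_{A}(y)$. Everything else is Tonelli, exactly as in Lemma \ref{lem:gibbsscan}, and no positivity of the density is needed here.
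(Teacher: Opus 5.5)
Your proposal is correct and follows the paper's proof essentially step for step. Both arguments unfold the composition as in Lemma \ref{lem:gibbsscan} and use the last-update indices to identify $z^{(n)}$ with the $d$ final variables, whose product measure is $\lambda$. Both then integrate out the remaining variables by Tonelli to obtain a $\lambda$-density, hence a $\pi$-density. Your remark that this density is finite $\lambda$-almost everywhere is true but not needed.
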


\begin{proof}
Fix $x \in \XX$, write $z^{(0)} := x$ and $z^{(m)} := \bigl(z^{(m-1)}\bigr)^{i_{m}:w_{m}}$
for $w = (w_{1},\dots,w_{n}) \in \mathcal{Y}_{i_{1}}\times\cdots\times\mathcal{Y}_{i_{n}}$.
Unfolding the definition of the coordinate kernels exactly as in the proof of
Lemma \ref{lem:gibbsscan} gives, for $A \in \XXA$,
\[
  \bigl( P_{i_{n}}\cdots P_{i_{1}} \bigr)(A \mid x)
  = \int \cdots \int \ind_{A}\bigl( z^{(n)} \bigr)
    \prod_{m=1}^{n} \frac{p\bigl( z^{(m)} \bigr)}{Z_{i_{m}}\bigl( z^{(m-1)} \bigr)}\,
    \lambda_{i_{n}}(\dd w_{n}) \cdots \lambda_{i_{1}}(\dd w_{1}) ,
\]
the integrand being nonnegative and jointly measurable, and the quotients being well
defined and finite because $0 < Z_{i} < \infty$ everywhere by \eqref{eq:condexists}.
For each $j \in \{1,\dots,d\}$ let
\[
  m_{j} := \max\{\, m \in \{1,\dots,n\} : i_{m} = j \,\}
\]
be the last time at which the $j$-th coordinate is updated; this is well defined
precisely because every index occurs in the sequence, and $m_{1},\dots,m_{d}$ are $d$
distinct elements of $\{1,\dots,n\}$. By construction the $j$-th coordinate of
$z^{(n)}$ is $w_{m_{j}}$, since it is set at time $m_{j}$ and never touched again.
Thus $z^{(n)}$ depends on $w$ only through $(w_{m_{1}},\dots,w_{m_{d}})$, and the map
$(w_{m_{1}},\dots,w_{m_{d}}) \mapsto z^{(n)}$ is the identification of
$\mathcal{Y}_{1}\times\cdots\times\mathcal{Y}_{d}$ with $\XX$, under which
$\lambda_{i_{m_{1}}}\otimes\cdots\otimes\lambda_{i_{m_{d}}}$ becomes
$\lambda_{1}\otimes\cdots\otimes\lambda_{d} = \lambda$.

The product measure $\bigotimes_{m=1}^{n}\lambda_{i_{m}}$ is $\sigma$-finite, so
Tonelli's theorem permits integrating in any order. Integrating first over the
variables $w_{m}$ with $m \notin \{m_{1},\dots,m_{d}\}$ leaves a nonnegative
measurable map $G(\,\cdot \mid x)$ of the remaining variables, that is of
$y := z^{(n)} \in \XX$, and
\[
  \bigl( P_{i_{n}}\cdots P_{i_{1}} \bigr)(A \mid x) = \int_{A} G(y \mid x)\,\lambda(\dd y)
  \qquad (A \in \XXA) .
\]
Hence the measure is absolutely continuous with respect to $\lambda$, and therefore
with respect to $\pi$, the two being equivalent because $p > 0$ everywhere.
\end{proof}

\noindent
Only the event that every coordinate has been refreshed is used, so the lemma applies
verbatim to a scan with unequal selection probabilities. Its consequence is recorded
in the corollary and in Remark \ref{rem:gibbscomment}.

\begin{corollary}[The Gibbs sampler with random scan converges; Theorem \ref{thm:gibbsintro}]\label{cor:gibbs}
Let $d \in \N$ with $d \ge 2$, let $\mathcal{Y}_{1},\dots,\mathcal{Y}_{d}$,
$\XX,\XXA,\lambda,p,\pi$ and $P_{1},\dots,P_{d}$ be as above, assume
\eqref{eq:condexists}, and put $T := T_{\mathrm{rs}} = \frac{1}{d}\sum_{i} P_{i}$.
Then $\pi$ is the unique invariant probability measure of $T$ and
\[
  \lim_{n\to\infty}\ \sup_{A \in \XXA}\bigl| (T^{n}\circ\mu)(A) - \pi(A) \bigr| = 0
  \qquad\text{for every } \mu \in \Prob .
\]
More precisely, for every $\mu \in \Prob$ and every $n \in \Nz$,
\[
  \sing\bigl( T^{n}\circ\mu \bigm| \pi\bigr) \;\le\; d\,\Bigl( 1-\frac{1}{d} \Bigr)^{\! n} ,
\]
a bound uniform in $\mu$.
\end{corollary}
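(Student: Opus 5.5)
The plan is to apply Theorem \ref{thm:criterion}, so it suffices to verify \hyp{P}{P} and \hyp{S}{S}. Invariance of $\pi$ for $T$ is Lemma \ref{lem:gibbsinvariance}. Uniqueness of $\pi$ and the convergence statement then follow at once.

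For \hyp{P}{P}, expand $T^{d} = d^{-d}\sum P_{i_{d}}\cdots P_{i_{1}}$ over all $d^{d}$ index sequences. Retain only the sequence $(1,2,\dots,d)$; this gives $T^{d}_{x} \ge d^{-d}(T_{\mathrm{sc}})_{x}$ for every $x$. By Lemma \ref{lem:gibbsscan}, $(T_{\mathrm{sc}})_{x}$ has the everywhere strictly positive $\lambda$-density $\tau(\cdot\mid x)$. Since $p>0$ everywhere, we can pass to a $\pi$-density by setting
\[
  s(y\mid x) := d^{-d}\,\tau(y\mid x)/p(y) .
\]
This $s$ is $\XXA\otimes\XXA$-measurable and strictly positive at every point, and it satisfies $T^{d}(A\mid x) \ge \int_{A} s(y\mid x)\,\pi(\dd y)$ for every $x$ and $A$. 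So \hyp{P}{P} holds with $N:=d$ and $\YY=\YY':=\XX$, independently of $\varepsilon$; part (c) is vacuous because the zero set of $s$ is empty.

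For \hyp{S}{S} there are two routes. The quick one applies Lemma \ref{lem:uniformminorant} with $N:=d$, $u:=s$ and $c:=d^{-d}$, since $\int s(y\mid x)\,\pi(\dd y)=d^{-d}$. However, that only yields the weaker bound $(1-d^{-d})^{\lfloor n/d\rfloor}$.

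To get the stated bound $d(1-1/d)^{n}$, I would instead use Lemma \ref{lem:gibbsrefresh}. Write
\[
  T^{n}\circ\mu = d^{-n}\sum_{(i_{1},\dots,i_{n})} P_{i_{n}}\cdots P_{i_{1}}\circ\mu .
\]
Split the sum into surjective and non-surjective sequences. For a surjective sequence, each $(P_{i_{n}}\cdots P_{i_{1}})_{x}\ll\pi$ by the lemma. Integrating in $x$ against $\mu$ keeps absolute continuity, because a $\pi$-null set $A$ has $(P\cdots)(A\mid x)=0$ for every $x$. So the surjective part $\beta$ satisfies $\beta\le T^{n}\circ\mu$ and $\beta\ll\pi$, and Definition \ref{def:singmass} gives that $\sing(T^{n}\circ\mu\mid\pi)$ is at most the mass of the non-surjective part. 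That mass equals the fraction of sequences missing some index. By a union bound over the missing index it is at most $\sum_{j} (1-1/d)^{n} = d(1-1/d)^{n}$. The bound is uniform in $\mu$, holds for $n=0$ trivially since $d\ge1$, and tends to $0$, which gives \hyp{S}{S}.

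The main obstacle I expect is checking that the surjective-sequence part is absolutely continuous after mixing over $\mu$; the rest is routine.
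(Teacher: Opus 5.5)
Your proposal is correct and follows the paper's proof essentially step for step. It uses the same data for \hyp{P}{P}: $N=d$, $\YY=\YY'=\XX$, and the minorant $s=d^{-d}\tau/p$ obtained from the sequence $(1,\dots,d)$. It obtains \hyp{S}{S} by the same two routes: Lemma \ref{lem:uniformminorant} first, then the sharper bound $d(1-1/d)^{n}$ via surjective index sequences, Lemma \ref{lem:gibbsrefresh} and the union bound. It then concludes with Theorem \ref{thm:criterion}.
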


\begin{proof}
Assumptions \hyp{A1}{A1} and \hyp{A2}{A2} hold by construction and \hyp{A3}{A3} is
Lemma \ref{lem:gibbsinvariance}. Put
\[
  N := d, \qquad \YY := \XX, \qquad
  s : \XX\times\XX \to (0,\infty), \quad s(y \mid x) := d^{-d}\,\frac{\tau(y \mid x)}{p(y)} ,
\]
with $\tau$ the map of Lemma \ref{lem:gibbsscan}. Expanding
$T^{d} = d^{-d}\sum P_{i_{d}}\cdots P_{i_{1}}$ over the $d^{d}$ sequences in
$\{1,\dots,d\}^{d}$ and retaining only the sequence $(1,2,\dots,d)$, whose
contribution is $d^{-d}T_{\mathrm{sc}}$, gives by Lemma \ref{lem:gibbsscan} and
$\pi = p\lambda$
\begin{equation}
  T^{d}(A \mid x) \;\ge\; d^{-d}\,T_{\mathrm{sc}}(A \mid x)
  = d^{-d}\int_{A}\tau(y \mid x)\,\lambda(\dd y)
  = \int_{A} s(y \mid x)\,\pi(\dd y)
  \qquad (A \in \XXA,\ x \in \XX) .
  \label{eq:gibbsminorant}
\end{equation}
Moreover, $T_{\mathrm{sc},x}$ being a probability measure,
\begin{equation}
  \int_{\XX} s(y \mid x)\,\pi(\dd y) = d^{-d}\int_{\XX}\tau(y\mid x)\,\lambda(\dd y) = d^{-d}
  \qquad (x \in \XX) .
  \label{eq:gibbsmass}
\end{equation}

\emph{Assumption \hyp{P}{P}.} Let $\varepsilon \in (0,1)$ be a number and take the
above $N$, $\YY$ and $s$ together with $\YY' := \XX$, none of which depends on
$\varepsilon$. Hypothesis (a) is trivial since
$\pi(\XX\setminus\YY) = \pi(\XX\setminus\YY') = 0$, hypothesis (b) is
\eqref{eq:gibbsminorant}, and hypothesis (c) holds because $s > 0$ at every point, so
that the set occurring in it is empty and its measure is $0 \le \varepsilon$.

\emph{Property \hyp{S}{S}.} Applying Lemma \ref{lem:uniformminorant} with the same
$N = d$, with $u := s$ and with $c := d^{-d}$ --- its hypothesis (a) being
\eqref{eq:gibbsminorant} and its hypothesis (b) being \eqref{eq:gibbsmass} --- already
gives $\sing(T^{n}\circ\mu\mid\pi) \le (1-d^{-d})^{\lfloor n/d\rfloor}$, hence \hyp{S}{S}.

For the sharper bound stated, expand $T^{n} = d^{-n}\sum P_{i_{n}}\cdots P_{i_{1}}$
over the $d^{n}$ sequences in $\{1,\dots,d\}^{n}$ and let $W$ be the set of those
sequences in which every index occurs. Put
\[
  \beta_{n} := d^{-n} \sum_{(i_{1},\dots,i_{n}) \in W}
     \bigl( P_{i_{n}}\cdots P_{i_{1}} \bigr)\circ\mu .
\]
Then $\beta_{n} \le T^{n}\circ\mu$, because the discarded sequences contribute a
nonnegative measure; and $\beta_{n} \ll \pi$, because each summand is: if
$\pi(A) = 0$ then $(P_{i_{n}}\cdots P_{i_{1}})(A \mid x) = 0$ for every $x$ by
Lemma \ref{lem:gibbsrefresh}, so the integral of that map against $\mu$ vanishes.
Now $d^{-n}|W|$ is the probability that $n$ independent uniform draws from
$\{1,\dots,d\}$ exhaust $\{1,\dots,d\}$, so by the union bound over the $d$ events
``the index $i$ is never drawn'',
\[
  \beta_{n}(\XX) = d^{-n}|W| \;\ge\; 1 - d\Bigl(1-\frac1d\Bigr)^{\! n} .
\]
Definition \ref{def:singmass} gives $\sing(T^{n}\circ\mu\mid\pi) \le 1 - \beta_{n}(\XX)
\le d\,(1-1/d)^{n}$, as claimed. This beats the bound of the previous paragraph
substantially --- for $d = 2$ it is $2\cdot 2^{-n}$ against roughly $0.866^{n}$ ---
and it is uniform in $\mu$.

Theorem \ref{thm:criterion} now applies.
\end{proof}

\begin{remark}[What the two hypotheses cost here]\label{rem:gibbscomment}
Three features of the verification are worth recording. First, both hypotheses come
from the single map $s$, that is, from the single event that the next $d$ steps happen
to update the coordinates $1,2,\dots,d$ in that order; the two are not established by
separate arguments. Second, the bound on the singular mass is geometric and
\emph{uniform in $\mu$}, so a single time $n_{1}$ serves every initial distribution
--- more than \hyp{S}{S} demands. Third, no
positivity beyond $p > 0$ is used, and the positivity that \hyp{P}{P} needs is
manufactured by the sweep rather than assumed: it is the composition of $d$
coordinate kernels, each of them singular with respect to $\pi$, that produces the
everywhere strictly positive density $\tau$. By Corollary \ref{cor:lln} the ergodic
averages along a single trajectory of the random scan Gibbs sampler converge as
well.

If the coordinate is chosen from a fixed distribution
$(\omega_{1},\dots,\omega_{d})$ with $\omega_{i} > 0$ instead of uniformly, so that
$T_{\omega} := \sum_{i=1}^{d}\omega_{i}P_{i}$, both halves survive with the constants
changed. For \hyp{P}{P}, the sweep $(1,2,\dots,d)$ now carries probability
$\prod_{i}\omega_{i}$ rather than $d^{-d}$, so \eqref{eq:gibbsminorant} and
\eqref{eq:gibbsmass} hold with $s(y \mid x) := \bigl(\prod_{i}\omega_{i}\bigr)\tau(y\mid x)/p(y)$
and $c := \prod_{i}\omega_{i}$. For \hyp{S}{S}, the argument of the previous proof
applies verbatim, the only change being that a sequence $(i_{1},\dots,i_{n})$ now
carries weight $\omega_{i_{1}}\cdots\omega_{i_{n}}$: writing $W$ for the set of
sequences in which every index occurs, the measure
$\beta_{n} := \sum_{W}\omega_{i_{1}}\cdots\omega_{i_{n}}\,(P_{i_{n}}\cdots P_{i_{1}})\circ\mu$
satisfies $\beta_{n} \le T_{\omega}^{n}\circ\mu$ and $\beta_{n}\ll\pi$ by
Lemma \ref{lem:gibbsrefresh}, while $\beta_{n}(\XX)$ is the probability that $n$
independent draws from $(\omega_{1},\dots,\omega_{d})$ exhaust $\{1,\dots,d\}$. The
union bound over the events ``the index $i$ is never drawn'' gives the
coupon-collector estimate
\[
  \sing\bigl( T_{\omega}^{n}\circ\mu \bigm| \pi\bigr) \;\le\; \sum_{i=1}^{d} (1-\omega_{i})^{n}
  \qquad (\mu \in \Prob,\ n \in \Nz) ,
\]
of which the bound of Corollary \ref{cor:gibbs} is the case
$\omega_{i} \equiv 1/d$. If some $\omega_{i}$ vanishes the $i$-th coordinate is never
updated and the conclusion is false; the proof detects this at once, both bounds
degenerating.
\end{remark}

\begin{remark}[The full conditionals must exist at \emph{every} point]\label{rem:gibbssharp}
Hypothesis \eqref{eq:condexists} cannot be weakened to the $\lambda$-almost everywhere
existence of the full conditionals, and the reason is that Theorem \ref{thm:gibbsintro} and
Corollary \ref{cor:gibbs} assert convergence from \emph{every} initial distribution.
The following object shows it. Take $d := 2$,
$\mathcal{Y}_{1} = \mathcal{Y}_{2} := \R$ with Lebesgue measure, so that
$\XX = \R^{2}$ and $\lambda$ is planar Lebesgue measure, and put
\[
  h(x,y) := |x|^{-1}\ind_{\{0<|x|<1\}}\ind_{\{|y|<|x|\}},
  \qquad
  h'(x,y) := |y-3|^{-1}\ind_{\{0<|y-3|<1\}}\ind_{\{|x-3|<|y-3|\}} ,
\]
\[
  p \;:=\; \frac{1}{9}\Bigl( \varphi(x)\varphi(y) + h + h' \Bigr) ,
\]
with $\varphi$ the standard normal density. Both $h$ and $h'$ are finite at every
point --- their indicators vanish where the factors $|x|^{-1}$, $|y-3|^{-1}$ blow up
--- so $p : \R^{2} \to (0,\infty)$ is measurable and strictly positive everywhere;
and $\int h \dd\lambda = \int h'\dd\lambda = 4$ by Tonelli, so
$\int p \dd\lambda = 1$. All the standing hypotheses of
Subsection \ref{ssec:gibbs} therefore hold \emph{except} \eqref{eq:condexists}.

Compute the two normalising maps. Integrating $h$ out over the first coordinate gives
$\int_{\R} h(w,y)\,\dd w = 2\ln(1/|y|)$ when $0 < |y| < 1$, and $0$ when
$|y| \ge 1$. At $y = 0$, by contrast, the indicator $\ind_{\{|y|<|w|\}}$ is satisfied
by every $w \ne 0$, and $\int_{0<|w|<1}|w|^{-1}\dd w = \infty$. The other two
summands of $p$ contribute finitely in either case. Hence
\[
  Z_{1} = \infty \ \text{ exactly on } \{y = 0\},
  \qquad\text{and symmetrically}\qquad
  Z_{2} = \infty \ \text{ exactly on } \{x = 3\} .
\]
Both are $\lambda$-null sets, so all full conditionals exist $\lambda$-almost
everywhere. But at the single point $(3,0)$ \emph{both} of them fail to exist.

Complete $P_{1}$ and $P_{2}$ on their bad sets in the only way available, by
``staying put'': $P_{i,x} := \delta_{x}$ when $Z_{i}(x) = \infty$. Since
the bad sets are $\pi$-null, Lemma \ref{lem:gibbsinvariance} is unaffected and $\pi$
remains invariant for $P_{1}$, $P_{2}$ and $T_{\mathrm{rs}}$. Yet
$T_{\mathrm{rs},(3,0)} = \tfrac12\delta_{(3,0)} + \tfrac12\delta_{(3,0)}
= \delta_{(3,0)}$: the point $(3,0)$ is absorbing. So $\delta_{(3,0)}$ is a second
invariant probability measure, uniqueness fails, and
$T_{\mathrm{rs}}^{n}\circ\delta_{(3,0)} = \delta_{(3,0)} \not\to \pi$.

The words ``for every $i$ and every $x$'' in \eqref{eq:condexists} are therefore
load-bearing, and not a convenience of the write-up. What goes wrong is exactly what
\hyp{P}{P} is designed to see: the minorant $s$ of Corollary \ref{cor:gibbs} is built
from $\tau$, which requires $Z_{i}$ to be finite along the whole sweep, and no set
$\YY$ of full $\pi$-measure repairs a defect at a single starting point when the
conclusion is quantified over all of them. Compare Remark \ref{rem:mhsharp}, and the
same phenomenon in the proof of Corollary \ref{cor:reference}, where hypothesis (ii)
is likewise needed for \emph{every} $x$.
\end{remark}

\subsection{Parallel tempering}\label{ssec:tempering}

Parallel tempering, also called replica exchange
\cite{SwendsenWang86,Geyer91}, runs $K$ chains side by side, the $k$-th one targeting
a flattened version $\pi_{k}$ of the distribution of interest, and occasionally
proposes to exchange the states of two of them. The flattened chains move easily
between the modes of the target and, through the exchanges, communicate that mobility
to the chain that targets $\pi_{1}$. As with the Gibbs sampler, the classical
convergence theory proceeds through Harris recurrence
\cite{Tierney94,RobertsRosenthal04}.

What puts the algorithm outside Sections \ref{sec:first}--\ref{sec:general} is the
exchange move: it is deterministic once accepted, so it contributes an atom at a
\emph{permuted} point rather than at the starting point, and \hyp{M}{M} demands the
latter. The verification of \hyp{P}{P} and \hyp{S}{S} is nevertheless one
application of Lemma \ref{lem:uniformminorant}, exactly as for the Gibbs sampler.

\subsubsection*{The setting}

Let $\mathcal{Y}$ be a set with a $\sigma$-algebra $\mathcal{B}_{\mathcal{Y}}$ and a
$\sigma$-finite measure $\lambda_{0}$ on it, let $K \in \N$ with $K \ge 2$, and let
\[
  p_{k} : \mathcal{Y} \to (0,\infty), \qquad \int_{\mathcal{Y}} p_{k}\dd\lambda_{0} = 1,
  \qquad \pi_{k}(A) := \int_{A} p_{k}\dd\lambda_{0}
  \qquad (k = 1,\dots,K)
\]
be measurable maps and the associated probability measures on
$(\mathcal{Y},\mathcal{B}_{\mathcal{Y}})$; in practice $p_{k}$ is proportional to
$p_{1}^{\beta_{k}}$ for numbers $1 = \beta_{1} > \dots > \beta_{K} > 0$, but no
relation between the $p_{k}$ is needed below. Put
\[
  \XX := \mathcal{Y}^{K}, \quad
  \XXA := \mathcal{B}_{\mathcal{Y}}^{\otimes K}, \quad
  \lambda := \lambda_{0}^{\otimes K}, \quad
  p(x) := \prod_{k=1}^{K} p_{k}(x_{k}), \quad
  \pi := \pi_{1}\otimes\cdots\otimes\pi_{K} = p\lambda ,
\]
so that $\lambda$ is $\sigma$-finite and $\pi \sim \lambda$, as in
Subsection \ref{ssec:gibbs}.

\emph{The within-chain moves.} For each $k$ let $R_{k}$ be a Markov kernel on
$(\mathcal{Y},\mathcal{B}_{\mathcal{Y}})$ which leaves $\pi_{k}$ invariant and is of
the shape \hyp{M}{M} with respect to $\pi_{k}$: there is a
$\mathcal{B}_{\mathcal{Y}}\otimes\mathcal{B}_{\mathcal{Y}}$-measurable map
$\kappa_{k} : \mathcal{Y}\times\mathcal{Y} \to [0,\infty)$ with
\[
  R_{k}(A \mid y) = \int_{A} \kappa_{k}(z \mid y)\,\pi_{k}(\dd z)
                    + \bigl( 1 - \theta_{k}(y) \bigr)\,\ind_{A}(y),
  \qquad
  \theta_{k}(y) := \int_{\mathcal{Y}} \kappa_{k}(z \mid y)\,\pi_{k}(\dd z) \le 1 ,
\]
and $\kappa_{k} > 0$ $(\pi_{k}\otimes\pi_{k})$-almost everywhere. A
Metropolis--Hastings kernel for $\pi_{k}$ is of this shape, by
Corollary \ref{cor:mh}. We assume in addition that the moving probabilities are
bounded away from $0$:
\begin{equation}
  \theta^{-} := \min_{k = 1,\dots,K}\ \inf_{y \in \mathcal{Y}} \theta_{k}(y) \;>\; 0 .
  \label{eq:temperingtheta}
\end{equation}
The \emph{parallel update kernel} is the product kernel
\[
  R(A \mid x) := \int_{\XX} \ind_{A}(y)\, R_{1}(\dd y_{1} \mid x_{1}) \cdots R_{K}(\dd y_{K} \mid x_{K})
  \qquad (A \in \XXA,\ x \in \XX) ,
\]
which updates all $K$ components independently.

\emph{The exchange moves.} Let $\mathcal{S}$ be a nonempty finite set of pairs
$\{k,l\}$ with $k \ne l$ --- all pairs, or only the neighbouring ones, as one
prefers. For $\{k,l\} \in \mathcal{S}$ let $\varsigma_{kl} : \XX \to \XX$ be the map
exchanging the $k$-th and the $l$-th coordinate, which is measurable and satisfies
$\varsigma_{kl}\circ\varsigma_{kl} = \mathrm{id}$, and put
\[
  a_{kl} : \XX \to (0,1], \qquad
  a_{kl}(x) := \min\Bigl\{ 1,\ \frac{p_{k}(x_{l})\,p_{l}(x_{k})}{p_{k}(x_{k})\,p_{l}(x_{l})} \Bigr\} ,
\]
\[
  S_{kl}(A \mid x) := a_{kl}(x)\,\ind_{A}(\varsigma_{kl}x)
                      + \bigl(1-a_{kl}(x)\bigr)\,\ind_{A}(x)
  \qquad (A \in \XXA,\ x \in \XX) .
\]
Finally fix a number $\omega \in [0,1)$, the probability of attempting an exchange,
and let
\[
  T := (1-\omega)\,R \;+\; \frac{\omega}{|\mathcal{S}|}\sum_{\{k,l\} \in \mathcal{S}} S_{kl}
\]
be the parallel tempering kernel. The value $\omega = 0$ is admitted, and gives the
pure parallel update $T = R$, in which no exchange is ever attempted; the algorithm of
interest of course has $\omega > 0$, but nothing below uses it, and the hypothesis is
stated as the proof needs it (Remark \ref{rem:temperingtheta}).

\begin{lemma}[Invariance]\label{lem:temperinginvariance}
The measure $\pi$ is invariant for $R$ and for each $S_{kl}$, hence for $T$.
\end{lemma}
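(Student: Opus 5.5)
The plan is to prove invariance separately for $R$ and for each $S_{kl}$, and then combine. Since $T$ is a convex combination of these kernels, and $\mu \mapsto K\circ\mu$ is linear in the kernel $K$, we get
\[
T\circ\pi = (1-\omega)\,R\circ\pi + \frac{\omega}{|\mathcal{S}|}\sum_{\{k,l\} \in \mathcal{S}} S_{kl}\circ\pi ,
\]
and this equals $\pi$ once each summand equals $\pi$. The same remark is used in the proof of Lemma \ref{lem:gibbsinvariance}.

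\emph{The parallel update $R$.} Each $R_{k}$ leaves $\pi_{k}$ invariant by assumption. Fix a measurable rectangle $A = A_{1}\times\cdots\times A_{K}$. By definition, $R(A\mid x) = \prod_{k}R_{k}(A_{k}\mid x_{k})$. Since $\pi$ is the product $\pi_{1}\otimes\cdots\otimes\pi_{K}$, Tonelli's theorem gives
\[
(R\circ\pi)(A) = \prod_{k}\int R_{k}(A_{k}\mid y)\,\pi_{k}(\dd y) = \prod_{k}\pi_{k}(A_{k}) = \pi(A).
\]
The measurable rectangles form a $\cap$-stable generator of $\XXA$, and both $R\circ\pi$ and $\pi$ are probability measures. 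The uniqueness theorem for measures (Dynkin's $\pi$--$\lambda$ theorem) therefore gives $R\circ\pi = \pi$ on all of $\XXA$. As a preliminary, I also need to check that $R$ is a Markov kernel, i.e.\ that $x\mapsto R(A\mid x)$ is measurable for general $A$. This is again a monotone class argument starting from rectangles.

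\emph{The exchange kernel $S_{kl}$.} Here I will show detailed balance in integrated form: for all $A,B\in\XXA$,
\[
\int_{B}S_{kl}(A\mid x)\,\pi(\dd x)\quad\text{is symmetric in } A,B .
\]
Taking $B := \XX$ then gives invariance. The rejection part $\int_{A\cap B}(1-a_{kl})\,\dd\pi$ is visibly symmetric. The acceptance part is
\[
\int \ind_{B}(x)\,\ind_{A}(\varsigma x)\,a_{kl}(x)\,p(x)\,\lambda(\dd x).
\]
The identity that drives the proof is
\[
a_{kl}(x)\,p(x) = \min\{p(x),\,p(\varsigma x)\},
\]
which holds because $p(\varsigma x)/p(x)$ is exactly the ratio appearing in $a_{kl}$; all other factors of $p$ cancel since only coordinates $k$ and $l$ are swapped. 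The right-hand side is invariant under $\varsigma$. Moreover $\lambda = \lambda_{0}^{\otimes K}$ is invariant under the coordinate swap $\varsigma$, by uniqueness on rectangles, using $\sigma$-finiteness. Substituting $x \mapsto \varsigma x$ and using $\varsigma\circ\varsigma = \mathrm{id}$ turns the acceptance integral into the same expression with $A$ and $B$ exchanged.

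\emph{Main obstacle.} The computations themselves are routine. The one step that needs care is the change of variables under $\varsigma$: the image of $\lambda$ under $\varsigma$ is $\lambda$. The cleanest way to see this is to check it on rectangles, where $\lambda(\varsigma^{-1}(A_{1}\times\cdots\times A_{K}))$ is the same product of factors reordered. Then extend to all of $\XXA$ by uniqueness of $\sigma$-finite measures agreeing on a $\cap$-stable generator containing an exhausting sequence of finite-measure rectangles.
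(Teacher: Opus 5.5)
Your proposal is correct and follows essentially the same route as the paper. For $R$ you use rectangles, Tonelli and uniqueness of measures; for $S_{kl}$ you use the identity $a_{kl}(x)\,p(x) = \min\{p(x),\,p(\varsigma_{kl}x)\}$ together with the swap-invariance of $\lambda$; and for $T$ you use convexity. The only difference is cosmetic: you check symmetry of $\int_{B} S_{kl}(A\mid x)\,\pi(\dd x)$ on pairs of sets, whereas the paper checks symmetry of the whole measure $\pi(\dd x)\,S_{kl}(\dd y\mid x)$ on $\XX\times\XX$, and your pairwise version is all that invariance needs.
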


\begin{proof}
For $R$: the sets $A = A_{1}\times\cdots\times A_{K}$ with
$A_{k} \in \mathcal{B}_{\mathcal{Y}}$ form a $\pi$-system generating $\XXA$, and for
such $A$, by Tonelli's theorem and the invariance of each $\pi_{k}$ for $R_{k}$,
\[
  (R\circ\pi)(A) = \prod_{k=1}^{K}\int_{\mathcal{Y}} R_{k}(A_{k}\mid x_{k})\,\pi_{k}(\dd x_{k})
                 = \prod_{k=1}^{K}\pi_{k}(A_{k}) = \pi(A) ,
\]
so $R\circ\pi = \pi$ by uniqueness of measures agreeing on a generating $\pi$-system.

For $S_{kl}$ we show reversibility. The measure
$\pi(\dd x)S_{kl}(\dd y \mid x)$ on $(\XX\times\XX,\XXA\otimes\XXA)$ is the sum of
the diagonal part $C \mapsto \int_{\XX}(1-a_{kl}(x))\ind_{C}(x,x)\pi(\dd x)$, which is
invariant under the swap $(x,y)\mapsto(y,x)$, and of the part
$C \mapsto \int_{\XX} a_{kl}(x)\,\ind_{C}(x,\varsigma_{kl}x)\,p(x)\,\lambda(\dd x)$.
Since the factors of $\lambda$ are all equal to $\lambda_{0}$, the measure $\lambda$
is invariant under $\varsigma_{kl}$; and
\[
  a_{kl}(x)\,p(x) = \min\bigl\{ p(x),\, p(\varsigma_{kl}x) \bigr\}
  \qquad (x \in \XX),
\]
because $p(\varsigma_{kl}x)/p(x) = p_{k}(x_{l})p_{l}(x_{k})/(p_{k}(x_{k})p_{l}(x_{l}))$,
the other factors of $p$ being unaffected. The right-hand side is unchanged by
$x \mapsto \varsigma_{kl}x$, so substituting $x = \varsigma_{kl}u$ turns the second
part into $C \mapsto \int_{\XX} a_{kl}(u)\ind_{C}(\varsigma_{kl}u,u)p(u)\lambda(\dd u)$,
which is its image under the swap. Hence $\pi(\dd x)S_{kl}(\dd y\mid x)$ is symmetric,
and invariance follows by evaluating it on $\XX \times A$ and on $A \times \XX$.

Invariance for $T$ follows, a convex combination of kernels leaving $\pi$ invariant
leaving $\pi$ invariant.
\end{proof}

\begin{lemma}[Parallel tempering is singular]\label{lem:temperingsingular}
Assume in addition that $\omega > 0$, that $\{y\} \in \mathcal{B}_{\mathcal{Y}}$ and
that $\lambda_{0}(\{y\}) = 0$ for every $y \in \mathcal{Y}$. Then
$\sing(T^{n}_{x}\mid\pi) > 0$ for every $n \in \N$ and every $x \in \XX$, and
none of \hyp{D}{D}, \hyp{M}{M} and \hyp{E}{E} holds.
\end{lemma}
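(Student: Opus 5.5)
The plan is to copy the proof of Lemma \ref{lem:gibbssingular}(iii). We find a $\pi$-null set $L_{x,n}$ and a single term in the expansion of $T^{n}$ that puts a fixed positive mass on it. Definition \ref{def:singmass}, in the form \eqref{eq:singnull}, then gives the bound.

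The natural choice is to keep only exchange moves. Expand $T^{n}$ as a sum over words of length $n$ in the letters $(1-\omega)R$ and $\frac{\omega}{|\mathcal{S}|}S_{kl}$. Fix one pair $\{k,l\}\in\mathcal{S}$ and keep the word $S_{kl}^{n}$, with weight $(\omega/|\mathcal{S}|)^{n}>0$; here $\omega>0$ is used. Each $S_{kl,x}$ is carried by the two-point set $\{x,\varsigma_{kl}x\}$, and $\varsigma_{kl}$ is an involution. Hence $(S_{kl}^{n})_{x}$ is a probability measure carried by $F_{x}:=\{x,\varsigma_{kl}x\}$, and
\[
  T^{n}_{x} \;\ge\; \Bigl(\frac{\omega}{|\mathcal{S}|}\Bigr)^{n}(S_{kl}^{n})_{x},
  \qquad\text{so}\qquad
  T^{n}(F_{x}\mid x)\;\ge\;\Bigl(\frac{\omega}{|\mathcal{S}|}\Bigr)^{n}.
\]
Next, $F_{x}\in\XXA$ is $\pi$-null. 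Singletons of $\XX=\mathcal{Y}^{K}$ are measurable rectangles of singletons, and $\lambda(\{x\})=0$. To see this without the convention $0\cdot\infty$, note that every factor $\lambda_{0}(\{x_{j}\})$ is $0$, which is finite, so the product is $0$. Then $\pi\sim\lambda$ gives $\pi(F_{x})=0$. By \eqref{eq:singnull}, $\sing(T^{n}_{x}\mid\pi)\ge T^{n}(F_{x}\mid x)\ge(\omega/|\mathcal{S}|)^{n}>0$ for every $n$ and $x$.

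The failure of the three hypotheses then follows as in Lemma \ref{lem:gibbssingular}.

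\emph{\hyp{D}{D} and \hyp{E}{E}.} Since $T^{n}_{x}\not\ll\pi$ for every $n$ and $x$ (last sentence of Proposition \ref{prop:singforms}), no $T^{n}$ has a density at any point. So \hyp{D}{D} fails, and under \hyp{E}{E} every set $\XX_{n}$ would be empty, contradicting \hyp{E}{E}(a).

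\emph{\hyp{M}{M}.} Suppose \hyp{M}{M}(a) held. Then the singular part of $T_{x}$ is $r(x)\delta_{x}$, so $T(A\mid x)=r(x)\ind_{A}(x)$ for every $\pi$-null $A$. Choose a point $x$ with $\varsigma_{kl}x\ne x$, which exists because $\lambda_{0}$ is not concentrated on one point. With $A:=\{\varsigma_{kl}x\}$, which is $\pi$-null and does not contain $x$, we get
\[
  T(A\mid x)\;\ge\;\frac{\omega}{|\mathcal{S}|}\,a_{kl}(x)\;>\;0,
\]
since $a_{kl}>0$. This contradicts $T(A\mid x)=r(x)\ind_{A}(x)=0$.

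The main obstacle is the existence of $x$ with $x_{k}\ne x_{l}$. It needs $\mathcal{Y}$ to have at least two points. This holds because $\lambda_{0}$ is atomless with $\lambda_{0}(\mathcal{Y})>0$, which in turn holds since $\int p_{k}\dd\lambda_{0}=1$. Apart from that, the proof needs only care in justifying that a fixed-letter word is a minorant and that measurability is not an issue on two-point sets.
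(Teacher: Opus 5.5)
Your proof is correct and follows the paper's route: you retain the exchange-only term in the expansion of $T^{n}$, use that finite subsets of $\XX$ are $\pi$-null, and refute \hyp{M}{M} with the singleton $A=\{\varsigma_{kl}x\}$ exactly as the paper does. The only difference is cosmetic. You keep the whole word $S_{kl}^{n}$ and bound the mass of the two-point set $\{x,\varsigma_{kl}x\}$ by $(\omega/|\mathcal{S}|)^{n}$, whereas the paper keeps only the accepted exchanges and bounds the mass of the single point $z_{n}$ by $(\omega/|\mathcal{S}|)^{n}\prod_{m}a_{kl}(z_{m})$; your bound is therefore slightly cleaner and does not need $a_{kl}>0$ for the singular-mass part.
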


\begin{proof}
Every singleton of $\XX$ is a measurable rectangle with
$\pi(\{z\}) = \prod_{k}\pi_{k}(\{z_{k}\}) = 0$. Fix $x \in \XX$ and
$\{k,l\} \in \mathcal{S}$, and define $z_{0} := x$ and
$z_{m+1} := \varsigma_{kl}z_{m}$, so that $z_{m} \in \{x, \varsigma_{kl}x\}$ for every
$m$. Retaining, in the expansion of $T^{n}$, only the paths that attempt and accept
the exchange $\{k,l\}$ at each of the $n$ steps gives
\[
  T^{n}(\{z_{n}\} \mid x) \;\ge\; \Bigl( \frac{\omega}{|\mathcal{S}|} \Bigr)^{\! n}
  \prod_{m=0}^{n-1} a_{kl}(z_{m}) \;>\; 0 ,
\]
the product being positive because $a_{kl}$ takes values in $(0,1]$. Since
$\pi(\{z_{n}\}) = 0$, every measure $\beta \le T^{n}_{x}$ with
$\beta \ll \pi$ satisfies $\beta(\XX) \le 1 - T^{n}(\{z_{n}\}\mid x) < 1$, so
$\sing(T^{n}_{x}\mid\pi) > 0$ by Definition \ref{def:singmass}. By the last sentence of
Proposition \ref{prop:singforms} no iterate $T^{n}_{x}$ is absolutely continuous
with respect to $\pi$; hence \hyp{D}{D} fails, and so does \hyp{E}{E}, since no set
$\XX_{n}$ as in \hyp{E}{E}(b) can be nonempty and \hyp{E}{E}(a) therefore fails.

For \hyp{M}{M}, choose $x$ with $x_{k} \ne x_{l}$ for some $\{k,l\} \in \mathcal{S}$;
such a point exists because $\mathcal{Y}$ has more than one element, $\lambda_{0}$
being atomless and not the zero measure. Then $\varsigma_{kl}x \ne x$, and
$A := \{\varsigma_{kl}x\}$ satisfies $\pi(A) = 0$ and $\ind_{A}(x) = 0$, so that
\hyp{M}{M}(a) would force
$T(A \mid x) = \int_{A} k(y\mid x)\pi(\dd y) + r(x)\ind_{A}(x) = 0$, whereas
$T(A \mid x) \ge \frac{\omega}{|\mathcal{S}|}a_{kl}(x) > 0$.
\end{proof}

\begin{corollary}[Parallel tempering converges]\label{cor:tempering}
Let $\mathcal{Y},\lambda_{0},K,p_{k},\pi_{k}$ and $\XX,\XXA,\lambda,\pi$ be as above,
let $R_{1},\dots,R_{K}$ satisfy the hypotheses stated there including
\eqref{eq:temperingtheta}, let $\mathcal{S} \ne \emptyset$ and
$\omega \in [0,1)$, and let $T$ be the parallel tempering kernel. Then $\pi$ is the
unique invariant probability measure of $T$ and
\[
  \lim_{n\to\infty}\ \sup_{A \in \XXA}\bigl| (T^{n}\circ\mu)(A) - \pi(A) \bigr| = 0
  \qquad\text{for every } \mu \in \Prob .
\]
More precisely, with $c := (1-\omega)(\theta^{-})^{K} \in (0,1]$,
\[
  \sing\bigl( T^{n}\circ\mu \bigm| \pi\bigr) \;\le\; (1-c)^{n}
  \qquad (\mu \in \Prob,\ n \in \Nz) .
\]
\end{corollary}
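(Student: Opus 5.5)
The plan is to follow the Gibbs case exactly: exhibit one jointly measurable map $u$ that is a uniform minorant for $T$ itself, with $N := 1$. Then \hyp{S}{S} comes from Lemma \ref{lem:uniformminorant}, \hyp{P}{P} from the same $u$, and Theorem \ref{thm:criterion} finishes. Assumption \hyp{A}{A} holds by construction together with Lemma \ref{lem:temperinginvariance}.

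\emph{The minorant.} Discard the exchange part of $T$ and the holding atoms of each $R_{k}$. This gives, for every $x \in \XX$ and $A \in \XXA$,
\begin{equation}
  T(A \mid x) \;\ge\; (1-\omega)\,R(A\mid x) \;\ge\; \int_{A} u(y\mid x)\,\pi(\dd y),
  \qquad
  u(y\mid x) := (1-\omega)\prod_{k=1}^{K}\kappa_{k}(y_{k}\mid x_{k}) .
  \label{eq:temperingminorant}
\end{equation}
The second inequality holds because each $R_{k}(\cdot\mid x_{k})$ dominates $\kappa_{k}(\cdot\mid x_{k})\pi_{k}$, since $1-\theta_{k} \ge 0$. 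Products of dominating measures dominate the product measure, which can be checked on rectangles and extended by Tonelli, or directly by expanding $R$ into $2^{K}$ nonnegative pieces and keeping only the all-moving piece. Joint measurability of $u$ is clear. Its mass is
\[
  \int_{\XX} u(y\mid x)\,\pi(\dd y) = (1-\omega)\prod_{k}\theta_{k}(x_{k}) \ge (1-\omega)(\theta^{-})^{K} = c
\]
by \eqref{eq:temperingtheta}, and $c \in (0,1]$ since $\omega<1$, $\theta^{-}>0$ and each $\theta_{k}\le 1$.

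\emph{\hyp{S}{S}.} Apply Lemma \ref{lem:uniformminorant} with $N := 1$ and this $u$ and $c$. It gives directly $\sing(T^{n}\circ\mu\mid\pi) \le (1-c)^{n}$ for every $\mu$ and $n$, which is the stated bound and in particular \hyp{S}{S}.

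\emph{\hyp{P}{P}.} Take $N := 1$, $\YY := \XX$ and $s := u$; then (b) is \eqref{eq:temperingminorant}. For (c), show that $Z := \{(x,y) : u(y\mid x) = 0\}$ is $(\pi\otimes\pi)$-null. The set $Z$ is the union over $k$ of the sets $\{\kappa_{k}(y_{k}\mid x_{k}) = 0\}$. Each of these is the preimage of a $(\pi_{k}\otimes\pi_{k})$-null set under the coordinate projection $(x,y)\mapsto(x_{k},y_{k})$, and $\pi\otimes\pi$ pushes forward to $\pi_{k}\otimes\pi_{k}$ under that projection, so each is null. Fubini then gives $\pi(\{x : u(y\mid x)=0\}) = 0$ for $\pi$-almost every $y$. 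Taking $\YY'$ to be that full-measure set yields (a) and (c) with $0$ in place of $\varepsilon$, exactly as in the proof of Theorem \ref{thm:mh}.

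\emph{Main obstacle.} The only step needing care is the rigorous minorisation $R_{x} \ge \bigl(\prod_{k}\kappa_{k}(\cdot\mid x_{k})\bigr)\pi$ on all of $\XXA$. I would first prove it on rectangles $A_{1}\times\cdots\times A_{K}$ by writing each $R_{k}$ as its absolutely continuous part plus its atom and multiplying out. I would then extend from rectangles by showing that the difference of the two sides is itself a sum of nonnegative product measures, namely the mixed terms of the expansion. This avoids having to argue monotone-class extension of an inequality.
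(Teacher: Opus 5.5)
Your proposal is correct and follows the paper's proof essentially step by step. You use the same minorant $u$ with $N=1$, verify (P) with $\YY=\XX$ and a full-measure $\YY'$ obtained by Fubini, and get (S) together with the bound $(1-c)^{n}$ from Lemma~\ref{lem:uniformminorant}. For the product-order step, your expansion of $R_{x}$ into $2^{K}$ nonnegative product pieces is the same idea as the paper's $K$-term telescoping $\Pi_{j}-\Pi_{j+1}=\Sigma_{j}$: both write the difference as a sum of nonnegative product measures, identified on rectangles through an equality, so no inequality has to be extended by a monotone class argument.
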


\begin{proof}
Assumptions \hyp{A1}{A1} and \hyp{A2}{A2} hold by construction and \hyp{A3}{A3} is
Lemma \ref{lem:temperinginvariance}. Define the
$\XXA\otimes\XXA$-measurable map
\[
  u : \XX\times\XX \to [0,\infty), \qquad
  u(y \mid x) := (1-\omega)\prod_{k=1}^{K} \kappa_{k}(y_{k} \mid x_{k}) .
\]
For each $k$ one has $R_{k,x_{k}} \ge \kappa_{k}(\,\cdot\mid x_{k})\pi_{k}$
as measures on $\mathcal{Y}$, and products of finite measures respect this order:
\begin{equation}
  \mu_{k} \ge \nu_{k} \ \ (k=1,\dots,K)
  \quad\Longrightarrow\quad
  \mu_{1}\otimes\cdots\otimes\mu_{K} \;\ge\; \nu_{1}\otimes\cdots\otimes\nu_{K} .
  \label{eq:productorder}
\end{equation}
This is proved by telescoping, and \emph{not} by a monotone class argument: the family
of sets on which one measure dominates another is closed neither under complements nor
under proper differences, so the monotone class theorem does not apply to an
inequality. Instead, put
\[
  \Pi_{j} := \nu_{1}\otimes\cdots\otimes\nu_{j-1}\otimes\mu_{j}\otimes\cdots\otimes\mu_{K}
  \qquad (j = 1,\dots,K+1),
\]
so that $\Pi_{1} = \bigotimes_{k}\mu_{k}$ and $\Pi_{K+1} = \bigotimes_{k}\nu_{k}$. For
each $j$ the finite measure
$\Sigma_{j} := \nu_{1}\otimes\cdots\otimes\nu_{j-1}\otimes(\mu_{j}-\nu_{j})\otimes\mu_{j+1}\otimes\cdots\otimes\mu_{K}$
is well defined and nonnegative, $\mu_{j}-\nu_{j}$ being a finite nonnegative measure
by hypothesis; and on a measurable rectangle $A_{1}\times\cdots\times A_{K}$,
\[
  \Pi_{j} - \Pi_{j+1}
  = \prod_{i<j}\nu_{i}(A_{i}) \cdot \bigl( \mu_{j}(A_{j}) - \nu_{j}(A_{j}) \bigr) \cdot \prod_{i>j}\mu_{i}(A_{i})
  = \Sigma_{j}(A_{1}\times\cdots\times A_{K}) .
\]
The rectangles form a $\pi$-system generating $\XXA$, and $\Pi_{j}-\Pi_{j+1}$ and
$\Sigma_{j}$ are finite measures of the same total mass, so they agree everywhere by
the uniqueness theorem for measures --- here the monotone class argument \emph{is}
legitimate, because what is being extended is an equality. Summing,
$\bigotimes_{k}\mu_{k} - \bigotimes_{k}\nu_{k} = \sum_{j=1}^{K}\Sigma_{j} \ge 0$,
which is \eqref{eq:productorder}. Hence, by Tonelli's theorem,
\begin{equation}
  T(A \mid x) \;\ge\; (1-\omega)\,R(A \mid x) \;\ge\; \int_{A} u(y \mid x)\,\pi(\dd y)
  \qquad (A \in \XXA,\ x \in \XX) ,
  \label{eq:temperingminorant}
\end{equation}
and, again by Tonelli and by \eqref{eq:temperingtheta},
\begin{equation}
  \int_{\XX} u(y \mid x)\,\pi(\dd y) = (1-\omega)\prod_{k=1}^{K}\theta_{k}(x_{k})
  \;\ge\; (1-\omega)\,(\theta^{-})^{K} = c \;>\; 0
  \qquad (x \in \XX) .
  \label{eq:temperingmass}
\end{equation}

\emph{Assumption \hyp{P}{P}.} Let $\varepsilon \in (0,1)$ be a number and take
$N := 1$, $\YY := \XX$ and $s := u$, none of which depends on $\varepsilon$.
Hypothesis (a) is trivial and hypothesis (b) is \eqref{eq:temperingminorant}. For
hypothesis (c), the set $\{(x,y) \in \XX\times\XX : u(y \mid x) = 0\}$ is the union
over $k$ of the sets $\{(x,y) : \kappa_{k}(y_{k}\mid x_{k}) = 0\}$, each of which is
$(\pi\otimes\pi)$-null because $\kappa_{k} > 0$
$(\pi_{k}\otimes\pi_{k})$-almost everywhere and $\pi\otimes\pi$ is a product measure
whose $(x_{k},y_{k})$-marginal is $\pi_{k}\otimes\pi_{k}$. Fubini's theorem then gives
$\pi(\{x \in \XX : u(y\mid x) = 0\}) = 0$ for $\aeevery$ $y$; taking for $\YY'$ the
full-measure set of such $y$ gives (a) and (c), the bound in (c) holding with $0$ in
place of $\varepsilon$.

\emph{Property \hyp{S}{S}.} Apply Lemma \ref{lem:uniformminorant} with $N := 1$,
with this $u$ and with $c$ as above: its hypothesis (a) is
\eqref{eq:temperingminorant} and its hypothesis (b) is \eqref{eq:temperingmass}.

Theorem \ref{thm:criterion} now applies.
\end{proof}

\begin{remark}[What the proof really uses]\label{rem:temperingtheta}
Three comments. First, the exchange moves play no role whatever in the proof: the
minorant $u$ comes from the parallel update alone, and the exchanges enter only
through the requirement that $\pi$ be invariant for them, which
Lemma \ref{lem:temperinginvariance} supplies. This is as it should be. The exchanges
are what make the algorithm efficient, and efficiency is a statement about rates,
which the present method does not reach (Remark \ref{rem:notclaimed}); they are not
what makes it converge.

It is better to say this openly than to leave it to be discovered, because it means
that Corollary \ref{cor:tempering} is in truth a theorem about something more general,
and one may as well state that theorem. \emph{Let $R$ be a product of $K$ kernels of
the shape \hyp{M}{M} with moving probabilities bounded below as in
\eqref{eq:temperingtheta}, let $(V_{\iota})_{\iota \in I}$ be an arbitrary family of
Markov kernels on $(\XX,\XXA)$ leaving $\pi$ invariant, let
$(\varpi_{\iota})_{\iota\in I}$ be nonnegative weights and let $\omega \in [0,1)$ with
$\sum_{\iota}\varpi_{\iota} = \omega$. Then the kernel
$T := (1-\omega)R + \sum_{\iota}\varpi_{\iota}V_{\iota}$ satisfies the conclusion of
Corollary \ref{cor:tempering}, with the same constant
$c = (1-\omega)(\theta^{-})^{K}$.} The proof is the one given, word for word: the
family $(V_{\iota})$ is used only to know that $\pi$ is invariant for $T$, and
$T \ge (1-\omega)R$ is all that \eqref{eq:temperingminorant} needs. The exchange
kernels $S_{kl}$ are one such family; so are Metropolis moves on the joint state, and
so is doing nothing. This pre-empts the reasonable objection that the theorem does not
see the algorithm --- it does not, and cannot, since what makes parallel tempering
worth running is invisible to any rate-free criterion.

Second, the hypothesis $\omega < 1$ is used, and $\omega > 0$ is not. The proof needs
$1-\omega > 0$ so that $c > 0$ in \eqref{eq:temperingmass}, and nothing else; the
nonemptiness of $\mathcal{S}$ is likewise never used, except to make the displayed
kernel well defined. Remark \ref{rem:temperingsingularscope} records where $\omega > 0$
\emph{is} needed, namely in Lemma \ref{lem:temperingsingular}, which is a statement
about the algorithm and not about its convergence.

Third, the requirement \eqref{eq:temperingtheta} that the moving probabilities be
bounded away from $0$ is a genuine restriction and not an artefact of the
bookkeeping. It holds, for instance, when each $R_{k}$ is an independence sampler
whose proposal density $q_{k}$ satisfies $\sup_{y} p_{k}(y)/q_{k}(y) < \infty$, and
more generally whenever the component samplers accept with probability bounded below.
Without it, Lemma \ref{lem:uniformminorant} is unavailable: the minorant $u$ still
exists but its mass $(1-\omega)\prod_{k}\theta_{k}(x_{k})$ may approach $0$, and
\hyp{S}{S} then has to be established by following the trajectory rather than by
a bound valid at every point. That is exactly the gap described in
Subsection \ref{ssec:directions}, Direction \ref{dir:trajectory}.
\end{remark}

\begin{remark}[Where $\omega > 0$ is used]\label{rem:temperingsingularscope}
Lemma \ref{lem:temperingsingular} --- that no iterate of $T$ is absolutely continuous
with respect to $\pi$, so that none of \hyp{D}{D}, \hyp{M}{M}, \hyp{E}{E} applies ---
does require $\omega > 0$, and visibly so: with $\omega = 0$ the kernel is the product
$R$ of $K$ kernels of the shape \hyp{M}{M}, which is itself of the shape \hyp{M}{M}
with an atom at the starting point, and Theorem \ref{thm:mh} then applies directly.
It is the exchange move, contributing an atom at a \emph{permuted} point rather than
at the starting point, that puts the algorithm outside
Sections \ref{sec:first}--\ref{sec:general}; and it is the parallel update that brings
it back inside Theorem \ref{thm:criterion}. The two halves of the subsection concern
different features of the kernel, which is why their hypotheses differ.
\end{remark}

\subsection{Further algorithms}

\begin{remark}[Other samplers of the same shape]\label{rem:otheralgorithms}
Many samplers used in practice have, like the random scan Gibbs sampler, a singular
part that is neither absent nor an atom at the starting point, and are therefore
outside Theorems \ref{thm:main}, \ref{thm:mh} and \ref{thm:general} while remaining
within reach of Theorem \ref{thm:criterion}. We indicate the shape of the two
verifications without carrying them out; in each case \hyp{P}{P} comes from one
favourable sweep and \hyp{S}{S} from the event that every component has been
refreshed.
\begin{itemize}[nosep,topsep=3pt,leftmargin=1.4em,itemsep=3pt]
  \item \emph{Metropolis within Gibbs} \cite{Tierney94,RobertsRosenthal04}, in which
        the conditional draw is replaced by a Metropolis--Hastings step: \hyp{P}{P} from
        the sweep in which every coordinate is proposed and accepted,
        \hyp{S}{S} from the bound of the singular mass by the probability that
        some coordinate has never been accepted.
  \item \emph{Ensemble samplers with affine invariance} \cite{GoodmanWeare10}, in
        which one walker at a time is moved along the line joining it to another:
        a single step is carried by a union of lines, exactly as in
        Lemma \ref{lem:gibbssingular}, and both hypotheses follow as for the random
        scan.
  \item \emph{Piecewise deterministic samplers}, such as the bouncy particle sampler
        \cite{BouchardCote18} and the zig-zag process \cite{Bierkens19}, observed at
        the times of a fixed grid: over one time step the singular part is the
        deterministic transport along which no event has occurred, whose mass decays
        with the number of steps, and \hyp{P}{P} follows once enough events have
        taken place.
  \item \emph{Particle Gibbs and conditional sequential Monte Carlo}
        \cite{AndrieuDoucetHolenstein10}: the reference trajectory is retained with
        positive probability, which contributes an atom, and partial updates
        contribute components carried by lower dimensional sets.
\end{itemize}
Two comments of a different kind. Hamiltonian Monte Carlo
\cite{Duane87,Livingstone19} does \emph{not} require this section: momentum
refreshment followed by the leapfrog map produces an absolutely continuous part, and
the rejection produces an atom at the starting point, so it is of the shape
\hyp{M}{M} and is covered by Theorem \ref{thm:mh} as soon as that absolutely
continuous part has an almost everywhere positive density, which is what a
randomised integration time supplies. Reversible jump Markov chain Monte Carlo
\cite{Green95}, and the Bayesian variable selection samplers built on it, are of the
shape \hyp{M}{M} as well, so that \hyp{S}{S} is again bounded by
$\int r^{n}\dd\mu$; but \hyp{P}{P} fails as stated, because birth and death moves
change the model index only by one and therefore cannot reach $\aeevery$ endpoint in
a bounded number of steps. That family needs the localised form of
Proposition \ref{prop:minorisation} described in Subsection \ref{ssec:directions},
Direction \ref{dir:local}.
\end{remark}

\begin{remark}[A boundary case: the preconditioned Crank--Nicolson algorithm]\label{rem:pcn}
Property \hyp{S}{S} is a genuine restriction and not a formality. Consider the
preconditioned Crank--Nicolson sampler \cite{Cotter13} on a separable Hilbert space,
with Gaussian reference measure $\pi_{0}$ and target $\pi \ll \pi_{0}$: the proposal
from $x$ is Gaussian with covariance $\beta^{2}C$, where $C$ is the covariance of
$\pi_{0}$ and $\beta \in (0,1)$. In infinite dimensions two centred Gaussian measures whose covariances are
proportional with ratio $\ne 1$ are mutually singular --- the operator entering the
Feldman--H\'ajek criterion is then a nonzero multiple of the identity, hence not
Hilbert--Schmidt --- so every proposal, and after $n$ steps every one of the finitely many
conditional laws indexed by the number of accepted moves, is carried by a
$\pi_{0}$-null set. Hence $\sing(T^{n}\circ\delta_{x}\mid\pi) = 1$ for every $n$, so
\hyp{S}{S} fails, and with it, by Lemma \ref{lem:SiffR}, does \eqref{prop:R}; and
indeed $\tv{T^{n}\circ\delta_{x} - \pi} = 1$ for every $n$, so
the conclusion of Theorem \ref{thm:criterion} fails as well. This is why the
convergence theory for that algorithm is developed in a Wasserstein distance rather
than in total variation \cite{HairerStuartVollmer14}. Both halves of the criterion fail here, and not only \hyp{S}{S}: since
$T^{n}_{x}$ is carried by a $\pi$-null set, $\sing(\pi \mid T^{n}_{x}) = 1$ as well,
so \hyp{L}{L} fails too. What survives is Proposition \ref{thm:dominated}: whenever
\eqref{prop:U} holds, the laws still converge for every initial distribution dominated
by a multiple of $\pi$, and the failure is entirely in the passage from an arbitrary
initial law to a dominated one. In Example \ref{ex:UnotR} that passage fails while
\hyp{P}{P} holds; here it fails and \hyp{P}{P} is unavailable as well, so the two are
not the same situation, though they fail at the same step.
\end{remark}

\section{The law of large numbers for ergodic Markov chains}\label{sec:ergodic}

What is used when a chain is run is not only that the law $T^{n}\circ\mu$ approaches
$\pi$, but that averages along a single trajectory converge to integrals against
$\pi$. This section records that statement. Unlike the rest of the note it is not
self-contained: it quotes, without proof, two classical results --- the canonical
construction of the chain as a stochastic process, and Birkhoff's pointwise ergodic
theorem. What the preceding sections contribute is precisely the hypothesis that
Birkhoff's theorem needs and that does not come for free, namely ergodicity.

\subsection{The Markov chain as a stochastic process}

\begin{definition}[Path space and shift]\label{def:pathspace}
Let $\XX^{\Nz}$ be the set of all sequences $\omega = (\omega_{0},\omega_{1},\dots)$
with $\omega_{n} \in \XX$, let
\[
  X_{n} : \XX^{\Nz} \longrightarrow \XX, \qquad X_{n}(\omega) := \omega_{n}
  \qquad (n \in \Nz),
\]
be the coordinate maps, and let $\XXA^{\otimes\Nz}$ be the smallest $\sigma$-algebra
on $\XX^{\Nz}$ making every $X_{n}$ measurable. Sets of the form
$\{X_{0} \in A_{0},\dots,X_{m} \in A_{m}\}$ with $m \in \Nz$ and
$A_{0},\dots,A_{m} \in \XXA$ are called \emph{rectangles}. They form a $\pi$-system
--- the intersection of two of them is again one --- and they generate
$\XXA^{\otimes\Nz}$; this is what is used to identify a measure on
$\XXA^{\otimes\Nz}$ by its values on them. They do \emph{not} form an algebra: the
complement of $\{X_{0}\in A_{0}, X_{1}\in A_{1}\}$ is in general not a rectangle. The
family
\[
  \mathcal{A} := \bigcup_{m \in \Nz} \sigma(X_{0},\dots,X_{m})
  \;\subseteq\; \XXA^{\otimes\Nz}
\]
\emph{is} an algebra --- it is an increasing union of $\sigma$-algebras --- and it
generates $\XXA^{\otimes\Nz}$, since it contains every rectangle. It is $\mathcal{A}$
that is used in the approximation step of Proposition \ref{prop:ergodic}.
The \emph{shift} is the map
\[
  \vartheta : \XX^{\Nz} \longrightarrow \XX^{\Nz}, \qquad
  (\vartheta\omega)_{n} := \omega_{n+1} ,
\]
which is measurable and satisfies $X_{n} \circ \vartheta^{k} = X_{n+k}$.
\end{definition}

\begin{theorem}[Canonical Markov chain]\label{thm:canonical}
Assume \hyp{A}{A}. For every $\mu \in \Prob$ there is a unique
probability measure $\Prb_{\mu}$ on $(\XX^{\Nz},\XXA^{\otimes\Nz})$ with
\[
  \Prb_{\mu}\bigl( X_{0} \in A_{0},\dots,X_{m} \in A_{m} \bigr)
  = \int_{A_{0}}\int_{A_{1}} \cdots \int_{A_{m}}
      T(\dd x_{m} \mid x_{m-1}) \cdots T(\dd x_{1} \mid x_{0})\, \mu(\dd x_{0})
\]
for all $m \in \Nz$ and $A_{0},\dots,A_{m} \in \XXA$. Here the differentials appear,
as usual, in the order opposite to that of the integral signs, so that
$\int_{A_{0}}$ goes with $\mu(\dd x_{0})$ and $\int_{A_{m}}$ with
$T(\dd x_{m} \mid x_{m-1})$. Read from the right, the measures then occur in the
order in which they act, matching the composition $T^{m}\circ\mu$ of
\ref{conv:kernelnotation}. Writing
$\Prb_{x} := \Prb_{\delta_{x}}$ and denoting by $\Ex_{\mu}$ the corresponding expectation,
one has moreover:
\begin{enumerate}[label=(\roman*),nosep,topsep=3pt]
  \item the map $x \mapsto \Prb_{x}(B)$ is measurable and
        $\Prb_{\mu}(B) = \int_{\XX} \Prb_{x}(B)\,\mu(\dd x)$, for every $B \in \XXA^{\otimes\Nz}$;
  \item \emph{(Markov property)} for every bounded measurable
        $F : \XX^{\Nz} \to \R$ and every $m \in \Nz$,
        \[
          \Ex_{\mu}\bigl[ F \circ \vartheta^{m} \mid \sigma(X_{0},\dots,X_{m}) \bigr]
          = \Ex_{X_{m}}[F] \qquad \Prb_{\mu}\text{-almost surely};
        \]
  \item the law of $X_{n}$ under $\Prb_{\mu}$ is $T^{n}\circ\mu$;
  \item the shift $\vartheta$ preserves $\Prb_{\pi}$, that is $\Prb_{\pi}\circ\vartheta^{-1} = \Prb_{\pi}$.
\end{enumerate}
\end{theorem}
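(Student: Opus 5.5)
The plan is to build $\Prb_{\mu}$ by the Ionescu-Tulcea theorem, which is available on an arbitrary measurable space and needs no topology. That matches the paper's standing claim that nothing beyond a $\sigma$-algebra is assumed.

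First I will define, for each $m$, the finite-dimensional measure $\Prb^{(m)}_{\mu}$ on $(\XX^{m+1},\XXA^{\otimes(m+1)})$. This is done recursively: $\Prb^{(0)}_{\mu}:=\mu$, and $\Prb^{(m)}_{\mu}$ is $\Prb^{(m-1)}_{\mu}$ composed with the kernel $(x_{0},\dots,x_{m-1})\mapsto T_{x_{m-1}}$. Tonelli's theorem makes each step well defined and gives the displayed iterated-integral formula on rectangles. These measures are consistent, since integrating out the last coordinate uses $T(\XX\mid x)=1$. Ionescu-Tulcea then yields a probability measure on $\XXA^{\otimes\Nz}$ with these marginals. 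Uniqueness follows from Definition \ref{def:pathspace}: the rectangles form a generating $\pi$-system, and two probability measures agreeing on it agree everywhere.

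Next come the four consequences, taken in order.

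For (i), the class of $B$ for which $x\mapsto\Prb_{x}(B)$ is measurable and $\Prb_{\mu}(B)=\int\Prb_{x}(B)\,\mu(\dd x)$ holds for every $\mu$ is a Dynkin system. It contains the rectangles, because the iterated-integral formula with $\mu=\delta_{x}$ is measurable in $x$ by Tonelli, and integrating it against $\mu$ reproduces the formula for $\mu$. The $\pi$-$\lambda$ theorem then covers all of $\XXA^{\otimes\Nz}$.

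For (iii), I take $A_{0}=\dots=A_{n-1}=\XX$ and $A_{n}=A$. The recursion of Definition \ref{def:kernel} gives $(T^{n}\circ\mu)(A)$.

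For (iv), I compare $\Prb_{\pi}\circ\vartheta^{-1}$ and $\Prb_{\pi}$ on rectangles. Since $\vartheta^{-1}\{X_{0}\in A_{0},\dots,X_{m}\in A_{m}\}=\{X_{0}\in\XX,X_{1}\in A_{0},\dots,X_{m+1}\in A_{m}\}$, the measure of the shifted set is the rectangle formula with initial law $T\circ\pi$. Invariance gives $T\circ\pi=\pi$, so the two measures agree on rectangles, and uniqueness finishes.

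I expect (ii), the Markov property, to be the main obstacle, because it asserts a conditional expectation identity for general bounded $F$. I would reduce it in stages.

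First I reduce to indicators $F=\ind_{B}$, by linearity and bounded monotone approximation. Then, by a Dynkin-system argument in $B$, I reduce to rectangles $B$; the right-hand side is measurable in $X_{m}$ by (i).

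For a rectangle $B$ the claim reads
\[
\Prb_{\mu}\bigl(C\cap\vartheta^{-m}B\bigr)=\Ex_{\mu}\bigl[\ind_{C}\,\Prb_{X_{m}}(B)\bigr]
\]
for all $C\in\sigma(X_{0},\dots,X_{m})$. It suffices to check this for $C$ a rectangle in the first $m+1$ coordinates, since these generate that $\sigma$-algebra as a $\pi$-system and both sides are finite measures in $C$. For such $C$, both sides are iterated integrals over $m+1+k$ coordinates. They coincide by splitting the iterated integral at time $m$ and recognizing the inner part as $\Prb_{x_{m}}(B)$.

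Bookkeeping of the nested integrals is the only real work. I would present that step carefully and the rest briefly, noting that the result is classical (e.g.\ \cite{Kallenberg21}).
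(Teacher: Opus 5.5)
Your proposal is correct and takes the same route as the paper. The paper gets existence and uniqueness from the Ionescu-Tulcea theorem, treats (i)--(iii) as part of that construction citing \cite{Kallenberg21}, and proves (iv) exactly as you do, by noting that $T\circ\pi=\pi$ makes $\Prb_{\pi}$ and $\Prb_{\pi}\circ\vartheta^{-1}$ agree on cylinders; your Dynkin-system arguments for (i) and (ii) and your computation for (iii) simply write out what the paper quotes from the literature.
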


\noindent
Existence and uniqueness are the theorem of Ionescu-Tulcea; see
\cite[Chapter~8]{Kallenberg21}, or \cite{MeynTweedie09} and \cite{Douc18} for the
same construction in the language of Markov chains. Items (i)--(iii) are part of that
construction, and (iv) follows from it, since by $T\circ\pi = \pi$ both $\Prb_{\pi}$ and
$\Prb_{\pi}\circ\vartheta^{-1}$ assign the displayed value to every cylinder. We use
these facts without further comment.

\subsection{Birkhoff's ergodic theorem}

\begin{definition}[Invariant sets, ergodicity]\label{def:ergodic}
Let $(\mathcal{Z},\mathcal{B}_{\mathcal{Z}},\Prb)$ be a probability space and let
$\vartheta : \mathcal{Z} \to \mathcal{Z}$ be measurable. One says that $\vartheta$
\emph{preserves} $\Prb$ if $\Prb \circ \vartheta^{-1} = \Prb$. The \emph{invariant
$\sigma$-algebra} is
\[
  \mathcal{I} := \bigl\{ B \in \mathcal{B}_{\mathcal{Z}} : \vartheta^{-1}B = B \bigr\}
  \subseteq \mathcal{B}_{\mathcal{Z}} ,
\]
and $\vartheta$ is called \emph{ergodic} for $\Prb$ if $\Prb(B) \in \{0,1\}$ for every
$B \in \mathcal{I}$.
\end{definition}

\begin{theorem}[Birkhoff's pointwise ergodic theorem]\label{thm:birkhoff}
Let $(\mathcal{Z},\mathcal{B}_{\mathcal{Z}},\Prb)$ be a probability space and let
$\vartheta : \mathcal{Z} \to \mathcal{Z}$ be measurable and preserve $\Prb$. Then for
every $F \in L^{1}(\Prb)$,
\[
  \frac{1}{n}\sum_{k=0}^{n-1} F \circ \vartheta^{k}
  \;\longrightarrow\; \Ex_{\Prb}\bigl[ F \mid \mathcal{I} \bigr]
  \qquad \Prb\text{-almost surely and in } L^{1}(\Prb) .
\]
If $\vartheta$ is ergodic for $\Prb$, the limit is the constant $\Ex_{\Prb}[F]$.

If moreover $F \in L^{r}(\Prb)$ for some number $r \in [1,\infty)$, then the
convergence holds in $L^{r}(\Prb)$ as well.
\end{theorem}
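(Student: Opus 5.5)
The plan is the classical route through the maximal ergodic inequality, in Garsia's form, followed by an invariance argument for almost sure convergence and a truncation argument for the norm convergence. Write $S_{n} := \sum_{k=0}^{n-1} F\circ\vartheta^{k}$ and $A_{n} := S_{n}/n$. Two facts are used throughout. Since $\vartheta$ preserves $\Prb$, one has $\int G\circ\vartheta \dd\Prb = \int G\dd\Prb$ for every integrable $G$. Consequently $\lVert G\circ\vartheta^{k}\rVert_{L^{r}(\Prb)} = \lVert G\rVert_{L^{r}(\Prb)}$ for every $r \in [1,\infty)$.

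\emph{Step 1 (maximal ergodic lemma).} I will show
\[
  \int_{\{M_{n}>0\}} F\dd\Prb \ge 0,
  \qquad M_{n} := \max\{S_{1},\dots,S_{n}\}.
\]
For $1\le k\le n$ one has $S_{k} = F + S_{k-1}\circ\vartheta \le F + M_{n}^{+}\circ\vartheta$, with $S_{0}:=0$. Taking the maximum over $k$ gives $M_{n} \le F + M_{n}^{+}\circ\vartheta$. On $\{M_{n}>0\}$ this reads $F \ge M_{n}^{+} - M_{n}^{+}\circ\vartheta$. Integrate over $\{M_{n}>0\}$, bound $\int_{\{M_{n}>0\}} M_{n}^{+}\circ\vartheta \le \int M_{n}^{+}\circ\vartheta$, and use the invariance $\int M_{n}^{+}\circ\vartheta = \int M_{n}^{+}$. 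The right-hand side then vanishes. This is the one genuinely clever step, and I expect it to be the main obstacle. Everything else is bookkeeping, but the inequality must be arranged exactly as above, with the positive part inside and the maximum over $k\ge1$ together with $S_{0}=0$.

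\emph{Step 2 (almost sure convergence).} Replacing $F$ by $F - \Ex[F\mid\mathcal{I}]$ is legitimate: an $\mathcal{I}$-measurable $H$ satisfies $H\circ\vartheta = H$ almost surely, so its averages are $H$ itself. It therefore suffices to treat $\Ex[F\mid\mathcal{I}]=0$ and to prove $\limsup A_{n} \le 0$ almost surely; applying the same to $-F$ handles the $\liminf$.

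Put $\bar F := \limsup A_{n}$. Since $A_{n}\circ\vartheta = \tfrac{n+1}{n}A_{n+1} - F/n$, one gets $\bar F\circ\vartheta = \bar F$, so $D := \{\bar F > \varepsilon\}$ lies in $\mathcal{I}$ for every $\varepsilon>0$. Apply Step 1 to $G := (F-\varepsilon)\ind_{D}$. Its ergodic sums are $\ind_{D}(S_{k} - k\varepsilon)$, because $D$ is invariant. The sets $\{\max_{k\le n}(\text{sums of }G)>0\}$ increase to $D$, so dominated convergence yields $\int_{D}(F-\varepsilon)\dd\Prb \ge 0$. But $\int_{D} F = \int_{D}\Ex[F\mid\mathcal{I}] = 0$, hence $\varepsilon\,\Prb(D) \le 0$, so $\Prb(D)=0$. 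Letting $\varepsilon\downarrow0$ along a sequence gives the claim. In the ergodic case $\Ex[F\mid\mathcal{I}]$ is almost surely constant, because $\mathcal{I}$ is trivial, and it equals $\Ex[F]$.

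\emph{Step 3 ($L^{1}$ and $L^{r}$ convergence).} Fix $r\in[1,\infty)$ and $F\in L^{r}$. Given $\eta>0$, choose a bounded $G$ with $\lVert F-G\rVert_{r}\le\eta$, for instance a truncation of $F$.

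For bounded $G$ the averages are bounded by $\sup|G|$ and converge almost surely by Step 2, so they converge in $L^{r}$ by dominated convergence.

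By Minkowski's inequality and the norm invariance noted at the start, $\lVert A_{n}(F)-A_{n}(G)\rVert_{r}\le\eta$ for all $n$. Conditional expectation is an $L^{r}$-contraction, so $\lVert\Ex[F\mid\mathcal{I}]-\Ex[G\mid\mathcal{I}]\rVert_{r}\le\eta$. A three-term triangle inequality then gives $\limsup_{n}\lVert A_{n}(F)-\Ex[F\mid\mathcal{I}]\rVert_{r}\le 2\eta$. Since $\eta>0$ was arbitrary, the limit is $0$; the case $r=1$ is the asserted $L^{1}$ convergence.
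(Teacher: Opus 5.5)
Your proof is correct. Note that the paper gives no proof of this statement. It quotes Birkhoff's theorem from the literature (Birkhoff, von Neumann, Kallenberg, Krengel), and it is explicitly the one input of the note that is not proved there. Your argument therefore supplies a self-contained proof of something the paper deliberately leaves out.

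\emph{The almost sure part.} This is the standard route through Garsia's maximal inequality, and it is carried out correctly. It also fits the paper's conventions. Your identity $A_{n}\circ\vartheta = \tfrac{n+1}{n}A_{n+1} - F/n$ gives $\bar F\circ\vartheta = \bar F$ pointwise in $[-\infty,\infty]$, using that the representative $F$ is finite everywhere. Hence $\vartheta^{-1}D = D$ holds as an equality of sets, which is what the strict definition of $\mathcal{I}$ in Definition \ref{def:ergodic} demands. This is the same care the paper takes in Lemma \ref{lem:cesaroinvariant}. Similarly, if you choose an $\mathcal{I}$-measurable, real-valued version of $\Ex_{\Prb}[F\mid\mathcal{I}]$, it is invariant everywhere and not merely almost surely.

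\emph{The norm statements.} The paper indicates a different route here:
\begin{itemize}
\item start from almost sure convergence;
\item note $|A_{n}(F)|^{r}\le A_{n}(|F|^{r})$;
\item use uniform integrability of averages of the identically distributed functions $|F|^{r}\circ\vartheta^{k}$;
\item conclude with Vitali's theorem.
\end{itemize}
You instead approximate $F$ in $L^{r}$ by a bounded $G$. You then use that both $A_{n}$ (by measure preservation and Minkowski) and $\Ex_{\Prb}[\,\cdot\mid\mathcal{I}]$ are $L^{r}$-contractions, so almost sure convergence is needed only for bounded functions, where dominated convergence suffices.

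The two routes trade off as follows. Yours avoids uniform integrability and Vitali's theorem entirely. The paper's transfers almost sure convergence to $L^{r}$ convergence directly for each $F$, with no approximation step. Both are complete.
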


\noindent
The almost sure statement is the theorem of Birkhoff \cite{Birkhoff31}; the $L^{2}$
convergence is the mean ergodic theorem of von Neumann \cite{vonNeumann32},
published almost simultaneously. For the formulation above, with the conditional
expectation as limit and with $L^{1}$ convergence included, and for the approximation
of a set in a generated $\sigma$-algebra by sets of the generating algebra used
below, see \cite[Chapters~1 and~10]{Kallenberg21}; for the $L^{r}$ statement for
general $r \in [1,\infty)$, which follows from the almost sure convergence together
with the uniform integrability of the averages of $|F|^{r}$, see
\cite[Chapter~1]{Krengel85}. We quote all of this without proof; it is the only
input to this note that is not proved here.

\subsection{Ergodicity of the stationary chain under \texorpdfstring{\hyp{A}{A} and \hyp{P}{P} and \hyp{S}{S}}{(A), (P) and (S)}}

\begin{proposition}[Ergodicity]\label{prop:ergodic}
Assume \hyp{A}{A}, \hyp{P}{P} and \hyp{S}{S}.
Then the shift $\vartheta$ is ergodic for $\Prb_{\pi}$.
\end{proposition}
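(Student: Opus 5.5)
Under \hyp{A}{A}, \hyp{P}{P} and \hyp{S}{S}, Theorem \ref{thm:criterion} gives $\tv{T^{n}\circ\mu - \pi} \to 0$ for every $\mu \in \Prob$. The plan is to turn this convergence of laws into a zero--one law for shift-invariant events, by conditioning on the first coordinates and using the Markov property.

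Let $B \in \mathcal{I}$ be invariant and define $h(x) := \Prb_{x}(B)$. This map is measurable by Theorem \ref{thm:canonical}(i). From $\ind_{B} = \ind_{B}\circ\vartheta^{m}$ and the Markov property (ii) we get, for every $m \in \Nz$,
\[
  \Ex_{\pi}\bigl[\ind_{B} \mid \sigma(X_{0},\dots,X_{m})\bigr] = h(X_{m})
  \qquad \Prb_{\pi}\text{-almost surely}.
\]
The key step is to show that $h$ is $\pi$-almost everywhere equal to the constant $\Prb_{\pi}(B)$. The same Markov property argument, applied with $\Prb_{x}$ in place of $\Prb_{\pi}$, gives $h(x) = \Ex_{x}[h(X_{m})] = \int h \, \dd(T^{m}\circ\delta_{x})$ for every $x$ and every $m$. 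In the notation of the theorem this reads $h(x) = \int h \,\dd T^{m}_{x}$.

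Now $0 \le h \le 1$, so
\[
  \Bigl| \int h\,\dd T^{m}_{x} - \int h\,\dd\pi \Bigr| \le \tv{T^{m}_{x} - \pi},
\]
and the right-hand side tends to $0$ as $m \to \infty$ by Theorem \ref{thm:criterion}. This inequality needs no measurability of $x \mapsto \tv{T^{m}_{x}-\pi}$, because it is applied at one fixed $x$ at a time. Letting $m \to \infty$ gives $h(x) = \int h\,\dd\pi = \Prb_{\pi}(B)$ for \emph{every} $x$, using (i) with $\mu = \pi$ for the last equality. Hence $h \equiv c := \Prb_{\pi}(B)$.

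Consequently $\Ex_{\pi}[\ind_{B} \mid \sigma(X_{0},\dots,X_{m})] = c$ almost surely for every $m$. Choose $B_{k} \in \mathcal{A}$ with $\Prb_{\pi}(B \triangle B_{k}) \to 0$, which is possible because $\mathcal{A}$ is an algebra generating $\XXA^{\otimes\Nz}$. Suppose $B_{k} \in \sigma(X_{0},\dots,X_{m_{k}})$. Then
\[
  \Prb_{\pi}(B \cap B_{k}) = \Ex_{\pi}\bigl[\ind_{B_{k}}\,\Ex_{\pi}[\ind_{B}\mid\sigma(X_{0},\dots,X_{m_{k}})]\bigr] = c\,\Prb_{\pi}(B_{k}).
\]
Letting $k \to \infty$ yields $c = c^{2}$, so $c \in \{0,1\}$.

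The only delicate point is the first identity $h(x) = \int h\,\dd T^{m}_{x}$ pointwise. It requires the Markov property under $\Prb_{x}$ together with the fact that $B = \vartheta^{-m}B$ exactly, not merely almost surely. Both are supplied by Definition \ref{def:ergodic} and Theorem \ref{thm:canonical}. Everything else is bounded convergence and approximation in the algebra $\mathcal{A}$.
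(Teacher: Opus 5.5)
Your proof is correct. It uses the same ingredients as the paper's: the Markov property at time $m$, measurability of $x\mapsto\Prb_{x}(B)$, the convergence $\tv{T^{k}_{x}-\pi}\to 0$ at each fixed $x$ from Theorem \ref{thm:criterion}, and approximation in the algebra $\mathcal{A}$. The difference is in how you organise them.

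The paper first proves a mixing statement, $\Prb_{\pi}(A\cap\vartheta^{-n}B)\to\Prb_{\pi}(A)\Prb_{\pi}(B)$ for $A\in\mathcal{A}$ and \emph{arbitrary} $B$. It writes the left side as $\Ex_{\pi}[\ind_{A}\,h_{n-m}(X_{m})]$ with $h_{k}(x)=\int\Prb_{y}(B)\,T^{k}_{x}(\dd y)$, passes to the limit by bounded convergence, and only then specialises to invariant $B$.

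You use invariance at the outset, so $x\mapsto\Prb_{x}(B)$ is harmonic and you take the limit in $m$ of a quantity that does not depend on $m$. This is a Liouville-type argument. It gives the exact identity $\Prb_{\pi}(B\cap A)=\Prb_{\pi}(B)\Prb_{\pi}(A)$ for every $A\in\mathcal{A}$, rather than an asymptotic one.

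The paper's route buys mixing of the shift, a property stronger than ergodicity. Yours buys $\Prb_{x}(B)=\Prb_{\pi}(B)\in\{0,1\}$ for \emph{every} $x\in\XX$, not merely for $\pi$-almost every $x$. That is worth noticing. Applied to the Ces\`aro set $\Gamma$ of Lemma \ref{lem:cesaroinvariant}, which is strictly invariant, it gives $\Prb_{x}(\Gamma)=1$ for every $x$. Hence $\Prb_{\mu}(\Gamma)=\int\Prb_{x}(\Gamma)\,\mu(\dd x)=1$ for every $\mu\in\Prob$ directly. This would replace the transfer from $\Prb_{\pi}$ to $\Prb_{\mu}$ via Remark \ref{rem:aposteriori} in the proof of Corollary \ref{cor:lln}.

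Your bound by $\tv{T^{m}_{x}-\pi}$, rather than the paper's $2\,\tv{\cdot}$, is also valid. This is because $0\le h\le 1$ and both Jordan parts of $T^{m}_{x}-\pi$ have mass $\tv{T^{m}_{x}-\pi}$.
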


\begin{proof}
\emph{Step 1.} Let $A \in \mathcal{A}$, say $A \in \sigma(X_{0},\dots,X_{m})$ for some
$m \in \Nz$, and let $B \in \XXA^{\otimes\Nz}$ be arbitrary. We claim that
\[
  \Prb_{\pi}\bigl( A \cap \vartheta^{-n}B \bigr) \longrightarrow \Prb_{\pi}(A)\,\Prb_{\pi}(B)
  \qquad (n \to \infty).
\]
Put $g(x) := \Prb_{x}(B)$, a measurable map $\XX \to [0,1]$ by
Theorem \ref{thm:canonical}(i), and let $n \ge m$. Applying
Theorem \ref{thm:canonical}(ii) at time $m$ to $F := \ind_{B}\circ\vartheta^{\,n-m}$
and then again at time $0$ gives
\[
  \Prb_{\pi}\bigl( A \cap \vartheta^{-n}B \bigr)
  = \Ex_{\pi}\bigl[ \ind_{A}\, \Prb_{X_{m}}\bigl( \vartheta^{-(n-m)}B \bigr) \bigr]
  = \Ex_{\pi}\bigl[ \ind_{A}\, h_{n-m}(X_{m}) \bigr],
  \qquad
  h_{k}(x) := \int_{\XX} g \dd(T^{k}\circ\delta_{x}) .
\]
Since $0 \le g \le 1$, Definition \ref{def:tv} gives
\[
  \Bigl| h_{k}(x) - \int_{\XX} g \dd\pi \Bigr|
  = \Bigl| \int_{\XX} g \dd\bigl( T^{k}\circ\delta_{x} - \pi \bigr) \Bigr|
  \;\le\; 2\,\tv{T^{k}\circ\delta_{x} - \pi} \;\longrightarrow\; 0
\]
for every $x \in \XX$, by Theorem \ref{thm:criterion}, whose hypotheses
\eqref{prop:U} and \eqref{prop:R} hold by Lemmas \ref{lem:PimpliesU} and
\ref{lem:SiffR}. Moreover
$\int_{\XX} g \dd\pi = \int_{\XX} \Prb_{x}(B)\,\pi(\dd x) = \Prb_{\pi}(B)$ by
Theorem \ref{thm:canonical}(i). As $|h_{k}| \le 1$, bounded convergence yields
$\Ex_{\pi}[\ind_{A} h_{n-m}(X_{m})] \to \Prb_{\pi}(A) \Prb_{\pi}(B)$, which is the claim.

\emph{Step 2.} Let $B \in \mathcal{I}$, so that $\vartheta^{-n}B = B$ for every
$n \in \Nz$, and let $\varepsilon \in (0,1)$ be a number. Since $\mathcal{A}$ is an
algebra generating $\XXA^{\otimes\Nz}$ and $\Prb_{\pi}$ is a finite measure, there is
a set $A \in \mathcal{A}$ with
$\Prb_{\pi}(A \,\triangle\, B) \le \varepsilon$ (see \cite[Chapter~1]{Kallenberg21}).
Then
\[
  \Prb_{\pi}(B) = \Prb_{\pi}\bigl( B \cap \vartheta^{-n}B \bigr)
  \quad\text{and}\quad
  \bigl| \Prb_{\pi}\bigl( B \cap \vartheta^{-n}B \bigr) - \Prb_{\pi}\bigl( A \cap \vartheta^{-n}B \bigr) \bigr|
  \le \Prb_{\pi}(A \,\triangle\, B) \le \varepsilon .
\]
Letting $n \to \infty$ and using Step 1 together with
$|\Prb_{\pi}(A) - \Prb_{\pi}(B)| \le \varepsilon$ gives
$\bigl| \Prb_{\pi}(B) - \Prb_{\pi}(B)^{2} \bigr| \le 2\varepsilon$. As
$\varepsilon \in (0,1)$ was arbitrary, $\Prb_{\pi}(B) = \Prb_{\pi}(B)^{2}$, so
$\Prb_{\pi}(B) \in \{0,1\}$.
\end{proof}

\begin{remark}[The hypotheses enter only through the convergence theorem]\label{rem:ergodichyp}
Assumptions \hyp{P}{P} and \hyp{S}{S} are used in the proof of
Proposition \ref{prop:ergodic}, and hence in everything that follows in this section,
only through the single conclusion
$\tv{T^{k}\circ\delta_{x}-\pi} \to 0$ for every $x \in \XX$; the same is true of
the two other places where Theorem \ref{thm:criterion} is invoked below, in the proof
of Proposition \ref{prop:weaklaw}, and of the appeal to
Remark \ref{rem:aposteriori} in the proof of Corollary \ref{cor:lln}, since that
remark is itself derived from that conclusion by Lemma \ref{lem:pointwise}. Any
hypothesis delivering it therefore serves equally well. In particular the whole of Section \ref{sec:ergodic} holds
verbatim under \hyp{A}{A}, \hyp{J}{J}, \hyp{L}{L} and \hyp{S}{S}, by
Theorem \ref{thm:asympequiv} in place of Theorem \ref{thm:criterion}.
\end{remark}

\subsection{The strong law of large numbers under \texorpdfstring{\hyp{A}{A}, \hyp{P}{P} and \hyp{S}{S}}{(A), (P) and (S)}, with Birkhoff}

The proof below needs one small fact about path space, which has nothing to do with
Markov chains and is separated out because it is used twice and is worth having by
itself: the set on which the Ces\`aro averages of an observable converge to a
prescribed value is not merely almost invariant under the shift, but invariant as a
set, which is what the definition of $\mathcal{I}$ in Definition \ref{def:ergodic}
demands.

\begin{lemma}[Ces\`aro convergence sets are strictly shift-invariant]\label{lem:cesaroinvariant}
Assume \hyp{A1}{A1} and let $f : \XX \to \R$ be measurable and $L \in \R$ a number.
Put
\[
  \Gamma := \Bigl\{ \omega \in \XX^{\Nz} :\
     \lim_{n\to\infty} \frac{1}{n}\sum_{k=0}^{n-1} f\bigl( X_{k}(\omega) \bigr) = L \Bigr\} .
\]
Then $\Gamma \in \XXA^{\otimes\Nz}$ and $\vartheta^{-1}\Gamma = \Gamma$, with equality
of sets and not merely up to a null set.
\end{lemma}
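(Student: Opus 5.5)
The plan is to handle measurability and strict invariance separately. Both rest on writing the Ces\`aro averages as explicit measurable maps on path space. For $n \in \N$ put
\[
  S_{n} := \frac{1}{n}\sum_{k=0}^{n-1} f\circ X_{k} : \XX^{\Nz} \to \R .
\]
Each $f\circ X_{k}$ is measurable, because $X_{k}$ is measurable for $\XXA^{\otimes\Nz}$ by Definition \ref{def:pathspace} and $f$ is measurable. Hence each $S_{n}$ is a measurable real-valued map.

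\emph{Measurability.} I will express $\Gamma$ through countable operations. By definition of the limit,
\[
  \Gamma = \bigcap_{j \in \N}\ \bigcup_{m \in \N}\ \bigcap_{n \ge m}\ \bigl\{ |S_{n} - L| \le 1/j \bigr\} .
\]
Every set on the right is measurable, and only countably many unions and intersections occur. So $\Gamma \in \XXA^{\otimes\Nz}$. Working with $\limsup$ and $\liminf$ in $[-\infty,\infty]$ would serve equally well, but the display is the most direct route.

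\emph{Invariance.} The key identity is $X_{k}\circ\vartheta = X_{k+1}$. From it,
\[
  S_{n}\circ\vartheta = \frac{1}{n}\sum_{k=1}^{n} f\circ X_{k}
  = \frac{n+1}{n}\,S_{n+1} - \frac{1}{n}\,f\circ X_{0} .
\]
Fix a point $\omega$. Since $f(X_{0}(\omega))$ is a finite real number, the term $\frac{1}{n}f(X_{0}(\omega))$ tends to $0$. Also $(n+1)/n \to 1$. So $S_{n}(\vartheta\omega) \to L$ holds if and only if $S_{n+1}(\omega) \to L$, which holds if and only if $S_{n}(\omega) \to L$.

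Now $\omega \in \vartheta^{-1}\Gamma$ means exactly that $\vartheta\omega \in \Gamma$, that is, $S_{n}(\vartheta\omega) \to L$. By the equivalence just shown, this says $\omega \in \Gamma$. This gives $\vartheta^{-1}\Gamma = \Gamma$ as an identity of sets: no measure and no null set enters the argument.

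\emph{Main obstacle.} The only delicate point is to keep the argument pointwise at every $\omega$. That works here because $f$ is real-valued, so the dropped term $f(X_{0}(\omega))/n$ vanishes in the limit at every single path, not merely almost surely. I would also take care to phrase the equivalence in both directions, so that the result is equality of sets and not just an inclusion.
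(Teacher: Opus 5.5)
Your proof is correct and follows the paper's argument. Both use the same countable description of $\Gamma$ for measurability, and both prove invariance by comparing the averages at $\omega$ and at $\vartheta\omega$ pointwise, in both directions. The only difference is that your identity $S_{n}\circ\vartheta = \frac{n+1}{n}S_{n+1} - \frac{1}{n}f\circ X_{0}$ handles both directions at once. In the direction $\omega \in \Gamma \Rightarrow \vartheta\omega \in \Gamma$, the paper instead compares the sums at the same index $n$, so it must first deduce $f(X_{n}(\omega))/n \to 0$ from the convergence of the averages; your shift of index makes that step unnecessary.
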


\begin{proof}
\emph{Measurability.} Each map $S_{n} := \sum_{k=0}^{n-1} f \circ X_{k}$ is measurable,
the $X_{k}$ being measurable by Definition \ref{def:pathspace}; and
\[
  \Gamma = \bigcap_{r \in \N}\ \bigcup_{N \in \N}\ \bigcap_{n \ge N}
     \Bigl\{ \bigl| \tfrac{1}{n}S_{n} - L \bigr| \le \tfrac1r \Bigr\}
\]
is a countable combination of measurable sets.

\emph{Invariance.} Write $S_{n}'(\omega) := \sum_{k=1}^{n} f(X_{k}(\omega)) = S_{n}(\vartheta\omega)$.
\begin{itemize}[nosep,topsep=3pt,leftmargin=1.4em]
  \item If $\omega \in \Gamma$, i.e.\ $S_{n}(\omega)/n \to L$, then also
        $S_{n+1}(\omega)/(n+1) \to L$, so that
        \[
          \frac{f(X_{n}(\omega))}{n}
          = \frac{S_{n+1}(\omega)-S_{n}(\omega)}{n}
          = \frac{n+1}{n}\cdot\frac{S_{n+1}(\omega)}{n+1} - \frac{S_{n}(\omega)}{n}
          \longrightarrow L - L = 0 ;
        \]
        hence
        $S_{n}'(\omega)/n = S_{n}(\omega)/n + \bigl(f(X_{n}(\omega))-f(X_{0}(\omega))\bigr)/n
        \to L$, i.e.\ $\vartheta\omega \in \Gamma$.
  \item Conversely, if $\vartheta\omega \in \Gamma$, i.e.\ $S_{n}'(\omega)/n \to L$,
        then
        $S_{n}(\omega)/n = f(X_{0}(\omega))/n + \frac{n-1}{n}\cdot S_{n-1}'(\omega)/(n-1)
        \to L$, i.e.\ $\omega \in \Gamma$.
\end{itemize}
The first item is the step that is not merely bookkeeping: the vanishing of
$f(X_{n})/n$ has to be \emph{deduced} from the convergence of the averages, not
assumed. Together the two give $\vartheta^{-1}\Gamma = \Gamma$.
\end{proof}

\begin{corollary}[Convergence of ergodic averages]\label{cor:lln}
Assume \hyp{A}{A}, \hyp{P}{P} and \hyp{S}{S},
and let $\mu \in \Prob$. Then:
\begin{enumerate}[label=(\roman*),nosep,topsep=3pt]
  \item for every $f \in L^{1}(\pi)$,
        \[
          \frac{1}{n}\sum_{k=0}^{n-1} f(X_{k}) \;\longrightarrow\; \int_{\XX} f \dd\pi
          \qquad \Prb_{\mu}\text{-almost surely};
        \]
  \item if $f$ is bounded and measurable, the convergence in (i) holds in addition in
        $L^{r}(\Prb_{\mu})$ for every number $r \in [1,\infty)$;
  \item if $\mu \le M\pi$ for some number $M \in [1,\infty)$, then for every number
        $r \in [1,\infty)$ and every $f \in L^{r}(\pi)$ the convergence in (i) holds
        in addition in $L^{r}(\Prb_{\mu})$.
\end{enumerate}
\end{corollary}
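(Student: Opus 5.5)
The plan is to run the stationary chain through Birkhoff, then move from $\Prb_{\pi}$ to an arbitrary $\Prb_{\mu}$ by exploiting the convergence of $T^{n}\circ\mu$ to $\pi$ together with the shift-invariance of the convergence set from Lemma \ref{lem:cesaroinvariant}.

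\emph{Stationary case.} Let $f \in L^{1}(\pi)$ and put $F := f\circ X_{0}$. Theorem \ref{thm:canonical}(iii) gives $F \in L^{1}(\Prb_{\pi})$ with $\Ex_{\pi}F = \int f\dd\pi$. By Theorem \ref{thm:canonical}(iv) the shift preserves $\Prb_{\pi}$, and by Proposition \ref{prop:ergodic} it is ergodic. Since $F\circ\vartheta^{k} = f(X_{k})$, Theorem \ref{thm:birkhoff} yields almost sure convergence under $\Prb_{\pi}$ to $\int f\dd\pi$, and $L^{r}(\Prb_{\pi})$ convergence whenever $f \in L^{r}(\pi)$.

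\emph{Transfer of (i) to $\mu$.} Let $\Gamma$ be the set of Lemma \ref{lem:cesaroinvariant} with $L := \int f\dd\pi$; then $\Gamma \in \XXA^{\otimes\Nz}$, $\vartheta^{-n}\Gamma = \Gamma$, and $\Prb_{\pi}(\Gamma) = 1$. Put $g(x) := \Prb_{x}(\Gamma)$, which is measurable by Theorem \ref{thm:canonical}(i) and takes values in $[0,1]$. The Markov property at time $n$ gives
\[
  \Prb_{\mu}(\Gamma) = \Prb_{\mu}(\vartheta^{-n}\Gamma) = \int_{\XX} g \dd(T^{n}\circ\mu) .
\]
Since $\int g\dd\pi = 1$, this yields
\[
  \bigl|\Prb_{\mu}(\Gamma) - 1\bigr| \le 2\tv{T^{n}\circ\mu - \pi},
\]
and the right-hand side tends to $0$ by Theorem \ref{thm:criterion}. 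Hence $\Prb_{\mu}(\Gamma)=1$. This is the key step, and the one I expect to be the main obstacle, because it depends on invariance holding \emph{as a set}, which is exactly what Lemma \ref{lem:cesaroinvariant} supplies. Nothing requires measurability of $x\mapsto\tv{T^{n}_{x}-\pi}$: the estimate uses only the single measurable $g$.

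\emph{(ii) and (iii).} For (ii), when $f$ is bounded the averages are bounded by $\sup|f|$. Almost sure convergence then gives $L^{r}$ convergence by bounded convergence. For (iii), if $\mu \le M\pi$ then $\Prb_{\mu} \le M\Prb_{\pi}$ on rectangles by the displayed formula of Theorem \ref{thm:canonical}. The difference $M\Prb_{\pi}-\Prb_{\mu}$ is then a finite measure that is nonnegative on the generating $\pi$-system. I would extend the inequality to all of $\XXA^{\otimes\Nz}$ via the representation $\Prb_{\mu}(B)=\int \Prb_{x}(B)\,\mu(\dd x) \le M\int\Prb_{x}(B)\,\pi(\dd x)$ from Theorem \ref{thm:canonical}(i), which gives it at once for every $B$. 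It follows that $\Ex_{\mu}|Y|^{r} \le M\,\Ex_{\pi}|Y|^{r}$ for every measurable $Y$. Applying this to $Y$ equal to the average minus $\int f\dd\pi$, the stationary $L^{r}$ convergence transfers to $\Prb_{\mu}$.
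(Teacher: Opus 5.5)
Your proof is correct. The stationary case, (ii) and (iii) coincide with the paper's; the genuine difference is in how (i) is transferred from $\Prb_{\pi}$ to $\Prb_{\mu}$.

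\emph{Your route.} You use $\vartheta^{-n}\Gamma=\Gamma$ at \emph{every} $n$ to write $\Prb_{\mu}(\Gamma)=\int g\dd(T^{n}\circ\mu)$. You then pass to the limit using $\tv{T^{n}\circ\mu-\pi}\to0$ and $\int g\dd\pi=\Prb_{\pi}(\Gamma)=1$.

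\emph{The paper's route.} It fixes a single time $n_{1}$ and takes from Remark~\ref{rem:aposteriori} a law $\rho\le M\pi$ with $\tv{T^{n_{1}}\circ\mu-\rho}\le\varepsilon$. It then forms the minorant $\sigma:=\alpha-(\alpha-\rho)^{+}$ of $\alpha:=T^{n_{1}}\circ\mu$, which satisfies $\sigma\ll\pi$ and $\sigma(\XX)\ge1-\varepsilon$. From there it uses only that $\Prb_{x}(\Gamma)=1$ for $\pi$-almost every $x$.

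\emph{What each buys.} Your version is shorter, but it consumes the full conclusion of Theorem~\ref{thm:criterion}. The paper's version isolates what the transfer actually needs: that $T^{n}\circ\mu$ become absolutely continuous up to an arbitrarily small error (\eqref{prop:R} for the law $\mu$), not that it converge to $\pi$. So that step by itself would also serve for a periodic chain whose stationary version is ergodic. Within these notes, however, that input is itself derived from the convergence theorem, so nothing is lost by your shortcut.

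\emph{A small point in (iii).} Nonnegativity of $M\Prb_{\pi}-\Prb_{\mu}$ on a generating $\pi$-system would not by itself extend to $\XXA^{\otimes\Nz}$. You do not rely on it, though: the representation through Theorem~\ref{thm:canonical}(i) that you then invoke is exactly the paper's argument and settles it.
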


\begin{proof}
Fix $f \in L^{1}(\pi)$ and put $F := f \circ X_{0}$, so that
$F \circ \vartheta^{k} = f \circ X_{k}$ and, by
Theorem \ref{thm:canonical}(iii), $\Ex_{\pi}|F| = \int_{\XX}|f|\dd\pi < \infty$. By
Theorem \ref{thm:canonical}(iv) the shift preserves $\Prb_{\pi}$, and by
Proposition \ref{prop:ergodic} it is ergodic for $\Prb_{\pi}$; so
Theorem \ref{thm:birkhoff} gives the assertion for $\mu = \pi$, both almost surely
and in $L^{1}(\Prb_{\pi})$.

Let $\Gamma$ be the set of those $\omega \in \XX^{\Nz}$ for which
$n^{-1}\sum_{k<n} f(X_{k}(\omega))$ converges to $L := \int f \dd\pi$. By
Lemma \ref{lem:cesaroinvariant}, $\Gamma$ is a measurable subset of $\XX^{\Nz}$ and
$\vartheta^{-1}\Gamma = \Gamma$ as sets, so $\Gamma \in \mathcal{I}$; and
$\Prb_{\pi}(\Gamma) = 1$ by the previous paragraph.
By Theorem \ref{thm:canonical}(i), $\int_{\XX} \Prb_{x}(\Gamma)\,\pi(\dd x) = 1$,
whence
\[
  \Prb_{x}(\Gamma) = 1 \qquad\text{for } \aeevery\ x \in \XX .
\]

Now let $\mu \in \Prob$ be arbitrary and let $\varepsilon \in (0,1)$ be a number.
Hypotheses \eqref{prop:R} and \hyp{S}{S} are imposed at the points of $\XX$ only, but
by Remark \ref{rem:aposteriori} they hold for every initial law; so there are
$n_{1} \in \Nz$, $M \in [1,\infty)$ and $\rho \in \Prob$
with $\rho \le M\pi$ and $\tv{T^{n_{1}}\circ\mu - \rho} \le \varepsilon$. Write
$\alpha := T^{n_{1}}\circ\mu$ and let $\alpha - \rho = (\alpha-\rho)^{+} -
(\alpha-\rho)^{-}$ be the Jordan decomposition; then
\[
  \sigma := \alpha - (\alpha - \rho)^{+}
\]
is a nonnegative measure: if $E$ is a Hahn set for $\alpha-\rho$, then
$\sigma(A) = \alpha(A) - (\alpha-\rho)(A\cap E) \ge \alpha(A) - \alpha(A \cap E) \ge 0$.
Moreover $\sigma \le \alpha$, and
$\sigma = \rho - (\alpha-\rho)^{-} \le \rho \le M\pi$, so
$\sigma \ll \pi$, and $\sigma(\XX) = 1 - (\alpha-\rho)^{+}(\XX) = 1 -
\tv{\alpha-\rho} \ge 1-\varepsilon$ by Definition \ref{def:tv}. Using
$\vartheta^{-n_{1}}\Gamma = \Gamma$, then Theorem \ref{thm:canonical}(ii) and (iii),
and finally $\Prb_{x}(\Gamma) = 1$ for $\sigma$-almost every $x$,
\[
  \Prb_{\mu}(\Gamma) = \Prb_{\mu}\bigl( \vartheta^{-n_{1}}\Gamma \bigr)
  = \Ex_{\mu}\bigl[ \Prb_{X_{n_{1}}}(\Gamma) \bigr]
  = \int_{\XX} \Prb_{x}(\Gamma)\, \alpha(\dd x)
  \;\ge\; \int_{\XX} \Prb_{x}(\Gamma)\, \sigma(\dd x)
  = \sigma(\XX) \;\ge\; 1-\varepsilon .
\]
As $\varepsilon \in (0,1)$ was arbitrary, $\Prb_{\mu}(\Gamma) = 1$. This proves (i).

(ii) The averages $n^{-1}\sum_{k<n}f(X_{k})$ are bounded in absolute value by
$\sup_{x}|f(x)|$, so (i) and dominated convergence give the assertion.

(iii) If $\mu \le M\pi$ then $\Prb_{\mu} \le M\,\Prb_{\pi}$, because
$\Prb_{\mu}(B) = \int \Prb_{x}(B)\mu(\dd x) \le M \int \Prb_{x}(B)\pi(\dd x)
= M\,\Prb_{\pi}(B)$ for every $B \in \XXA^{\otimes\Nz}$, by
Theorem \ref{thm:canonical}(i); hence
$\lVert \cdot \rVert_{L^{r}(\Prb_{\mu})}^{r} \le M \lVert \cdot \rVert_{L^{r}(\Prb_{\pi})}^{r}$.
For $f \in L^{r}(\pi)$ one has $F = f \circ X_{0} \in L^{r}(\Prb_{\pi})$ by
Theorem \ref{thm:canonical}(iii), so the last part of Theorem \ref{thm:birkhoff}
gives convergence in $L^{r}(\Prb_{\pi})$, and therefore in $L^{r}(\Prb_{\mu})$.
\end{proof}

\begin{remark}[The restriction in (iii) is necessary]\label{rem:lprestriction}
Part (iii) cannot be extended to arbitrary initial distributions. Throughout this
section $f$ is a genuine map $\XX \to \R$, so that $f(X_{k})$ is defined at every
point of path space; but a set of $\pi$-measure zero, invisible to the norm of
$L^{r}(\pi)$, is not invisible to $\Prb_{\delta_{x}}$. Concretely, for
$\mu = \delta_{x}$ and an unbounded $f \in L^{r}(\pi)$ the value $|f(x)|$ may be
arbitrarily large, so that
$\lVert n^{-1}\sum_{k<n}f(X_{k})\rVert_{L^{r}(\Prb_{x})} \ge |f(x)|/n$ is not
controlled by $\lVert f \rVert_{L^{r}(\pi)}$ at all, uniformly in $x$. The almost sure
statement (i) is unaffected by this, since it concerns the limit only.
\end{remark}

\subsection{The weak law of large numbers, without Birkhoff}

The hypotheses of this note are considerably stronger than measure preservation, and
it is natural to ask whether they permit a short self-contained substitute for
Theorem \ref{thm:birkhoff}. They do, but only for the weak law.

\begin{proposition}[Weak law of large numbers]\label{prop:weaklaw}
Assume \hyp{A}{A}, \hyp{P}{P} and \hyp{S}{S}.
Then for every bounded measurable map $f : \XX \to \R$ and every $\mu \in \Prob$,
\[
  \frac{1}{n}\sum_{k=0}^{n-1} f(X_{k}) \;\longrightarrow\; \int_{\XX} f \dd\pi
  \qquad\text{in } L^{2}(\Prb_{\mu}), \text{ hence in } \Prb_{\mu}\text{-probability.}
\]
More precisely, if $\int_{\XX} f \dd\pi = 0$ and $\sup_{x \in \XX}|f(x)| \le 1$ ---
to which the general case reduces by an affine change of $f$ --- then, with the
numbers
\[
  u_{m} := \int_{\XX} \bigl| (T^{m}f)(x) \bigr|\,\pi(\dd x),
  \qquad
  v_{j} := \tv{T^{j}\circ\mu - \pi}
  \qquad (j,m \in \Nz),
\]
where $(T^{m}f)(x) := \int_{\XX} f \dd(T^{m}\circ\delta_{x})$, one has the explicit
variance bound
\begin{equation}
  \Ex_{\mu}\Bigl[ \Bigl( \frac{1}{n}\sum_{k=0}^{n-1} f(X_{k}) \Bigr)^{\!2} \Bigr]
  \;\le\; \frac{1}{n} + \frac{2}{n}\sum_{m=1}^{n-1} u_{m} + \frac{4}{n}\sum_{j=0}^{n-1} v_{j}
  \qquad (n \in \N),
  \label{eq:weaklawbound}
\end{equation}
in which $u_{m} \to 0$ and $v_{j} \to 0$, so that the right-hand side tends to $0$.
\end{proposition}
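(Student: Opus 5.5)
The plan is to expand the square and control the resulting double sum of correlations, using the Markov property to reduce each covariance to an integral of $T^{m}f$ against the law at time $k$. This law is then compared with $\pi$ in total variation. After the affine reduction, write $S_{n} := \sum_{k<n} f(X_{k})$, so that
\[
  \Ex_{\mu}[S_{n}^{2}] = \sum_{k<n}\Ex_{\mu}[f(X_{k})^{2}] + 2\sum_{0\le k<l<n}\Ex_{\mu}[f(X_{k})f(X_{l})] .
\]
The diagonal part is at most $n$, since $|f|\le 1$, and this contributes the term $1/n$ after division by $n^{2}$. For $k<l$, Theorem \ref{thm:canonical}(ii) applied at time $k$ gives $\Ex_{\mu}[f(X_{k})f(X_{l})] = \Ex_{\mu}[f(X_{k})(T^{l-k}f)(X_{k})] = \int \psi_{m}\dd(T^{k}\circ\mu)$, where $m := l-k$ and $\psi_{m} := f\cdot T^{m}f$. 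Measurability of $T^{m}f$ follows from the kernel property, extended from indicators to bounded maps.

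Next I would compare the measure $T^{k}\circ\mu$ with $\pi$. Since $|\psi_{m}|\le 1$, Definition \ref{def:tv} gives
\[
  \Bigl|\int\psi_{m}\dd(T^{k}\circ\mu) - \int\psi_{m}\dd\pi\Bigr| \le 2v_{k},
\]
and $|\int\psi_{m}\dd\pi|\le \int|T^{m}f|\dd\pi = u_{m}$. Summing over pairs, each $m\in\{1,\dots,n-1\}$ occurs for at most $n$ values of $k$, and each $k$ occurs with at most $n$ values of $m$. Hence
\[
  2\sum_{k<l}\bigl|\Ex_{\mu}[f(X_k)f(X_l)]\bigr| \le 2n\sum_{m=1}^{n-1}u_{m} + 4n\sum_{j=0}^{n-1}v_{j},
\]
and dividing by $n^{2}$ yields \eqref{eq:weaklawbound}.

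It remains to show $u_{m}\to 0$ and $v_{j}\to 0$. The convergence $v_{j}\to 0$ is Theorem \ref{thm:criterion}. For $u_{m}$, observe that $|(T^{m}f)(x)| = |\int f\dd(T^{m}_{x}-\pi)| \le 2\tv{T^{m}_{x}-\pi}$, because $\int f\dd\pi=0$. This tends to $0$ for every $x$ by Theorem \ref{thm:criterion} applied to $\delta_{x}$. Since $|T^{m}f|\le 1$, bounded convergence gives $u_{m}\to 0$; the integrand is the measurable map $|T^{m}f|$, not the possibly non-measurable supremum, so the concern raised in Remark \ref{rem:pointwise} does not arise. Ces\`aro averages of null sequences vanish, so the right-hand side of \eqref{eq:weaklawbound} tends to $0$. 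The general bounded $f$ reduces to this case by subtracting $\int f\dd\pi$ and scaling, and convergence in probability follows from Chebyshev's inequality.

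The main obstacle is the bookkeeping in the pair sum, specifically making sure the error from replacing $T^{k}\circ\mu$ by $\pi$ enters only as $v_{k}$ summed with multiplicity at most $n$. This is what produces the factor $4/n$ rather than something involving $\sum_{k}\sum_{m}$ double-counted. Checking the measurability of $T^{m}f$ is routine.
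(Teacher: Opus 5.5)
Your proposal is correct and follows the paper's proof essentially step for step. You expand the square and bound the diagonal by $n$. You use the Markov property to write each off-diagonal term as $\int f\,(T^{m}f)\,\dd(T^{k}\circ\mu)$, compare it with the $\pi$-integral at cost $2v_{k}$, and bound that $\pi$-integral by $u_{m}$. You count multiplicities of at most $n$ per index. Finally you get $u_{m}\to 0$ from the pointwise bound $|(T^{m}f)(x)|\le 2\tv{T^{m}_{x}-\pi}$ and bounded convergence applied to the measurable map $|T^{m}f|$.

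The only detail the paper makes explicit that you leave implicit is the order of the affine reduction. You must centre first and then divide by the supremum of the centred map, treating the constant case separately. Your wording already respects this order.
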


\begin{proof}
Put $\bar f := f - \int_{\XX} f \dd\pi$ and $c := \sup_{x \in \XX}|\bar f(x)|$. If
$c = 0$ then $f$ is constant and the assertion is trivial. Otherwise replace $f$ by
$\bar f/c$; this is an affine change of $f$, so it changes both sides of the assertion
by the same affine map, and after it $\int_{\XX} f \dd\pi = 0$ and
$\sup_{x}|f(x)| \le 1$. (Note that dividing the \emph{uncentred} $f$ by
$\sup_{x}|f(x)|$ would not suffice: centring can enlarge the supremum norm.)

Write $(T^{m}f)(x) := \int_{\XX} f \dd(T^{m}\circ\delta_{x})$; this is a bounded
measurable map $\XX \to \R$, since $T^{m}$ is a Markov kernel and $f$ is bounded and
measurable, and $\sup_{x}|(T^{m}f)(x)| \le \sup_{x}|f(x)| \le 1$. Put
\[
  u_{m} := \int_{\XX} \bigl| (T^{m}f)(x) \bigr|\,\pi(\dd x),
  \qquad
  v_{j} := \tv{T^{j}\circ\mu - \pi}
  \qquad (j,m \in \Nz),
\]
two sequences of numbers in $[0,1]$. Both tend to $0$. For $v_{j}$ this is
Theorem \ref{thm:criterion}. For $u_{m}$, note first that, since $\int f \dd\pi = 0$,
\begin{equation}
  \bigl| (T^{m}f)(x) \bigr|
  = \Bigl| \int_{\XX} f \dd\bigl(T^{m}\circ\delta_{x} - \pi\bigr) \Bigr|
  \;\le\; \sup_{x'}|f(x')| \cdot \Bigl( h^{+}(\XX) + h^{-}(\XX) \Bigr)
  \;\le\; 2\,\tv{T^{m}\circ\delta_{x} - \pi}
  \label{eq:Tmfbound}
\end{equation}
for every $x \in \XX$, where $h := T^{m}\circ\delta_{x} - \pi$ satisfies $h(\XX)=0$,
so that $h^{\pm}(\XX) = \tv{h}$ by Definition \ref{def:tv}. By
Theorem \ref{thm:criterion} the right-hand side of \eqref{eq:Tmfbound} tends to $0$ for
every fixed $x \in \XX$; since $|T^{m}f| \le 1$, dominated convergence gives
$u_{m} \to 0$.

It is essential here that the \emph{integrand} is $|T^{m}f|$ and not the map
$x \mapsto \tv{T^{m}\circ\delta_{x}-\pi}$; see Remark \ref{rem:tvnotmeasurable}
below.

By the Markov property, Theorem \ref{thm:canonical}(ii) applied to
$F := f \circ X_{m}$, and by Theorem \ref{thm:canonical}(iii),
\[
  \Ex_{\mu}\bigl[ f(X_{j})\,f(X_{j+m}) \bigr]
  = \Ex_{\mu}\bigl[ f(X_{j})\,(T^{m}f)(X_{j}) \bigr]
  = \int_{\XX} f\cdot(T^{m}f) \dd(T^{j}\circ\mu) .
\]
Since $\sup_{x}|f(x)| \le 1$ we get
\[
  \Bigl| \int_{\XX} f\,(T^{m}f) \dd\pi \Bigr| \;\le\; \int_{\XX}|T^{m}f| \dd\pi \;=\; u_{m} ;
\]
and since $\sup_{x}|f(x)(T^{m}f)(x)| \le 1$, the same computation as in
\eqref{eq:Tmfbound},
applied to the signed measure $T^{j}\circ\mu - \pi$ of total mass $0$, gives
\[
  \Bigl| \int_{\XX} f\,(T^{m}f) \dd(T^{j}\circ\mu) - \int_{\XX} f\,(T^{m}f) \dd\pi \Bigr|
  \;\le\; 2\,v_{j} .
\]
Hence $\bigl| \Ex_{\mu}[ f(X_{j})f(X_{j+m}) ] \bigr| \le u_{m} + 2 v_{j}$ for
$m \ge 1$, and it is at most $1$ for $m = 0$. Splitting the double sum into its
diagonal, which contributes $n$ terms bounded by $1$, and its off-diagonal part,
which consists of the pairs $\{j,j+m\}$ with $m \ge 1$ counted twice,
\[
  \Ex_{\mu}\Bigl[ \Bigl( \frac{1}{n}\sum_{k=0}^{n-1} f(X_{k}) \Bigr)^{2} \Bigr]
  = \frac{1}{n^{2}}\sum_{j,k=0}^{n-1} \Ex_{\mu}\bigl[ f(X_{j})f(X_{k}) \bigr]
  \;\le\; \frac{1}{n} + \frac{2}{n}\sum_{m=1}^{n-1}u_{m}
        + \frac{4}{n}\sum_{j=0}^{n-1}v_{j} ,
\]
which is \eqref{eq:weaklawbound}; and all three terms tend to $0$, the last two
because the Ces\`aro means of a null sequence are null.
\end{proof}

\begin{remark}[Why $|T^{m}f|$ and not $\tv{T^{m}\circ\delta_{x}-\pi}$]\label{rem:tvnotmeasurable}
The proof above integrates $|T^{m}f|$ against $\pi$. The seemingly more natural
quantity $\int_{\XX}\tv{T^{m}\circ\delta_{x}-\pi}\,\pi(\dd x)$ is not available: by
Definition \ref{def:tv} the integrand is
$\sup_{A \in \XXA}|T^{m}(A \mid x) - \pi(A)|$, a supremum of measurable maps of $x$
over the \emph{uncountable} index set $\XXA$, and on a general measurable space
nothing in Remark \ref{rem:nostructure} makes it measurable. It \emph{is} measurable
under either of two additional hypotheses, both of which are avoided here: if $\XXA$ is
countably generated, that is under \hyp{C}{C}, because then $\sup_{A \in \XXA}|h(A)| = \sup_{A \in \mathcal{A}_{0}}|h(A)|$ for every
$h \in \Sign$ and every countable generating algebra $\mathcal{A}_{0}$, by
approximation in $|h|$; or if $T^{m}$ has a jointly measurable density $t_{m}$ with
respect to $\pi$, because then
$\tv{T^{m}\circ\delta_{x}-\pi} = \tfrac12\int_{\XX}|t_{m}(y\mid x)-1|\pi(\dd y)$,
which is measurable in $x$ by Tonelli. The map $T^{m}f$, by contrast, is measurable
under \hyp{A}{A} alone, by Definition \ref{def:kernel}, and the
pointwise bound \eqref{eq:Tmfbound} is all the proof needs.
\end{remark}

\begin{remark}[Why the strong law appears to need more]\label{rem:whybirkhoff}
The proof just given uses only Theorem \ref{thm:criterion} and the Markov property,
and is three times shorter than any proof of Theorem \ref{thm:birkhoff}. It yields
convergence in probability, and the gap to almost sure convergence does not close by
the same means. Passing from $L^{2}$ convergence to almost sure convergence along the
full sequence would require the variances above to be summable along a subsequence,
that is, a \emph{rate} in Theorem \ref{thm:abstract}; and no rate is available under
\hyp{P}{P} and \hyp{S}{S} alone, nor under \hyp{D}{D}
(Remark \ref{rem:notclaimed}). When a rate is assumed the strong law does have short
proofs, for instance through the Poisson equation $g - Tg = f - \int f \dd\pi$, whose
bounded solution exists as soon as $\sum_{n}\sup_{x}|(T^{n}f)(x)|$ converges, and the
martingale strong law; this is the route taken under geometric ergodicity, for which
see \cite{Gallegos24} and \cite{MeynTweedie09}.

Under the present hypotheses we know of no route to the strong law that avoids both a
rate and Theorem \ref{thm:birkhoff}. The reason is structural: the difficulty of
Birkhoff's theorem lies in the almost sure convergence of
$n^{-1}\sum_{k<n} F \circ \vartheta^{k}$ under measure preservation alone, and
ergodicity --- which is exactly what the hypotheses of this note supply, by
Proposition \ref{prop:ergodic} --- serves only to identify the limit. Strengthening
the mixing hypothesis therefore does not simplify the hard part. The other classical
route, through Harris recurrence and regeneration \cite{AsmussenGlynn11}, is short
but reintroduces the irreducibility machinery that these notes set out to avoid.
\end{remark}

\noindent
Corollary \ref{cor:lln} applies in each of the five settings of this note, since
\hyp{P}{P} and \hyp{S}{S} were verified under \hyp{D}{D} in the proof of
Theorem \ref{thm:main}, under \hyp{M}{M} in the proof of Theorem \ref{thm:mh}, under
\hyp{E}{E} in the proof of Theorem \ref{thm:general}, and directly in
Corollaries \ref{cor:gibbs} and \ref{cor:tempering}. Together with
Corollaries \ref{cor:reference}, \ref{cor:mh} and \ref{cor:referencegen} this proves
Corollary \ref{cor:llnintro} of the introduction.

\section{Discussion}\label{sec:discussion}

\subsection{Summary}

One convergence theorem has been proved and then verified five times. Its
hypotheses are \hyp{P}{P}, a strictly positive minorant of the transition density
after finitely many steps, and \hyp{S}{S}, the vanishing of the singular mass of the
law of the chain started at $x$, for every starting point $x$; from these,
Theorem \ref{thm:criterion} gives convergence in total
variation from every initial distribution, and uniqueness of the invariant measure.
Two observations sharpen the statement without changing the proof. Convergence from
every starting \emph{point} already implies convergence from every initial
\emph{law} (Lemma \ref{lem:pointwise}), by an argument that fixes one Hahn set per
time step and so needs no measurability of $x \mapsto \tv{T^{n}_{x}-\pi}$;
consequently \eqref{prop:R} and \hyp{S}{S} are imposed at the points of $\XX$ alone,
which is also where they are checked; that they then hold for every initial law is a
consequence and not a hypothesis (Remark \ref{rem:aposteriori}).

Under one further hypothesis on the kernel, the criterion can then be stated with no
density in it at all. Assumption \hyp{L}{L} asks only that
$\sing(\pi \mid T^{n}_{x}) \to 0$ for $\aeevery$ starting point $x$ --- that the part
of the target which the law of the chain does not see becomes negligible --- with no
density required to be jointly measurable, or indeed to be exhibited at all, and with
no exact domination demanded at any finite time. It is implied by \hyp{P}{P}
(Lemma \ref{lem:PimpliesL}); each of the five verifications establishes it in the
stronger form $\pi \ll T^{n}_{x}$ (Remark \ref{rem:Lverification}), but the gap
between the two is real and is exactly the case of a local proposal on an unbounded
state space (Remark \ref{rem:whylimit}); and it is equivalent to \hyp{P}{P} as soon as
the absolutely continuous part of each iterate has a jointly measurable density, which
is the hypothesis \hyp{J}{J} (Proposition \ref{prop:LimpliesP}), of the same kind as
the measurable transition densities assumed in \hyp{D}{D} and \hyp{E}{E}; it holds
whenever $\XXA$ is countably generated (Proposition \ref{prop:jointdensity}), by a
martingale construction whose classical one-measure form is the theorem of Andersen and
Jessen.

Combining this with \hyp{S}{S} gives Theorem \ref{thm:asympequiv}, stated as
Theorem \ref{thm:crit} in the introduction. Both hypotheses are then statements about
the Lebesgue decomposition of $T^{n}_{x}$ at a single starting point, and nothing else
appears; and they are the two halves of a single relation, $T^{n}_{x} \sim \pi$, each
asked in the limit rather than at a finite time. Read that way the criterion is not
only sufficient but \emph{necessary}: the conclusion bounds both singular masses by
$\tv{T^{n}_{x}-\pi}$, so asymptotic equivalence with the target characterises
convergence. What that form costs, and all it costs, is \hyp{J}{J}; the proofs of
Sections \ref{sec:first}--\ref{sec:applications} are kept through \hyp{P}{P}, which
needs no measurable selection at all, and which in three of the five settings is met by
a strict minorant rather than by a density (Remark \ref{rem:PvsJ}). No convergence
theorem in these notes assumes anything of the state space.

The verifications are: \hyp{D}{D}, a strictly positive transition density with
respect to $\pi$ (Theorem \ref{thm:main}); \hyp{M}{M}, an absolutely continuous part
with strictly positive density together with an atom at the starting point, which
brings the Metropolis--Hastings algorithm within reach (Theorem \ref{thm:mh});
\hyp{E}{E}, a strictly positive density only after a number of steps depending on the
starting point (Theorem \ref{thm:general}); and the two algorithms of
Section \ref{sec:applications}, whose kernels are singular with respect to $\pi$ at
every step, so that none of the three settings applies to them
(Corollaries \ref{cor:gibbs} and \ref{cor:tempering}). Each of the three settings has
a counterpart phrased with densities against a $\sigma$-finite reference measure
(Corollaries \ref{cor:reference}, \ref{cor:mhreference} and
\ref{cor:referencegen}), and all of them yield the convergence of ergodic averages
along a trajectory (Corollary \ref{cor:lln}).

The mechanism divides in two. The convergence theorem itself
(Theorem \ref{thm:abstract}) rests on two lemmas only: the distance to $\pi$
contracts (Lemma \ref{lem:contraction}) and domination by $M\pi$ is preserved because
$\pi$ is invariant (Lemma \ref{lem:domination}); the renormalisation in
Proposition \ref{thm:dominated} then converts a stalled distance into a
contradiction. Its two conditions \eqref{prop:U} and \eqref{prop:R} isolate what has
to be true of dominated laws and of the chain started at a point respectively. Producing them from
\hyp{P}{P} and \hyp{S}{S} rests on four further lemmas: a common minorant bounds the
distance (Lemma \ref{lem:minorant}), positivity yields a quantitative lower bound on
a set of substantial measure (Lemma \ref{lem:positivity}), a density may be truncated
(Lemma \ref{lem:truncation}), and absolute continuity is never lost
(Lemma \ref{lem:monotone}(i), in quantitative form Lemma \ref{lem:monotone}(iv)).
Neither hypothesis is implied by the other, and neither may be dropped
(Examples \ref{ex:RnotU} and \ref{ex:UnotR}).

\begin{remark}[The main hypotheses compared]\label{rem:comparison}
The logical relations between \hyp{D}{D}, \hyp{M}{M}, \hyp{E}{E}, \hyp{P}{P} and \hyp{S}{S} are the
following.
\begin{enumerate}[label=\textup{(\roman*)},nosep,topsep=3pt,leftmargin=2.4em]
  \item \hyp{D}{D} implies \hyp{M}{M}, with $k := t$ and $r := 0$: then
        $\theta \equiv 1$, so \hyp{M}{M}(a) and \hyp{M}{M}(b) hold, and \hyp{M}{M}(c)
        holds because $t > 0$ everywhere.
  \item \hyp{D}{D} implies \hyp{E}{E}, with $\XX_{n} := \XX$ for every $n$
        (Remark \ref{rem:CK}).
  \item \hyp{M}{M} does \emph{not} imply \hyp{E}{E}. If $\pi$ is atomless and
        $r(x) > 0$ for every $x$, then $T^{n}_{x} \ge r(x)^{n}\delta_{x}$
        has an atom at $x$ of positive mass while $\pi(\{x\}) = 0$, so no iterate is
        absolutely continuous with respect to $\pi$ and \hyp{E}{E}(b) fails for
        every $n$ and every $x$.

        The hypothesis $r > 0$ everywhere is met by the plainest example there is.
        Take $\XX := \R$ with Lebesgue measure, the standard normal target
        $p := \varphi$, and the Gaussian random walk proposal
        $q(y \mid x) := \varphi(y-x)$, which is symmetric, so that
        $a(y \mid x) = \min\{1, \varphi(y)/\varphi(x)\}$. Then $a(y \mid x) < 1$
        exactly on $\{ y : \varphi(y) < \varphi(x) \} = \{ y : |y| > |x| \}$, a set of
        infinite Lebesgue measure on which $q(\,\cdot\mid x)$ is strictly positive, so
        that
        \[
          r(x) = \int_{\R} \bigl( 1 - a(y \mid x) \bigr)\,q(y \mid x)\,\lambda(\dd y)
          \;>\; 0 \qquad\text{for every } x \in \R .
        \]
        Every point therefore carries an atom at every time. This is the generic case
        for a Metropolis--Hastings kernel on a continuous state space, and it is why
        Section \ref{sec:mh} cannot be dispensed with in favour of
        Section \ref{sec:general}.
  \item \hyp{E}{E} does \emph{not} imply \hyp{M}{M}. Let $\XX := \{1,2,3\}$ with
        $\pi$ the uniform distribution and let $T$ be the stochastic matrix with rows
        $T_{1} = (0,\tfrac12,\tfrac12)$,
        $T_{2} = (\tfrac12,0,\tfrac12)$,
        $T_{3} = (\tfrac12,\tfrac12,0)$. Then $\pi$ is invariant (the
        matrix is doubly stochastic), $T^{2}$ has all entries strictly positive, so
        \hyp{E}{E} holds with $\XX_{1} := \emptyset$ and $\XX_{n} := \XX$ for
        $n \ge 2$, whereas \hyp{D}{D} fails. And \hyp{M}{M} fails too: a
        representation as in \hyp{M}{M}(a) forces
        $k(y \mid x) = 3\,T(\{y\}\mid x)$ for $y \ne x$, hence
        $\theta(x) = \tfrac13\sum_{y \ne x} k(y\mid x) + \tfrac13 k(x\mid x)
        = 1 + \tfrac13 k(x \mid x)$, so $\theta(x) \le 1$ forces $k(x\mid x) = 0$,
        contradicting \hyp{M}{M}(c) on the finite space $\XX$, where every point has
        positive $\pi$-measure.
\end{enumerate}
Thus \hyp{M}{M} and \hyp{E}{E} are two incomparable weakenings of \hyp{D}{D},
which is why Sections \ref{sec:mh} and \ref{sec:general} are independent of one
another and why both are needed. Each of the three implies \hyp{P}{P} together with
\hyp{S}{S}, and the implication is strict: the random scan Gibbs kernel and the
parallel tempering kernel satisfy the latter pair and none of the former three
(Lemmas \ref{lem:gibbssingular} and \ref{lem:temperingsingular}). The hypotheses are
therefore ordered as
\[
  \hyp{D}{D} \implies \hyp{M}{M} \implies \hyp{P}{P} \wedge \hyp{S}{S}
  \implies \hyp{L}{L} \wedge \hyp{S}{S},
  \qquad
  \hyp{D}{D} \implies \hyp{E}{E} \implies \hyp{P}{P} \wedge \hyp{S}{S}
  \implies \hyp{L}{L} \wedge \hyp{S}{S},
\]
with \hyp{M}{M} and \hyp{E}{E} incomparable and all the implications strict except
possibly the last, which is an equivalence under \hyp{J}{J}. The
last implication is Lemma \ref{lem:PimpliesL}; the reverse one, under \hyp{J}{J}, is
Proposition \ref{prop:LimpliesP}, and whether it can fail without \hyp{J}{J} we do not
know. The individual verifications are
collected in Remarks \ref{rem:roadmap} and \ref{rem:Lverification}.
\end{remark}

\subsection{Two directions not pursued}\label{ssec:directions}

Two limitations of the criterion were met in the course of the applications, and are
recorded here because each is a definite gap with a definite shape, and because
Remarks \ref{rem:temperingtheta} and \ref{rem:otheralgorithms} point to this
subsection. Neither is pursued.

\begin{enumerate}[label=\textup{(\alph*)},ref=\textup{(\alph*)},leftmargin=2.6em,itemsep=6pt,topsep=4pt]

\item\label{dir:local} \emph{A localised minorisation.}
Proposition \ref{prop:minorisation}, and with it \hyp{P}{P}, asks for a minorant $s$
that carries mass from the set $\YY$ of starting points to all but an
arbitrarily small $\pi$-proportion of the endpoints of $\XX$. Some algorithms supply instead a minorant available only \emph{between a pair
of sets}: from $\YY$ into some third set $\YY''$, of possibly small $\pi$-measure, and
the passage from $\YY''$ to the rest of the space achieved only after further steps whose
number is not bounded uniformly. Reversible jump Markov chain Monte Carlo
\cite{Green95} is of this kind: birth and death moves change the model index by one,
so that from a model of dimension $j$ the chain cannot reach $\aeevery$ point of a
model of dimension $j'$ in a number of steps bounded independently of $|j - j'|$, and
\hyp{P}{P} fails as stated even though the algorithm converges.

The weakened form of \hyp{L}{L} adopted here removes part of this obstacle, and it is
worth saying which part. What \hyp{L}{L} asks of such a chain is only that the
$\pi$-mass of the models it cannot reach in $n$ steps tend to $0$, which for a prior
spread over countably many models is exactly what birth and death moves deliver; the
difficulty described above was created by demanding a \emph{single} $N$ serving all
starting points, and that demand is gone. What is not gone is \hyp{S}{S}, and the
within-model absolute continuity that \hyp{L}{L} still needs at each fixed dimension.
So the remaining question is narrower than it was.

What is needed for the \hyp{P}{P} route itself is a version of
Proposition \ref{prop:minorisation} in which the
overlap constant $\gamma$ is produced from a \emph{chain} of local minorisations
rather than from a single global one. The obstacle is not the construction of the
chain but the constant: each link costs a factor, and the number of links is not
bounded over the class of $M$-dominated laws, so the product may vanish. Recovering a
positive $\gamma$ would require some form of control on how far the class can spread
out, which is a hypothesis of a different type from the ones used here.

\item\label{dir:trajectory} \emph{(S) along the trajectory.}
Every verification of \hyp{S}{S} in these notes is by a bound on the mass of the
singular part that holds \emph{pointwise in the starting point}: $\int r^{n}\dd\mu$
under \hyp{M}{M}, $1-\mu(\XX_{n})$ under \hyp{E}{E}, and the geometric bounds of
Lemma \ref{lem:uniformminorant} in the two applications. Such a bound is unavailable
as soon as the mass of the minorant is positive at every point but not bounded away
from $0$. The instance is parallel tempering without hypothesis
\eqref{eq:temperingtheta}, that is with $\inf_{y}\theta_{k}(y) = 0$ for some
component: the minorant $u$ of Corollary \ref{cor:tempering} still exists, but its
mass $(1-\omega)\prod_{k}\theta_{k}(x_{k})$ may be arbitrarily small, and
Lemma \ref{lem:uniformminorant} does not apply.

The conclusion is nevertheless to be expected, because the chain need not linger where
$\theta$ is small; but establishing it means following the trajectory --- showing that
the time spent in the region $\{\theta < \epsilon\}$ is almost surely finite, or at
least that the mass which never moves vanishes in the limit --- rather than bounding
the singular mass step by step. That is an argument about the process and not about
the kernel, and it is the one place in these notes where the distinction bites.
Example \ref{ex:UnotR} shows that some such argument is genuinely required: there the
minorant has mass $a_{i} > 0$ at every point, the masses are not bounded below, and
\hyp{S}{S} \emph{fails}. So no purely pointwise weakening of
Lemma \ref{lem:uniformminorant} can succeed, and the trajectory has to be looked at.

\end{enumerate}

\subsection{What is lost relative to the general theory}

The theory of $\psi$-irreducible aperiodic Harris chains
\cite{Nummelin84,MeynTweedie09,Douc18} covers all of the above and much besides. Its
central device, the splitting construction of Nummelin and of Athreya and Ney, turns
a minorisation on a small set into genuine regeneration times, and from those one
obtains not only convergence but rates, central limit theorems, and the strong law
for every starting point by way of the renewal theorem. The hypotheses of this note
buy the conclusion of that theory in the total variation metric while dispensing with
the construction; what they do not buy is the construction itself, and hence none of
its quantitative consequences. Two points deserve emphasis. First, the reason the
present route can skip recurrence is not an accident of the proof: a transition
density upgrades irreducibility to positive Harris recurrence automatically
\cite{AsmussenGlynn11}, so under \hyp{D}{D} recurrence is not an additional
hypothesis but a consequence. Second, aperiodicity is likewise not assumed but
derived, in Corollary \ref{cor:selfimprovement}, at a cost of two lines.

\phantomsection
\addcontentsline{toc}{section}{Acknowledgements}
\section*{Acknowledgements}

This note was prepared with the assistance of Claude Opus 5, a large language model developed by Anthropic. Based on the author's preliminary notes and input, the model was used to draft and restructure the exposition, to prepare the typescript, to search for and cross-check references, to work out and check the arguments, and to provide feedback, which the author used to refine further inputs. All references, statements and proofs have been verified by the author, who takes full responsibility for the content, including any remaining errors.

\phantomsection

\end{document}